\let\realItem\item 
\NewDocumentCommand\myItem{ o }{%
   \IfNoValueTF{#1}%
      {\realItem}
      {\realItem[#1]\def\@currentlabel{#1}}
}
\setlist[enumerate]{
    before=\let\item\myItem,       
    label=\textnormal{(\arabic*)}, 
    widest=(2')                    
}
\numberwithin{equation}{section}
\DeclareMathOperator{\tr}{Tr}
\DeclareMathOperator{\ts}{tr}
\DeclareMathOperator{\cov}{Cov}
\DeclareMathOperator{\supp}{Supp}
\DeclareMathOperator{\id}{id}
\DeclareMathOperator{\Ree}{Re}
\DeclareMathOperator{\ev}{ev}
\let\limsup\relax
\DeclareMathOperator*{\limsup}{limsup}
\newcommand{\norm}[1]{\left\Vert #1\right\Vert}
\newcommand{\equaldist}{\stackrel{\lower0.5pt\hbox{$\scriptstyle\mathrm d$}}=}
\theoremstyle{plain}
\newtheorem{theorem}{Theorem}[section]
\newtheorem*{nono-theorem}{Theorem}
\newtheorem{prop}[theorem]{Proposition} 
\newtheorem{defi}[theorem]{Definition} 
\newtheorem{exe}[theorem]{Example} 
\newtheorem{lemma}[theorem]{Lemma} 
\newtheorem{rem}[theorem]{Remark} 
\newtheorem{cor}[theorem]{Corollary} 
\newtheorem{conj}[theorem]{Conjecture}
\newtheorem{notation}[theorem]{Notation}
\newtheorem*{ack}{Acknowledgments}
\theoremstyle{definition}
 \newtheorem{claim}[theorem]{Claim}
\renewcommand\P{\mathbb{P}}
\newcommand\E{\mathbb{E}}
\newcommand\N{\mathbb{N}}
\newcommand\M{\mathbb{M}}
\newcommand\C{\mathbb{C}}
\newcommand\CC{\mathcal{C}}
\newcommand\PP{\mathcal{P}}
\newcommand\R{\mathbb{R}}
\newcommand\A{\mathscr{A}}
\renewcommand\rho{\varrho}
\newcommand{\deq}{\mathrel{\mathop:}=}
\newcommand\Binv{\rotatebox[origin=c]{180}{$B$}}
\newcommand{\ignore}[1]{}
\newcommand\nc\newcommand
\nc\eps{\varepsilon}
\nc\ls{\lesssim}
\nc\gs{\gtrsim}
\nc\nicked[1]{{\color{magenta} #1}}
\nc{\eqd}{\stackrel{\text{\tiny $d$}}{=}}
\nc\cG{\mathcal G}
\nc\cE{\mathcal E}
\def \tran {\mathsf{T}}
\nc{\delW}{Z}
\nc{\proB}{\Gamma}
\renewcommand*{\backref}[1]{\ifx#1\relax \else Page #1 \fi}
\renewcommand*{\backrefalt}[4]{
  \ifcase #1 \footnotesize{(Not cited.)}
  \or        \footnotesize{(Cited on page~#2.)}
  \else      \footnotesize{(Cited on pages~#2.)}
  \fi
}
\NewCommandCopy{\dotlessi}{\i}
\renewcommand\i{\mathbf{i}}
\begin{document}

\title{Eigenvalues of Brownian Motions on $\mathrm{GL}(N,\mathbb{C})$}

\author[T. Brailovskaya]{Tatiana I.\ Brailovskaya\textsuperscript{1}}
\address{\textsuperscript{1}Department of Mathematics, Duke University, Durham, NC 27708-0320}
\email{tatiana.brailovskaya@duke.edu}

\author[N. Cook]{Nicholas A.\ Cook\textsuperscript{2}}
\address{\textsuperscript{2}Department of Mathematics, Duke University, Durham, NC 27708-0320}
\email{nicholas.cook@duke.edu}

\author[T. Kemp]{Todd Kemp\textsuperscript{3}}
\address{\textsuperscript{3}Department of Mathematics, UC San Diego, La Jolla, CA 92093-0112}
\email{tkemp@ucsd.edu}
\thanks{\textsuperscript{3}Supported in part by NSF Grant DMS-2400246}

\author[F. Parraud]{F\'elix Parraud\textsuperscript{4}}
\address{\textsuperscript{4}Department of Mathematics, Queen's university, K7L 3N8, Kingston, Canada.}
\email{felix.parraud@gmail.com}

\subjclass[2020]{60B20,    
46L54,  
58J65,  	
22E30,   
60B10}  	


\keywords{Random matrices, 
free probability, 
Brown measure,
Lie group Brownian motion,
non-normal matrices, pseudospectrum}

\begin{abstract} 

	We prove that the empirical law of eigenvalues of Brownian motion on the Lie Group $\mathrm{GL}(N,\mathbb{C})$ converges almost surely to a deterministic probability measure, characterized by a free stochastic differential equation.  This fully resolves a conjecture made by Philippe Biane in 1997. Our analysis includes a family $\{B=B_{\rho,\zeta}\colon \rho>0,|\zeta|\le\rho\}$ of diffusion processes on $\mathrm{GL}(N,\mathbb{C})$ whose laws are invariant under unitary conjugation.
	
	The crux of our analysis is a strong quantitative approximation of Brownian motion $B(t)$ on $\mathrm{GL}(N,\mathbb{C})$ for small $t$ by a single increment $I+W(t)$, where $W=W_{\rho,\zeta}$ is an elliptical Brownian motion in the Lie algebra $\mathfrak{gl}(N,\mathbb{C}) = \mathbb{M}_N(\mathbb{C})$. Specifically, for any $t\in[0,1]$ and $\delta>0$, 
	\[ 
	\P\left(\|B(t)-I-W(t)\|\geq \delta\right)\leq \left(C t/\delta\right)^{N^{2/3}}
	\]
	for a constant $C=C_\rho$. Leveraging independence of multiplicative increments of the Brownian motion then allows us to use powerful (anti-)concentration tools for Gaussian matrices to complete the Hermitization procedure for convergence of eigenvalues.  
\end{abstract}

\maketitle

\tableofcontents

\section{Introduction\label{Intro}}

In this paper, we answer the longstanding question of convergence of eigenvalues of Brownian motions on the Lie group $\mathrm{GL}(N,\mathbb{C})$ of complex invertible $N\times N$ matrices. 

Brownian motion and heat kernel analysis on Lie groups have been studied extensively since the 1970s; see the survey \cite{Varopoulos} and especially \cite{Taylor1988,Gross1993,Hall1994,Driver1995} for perspective relevant to the present work. 
One effective construction of Lie {\em group} Brownian motion is through a stochastic differential equation driven by ``flat'' Brownian motion on the Lie {\em algebra}; see \eqref{eq.BM.Strat.SDE}.  This provides a direct connection to the classical ensembles of random matrix theory.  The unitary group $\mathrm{U}(N)$ has, as Lie algebra, the space of skew-Hermitian matrices in $\M_N(\C)$; the ``flat'' Brownian motion on this space is $\i X(t)$ where, for each $t\ge 0$, the Hermitian matrix $X(t)$ is a {\em Gaussian Unitary Ensemble} (scaled by $\sqrt{t}$).  Meanwhile, the general linear group $\mathrm{GL}(N,\mathbb{C})$ has as Lie algebra the full space $\M_N(\C)$; the standard ``flat'' Brownian motion $W(t)$ here is, for each $t\ge0$, a matrix of all i.i.d.\ complex Gaussian random variables, i.e.\ a {\em Ginibre Ensemble} (scaled by $\sqrt{t}$).

These ensembles comprise two of the most well-known settings for large-$N$ limits of eigenvalue distributions: Wigner \cite{Wigner1955,Wigner1958} proved the density of eigenvalues of $X(1)$ converges to the semicircle law, while Ginibre \cite{Ginibre1965} proved that $W(1)$'s eigenvalues converge to the circular law, as $N\to\infty$.  Wigner's semicircle law was extended to matrices with i.i.d.\ entries above the diagonal under the optimal finite second moment hypothesis within a decade; cf. \cite{Arnold1,Arnold2,Grenander}.  On the other hand, the analogous extension  of the circular law
was not accomplished until 45 years after Ginibre's work in \cite{TaoVu2010-aop}, following incremental progress by several authors -- see the survey \cite{Bordenave-Chafai-circular}.  This is a testament to the vastly greater difficulty working with eigenvalues of non-normal matrices than with selfadjoint matrices. 

As curved-space complements to these classical theorems, it is very natural therefore to study large-$N$ limits of eigenvalues of Brownian motions on $\mathrm{U}(N)$ and $\mathrm{GL}(N,\C)$.  The unitary case was initiated by Philippe Biane in \cite{Biane1997b,Biane1997JFA}, with some related work by Eric Rains around the same time \cite{Rains1997}.  Inspired by earlier work of Applebaum, Hudson, and Parthasarathy \cite{AH1984,HP1984} and especially Bo\.{z}ejko, K\"ummerer,  and Speicher \cite{BS1991,KS1992}, Biane set the framework for free stochastic calculus, and used it to construct a noncommutatve process $(u(t))_{t\ge 0}$ he called {\em free unitary Brownian motion} as a solution to a free SDE (the free version of \eqref{eq.UBM.SDE}).  He then proved that, with the correct scaling of the metrics on the groups $\mathrm{U}(N)$, the unitary Brownian motion $U(t)\in\mathrm{U}(N)$ converges to $u(t)$ as $N\to\infty$ in {\em $\ast$-distribution}: convergence of traces of polynomial functions of the process at different times.  Given that both $U(t)$ and $u(t)$ are unitary operators, this sufficed to prove convergence of the density of eigenvalues of $U(t)$ to a probability measure determined by $u(t)$.  This convergence result, and the free unitary Brownian motion process, found many uses in the theory of free entropy (e.g.\ \cite{Voiculescu1999,CollinsKemp2014}). Many more refined convergence results were explored in the ensuing two decades, including concentration at the edge and absence of outliers \cite{Kemp2018}, strong metric convergence and mesoscopic scale analysis of eigenvalue evolution \cite{Melcher2018}, linear fluctuations \cite{LevyMaida2010,KempCebron2022}, applications to Yang--Mills theory \cite{KempMM1,KempMM2}, and connections with non-intersecting Brownian motions \cite{Liechty}.

In parallel, Biane introduced the {\em free multiplicative Brownian motion} $(b(t))_{t\ge 0}$ -- another non-commutative stochastic process that satisfies a free SDE (a special case of \eqref{eq.b.fSDE}).  In \cite{Biane1997JFA}, it played a central role in constructing a free version of the Segal--Bargmann--Hall transform -- a unitary isomorphism in geometric quantization theory that concretely implements wave-particle duality. The transform intertwines heat kernels (i.e.\ marginal distributions of Brownian motions) on $\mathrm{U}(N)$ and its complexification $\mathrm{U}(N)_{\C} = \mathrm{GL}(N,\C)$.  While connected by analogy with Brownian motion $B(t)$ on $\mathrm{GL}(N,\C)$, Biane wrote \cite[p.\ 19]{Biane1997b} ``It is very likely that the [free multiplicative Brownian motion] process is the limit in distribution\ldots of the Brownian motion with values in $\mathrm{GL}(N,\C)$\ldots but we have not proved this.''  The functional calculus and Fourier analytic methods Biane invented to prove convergence of eigenvalues in the unitary setting are unavailable for $\mathrm{GL}(N,\C)$, making the problem much harder for the same reason the circular law is much harder than its semicircular cousin: most matrices in $\mathrm{GL}(N,\C)$ are non-normal, so the spectral theorem is no use.  (Indeed, in \cite[Prop.\ 4.15]{Kemp2016}, it was shown that, with probability $1$, $B(t)$ is not normal at any time $t>0$ for $N>1$.)

\begin{figure}[h!]
\centering
  \begin{tikzpicture}

    \node[inner sep=2pt] at (0cm, 0cm)
      {\includegraphics[width=4cm]{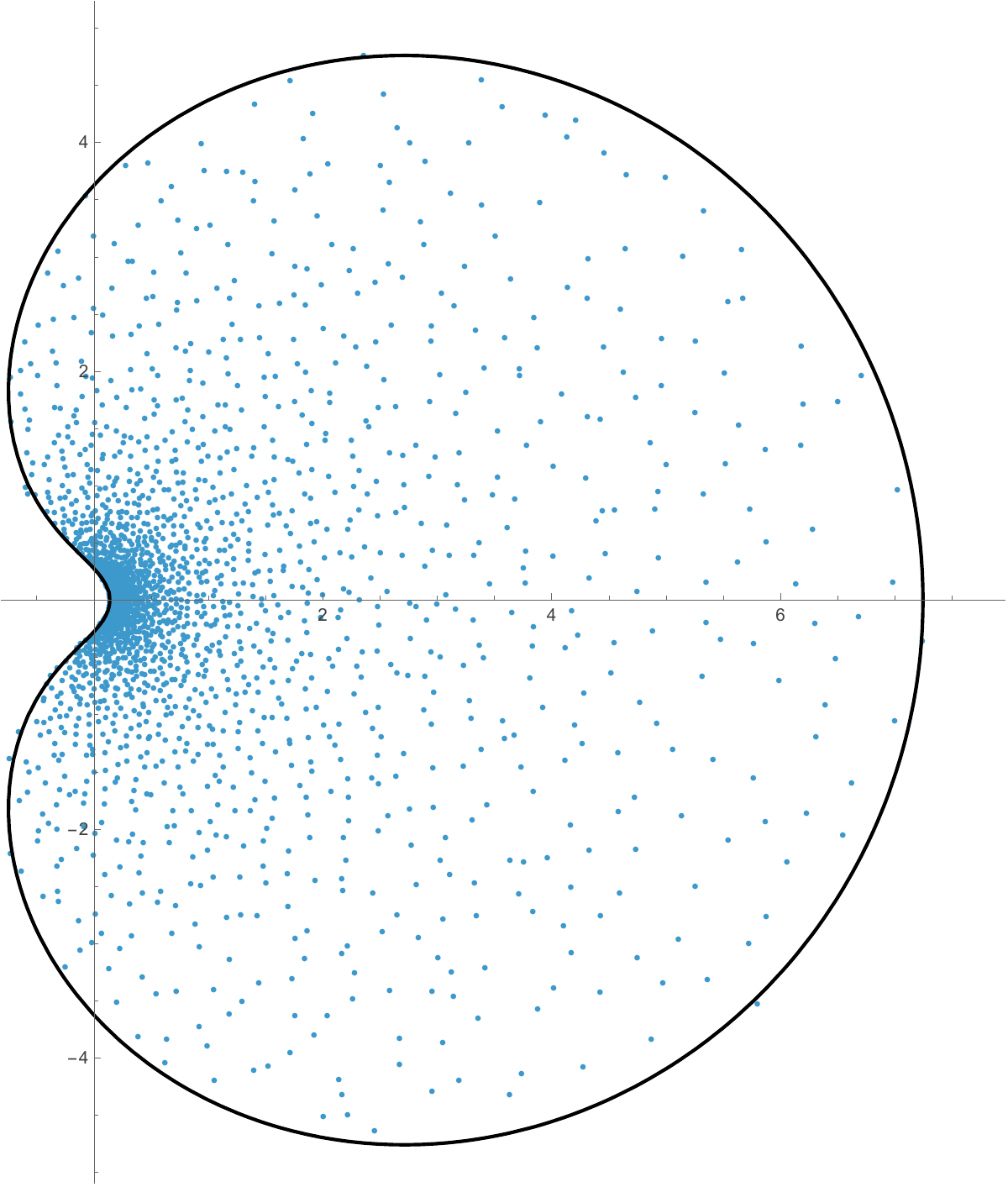}};

    \node[inner sep=2pt] at (4.2cm, -0.07cm)   
      {\includegraphics[width=4cm]{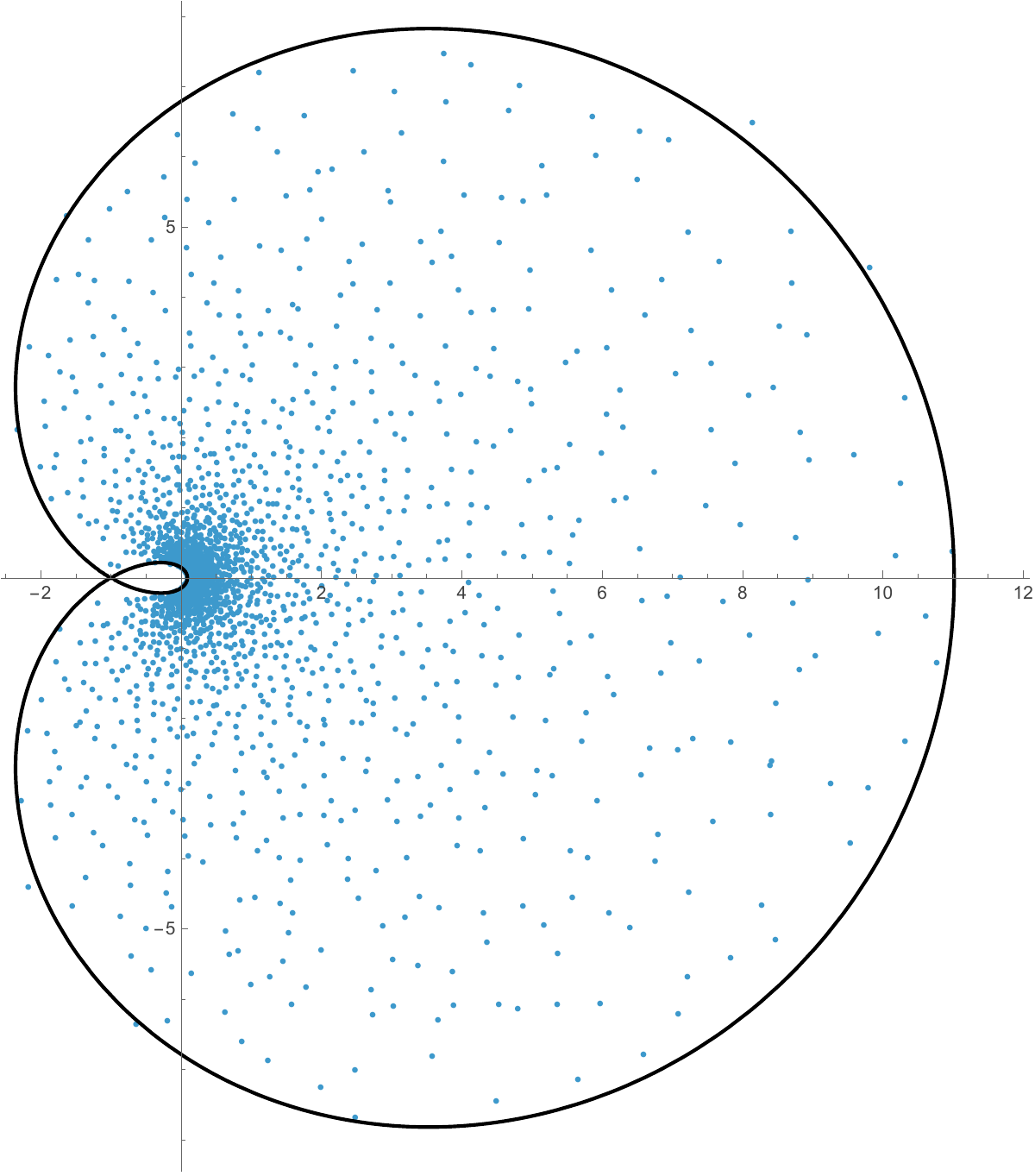}};

    \node[inner sep=2pt] at (8.4cm, -0.03cm)   
      {\includegraphics[width=4cm]{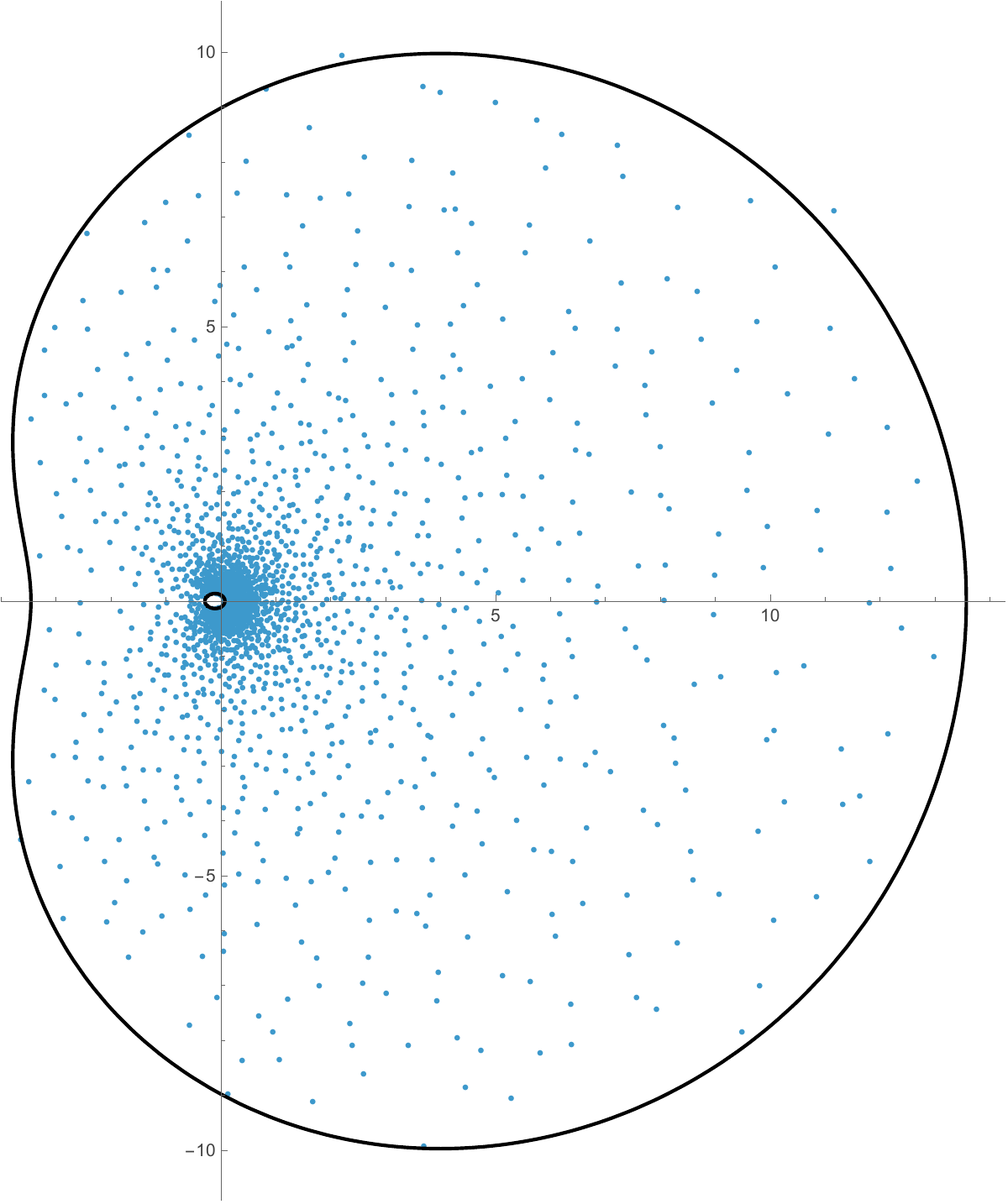}};

  \end{tikzpicture}
  \caption{Eigenvalues and limit support set of the Brownian motion $B(t)$ on $\mathrm{GL}(N,\C)$, with $N=2000$ and $t=3,4,4.5$ left to right.  These figures closely match the plots in \cite{Biane1997JFA} of the region Biane denoted $\Sigma_t$ (herein denoted $\Sigma(1,t)$, see Section \ref{sect.Brown(ian).meas}), instigating the work that has culminated in this paper.  \label{fig-evolving-lima-bean}}
\end{figure}

The framework to begin understanding the large-$N$ limit of the Brownian motion $(B(t))_{t\ge0}$ on $\mathrm{GL}(N,\C)$ was created independently in \cite{Cebron2013} and \cite{DHK2013}. The processes $(W(t))_{t\ge 0}$ and $(B(t))_{t\ge0}$, along with their Hermitian and unitary cousins, have Markov generators that are Laplacians on different spaces (see Section \ref{sect:BMLie} for details).  Careful analysis of the associated Markov semigroups  yields explicit covariance estimates, e.g.\ $\mathrm{Cov}[P(B(t),B(t)^\ast),Q(B(t),B(t)^\ast)] = \mathcal{O}_{t,P,Q}(\frac{1}{N^2})$ for any $t\ge 0$ and non-commutative polynomials $P,Q$. In \cite{Kemp2016,Kemp2017}, these results were generalized to multiple times and to a two-parameter family of diffusion processes on $\mathrm{GL}(N,\C)$, all Brownian motions with respect to invariant metrics, which includes the unitary Brownian motion as a boundary case.  Meanwhile, the original Segal--Bargmann--Hall transform was extended to allow for complexified time \cite{DHK2020}, and this gave rise to a third (and final, in a sense explained at the end of Section \ref{sect.intro.BMs} below) parameter for invariant metrics. Very recently \cite{BCC2025} upgraded these convergence results to this $3$-parameter ``elliptical'' setting, and moreover upgraded the convergence from {\em traces} to {\em norms} of polynomial functions (i.e.\ strong convergence).

All that being said, the spectrum is discontinuous in the topology of convergence in $\ast$-distribution.  For example \cite[Example 1.2]{Bordenave-Chafai-circular} the permutation matrix $Q$ of a full $N$-cycle and the unilateral shift $S$ on $\mathbb{C}^N$ have the same limit $\ast$-distribution (being bounded rank-$1$ perturbations of each other); but the spectrum of $Q$ consists of the $N$th roots of unity, while the spectrum of $S$ is $\{0\}$ for all $N$. In general, convergence of eigenvalues is much harder to establish than convergence in $\ast$-distribution.  For example, in the case of polynomials in independent Ginibre random matrices, the convergence in $\ast$-distribution has been known since the early nineties, see \cite{Voiculescu1991}, but the convergence of the empirical eigenvalue distribution remains a currently active problem -- see the 2022 paper \cite{Cook} for partial results, and the very recent preprint \cite{Yi-Han} for a proposed full solution.

The challenges presented by the gap between $\ast$-distribution and eigenvalue convergence left Biane's original conjecture about convergence of eigenvalues of Brownian motion on $\mathrm{GL}(N,\C)$ open.  Until now.  Our main Theorem \ref{thm.main} proves convergence of the density of eigenvalues for all invariant $\mathrm{GL}(N,\C)$ processes that deserve the title ``Brownian motion''.

To set the stage for this main result, we step back briefly to concretely describe Brownian motions on fairly general Lie groups.

\subsection{Brownian Motion on Lie Groups\label{sect:BMLie}}

Generally, a {\em Brownian motion} is any Feller--Markov process whose generator is ($\frac12$ of) a Laplace operator.  What is meant by a Laplace operator may depend on the precise geometric context; it is an elliptic second-order linear differential operator that resembles the classical Laplacian $\sum_j \partial_j^2$ on $\mathbb{R}^d$.  On manifolds, this form can be precisely mirrored by sums of squares of vector fields; on Lie groups, the group structure begs for the vector fields to be left- (or right-)invariant.

Let $\mathfrak{G}$ be a Lie group with Lie algebra $\mathfrak{g} = T_IG$, the tangent space at the identity $I\in \mathfrak{G}$.  For any vector $\xi\in\mathfrak{g}$, the vector field $\partial_\xi$ (often denoted $\tilde{\xi}$) is defined by $(\partial_\xi f)(A) = \left.\frac{d}{dt}f(A\exp(t\xi))\right|_{t=0}$ for $f\in C^\infty(\mathfrak{G})$.  Here, $\exp\colon\mathfrak{g}\to\mathfrak{G}$ is the exponential map which is a local diffeomorphism from a neighborhood of $0\in\mathfrak{g}$ onto a neighborhood of $I\in\mathfrak{G}$; if $\mathfrak{G}$ is a matrix Lie group, $\exp$ is simply the matrix exponential.  The vector field $\partial_{\xi}$ is left-invariant, meaning that $\partial_\xi (f\circ L_A) = (\partial_\xi f)\circ L_A$ where $L_A(B) = A^{-1}B$, and every left-invariant vector field on $\mathfrak{G}$ has this form (if $\tilde{X}$ is left-invariant then $\tilde{X} = \partial_{\xi}$ where $\xi = \tilde{X}|_I$).

Given a finite collection of vectors $\beta\subset\mathfrak{g}$, we may consider the associated sum-of-squares operator
\begin{equation} \label{eq.Laplacian.0} 
\Delta_\beta = \sum_{\xi\in\beta} \partial_{\xi}^2. \end{equation}
If $\beta$ is a vector space basis of $\mathfrak{g}$, then $\Delta_\beta$ is an elliptic operator. If $\mathfrak{G}$ is unimodular, $\Delta_\beta$ coincides with the the Laplace--Beltrami operator for the left-invariant Riemannian metric on $\mathfrak{G}$ induced by the inner product on $\mathfrak{g}$ for which $\beta$ is an orthonormal basis.  (It is then a standard calculus exercise that the operator $\Delta_\beta$ does not depend on which orthonormal basis $\beta$ is used.)

If $\beta$ is not a vector space basis for $\mathfrak{g}$, the operator $\Delta_\beta$ is degenerate, but it may still have nice properties.  H\"ormander's theorem \cite{Hormander1967} asserts that if $\beta$ generates $\mathfrak{g}$ as a Lie algebra (under vector space operations and Lie brackets) then $\Delta_\beta$ is {\em hypoelliptic}, meaning in particular that the associated heat kernel (the fundamental solution $u=u(t,\cdot)$ to the heat equation $\partial_t u =\frac12\Delta_\beta u$, which is the marginal distribution of the Brownian motion at time $t$) is a smooth function on $\mathfrak{G}$, strictly positive if $\mathfrak{G}$ is connected.  Feller--Markov processes with hypoelliptic generators retain many of the robust properties imbued by ellipticity (such as neighborhood recurrence and unique stationary distributions when $\mathfrak{G}$ is connected), and hence also deserve to be called Brownian motions.

The sum-of-squares form \eqref{eq.Laplacian.0} for the Laplacian allows for a simple description of Brownian motion in $\mathfrak{G}$ in terms of a stochastic differential equation driven by standard Brownian motion $W$ in (a subspace of) the Lie algebra $\mathfrak{g}$.  Specifically: for a given linearly independent set $\beta\subset\mathfrak{g}$, let $\{W_{\xi}\colon\xi\in\beta\}$ be independent standard Brownian motions on $\mathbb{R}$, and define
\begin{equation} \label{eq.W} W(t) = W_{\beta}(t):= \sum_{\xi\in\beta} W_{\xi}(t)\,\xi. \end{equation}
The process $W$ is determined by the inner product on $\mathrm{span}_{\R}\,\beta$ for which $\beta$ is orthonormal; if $\beta'$ is any other orthonormal basis with respect to this inner product, $W_\beta$ and $W_{\beta'}$ have the same law.  We therefore think of $W$ associated to an inner product on (a subspace of) $\mathfrak{g}$.

The diffusion process $B$ on $\mathfrak{G}$ whose generator is $\frac12\Delta_\beta$ (and is rightfully called a Brownian motion provided $\beta$ satisfies H\"ormander's condition) can equivalently be described as the strong solution to the Stratonovich SDE
\begin{equation} \label{eq.BM.Strat.SDE}
B(t) = B(0) + \int_0^t B(s)\circ dW(s).
\end{equation}
In the case that $\mathfrak{G}$ is a matrix Lie group, there is a simple conversion 
of \eqref{eq.BM.Strat.SDE} to an It\^o integral:
\begin{equation} \label{eq.BM.Ito.SDE}
B(t) = B(0) + \int_0^t B(s)\,dW(s) + \frac12\int_0^t B(s)\Xi\,ds
\end{equation}
where $\Xi = \sum_{\xi\in\beta}\xi^2$ \cite[p.\ 116]{McKean}.
(The It\^o correction term is $\frac12$ the quadtratic covariation of $B$ and $W$.)  If $\beta$ satisfies H\"ormander's condition (i.e.\ generates $\mathfrak{g}$ as a Lie algebra), the sum-of-squares matrix $\Xi$ is in the center of $\mathfrak{g}$.  Note that $\Xi$ can be computed directly from the given $\mathfrak{g}$-valued diffusion $W$ \eqref{eq.W} by
\begin{equation} \label{eq.Xi.EW2}
\mathbb{E}[W(t)^2] = \sum_{\xi_1,\xi_2\in\beta} \mathbb{E}[W_{\xi_1}(t)W_{\xi_2}(t)]\xi_1\xi_2 = \sum_{\xi\in\beta} t\xi^2 = t\Xi.
\end{equation}

If $B_I$ denotes the solution to \eqref{eq.BM.Ito.SDE} with $B(0)=I$ and if $B_0$ is any random matrix independent from the process $B_I$, the process $B=(B_0B_I(t))_{t\ge 0}$ satisfies \eqref{eq.BM.Ito.SDE} with $B(0)=B_0$; hence, Brownian motion with any given initial distribution $B_0$ can equivalently be constructed this way, and we generally restrict the SDE to $B(0)=I$. Similar considerations show that $B$ has {\em independent multiplicative increments}: for $0\le s<t$, $B(s)^{-1}B(t)$ is independent from $\{B(r)\colon r\le s\}$.  Moreover, the inverse is itself an It\^o process: a stochastic calculus exercise shows that the process $\Binv=(\Binv(t))_{t\ge0}$ defined by
\begin{equation} \label{eq.SDE.Binv} \Binv(t) = I - \int_0^t dW(s)\circ\Binv(s) = I - \int_0^t dW(s)\Binv(s) + \frac12\int_0^t \Xi\,\Binv(s)\,ds \end{equation}
is the inverse of $B(t)$ for each $t$; i.e. $B(t)\Binv(t) = I$.  That is: since $-W$ has the same law as $W$, the inverse $B(t)^{-1} = \Binv(t)$ is a {\em right}-invariant Brownian motion on $\mathfrak{G}$, driven by $W$.  See \cite[Section 4.2]{Kemp2016} for further discussion.

\subsection{Brownian Motions on $\mathrm{GL}(N,\C)$\label{sect.intro.BMs}}

Now, let us specialize to the case $\mathfrak{G} = \mathrm{GL}(N,\mathbb{C})$, in which case the Lie algebra is $\mathfrak{g} = \mathfrak{gl}(N,\mathbb{C}) = \M_N(\C)$, the algebra of all $N\times N$ complex matrices. The above constructions are determined by a choice of a linearly independent set $\beta\subset\M_N(\C)$,
or equivalently a choice of a possibly degenerate inner product on $\M_N(\C)$.  Consider the (rescaled) Pauli spin matrices
\begin{equation} \label{eq.Pauli} \begin{aligned} \mathscr{S}_N = \Big\{\textstyle{\frac{1}{\sqrt{N}}}E_{i,i}\colon 1\le i\le N\Big\} \; &\sqcup\; \Big\{\textstyle{\frac{1}{\sqrt{2N}}}(E_{i,j}+E_{j,i})\colon 1\le i<j\le N\Big\} \\
&\sqcup\; \Big\{\textstyle{\frac{\i}{\sqrt{2N}}}(E_{i,j}-E_{j,i})\colon 1\le i<j\le N\Big\} \end{aligned} \end{equation}
where $E_{i,j}\in\M_N(\C)$ has a $1$ in the $(i,j)$-entry and $0$ elsewhere.
The set $\mathscr{S}_N$ is orthonormal with respect to the scaled Hilbert--Schmidt inner product
\begin{equation} \label{eq.HS.innprod} 
\langle A,B\rangle_N := N\,\mathrm{Re}\,\mathrm{Tr}(B^\ast A) 
\end{equation}
where $\mathrm{Tr}(A) = \sum_{j=1}^N A_{jj}$ is the trace.  We denote the associated process \eqref{eq.W} as $W=X = (X(t))_{t\ge0}$, a {\bf Hermitian Brownian motion}.  The joint law of entries of $t^{-1/2}X$ is a {\bf Gaussian Unitary Ensemble} (GUE): $\{X_{ij}\colon i\le j\}$ are independent, with diagonal entries real Gaussians of variance $t/N$ and upper-triangular entries complex Gaussians with independent real and imaginary parts of variance $t/2N$.  This is the variance scaling that Biane considered in \cite{Biane1997b,Biane1997JFA}, ergo the $N^{-1/2}$ scaling of $\mathscr{S}_N$ and associated scaling up of the inner product in \eqref{eq.HS.innprod}.  See further discussion below in Definition \ref{2HBdef}.

\begin{rem}[Unitary case] \label{rem.UBM} The span $\i\mathscr{S}_N$ of skew-Hermitian matrices is the Lie algebra $\mathfrak{u}(N)$ of the compact Lie group $\mathrm{U}(N)$ of unitary matrices.  In particular, $\i\mathscr{S}_N$ does not satisfy H\"ormander's condition for all of $\M_N(\C)$.  The sum of squares $\Xi$ of matrices in $\i\mathscr{S}_N$ is equal to $-I$, and so in this context \eqref{eq.BM.Ito.SDE} becomes
\begin{equation} \label{eq.UBM.SDE} U(t) = I + \i\int_0^t U(s)\,dX(s) - \frac12\int_0^t U(s)\,ds \end{equation}
where we have renamed $B=U$ here as it is {\em Brownian motion on $\mathrm{U}(N)$}.  Note that $U(t)^\ast$ then satisfies \eqref{eq.SDE.Binv}, showing directly that $U(t)^\ast=U(t)^{-1}$, i.e.\ $U$ takes values in $\mathrm{U}(N)$.  Indeed, with the present scaling, $U$ is the unitary Brownian motion whose large-$N$ limit was studied by Biane \cite{Biane1997b}.
\end{rem}

In this paper, we consider the general class of Brownian motions on $\mathrm{GL}(N,\mathbb{C})$ driven by {\bf elliptical Brownian motions} on $\M_N(\C)$: i.e.
\begin{equation} \label{eq.W.elliptical} 
W = e^{\i\theta}(aX+\i bY) 
\end{equation}
where $X$ and $Y$ are independent Hermitian Brownian motions, $a,b\in\mathbb{R}$ (not both $0$), and $\theta\in[-\pi,\pi)$.  If $ab\ne 0$, $e^{-\i\theta}W$ corresponds to \eqref{eq.W} with a spanning set consisting of positive scalar multiples of all the matrices in $\mathscr{S}_N\sqcup\i\mathscr{S}_N$, and so the associated Laplacian is elliptic; it then follows that $W$ has an elliptic generator as well.  If either $a=0$ or $b=0$, the Laplacian is degenerate.  However, straightforward computations show that $\i\mathscr{S}_N\subset [\mathscr{S}_N,\mathscr{S}_N]$, and this shows that the H\"ormander condition holds {\em except} in the cases that $W$ reduces to a real multiple of $\i X$ or $\i Y$ --- which yield the unitary Brownian motion (Remark \ref{rem.UBM}).

It is useful to make the following change of variables:
\begin{equation} \label{eq.rho.zeta}
\begin{aligned} \rho: &= \mathbb{E}\mathrm{tr}[|W(1)|^2] = a^2+b^2 \\
\zeta: &= \mathbb{E}\mathrm{tr}[W(1)^2] = e^{2\i\theta}(a^2-b^2)
\end{aligned}
\end{equation}
where $\mathrm{tr} = \frac{1}{N}\mathrm{Tr}$ is the normalized trace on $\M_N(\C)$ and, for any matrix $A$, $|A|^2:=A^\ast A$.  The parameters $(\rho,\zeta)$ satisfy $|\zeta|\le\rho$.  Conversely, for any pair $(\rho,\zeta)\in\R_+\times\C$ with $|\zeta|\le\rho$, all the elliptical Brownian motions \eqref{eq.W} with $(a,b,\theta)$ satisfying \eqref{eq.rho.zeta} have the same law.  Ergo the pairs $(\rho,\zeta)$ parametrize all elliptical Brownian motions (in law); we henceforth denote $W=W_{\rho,\zeta}$.  Note that:
\begin{itemize}
    \item $W_{1,0}=\frac{1}{\sqrt{2}}(X+\i Y)$ is the full {\bf Ginibre Brownian motion}.  All its entries are i.i.d.\ complex Brownian motions.  When $\zeta=0$, the process is invariant under left (or right) multiplication by any deterministic unitary matrix; in particular, multiplying by $e^{\i\theta}$ does not change the distribution in this case.
    \item The ellipticity condition $ab\ne 0$ exactly corresponds to $|\zeta|<\rho$.
    \item When $|\zeta|=\rho$, $W_{\rho,\zeta}$ is hypoelliptic except in the particular case $\zeta = -\rho$ which yields time rescaled Unitary Brownian motion -- see Remark \ref{rem.UBM}.
\end{itemize}
See Section \ref{sect.background.BMs} for more detail on the relationship between $(a,b,\theta)$ and $(\rho,\zeta)$.

We can now fully describe the group Brownian motions of \eqref{eq.BM.Ito.SDE} corresponding to the elliptical driving processes $W_{\rho,\zeta}$.  Since $\Xi$ is in the center of $\mathfrak{gl}(N,\mathbb{C}) = \M_N(\C)$ which consists of scalar multiples of $I$, it follows from \eqref{eq.Xi.EW2} and \eqref{eq.rho.zeta} that $\Xi = \mathbb{E}[W(1)^2] = \zeta I$.  Hence, the main objects of study in the present work are Brownian motions $B_0 B_{\rho,\zeta}$ where
\begin{equation} \label{eq.B.SDE.rho.zeta}
B_{\rho,\zeta}(t) = I + \int_0^t B_{\rho,\zeta}(s)\,dW_{\rho,\zeta}(s) + \frac{\zeta}{2}\int_0^t B_{\rho,\zeta}(s)\,ds. \end{equation}
The standard Brownian motion on $\mathrm{GL}(N,\mathbb{C})$ is the specal case $B_{1,0}$, driven by the Ginibre Brownian motion $W_{1,0}$ on $\mathfrak{gl}(N,\mathbb{C})$.

\begin{figure}[h!]
\centering
  \begin{tikzpicture}

    \node[inner sep=2pt] at (0cm, 0cm)
      {\includegraphics[height=5cm]{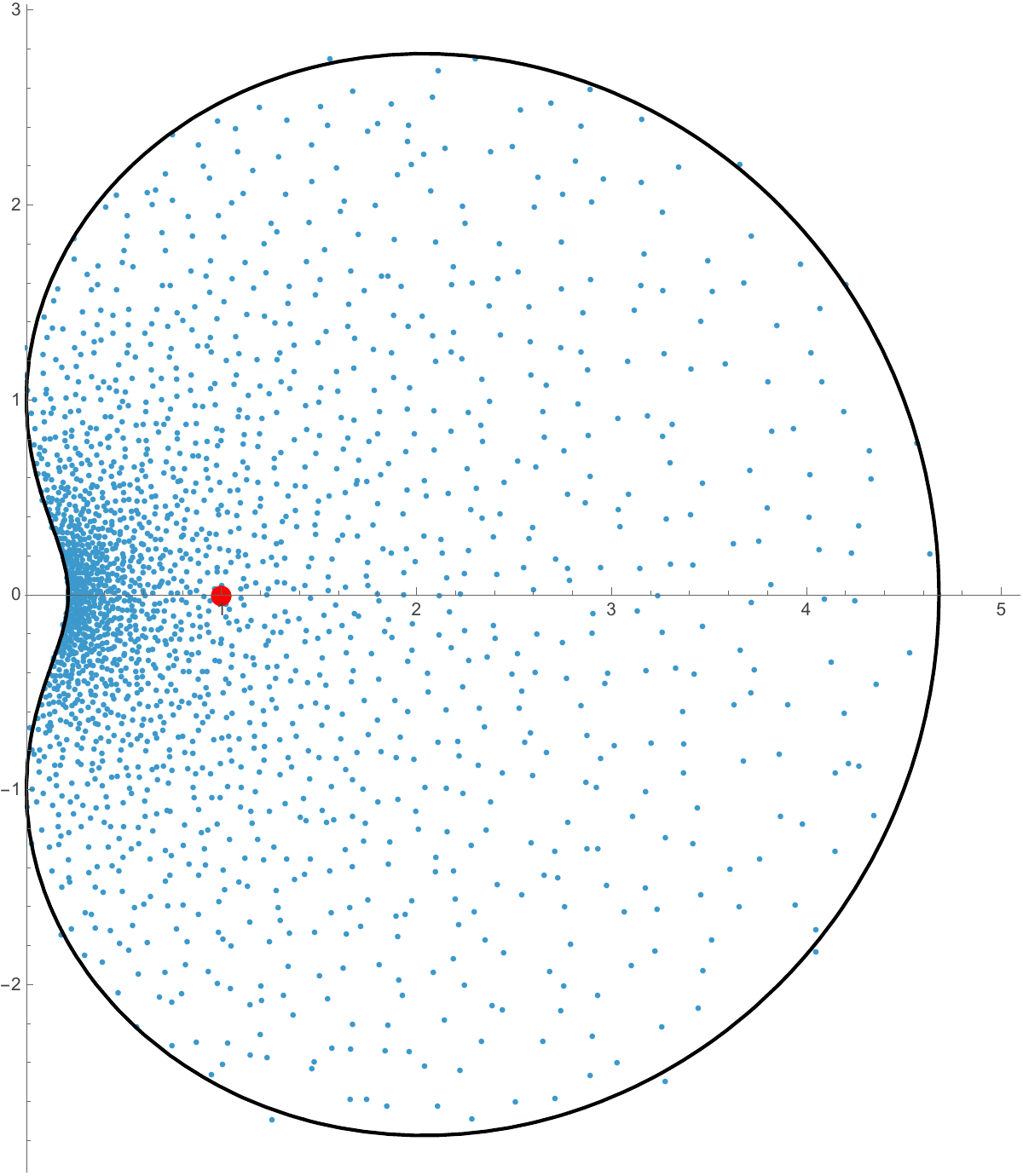}};

    \node[inner sep=2pt] at (5.5cm, -0.3cm)   
      {\includegraphics[width=5cm]{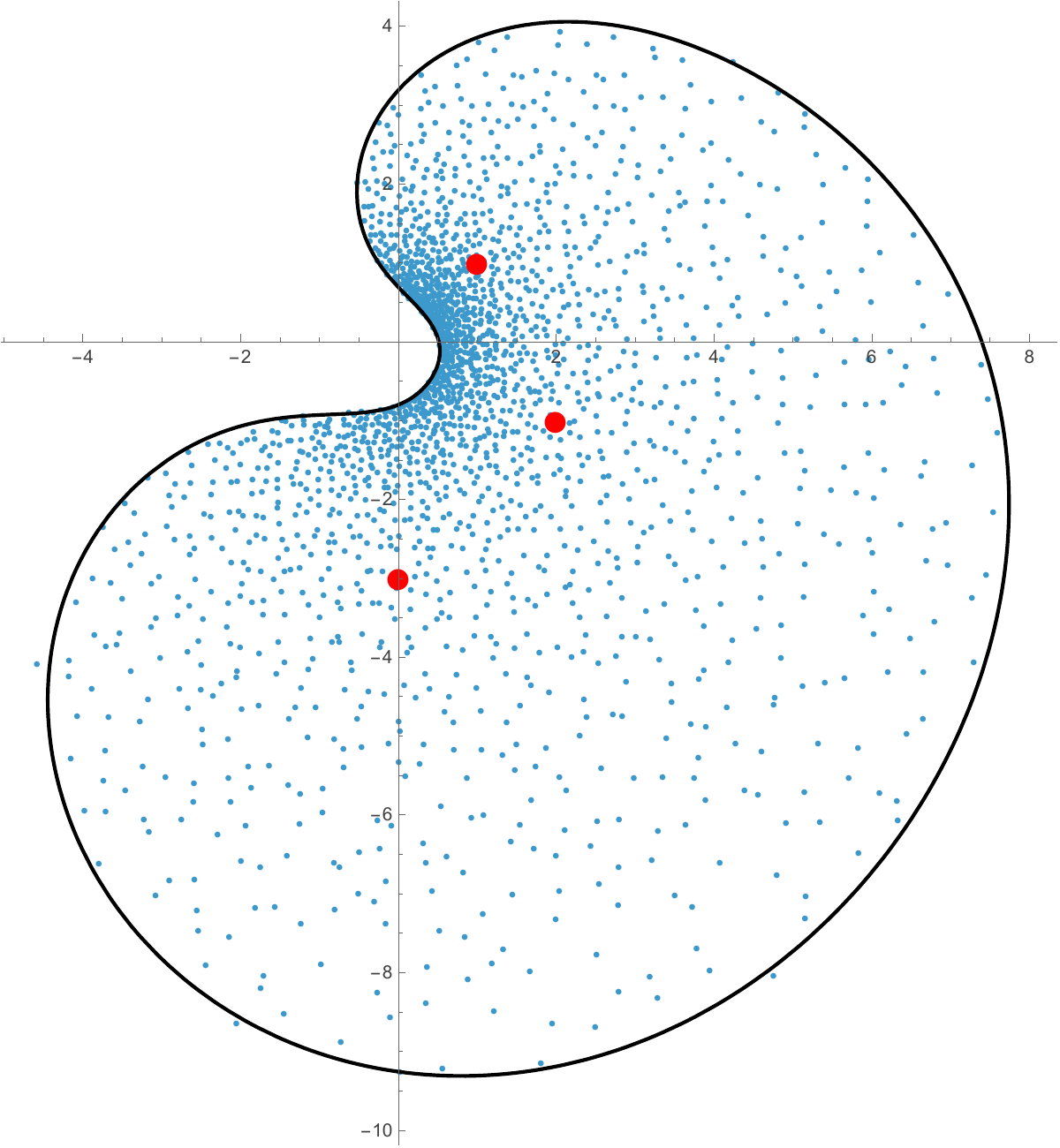}};

    \node[inner sep=2pt] at (0cm, -5.5cm)
      {\includegraphics[height=5cm]{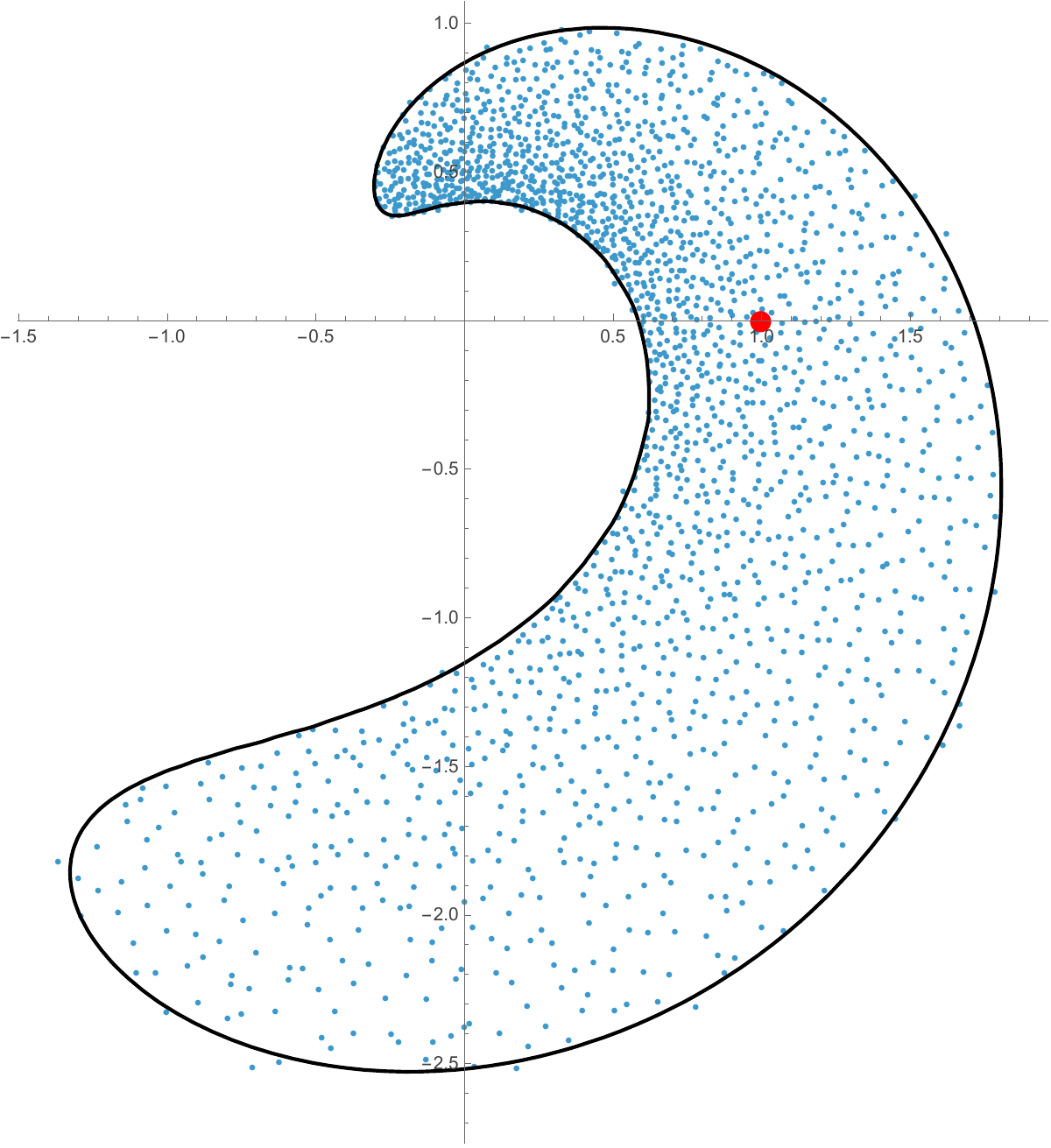}};

    \node[inner sep=2pt] at (5.5cm, -5.22cm)  
      {\includegraphics[width=5cm]{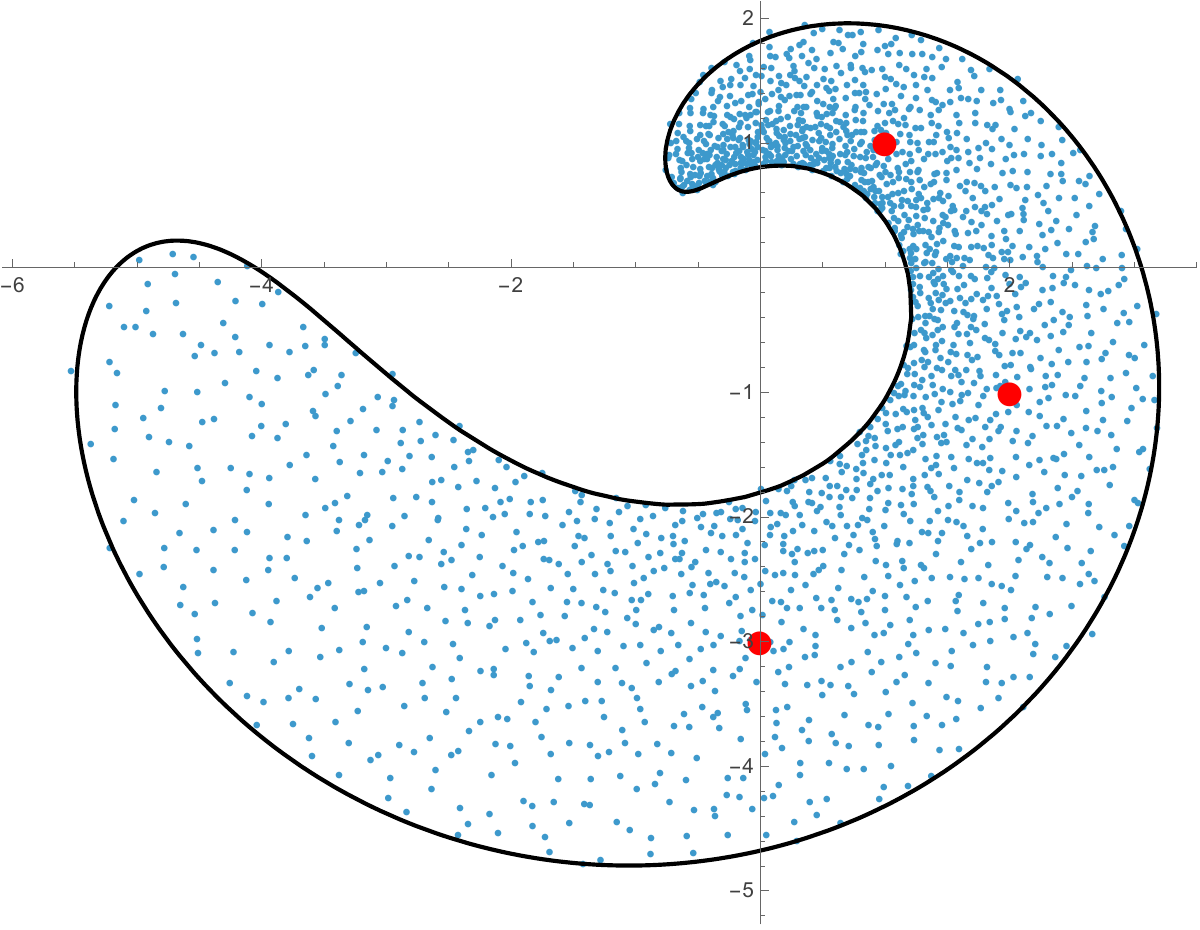}};
  \end{tikzpicture}
  \caption{Eigenvalues of $B_0 B_{\rho,\zeta}(1)$ with $N=2000$.  On the left $B_0 = I$, while on the right $B_0$ is a normal matrix with eigenvalues $\{2-\i,1+\i,-3\i\}$ weighted equally. In the top row $(\rho,\zeta)=(2,0)$; in the bottom row $(\rho,\zeta)=(2,-1-\i)$.
  \label{fig-grid-1vs3pt-slanted-vs-not}}
\end{figure}

\begin{figure}[h!]
\centering
  \begin{tikzpicture}

    \node[inner sep=2pt] at (0cm, -6cm)
      {\includegraphics[width=5cm]{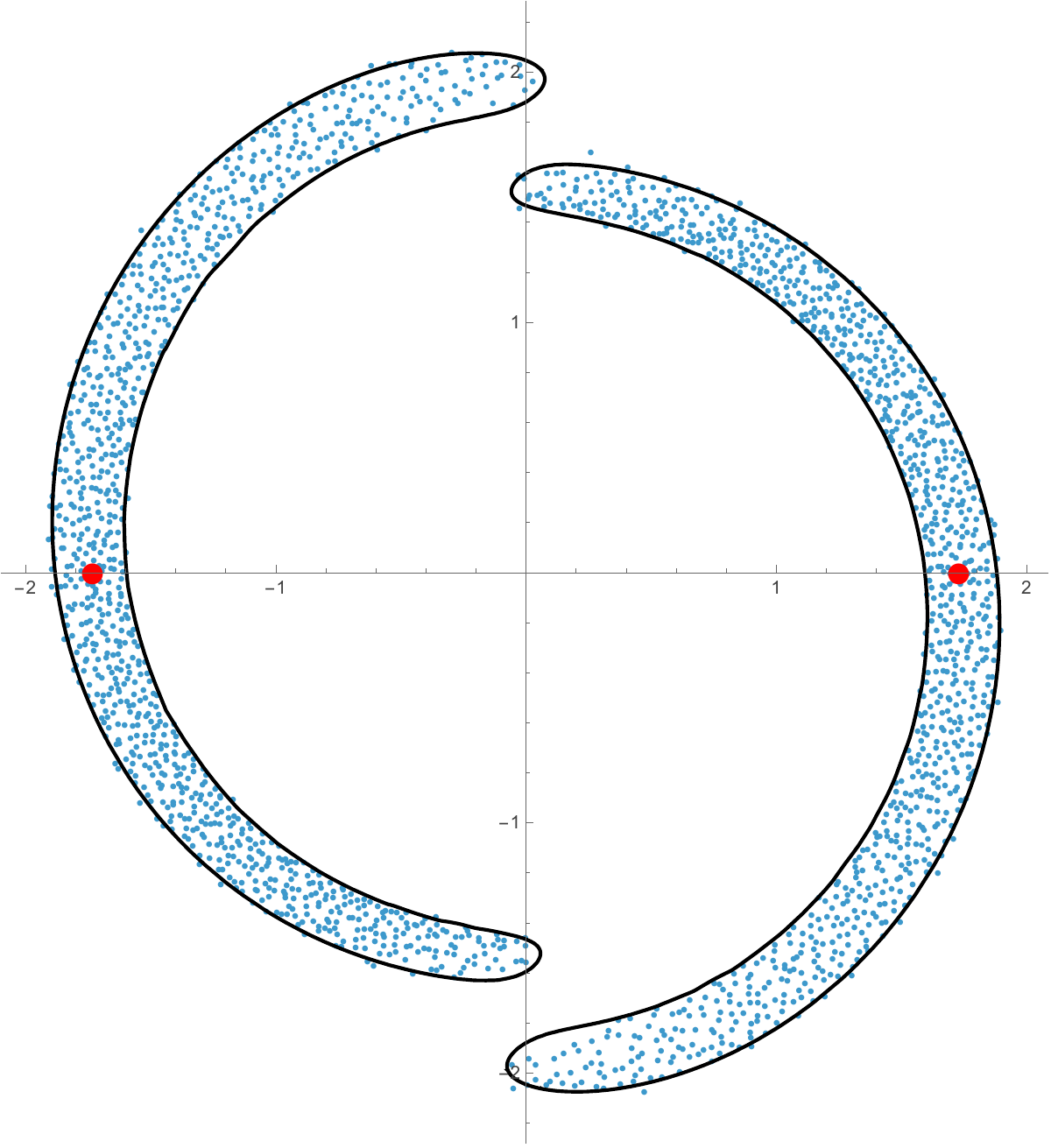}};

    \node[inner sep=2pt] at (5.5cm, -6cm)  
      {\includegraphics[width=5cm]{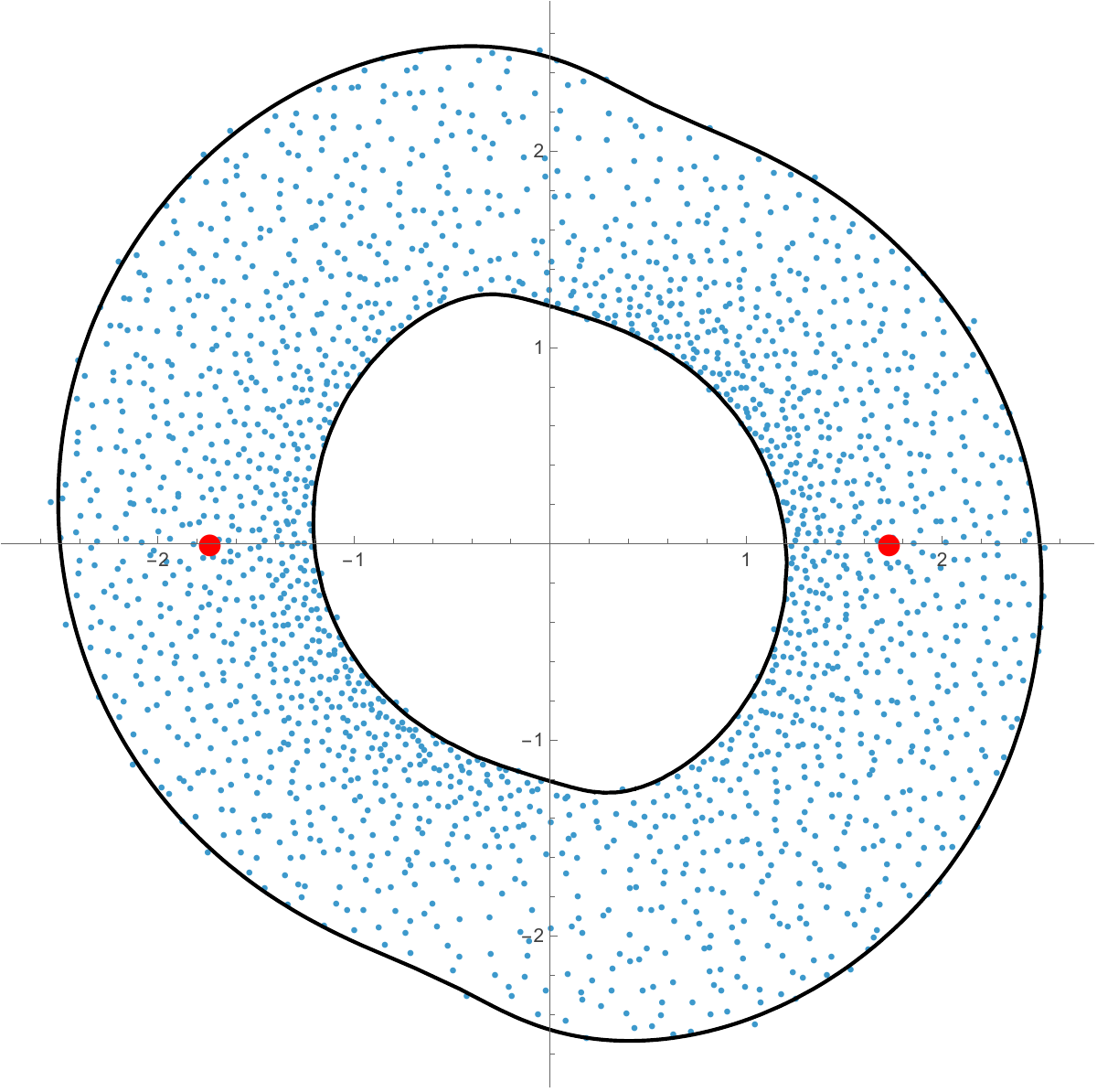}};
  \end{tikzpicture}
  \caption{Eigenvalues of $B_0B_{\rho,\zeta}(1)$ with $N=2000$ and $(\rho,\zeta)=(1.9,-1.7-0.6\i)$.  The initial condition has eigenvalues $\pm\sqrt{3}$ in both cases; on the left $B_0$ is normal, while on the right \break
  $ B_0 = \begin{bmatrix} 0 & 1 \\ 3 & 0 \end{bmatrix} \otimes I_{N/2}. $
}
\end{figure}

Note that all the elliptical Brownian motions \eqref{eq.W.elliptical} are invariant under unitary conjugation: if $U\in\mathrm{U}(N)$ is a deterministic unitary matrix then $UW_{\rho,\zeta}U^\ast$ is equal in distribution to $W_{\rho,\zeta}$.  It follows from \eqref{eq.B.SDE.rho.zeta} that the Brownian motions $B_{\rho,\zeta}$ on $\mathrm{GL}(N,\C)$ are similarly invariant; see \eqref{eq.B.conj.inv}.  Unitary conjugation invariance is a historically useful property for connecting random matrices to large-$N$ limits with free probability, see Proposition \ref{prop.Haar.conj}.  In \cite[Theorem 3.3, Proposition 3.7]{DHK2020}, the authors characterized all inner products, and hence all Brownian motion processes $W$ \eqref{eq.W}, on $\M_N(\C)$ that are $\mathrm{U}(N)$-conjugation invariant; they are all of the form $W_{\rho,\zeta}+cZ I$ where $c\in\R$ and $Z$ is a (scalar) complex Brownian motion independent from $W_{\rho,\zeta}$.  (The proof assumes strict positivity of the inner product, but a simple modification shows that the characterization also applies to the degenerate case $|\zeta|=\rho$.) Since $Z I$ commutes with $\M_N(\C)$, it follows that the most general $\mathrm{U}(N)$-conjugation invariant Brownian motions on $\mathrm{GL}(N,\C)$ have the form $e^{cZ(t)}B_{\rho,\zeta}(t)$. As the spectral distribution of $e^{cZ(t)}B_{\rho,\zeta}(t)$ is obtained from that of $B_{\rho,\zeta}(t)$ by multiplying each eigenvalue by the independent scalar random variable $e^{cZ(t)}$, we henceforth treat only the case $c=0$.

\subsection{Main Result}

Given a matrix $A\in\M_N(\C)$, denote its $N$ complex eigenvalues by $\{\lambda_j(A)\}_{j=1}^N$ (counted with multiplicity and labeled arbitrarily).  
The {\bf empirical law of eigenvalues} of $A$, denoted $\mu_A$, is the random probability distribution with equal masses at all eigenvalues of $A$:
\begin{equation} \label{eq.ESD.def} \mu_A = \frac1N\sum_{j=1}^N\delta_{\lambda_j(A)}. \end{equation}
Our main theorem asserts that, almost surely, the empirical law of eigenvalues $\mu_{B(t)}$ converges weakly to a deterministic measure as $N\to\infty$.  In fact, we prove this more generally for the Brownian motion $B_0 B(t)$ for any initial distribution $B_0$ that has a large-$N$ limit of its own.

\begin{theorem} \label{thm.main}  Let $\rho>0$, $\zeta\in\mathbb{C}$ with $|\zeta|\leq \rho$, and let $B = B_{\rho,\zeta}$ denote a Brownian motion on $\mathrm{GL}(N,\mathbb{C})$, see \eqref{eq.B.SDE.rho.zeta}.  Let $B_0$ be a random matrix in $\mathrm{GL}(N,\mathbb{C})$ independent from the process $B$ and suppose that $B_0$ converges a.s.\ in $\ast$-distribution (see Definition \ref{def.*-dist}) to an operator $b_0$ as $N\to\infty$.

Then for fixed $t>0$, a.s.\ as $N\to\infty$, the empirical law $\mu_{B_0 B(t)}$ of eigenvalues of the Brownian motion $B_0 B(t)$ converges weakly to a deterministic measure $\mu_{b_0,t}$ (see Definition \ref{def:mub0t} and Section \ref{sect.Brown(ian).meas} below).
\end{theorem}

If $B_0$ is a normal matrix, the $\ast$-distribution convergence criterion simply means that the empirical eigenvalue distribution of $B_0$ converges weakly a.s.\ to some deterministic probability measure on $\C$. 

When $B_0$ is not normal, as discussed in the last two paragraphs before Subsection \ref{sect:BMLie}, convergence in $\ast$-distribution does not generally imply convergence of eigenvalues.  Thus, our theorem implies that multiplying a matrix by a multiplicative Brownian  (at any time $t>0$) has a regularizing effect on its empirical measure which simplifies considerably the proof of its convergence. Theorem \ref{thm.main} thus provides a multiplicative analogue of a well-known result of Sniady \cite{Sniady}; see Section \ref{sec:future} for further discussion.

\subsection{Ideas of the Proof}
\label{sec:ideas}

In this work we develop a robust approach to convergence of the spectrum of non-normal matrix diffusions based on strong quantitative approximation by multiplicative random walks. Potential future applications of this strategy are discussed in Section \ref{sec:future}.

As in most works on the spectrum of non-normal random matrices, we apply
Girko's Hermitization framework, reviewed in Section \ref{sect.Brown.meas}, which reduces our task to establishing sufficient control on small-to-mesoscopic scale singular values of $B_0 B(t)-zI$, for a.e.\ $z\in\mathbb{C}$, to guarantee convergence of the log potential. This is often the most challenging step for proofs of convergence of the spectrum of non-normal matrices, and amounts to proving stability of the spectrum under small perturbations. 
Recall that the singular values of an $N\times N$ matrix $A$ are the square roots of the eigenvalues of $A^\ast A$, or equivalently the eigenvalues of $|A|=\sqrt{A^\ast A}$; they are denoted
\[ \|A\|=\sigma_1(A)\ge\cdots\ge\sigma_N(A) \ge0. \]
Note that 
$\sigma_N(A)= \|A^{-1}\|^{-1}$ if $A$ is invertible and $\sigma_N(A)=0$ otherwise, so bounding small singular values of $B_0B(t)-zI$ from below is equivalent to bounding large singular values of the resolvent $(B_0B(t)-zI)^{-1}$ from above.

Perhaps the most well-known quantification of spectral (in)stability of a matrix is by its \emph{pseudospectrum}. Recall that the $\eps$-pseudospectrum of $A\in\M_N(\C)$ is the set of points $z\in\C$ at which the resolvent has norm at least $1/\eps$, or equivalently $\sigma_N(A-zI)\le \eps$. We refer to the beautiful text \cite{Trefethen} for a comprehensive review of pseudospectrum and its relevance for numerical analysis. Implementing Girko's framework requires showing that (almost) any $z\in \C$ is unlikely to be in the $\eps$-pseudospectrum of $B_0B(t)$ for $\eps=\eps_N$ not shrinking too fast (a polynomial decay will suffice); but further requires strong lower bounds on the $k$th smallest singular value of $B_0B(t)$ for all $k=o(N)$.

The key tool we develop
for these tasks is a small-time affine approximation of the process $B$, given in Theorem \ref{thm.2}, which in turn is established using an approximation of the process $B(t)$ by a discrete-time multiplicative random walk.

\subsubsection{Random walk approximation}

As shown by Berger in \cite{Berger}, Brownian motion $B$ \eqref{eq.BM.Ito.SDE} on a matrix Lie group is well approximated by a Donsker-like multiplicative random walk:
\[ B(t) \approx B_n(t):= \prod_{i=1}^{\lfloor t2^n\rfloor} \left(I+W(\textstyle{\frac{i}{2^n}})-W(\textstyle{\frac{i-1}{2^n}}) + \frac{1}{2^{n+1}}\Xi\right). \]  
We prove (in the case of interest, $B=B_{\rho,\zeta}$ where $\Xi = \zeta I$) a quantitative $L^p$ version of Berger's theorem: for any fixed $t\ge 0$, 
\begin{equation}
    \label{intro.BBn-Lp}
    \|B(t)-B_n(t)\|_{p}:= ( \E \ts (|B(t)-B_n(t)|^p))^{1/p} \le C_{n,t,p}2^{-n/2p}
\end{equation}
where the constant $C_{n,t,p}$ is independent of $N$ and locally uniform in $t$ (see Proposition \ref{prop:Lp.conv}). Here, the trace is normalized to take value 1 at the identity, and for a matrix $A$ we write $|A|:= (A^*A)^{1/2}$. 
The main step for \eqref{intro.BBn-Lp} is to show $B_n(t)$ is Cauchy in $L^2$, which we do using Gaussian integration by parts (also known in this context as the Schwinger--Dyson equation -- see Proposition \ref{prop:SDeq}).

\subsubsection{Affine approximation}

The following is the core technical result of the paper, and our main tool for controlling small singular values of scalar shifts $B(t)-zI$ of the process $B$. It shows that over a small time increment, the multiplicative process $B$ is well approximated by an additive Gaussian elliptical perturbation of the identity.

\begin{theorem}[Small-time affine approximation] \label{thm.2}
For any finite $T$ there exists $C=C_{\rho,T}<\infty$ so that, for all $t\in[0,T]$ and $\delta>0$, 
\[ 
\P\left(\left\|B(t)-I-W(t)\right\|\ge \delta \right) \le \left(\frac{Ct}{\delta}\right)^{N^{2/3}}. 
\]
\end{theorem}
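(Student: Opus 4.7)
The plan is to follow the route sketched in the paragraph preceding the statement: approximate $B^N(t)$ by a Donsker-type product of independent Gaussian increments, algebraically collapse this product onto its affine part $I+W^N(t)$, and then control the remainder by a moment method driven by repeated Gaussian integration by parts (the Schwinger--Dyson equations of Section~\ref{sect.Schwinger-Dyson}). The exponent $N^{2/3}$ is the hallmark of moment bounds at the spectral edge of Ginibre-type matrices, and it suggests that we should aim to control $\E\,\mathrm{tr}_N[(E^*E)^p]$ for $p$ of order $N^{2/3}$ with constants independent of $N$.

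\textbf{Discretization and expansion.} For $k\in\N$, set $\Delta_iW:=W^N(it/k)-W^N((i-1)t/k)$ and $M_i:=I+\Delta_iW+\frac{\zeta t}{2k}I$; the $\Delta_iW$ are i.i.d.\ copies of $W^N(t/k)$ and $B_k^N(t):=\prod_{i=1}^{k}M_i$ is a Donsker approximation to $B^N(t)$. Proposition~\ref{svlksldknv} gives $\|B^N(t)-B_k^N(t)\|_{L^2}\le C_t\,k^{-1/2}$ uniformly in $N$; choosing $k$ as a large polynomial in $N$ and applying Chebyshev absorbs this error into the stated tail. Expanding the ordered product, and using $\sum_i\Delta_iW=W^N(t)$ together with $k\cdot\frac{\zeta t}{2k}=\frac{\zeta t}{2}$, gives
\[
B_k^N(t)-I-W^N(t)=\frac{\zeta t}{2}\,I+E,\qquad E:=\sum_{m=2}^{k}\sum_{1\le i_1<\cdots<i_m\le k}\prod_{j=1}^{m}\Bigl(\Delta_{i_j}W+\frac{\zeta t}{2k}I\Bigr).
\]
The deterministic piece has norm at most $\rho t/2$, which is dominated by $\delta/2$ in the only nontrivial regime $\delta\gtrsim t$, so the task reduces to bounding $\|E\|$.

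\textbf{Moment method.} By Markov applied to a high moment,
\[
\P\bigl(\|E\|\ge\delta/2\bigr)\;\le\;N\cdot(2/\delta)^{2p}\cdot\E\,\mathrm{tr}_N\!\bigl[(E^*E)^p\bigr].
\]
Expanding $(E^*E)^p$ into a polynomial in the independent Gaussians $\{\Delta_iW,\Delta_iW^*\}$, the expectation becomes a sum of Wick pairings; independence across the $\Delta_iW$'s forces all contractions to lie within a single factor $M_i$. The level-$m$ contribution to $E$ has combinatorial weight $\binom{k}{m}\sim k^m/m!$ but each $\Delta_{i_j}W$ contributes a factor of order $\sqrt{t/k}$ in norm, so the $k$-dependence drops out in the limit $k\to\infty$ and each level-$m$ piece carries operator-norm size of order $(C\sqrt{t})^m/\sqrt{m!}$. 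Iterating the Schwinger--Dyson recursion to organize these sums of pairings, one aims to show $\E\,\mathrm{tr}_N\!\bigl[(E^*E)^p\bigr]\le (Ct)^{2p}$ for every $p\le cN^{2/3}$; taking $p=\lfloor cN^{2/3}\rfloor$ and absorbing the prefactor $N$ into the constant then yields the theorem.

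\textbf{Main obstacle.} The essential difficulty is that last moment estimate. Pushing a Schwinger--Dyson bound on $E^*E$ to order $p\sim N^{2/3}$ with constants depending only on $\rho$ and $t$ is precisely the regime at which non-planar Wick contractions begin to affect the extreme singular values, and where the $1/N^2$ corrections they produce can accumulate through the product. Carrying this analysis through for a \emph{product} of nearly-identity Gaussian factors (rather than a single Ginibre matrix), and showing that the accumulated corrections remain of size $(Ct)^{2p}$ up to the edge scale, is where the bulk of the technical work will lie.
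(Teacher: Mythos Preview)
Your overall architecture (discretize, expand, moment method, Markov at $p\sim N^{2/3}$) is the paper's, but there is a genuine gap in how you handle the discretization error. You propose to control $\|B^N(t)-B_k^N(t)\|$ in operator norm via the $L^2$ bound of Proposition~\ref{svlksldknv} and Chebyshev, with $k$ polynomial in $N$. But Chebyshev on an $L^2$ bound yields at best
\[
\P\bigl(\|B^N(t)-B_k^N(t)\|\ge\epsilon\bigr)\le \frac{N\,\|B^N(t)-B_k^N(t)\|_{L^2}^2}{\epsilon^2}\lesssim \frac{N}{k\,\epsilon^2},
\]
which for polynomial $k$ decays only polynomially in $N$, whereas the theorem requires decay of order $\exp(-cN^{2/3})$ in the nontrivial regime $\delta>Ct$. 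No polynomial choice of $k$ closes this; you would need $L^{2p}$ control of the discretization error for $p\sim N^{2/3}$, which is essentially the problem you are trying to solve.

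The paper avoids this by reversing the order of limits: for each \emph{fixed} moment order $k$ and fixed $N$, it passes to the discretization limit $n\to\infty$ at the level of the scalar $\E\,\ts_N\bigl[|B_n^N(t)-I-W^N(\cdot)|^{2k}\bigr]$, using the $L^2$ bound only to justify continuity of this moment in $n$. One thus obtains a bound on the $2k$-th moment of $B^N(t)-I-W^N(t)$ itself, with explicit $N$-dependence, and only afterwards sets $k\sim N^{2/3}$. The operator norm of the discretization error is never estimated.

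On the moment bound, you correctly flag it as the crux, but the paper's mechanism is not a direct Wick/planarity analysis. It applies the $1/N^2$ expansion of Lemma~\ref{3apparition} (from \cite{Parraud2023}) to the polynomial $Q_n$ encoding $|B_n^N(t)-I-W^N(\cdot)|^{2k}$; the $i$-th coefficient is a trace of $(L^{T_i}\cdots L^{T_1})(Q_n)$ evaluated at \emph{free} semicircular variables. The factor $(tC)^{2k}$ then comes not from Gaussian combinatorics but from H\"older's inequality together with the free operator-norm bounds of Proposition~\ref{snlvsln} (in particular $\|b(t)-1-w(t)\|\lesssim t$), while the factor $e^{Ck^4/N^2}$ arises from summing the bound $\frac{(Ck^4)^i}{i!\,N^{2i}}$ on the $i$-th coefficient over $i\ge0$. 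Organizing the same conclusion through a raw Wick expansion of $(E^*E)^p$, as you suggest, would require tracking exactly these $1/N^{2i}$ contributions with the right combinatorial weight, and it is not clear how to do so without the interpolation machinery.
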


In other words, we have $B(t) = I + W(t) + E(t)$, for an error $E(t)$ bounded in operator norm by $\mathcal{O}(t)$ with overwhelming probability for any fixed $t\le 1$, say. 
For comparison, note that $W(t)$ typically has norm of order $\sqrt{t}$. For the purposes of proving Theorem \ref{thm.main} a bound $\|E(t)\|=\mathcal{O}(t^{1/2+c})$ would suffice for any positive constant $c$, but Theorem \ref{thm.2} captures the correct order in $t$ of the norm (as can be seen by computing the expected squared Frobenius norm of $B(t)-I-W(t)$). Note that from \eqref{eq.BM.Strat.SDE} it is intuitive that $B(t)\approx I+W(t)$ for small $t$; the point of Theorem \ref{thm.2} is get an effective bound that is independent of $N$.

By the moment method, to establish Theorem \ref{thm.2} it suffices to show 
\begin{equation}
    \label{intro-goal1}
    \mathbb{E}\left[\mathrm{tr}\left(|B(t)-I-W(t)|^{2k}\right)\right] \le \mathcal{O}(t)^{2k}
\end{equation}
for $k$ of order $N^{2/3}$, where the implicit constant depends only on $\rho,T$. From the $L^p$ approximation \eqref{intro.BBn-Lp} with $p=2k$ we can replace $B(t)$ with $B_n(t)$ in \eqref{intro-goal1}. As $B_n(t)$ is a polynomial in the increments $W(\frac i{2^n})-W(\frac{i-1}{2^n})$, which in turn are combinations of GUE matrices (see \eqref{eq.W.elliptical}), the problem is reduced to bounding
\begin{equation}
    \label{intro-goal2}
    \E \ts Q(\bs{X}^N, \bs{Y}^N) \le \mathcal{O}(t)^{2k}
\end{equation}
where $\bs{X}^N=(X^N_i)_{i=1}^{\lfloor t2^n\rfloor}, \bs{Y}^N=(Y^N_i)_{i=1}^{\lfloor t 2^n\rfloor}$ are independent sequences of independent GUE matrices, and $Q$ is a non-commutative polynomial of degree $2k\times \lfloor t2^n\rfloor$. 

To bound the left hand side of \eqref{intro-goal2}, we use a quantitative expansion of the expected trace in powers of $\frac{1}{N^2}$:
\begin{equation}
    \label{intro-expand}
    \E \ts Q(\bs{X}^N,\bs{Y}^N) = \sum_{q=0}^{r-1} \frac{\alpha_q}{N^{2q}} + \frac{\tilde \alpha_r^N }{N^{2r}}.
\end{equation}
In the case that $Q$ is a monomial, this expansion coincides with the {\em genus expansion} which appeared first in \cite{GenusExpansion1}.  In that case, $\alpha_q$ is a positive integer, and represents a count of certain colored maps on a surface of genus $2q$; see \cite{Guionnet-maps} for more general development.  This combinatorial description is not very useful for quantitative computations: beyond $r=1$ there are are no known formulas or even reasonable bounds.  Recently, a more analytic approach to the expansion was introduced in \cite{CGP2022} (in the case $r=1$) and fully developed in \cite{Parraud2023}.  The core idea is to work in a space where the GUE matrices $\bs{X}^N,\bs{Y}^N$ and their free probability limit operators $\bs{x},\bs{y}$ coexist, and then interpolate between them using Ornstein--Uhlenbeck type processes. By differentiating along two subsequent interpolations, using the Schwinger--Dyson equations and Gaussian integration by parts, the $r=1$ term appears; the procedure can be iterated $r$ times to achieve the full expansion.  See Section \ref{sect.Schwinger-Dyson} for a more detailed explanation of the method.

Following this approach, the coefficients $\alpha_q$ have explicit (though complicated) expressions in terms of iterated applications of a certain differential operator $L$ (the composition of the generators of the two interpolations) to the polynomial $Q$, evaluated at the limit semicircular variables; the remainder coefficient $\tilde\alpha_r^N$ is similar, involving an extended system of GUE matrices.  See Lemma \ref{3apparition} for the full technical result, which allows for effective estimation.  In our setting, we will use Lemma \ref{3apparition} to show $\E \tr Q(X^N,Y^N) = \mathcal{O}(t)^{2k} \exp(\mathcal{O}(k^4/N^2))$ with implicit constants depending only on $\rho,T$, giving the desired estimate \eqref{intro-goal2} for $k\sim N^{2/3}$. 

We note that a different matrix-to-free interpolation was also used in the recent paper \cite{BCC2025} to study $(\rho,\zeta)$-Brownian motions; this approach does not allow simultaneous consideration of $B(t)$ and $W(t)$ and therefore does not suit our needs. Moreover, the overarching strategy used in \cite{BCC2025} (i.e.\ studying the expectation of traces of a smooth function evaluated in random matrices to deduce $\mathcal{O}(\frac{1}{N^2})$ fluctuations), while effective to prove strong convergence, is not sufficiently sharp to obtain  the fine-scale concentration estimates on the norm that
are needed for our main results. We believe our strategy of expressing $B(t)$ and $W(t)$ jointly in terms of small increments of the driving process $W$ provides a general approach for analyzing the spectrum of matrix SDE -- see Section \ref{sec:future} for further discussion.

\subsubsection{Application to small singular values}

To control the small singular values of scalar shifts $B_0B(t)-zI$ we
use Theorem \ref{thm.2} in conjunction with the following observation.  If $B$ and $\widetilde{B}$ are independent $(\rho,\zeta)$-Brownian motions then, for any $t>0$ and  $\varepsilon\in(0,t)$, $B(t-\varepsilon)\widetilde{B}(\varepsilon)$ has the same law as $B(t)$. 
From
Theorem \ref{thm.2},
\begin{equation}    \label{Ceps}
    \widetilde{B}(\varepsilon)\approx I+W(\varepsilon)
\end{equation}
for small $\varepsilon$, and hence $B(t)\approx B(t-\varepsilon)(I+W(\varepsilon))$. The robust behavior of singular values of  the elliptical Ginibre matrix $W(\varepsilon)$ can thence be parlayed into good behavior of singular values for $B_0B(t)-zI$. We note that the basic strategy of exploiting more tractable randomness in the Lie algebra was brilliantly used in \cite{RudelsonVershynin2014} to bound the smallest singular value of Haar unitary/orthogonal perturbations of arbitrary matrices. In that argument the analogue of the step \eqref{Ceps} is elementary, whereas here we rely on Theorem \ref{thm.2}, the key technical innovation of this work.

Apart from a lower bound on the smallest singular value $\sigma_N(B_0B(t)-zI)$, we need lower bounds on ``mesoscopic'' singular values $\sigma_{N-k}(B_0B(t)-zI)$ for $k\in[N^{1-c}, \kappa N]$ for some small fixed $c,\kappa>0$. 
The most common approach in the literature is to deduce these from upper bounds on the Stieltjes transform of the empirical singular value distribution $\mu_{|B_0B(t)-zI|}$; see for instance \cite{Bai1997,HaagerupT2005Annals,TaVu-circ1,GuionnetKZ-single-ring,Cook}. Indeed, this was the route taken in \cite{GuionnetKZ-single-ring} for the proof of the Single Ring Theorem, where bounds on the Stieltjes transform were obtained via a quantitative analysis of the Schwinger--Dyson loop equations. Here we take a more flexible approach pioneered by Tao and Vu in their proof of the Circular Law under the optimal finite second moment hypothesis \cite{TaoVu2010-aop}. The argument makes use of the \emph{inverse second moment identity} (see \eqref{inv2mom}) to pass to a problem on the concentration of distances of columns to the span of remaining columns for arbitrary shifts $M+W(\eps)$ of the elliptical Gaussian matrix $W(\eps)$. While such estimates are standard for the case that $W$ is real or complex Ginibre, or $W$ is GO/UE and $M$ is Hermitian (see Appendix \ref{app:small} for review of the literature), the case that $W$ is GUE and $M$ is arbitrary (and possibly non-normal) appears to be new -- see Lemmas \ref{lem:ssv.GUE}, \ref{lem:meso.app}.

We use this framework to prove Propositions \ref{prop:smallest} and \ref{prop:meso}, which give quantitative 
lower bounds on the singular values of arbitrary shifts of $B(t)$, holding with high probability.

In Section \ref{sect.analytic.approach} we provide an alternative argument to bound mesoscopic singular values by the Stieltjes transform approach, which may be of independent interest. Using the interpolation method of Section \ref{section.concentration} we can pass to the Stieltjes transform of the $*$-distribution limit $|b(t)-z|$ (see Section \ref{sect:fmbm}) to which we can apply the delicate SDE and PDE methods developed in \cite{DHKBrown}, whereby regularity properties for the Stieltjes transform are obtained from analysis of characteristic curves for a Hamilton--Jacobi-type PDE for a regularization of the log-potential. 
The resulting Corollary \ref{cor:wegner} is incomparable to Proposition \ref{prop:meso}: the former controls $\sigma_{N-k}(B(t)-zI)$ for $k$ of order at least $N^{9/11}$, whereas the latter permits $k$ of order $\log N$. On the other hand, Corollary \ref{cor:wegner} gives lower bounds of the correct shape $\sigma_{N-k}(B(t)-zI) \gs k/N$, 
whereas the bound $\gs (k/N)^2$ of Proposition \ref{prop:meso}, while sufficient to establish Theorem \ref{thm.main}, is not sharp. 

Finally, in Section \ref{sect.final} we use  our regularity estimates on small and mesoscopic shifted singular values to prove the requisite convergence in Theorem \ref{thm.main}; see also Section \ref{sect.Brown.meas} for a bigger picture discussion of the Hermitization method.

\subsection{Free Multiplicative Brownian Motions\label{sect:fmbm}}

To fully describe the deterministic limit density of eigenvalues in Theorem \ref{thm.main}, we need some constructs and tools from free probability theory; see Section \ref{sect.freeprob} for a brief primer and references.

The Brownian motion $B = B_{\rho,\zeta}$ \eqref{eq.B.SDE.rho.zeta} on $\mathrm{GL}(N,\mathbb{C})$ is driven by the elliptical Brownian motion $W = W_{\rho,\zeta}$ \eqref{eq.W.elliptical}, which is constructed from a pair $X,Y$ of independent Hermitian Brownian motions in $\M_N(\C)$ (Definition \ref{2HBdef}).  These processes have a large-$N$ limit.  Let $(\A,\tau)$ be a $W^\ast$-probability space that supports  two freely independent free semicircular Brownian motions $x,y$ (Definitions \ref{2freeprob} and \ref{smdvskjvsdhg}).  Voiculescu showed \cite{Voiculescu1994} that $\left(X,Y\right)$ converges a.s.\ in finite dimensional $\ast$-distributions to $(x,y)$ as $N\to\infty$.  In particular,
\begin{equation} \label{eq.free.elliptic}
w_{\rho,\zeta}= e^{\i\theta}(ax + \i by)
\end{equation}
is a free stochastic process in $\A$ that is the a.s.\ large-$N$ limit in finite dimensional $\ast$-distributions of $W_{\rho,\zeta} = e^{\i\theta}(aX + \i bY)$; here $(\rho,\zeta) \leftrightsquigarrow (\theta,a,b)$ as in \eqref{eq.rho.zeta}.  The process $w_{\rho,\zeta}$ is a non-commutative martingale \cite{JKN2025},so stochastic integrals and quadratic covariation may be treated analogously to the classical setting.  In particular, the free It\^o equation
\begin{equation} \label{eq.b.fSDE} b_{\rho,\zeta}(t) = 1 + \int_0^t b_{\rho,\zeta}(s)\,dw_{\rho,\zeta}(s) + \frac{\zeta}{2}\int_0^t b_{\rho,\zeta}(s)\,ds
\end{equation}
has a unique solution in $\A$ for all $t\ge 0$, see \cite{BianeSpeicher1998,Kargin2011}.  The original free multiplicative Brownian motion introduced by Biane in \cite{Biane1997b,Biane1997JFA} is the special case $(\rho,\zeta)=(1,0)$, while the free unitary Brownian motion is the edge case $(\rho,\zeta)=(1,-1)$.  The two-parameter family introduced in \cite{Kemp2016} corresponds to $b_{\rho,\zeta}$ with $\zeta\in(-\rho,\rho)$ (i.e.\ real $\zeta$).  The fully complex parameter $\zeta$ with $|\zeta|<\rho$ was later introduced in \cite{DHK2020} in the context of heat kernel analysis on compact type Lie groups (notably the question of when the heat kernel can be complexified in both space {\em and time}).  The three-parameter families of free stochastic processes $w_{\rho,\zeta}$ and $b_{\rho,\zeta}$, including the degenerate cases $|\zeta|=\rho$, were studied extensively in \cite{HallHo2023,HallHo2025Spectrum}, as strong limits in \cite{BCC2025}, and most recently in terms of general additive perturbations of $w_{\rho,\zeta}$ in \cite{Zhong2025}.

\begin{notation} As with the matrix-valued processes  $W = W_{\rho,\zeta}$ and $B = B_{\rho,\zeta}$, we generally suppress the explicit dependence of the notation for the Brownian motion processes on the parameters $(\rho,\zeta)$, and define
\begin{align*}
w_{\rho,\zeta}(t) =: w(t) \qquad\;\; b_{\rho,\zeta}(t) =: b(t).
\end{align*}
\end{notation}

The following lemma guides our aim to describe the large-$N$ limit distribution $\mu_{b_0,t}$ of eigenvalues of $B(t)$ in Theorem \ref{thm.main}.

\begin{lemma} \label{lem.init.cond.*-conv} Let $\rho>0$ and $\zeta\in\mathbb{C}$ with $|\zeta|\leq \rho$, and let $B$ be a $(\rho,\zeta)$-Brownian motion on $\mathrm{GL}(N,\mathbb{C})$.  Let $(\A,\tau)$ be a $W^\ast$-probability space that contains a free multiplicative $(\rho,\zeta)$-Brownian motion $b$, and let $b_0$ be an operator in $\A$ that is freely independent from $b$.

Let $B_0=B_0^N$ be a sequence of random matrices in $\M_N(\C)$ independent from the process $B$ such that $B_0$ converges a.s.\ in $\ast$-distribution to $b_0$ as $N\to\infty$.

Then for any $t\ge 0$, $B_0 B(t)$ converges in $\ast$-distribution to $b_0b(t)$ a.s.
\end{lemma}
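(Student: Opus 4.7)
The plan is to establish joint a.s.\ convergence in $\ast$-distribution of the pair $(B_0^N, B^N(t))$ to a pair whose two components are freely independent, with the same marginals as $b_0$ and $b(t)$; then the conclusion is immediate, since $B_0^N B^N(t)$ is a fixed $\ast$-monomial in $(B_0^N, B^N(t))$ and hence all of its trace moments are determined by, and converge along with, the joint $\ast$-moments of the pair.

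First I would assemble the marginal convergence statements. By hypothesis, $B_0^N \to b_0$ a.s.\ in $\ast$-distribution. The a.s.\ convergence $B^N(t) \to b(t)$ in $\ast$-distribution at each fixed $t$ is known: it is a consequence of the strong convergence established in \cite{BCC2025}, and appears in the real-$\zeta$ case already in \cite{Kemp2016,Kemp2017}. Intersecting the two full-measure events, fix an event $\Omega_0$ on which both marginal $\ast$-distribution limits exist for all $N$ along some sample path.

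Next I would upgrade marginal convergence to joint convergence by invoking unitary conjugation invariance. Section \ref{sect.intro.BMs} records that the elliptic Brownian motion $W^N_{\rho,\zeta}$ is $\mathrm{U}(N)$-conjugation invariant in law, and the SDE \eqref{eq.B.SDE.rho.zeta} together with $\Xi = \zeta I$ in the center then transfers this invariance to $B^N(t)$ for every fixed $t$. Since $B_0^N$ is independent of the process $B^N$, for a Haar unitary $U_N$ chosen independently of everything we have the distributional identity
\[
(B_0^N, B^N(t)) \eqd (B_0^N, U_N B^N(t) U_N^\ast).
\]
Applying Proposition \ref{prop.Haar.conj} (the standard Voiculescu-type asymptotic freeness principle for a sequence conjugated by an independent Haar unitary together with a second independent sequence with an a.s.\ $\ast$-distribution limit) on the event $\Omega_0$ yields a.s.\ joint convergence of the pair in $\ast$-distribution, and the limiting pair is freely independent. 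Since its marginals must coincide with the marginals already identified, and since the joint $\ast$-distribution of a freely independent pair is determined by its marginals, we may identify the limit with $(b_0, b(t))$ as realized in $(\A,\tau)$ (where $b_0$ and $b(t)$ are free by the hypothesis on $b_0$ and $b$).

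Finally, for any $\ast$-polynomial $p$, the trace $\mathrm{tr}_N\bigl( p(B_0^N B^N(t), (B_0^N B^N(t))^\ast) \bigr)$ is a $\ast$-polynomial trace in the pair $(B_0^N, B^N(t))$, which by the previous step converges a.s.\ to the corresponding trace in $(b_0, b(t))$; this is exactly a.s.\ $\ast$-distribution convergence of $B_0^N B^N(t)$ to $b_0 b(t)$. The only step that warrants care is the asymptotic freeness input in the middle paragraph -- in particular the fact that both sequences are random, so that Proposition \ref{prop.Haar.conj} must be applied pathwise on $\Omega_0$ -- but no new ingredient beyond that proposition and the cited prior convergence theorems is required.
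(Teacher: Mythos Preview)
Your proposal is correct and follows essentially the same route as the paper's own proof: both arguments combine the marginal convergences $B_0^N\rightharpoonup b_0$ and $B^N(t)\rightharpoonup b(t)$ with the $\mathrm{U}(N)$-conjugation invariance of $B^N(t)$ (Lemma \ref{lem.B.conj.inv}) to obtain the distributional identity $(B_0^N,B^N(t))\equaldist(B_0^N,U^NB^N(t)U^{N\,\ast})$, then invoke Proposition \ref{prop.Haar.conj} to produce asymptotic freeness and identify the joint limit with the freely independent pair $(b_0,b(t))$. The paper spells out the distributional identity via an explicit Fubini/change-of-variables computation, but the structure of the argument is the same.
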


Under the additional assumption that $B_0$ converges {\em strongly} to $b_0$ (see Definition \ref{def.*-dist}), the recent paper \cite{BCC2025} proved Lemma \ref{lem.init.cond.*-conv} with the stronger conclusion of strong convergence.  Lemma \ref{lem.init.cond.*-conv} is proved in Section \ref{sect.background.BMs}.

\subsection{Singular Values, Eigenvalues, and Hermitization\label{sect.Brown.meas}}

Lemma \ref{lem.init.cond.*-conv} suggests that the large-$N$ limit $\mu_{b_0,t}$ of the empirical law of eigenvalues of $B_0B(t)$ is connected to the free multiplicative Brownian motion $b_0b(t)$.  That connection is through {\em Brown measure}.

\begin{defi} \label{def.BrownMeasure} Let $(\A,\tau)$ be a $W^\ast$-probability space, and let $a\in\A$ (just $L^2(\A,\tau)$ suffices).  Then $a^\ast a = |a|^2$ is selfadjoint, and so has a spectral measure $\mu_{|a|^2}$: the unique compactly-supported probability distribution on $\R_+$ whose moments are given by $\int x^k \mu_{|a|^2}(dx) = \tau[(a^\ast a)^k]$ (see \eqref{eq.spectral.meas}).  Denote by $\nu_a$ the push-forward of $\mu_{|a|^2}$ under $x\mapsto\sqrt{x}$; in other words, $\nu_a$ is the spectral measure of $|a|:=\sqrt{a^\ast a}$.

For $z\in\C$, define the {\bf log potential} $U_a\colon\C\to\R\cup\{-\infty\}$ by
\[ U_a(z):= \frac{1}{2\pi}\int_0^\infty \log(x)\,\nu_{a-z}(dx). \]
It is a subharmonic function, and so its (weak) Laplacian is a positive Radon measure.  In this case it is a probability measure: the {\bf Brown measure of $a$}
\[ \mu_a:= \Delta U_a \quad \text{i.e.} \quad
\int_{\C} \psi\,d\mu_a = \int_{\C} \Delta\psi(z)\, U_a(z)\,dz \qquad \forall\,\psi\in C^\infty(\C). \]
The Brown measure is then also uniquely defined by 
\[ \int_{\C} \log |\lambda-z|\,\mu_a(d\lambda) = U_a(z). \]
\end{defi}
The measure $\mu_a$ was introduced by Brown in \cite{Brown1986}.  We use the same notation $\mu_a$ for Brown measure as for spectral measure because if $a$ is normal then its Brown measure {\em is} its spectral measure.

Brown measure is closely related to the Hermitization procedure invented by Girko \cite{Girko} for studying eigenvalues of non-normal random matrices.  Indeed, in the $W^\ast$-probability space $(\M_N(\C),\ts)$, the Brown measure is exactly the empirical law of eigenvalues.  To see why, note that for $A\in\M_N(\C)$, the eigenvalues of $|A| = \sqrt{A^\ast A}$ are the singular values $\sigma_j(A)$ of $A$, and hence $\nu_A$ is the empirical law of singular values.  But then
\[ \int_0^\infty \log(x)\,\nu_{A-zI}(dx) = \frac1N\sum_{j=1}^N \log \sigma_j(A-zI) = \frac1N\log\det[|A-zI|]. \]
But $\log \det[|A-zI|] = \frac12\log \det[|A-zI|^2] = \log|\det[A-zI]|$, and hence
\[ \frac1N\log\det[|A-zI|] = \frac1N\sum_{j=1}^N \log|z-\lambda_j(A)| \]
where $\lambda_j(A)$ are the eigenvalues of $A$.  The (weak) Laplacian of this is indeed the empirical law of eigenvalues \eqref{eq.ESD.def}.

The log potential $U_A$ is locally uniformly integrable in $z$, with universal bounds independent of $A$; hence, proving a sequence of Brown measures $\mu_{A^N}$ converges weakly to a Brown measure $\mu_a$ is tantamount to proving pointwise convergence of the log potentials:  $U_{A^N}(z)\to U_a(z)$ for a.e.\ $z\in\C$.  From Definition \ref{def.BrownMeasure}, this means the goal is to prove
\begin{equation} \label{eq.log.weak.conv} \lim_{N\to\infty} \int_0^\infty \log\,d\nu_{A^N-zI} = \int_0^\infty \log\,d\nu_{a-z} \quad \text{for a.e. }z\in\C. \end{equation}
Equation \eqref{eq.log.weak.conv} had the flavor of weak convergence, but the test function $\log$ is neither bounded nor continuous near $0$ and $\infty$.  Hence, to prove convergence of the ESD (i.e.\ Brown measure) of a random matrix ensemble $A^N\in\M_N(\C)$ to the Brown measure of an operator $a\in\A$, it is necessary to show not only that the the measure $\nu_{A^N-zI}$ (empirical law of singular values of $A-zI$) converges weakly to $\nu_{a-z}$; one must also show that mass does not escape at $0$ or $\infty$ -- i.e.\ uniform integrability of $\log$ with respect to the measures $\{\nu_{A^N-zI}\}_{N\in\N}$.  Proving small singular values of $A-zI$ don't concentrate and decay too fast can be quite subtle, and can fail even in some simple examples, cf.\ \cite[Example 1.2]{Bordenave-Chafai-circular}.

After Girko \cite{Girko} presented the above method for handling eigenvalues of non-normal random matrix ensembles (in particular i.i.d.\ matrices and the circular law), it took more than 25 years for his program to reach full fruition, with the first major breakthrough by Bai \cite{Bai1997} requiring the law of each entry of $A^N$ to have a bounded density on $\C$ and more than $6$ finite moments. Numerous improvements over the following decade culminated with Tao and Vu's full proof \cite{TaoVu2010-aop}.

The same Hermitization roadmap has been used for various generalizations of the Circular Law (see the excellent survey \cite{Bordenave-Chafai-circular} for an extensive summary of work up to $\sim$2011), the much larger class of ``Single Ring'' ensembles (that are unitarily bi-invariant) \cite{GuionnetKZ-single-ring,BasakDembo2013} and generalizations, e.g.\ \cite{BenaychGeorges2017,BaoES2019singlering,HoZhong2025}, the elliptic law \cite{NgO'R:elliptic}, inhomogeneous circular laws \cite{CHNR1,CHNR2,AEK:circ,AlKr:circ}, local circular laws \cite{BourgadeYauYin2014}, polynomials of independent matrices \cite{O'RSo,ORSV,GNT:product,Cook,Yi-Han}, sparse directed graphs \cite{Cook2019,BCZ,LLTTY:circ,SSS:sparse} and (related to the present work) very general matrix random walks with unitarily bi-invariant increments \cite{multi}. 

Our present interest is in Brownian motion $B_0B(t)$ on $\mathrm{GL}(N,\C)$.  For these ensembles, weak convergence of empirical singular values $\nu_{B_0B(t)-zI}$ to $\nu_{b_0b(t)-z}$ follows directly from Lemma \ref{lem.init.cond.*-conv}.  Theorem \ref{thm.main}, via Theorem \ref{thm.2}, completes the rest of the Hermitization procedure.  In particular, we can now clarify:

\begin{defi}    \label{def:mub0t}
    The deterministic measure $\mu_{b_0,t}$ in Theorem \ref{thm.main} is the Brown measure of the operator $b_0b(t)$.  In particular: Theorem \ref{thm.main} asserts that {\bf the empirical spectral distribution of $B_0B(t)$ converges weakly a.s.\ to the Brown measure of $b_0b(t)$.}
\end{defi} 

\subsection{Directions for Future Work\label{sec:future}}

The tools developed in this paper open the door to a more general study of the spectrum of continuous-time non-normal matrix processes and refined properties of the spectrum.

\subsubsection{Other matrix diffusion processes}

The methods we develop for Theorem \ref{thm.main} are not intrinsically tied to diffusion processes on Lie groups. Consider a general matrix diffusion processes of the form $dM(t) = \sigma(M(t))dW(t) + b(M(t))dt$ for sufficiently nice matrix-valued drift and diffusion functions $\sigma,b$, or more generally
\[
dM(t) = \sigma(M(t))\# dW(t) + b(M(t))\,dt
\]
where $\sigma\colon\M_N(\C)\to\M_N(\C)\otimes\M_N(\C)$ and $(A\otimes B)\# X = AXB$. Under appropriate regularity assumptions, we may seek a version of Theorem \ref{thm.2} to locally approximate the process in terms of an increment of the standard Brownian motion $W(t)\in \M_N(\C)$, which is enough to regularize the spectrum and establish convergence to the Brown measure.

\subsubsection{Regularization of Brown's spectral measure}

A well-known result of Sniady (see \cite[Theorem 6]{Sniady}) states that if $B_0=B_0^N$ is a sequence of random matrices converging almost surely in $*$-distribution to an operator $b_0$, and $W(t)=W^N(t)$ is (additive) Ginibre Brownian motion independent from $B_0$, then for every fixed $t>0$, $\mu_{B_0+W(t)}$ converges weakly to the Brown measure $\mu_{b_0+w(t)}$, and moreover, there exists a sequence $t_N\to0$ such that $\mu_{B_0+W(t_N)} \to \mu_{b_0}$ weakly almost surely.
The speed at which $t_N$ may vanish and its dependence on the initial condition $B_0$ was investigated in \cite{GWZ2014,FPZ2015}.
Our main result Theorem \ref{thm.main} establishes a multiplicative version of the first half of Sniady's result concerning fixed $t>0$. It is natural to pursue an analogue of the second half: Does there exist a sequence $t_N\to0$ such that $\mu_{B_0B(t_N)}\to \mu_{b_0}$ weakly almost surely? How does the allowable speed of vanishing of $t_N$ depend on the initial condition?

\subsubsection{Path properties}

Consider the empirical laws $\mu_t^N=\mu_{B_0B(t)}$ as a stochastic process $(\mu_t^N)_{t\ge0}$ taking values in the Polish space $\mathcal M_1(\C)$ of probability measures on $\C$ with the topology of weak convergence. 
Theorem \ref{thm.main} immediately implies a.s.\ convergence of finite dimensional marginals of this process, i.e.\ for fixed times $0<t_1<\cdots<t_n$, $(\mu^N_{t_1},\dots, \mu^N_{t_n})$ converges almost surely in the product weak topology on $\mathcal M_1(\C)^n$ to the deterministic vector $(\mu_{b_0b(t_1)},\dots, \mu_{b_0b(t_n)})$. (In fact we can take any countable sequence of times.)
Does the random curve $(\mu_t^N)_{t>0}$ in $\mathcal M_1(\C)$ almost surely converge locally uniformly to the deterministic curve $(\mu_{b_0b(t)})_{t>0}$?

\subsubsection{Local universality}

Beyond the limiting law for the global spectral distribution, the Hermitization approach and control of the pseudospectrum provides a route to prove universality at mesoscopic and local spectral scales. 
In particular, a \emph{local law} should hold, showing the empirical law $\mu_{B_0B(t)}$ is well approximated by the limit $\mu_{b_0,t}$ at scales down to $N^{o(1)-\frac12}$; such a result was established for matrices with i.i.d.\ entries in \cite{TaVu:iidloc,BourgadeYauYin2014}. 
At the scale $N^{-1/2}$ of the typical separation of eigenvalues we expect rescaled correlation functions to converge to universal determinantal point processes, with Ginibre kernel in the bulk and Faddeeva kernel at smooth points of the boundary of the support; for related results see \cite{TaVu:iidloc, CCEJ, Osman,DuYa} for i.i.d.\ matrices and \cite{Berman, HeWe-acta} for normal matrix ensembles. 
The current understanding of point processes of eigenvalues in the vicinity of \emph{singular} boundary points 
-- such as at $z=-1$ for the Lima Bean Law at time $t=4$ (see middle of Figure \ref{fig-evolving-lima-bean}) 
-- 
is incomplete even for \emph{additive} Ginibre Brownian motion $B_0+W(t)$. The possible behaviors of the limiting spectral density of $B_0+W(t)$ near singular boundary points was classified in \cite{ErJi,AlKr}, and universality (allowing $W$ to have non-Gaussian entries) for local eigenvalue statistics near such points was established in \cite{CEJ:critical}, assuming $B_0$ is normal; the correlation functions themselves have only been computed for certain initial conditions $B_0$ \cite{BGNTW2015, LiLu}. 
See also recent works \cite{AKMW, BLY} on lemniscate-type normal matrix ensembles.  
Whether the classification of singular boundary points and universal correlation functions carry over to multiplicative Brownian motion $B_0B(t)$, with $B_0$ possibly non-normal, are open questions.

\section{The Brown(ian) Measure\label{sect.Brown(ian).meas}}
In studying the bulk limits of eigenvalues of non-normal random matrix ensembles, there are two fundamental (challenging) problems: identifying the (putative) limit measure, and proving convergence to it.  In this paper, we solve the convergence problem: Theorem \ref{thm.main} proves that the empirical law of eigenvalues of Brownian motion $B_{\rho,\zeta}(t)$ on $\mathrm{GL}(N,\C)$ with any initial state $B_0$ possessing a large-$N$ limit $b_0$ converges to an identifiable deterministic measure: the Brown measure of the operator $b_0b_{\rho,\zeta}(t)$.  The former problem of identifying those Brown measures has been well studied in the past five years, see \cite{DHKBrown,HoZhong2020Brown,HallHo2023,DemniHamdi2022,HallEak2025,HallHo2025Spectrum}.  These papers succeeded in computing the Brown measure precisely for general $|\zeta|\le\rho$ and any {\em unitary} $b_0$, along with some partial results for {\em positive definite} $b_0$, using stochastic, complex analytic, and PDE methods.

In this section, we briefly summarize known results about these Brown measures (all from 2022 or later), with some pictures of their support sets with simulations of eigenvalues of the associated Brownian motions for fairly large $N$.  Given the now known (thanks to Theorem \ref{thm.main}) robust connection to the eigenvalues, the simulations are suggestive of the more general picture of the support of the Brown measure for general initial conditions.

We begin with the case of the ``standard'' free multiplicative Brownian motion $b_{1,0}$, with unitary initial condition $b_0=u_0$.  The computation of the Brown measure with $b_0=1$ was initially accomplished by Driver--Hall--Kemp in \cite{DHKBrown}, and then extended to include unitary initial conditions by Ho--Zhong in \cite{HoZhong2020Brown}.

\begin{theorem}[\cite{DHKBrown},\cite{HoZhong2020Brown}] \label{thm.Brown.meas.10} Let $b_{0,1}(t)$ be a free multiplicative Brownian motion, and let $u_0$ be a unitary operator freely independent from $b_{0,1}$.  Denote by $\mu_{u_0}$ the spectral measure of $u_0$, a probability measure on the unit circle $\mathbb{T}$.

Define $T(u_0,\cdot)\colon\C\to\R_+$ by
\begin{equation} \label{eq.T.u} T(u_0,z) = \frac{\log |z|^2}{|z|^2-1}\left(\int_{\mathbb{T}} \frac{\mu_{u_0}(d\xi)}{|\xi-z|^2}\right)^{-1}.
\end{equation}
Also, for each $t>0$, define the open precompact set $\Sigma(u_0,t)$ by
\begin{equation} \label{eq.Sigma.u} \Sigma(u_0,t) = \{z\in\C\colon T(u_0,z)<t\}. \end{equation}
The Brown measure $\mu_{u_0b_{1,0}(t)}$ has support equal to $\overline{\Sigma(u_0,t)}$.  The Brown measure is absolutely continuous with respect to Lebesgue measure, and has density on $\overline{\Sigma(u_0,t)}$ that is real analytic in $\Sigma(u_0,t)$.  The density has the form $z=re^{i\theta}\mapsto w_t(\theta)/r^2$, with the radial function $w_t$ bounded by $w_t(\theta)<\frac{1}{\pi t}$.
\end{theorem}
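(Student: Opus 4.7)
The plan is to compute the Brown measure directly via free stochastic calculus and the PDE approach, following the strategy that was introduced for the case $u_0=1$ in \cite{DHKBrown} and extended to general unitary initial conditions in \cite{HoZhong2020Brown}. For each $\epsilon>0$, define the regularized log potential
\begin{equation*}
S(t,z,\epsilon) := \tau\bigl[\log\bigl((u_0 b_{1,0}(t)-z)^*(u_0 b_{1,0}(t)-z)+\epsilon\bigr)\bigr],
\end{equation*}
which decreases to $2L_{u_0 b_{1,0}(t)}(z)$ as $\epsilon\downarrow 0$ by \eqref{def.La.2}. The strategy is to solve for $S$ well enough to take the distributional Laplacian in $z$, and then pass to the $\epsilon\to 0$ limit.

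First I would use the free It\^o formula applied to \eqref{eq.b.fSDE} (with $(\rho,\zeta)=(1,0)$, so the drift vanishes) together with the free independence of $u_0$ from $b_{1,0}(t)$ to derive a first-order Hamilton--Jacobi-type PDE for $S(t,z,\epsilon)$. The coefficients of this PDE are expressible through the Cauchy/subordination transform of the spectral measure of $u_0 b_{1,0}(t)$; the key miracle making the method work is that the spatial nonlinearity depends only on the gradient $\nabla_z S$, so the equation is amenable to the method of characteristics. The initial data at $t=0$ is $\tau[\log(|u_0-z|^2+\epsilon)]$, which is explicit in $\mu_{u_0}$.

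Next I would flow the characteristics forward from $t=0$. A direct computation shows that the characteristic emanating from a point $z\notin\mathbb{T}$ remains non-singular (i.e.\ $\epsilon$ stays positive) for a maximal time that equals exactly $T(u_0,z)$ defined in \eqref{eq.T.u}; the formula involves $\log|z|^2/(|z|^2-1)$ and the Poisson-type integral $\int_{\mathbb{T}}|\xi-z|^{-2}\,\mu_{u_0}(d\xi)$ precisely because those are the Jacobian factors one obtains when inverting the characteristic map at $t=0$. Consequently, for $t<T(u_0,z)$ the function $S$ extends smoothly to $\epsilon=0$ at $z$, whereas for $t>T(u_0,z)$ the characteristic has broken and the Laplacian produces mass. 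This identifies $\mathrm{supp}\,\mu_{u_0 b_{1,0}(t)}=\overline{\Sigma(u_0,t)}$.

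Finally, applying $\frac{1}{2\pi}\Delta_z$ to the explicit solution inside $\Sigma(u_0,t)$ yields the density. Real analyticity follows from the analyticity of the Herglotz integral of $\mu_{u_0}$ on $\mathbb{C}\setminus\mathbb{T}$ and the implicit function theorem applied to the characteristic equation. The specific form $w_t(\theta)/r^2$ in polar coordinates is forced by the rotational/multiplicative structure of the characteristic flow on $\mathrm{GL}(1,\mathbb{C})\cong\mathbb{C}^*$. The main obstacle is the sharp upper bound $w_t(\theta)<1/(\pi t)$: this requires optimizing the characteristic equation in a way that is uniform in $u_0$, and I would handle it by analyzing the boundary behavior of the characteristic map near points where $T(u_0,z)=t$ and showing that the Jacobian of the characteristic flow has a pointwise lower bound which gives precisely the claimed ceiling on the density.
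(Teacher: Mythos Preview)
The paper does not prove this theorem; it is quoted as a result of \cite{DHKBrown} (for $u_0=1$) and \cite{HoZhong2020Brown} (for general unitary $u_0$), and your sketch is an accurate outline of the method used in those references: regularized log potential $S(t,z,\epsilon)$, a first-order Hamilton--Jacobi PDE obtained from the free It\^o formula, and the method of characteristics to identify the lifetime function $T(u_0,z)$ as the support threshold. Two small imprecisions are worth flagging: the Hamilton--Jacobi equation in \cite{DHKBrown,HoZhong2020Brown} is first-order in the full $(z,\epsilon)$-variable, so the nonlinearity involves $\partial_\epsilon S$ and not only $\nabla_z S$ (this is actually essential, since the characteristics flow in the augmented space and it is the $\epsilon$-component hitting zero that determines the support); and with the paper's normalization \eqref{def.La.2}, $S(t,z,\epsilon)\downarrow 4\pi L_{u_0b_{1,0}(t)}(z)$ rather than $2L$.
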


\begin{figure}[h!]
\centering
  \begin{tikzpicture}

    \node[inner sep=2pt] at (0cm, 0cm)
      {\includegraphics[width=5.5cm]{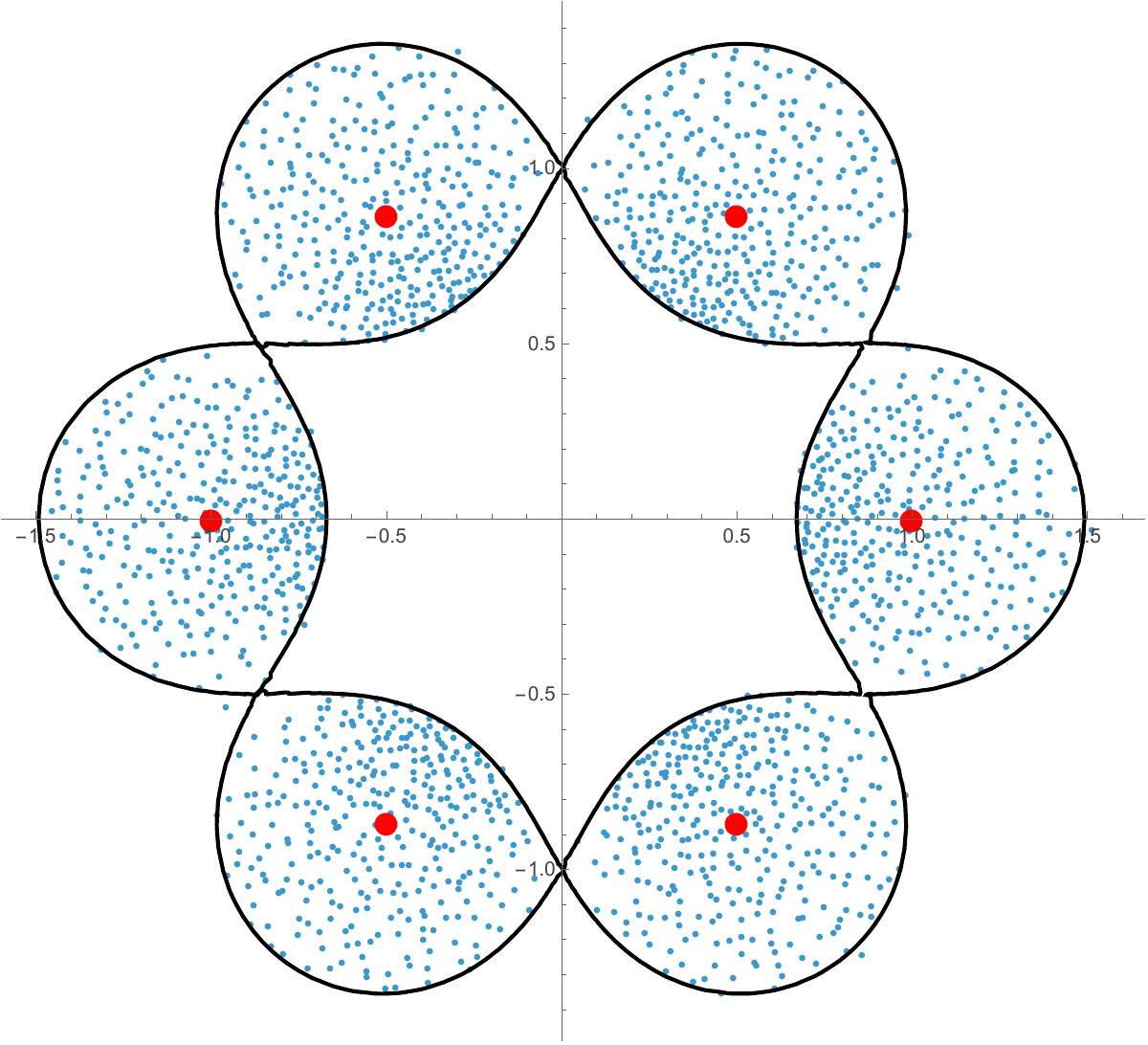}};

    \node[inner sep=2pt] at (6cm, -0.0cm)   
      {\includegraphics[width=5.5cm]{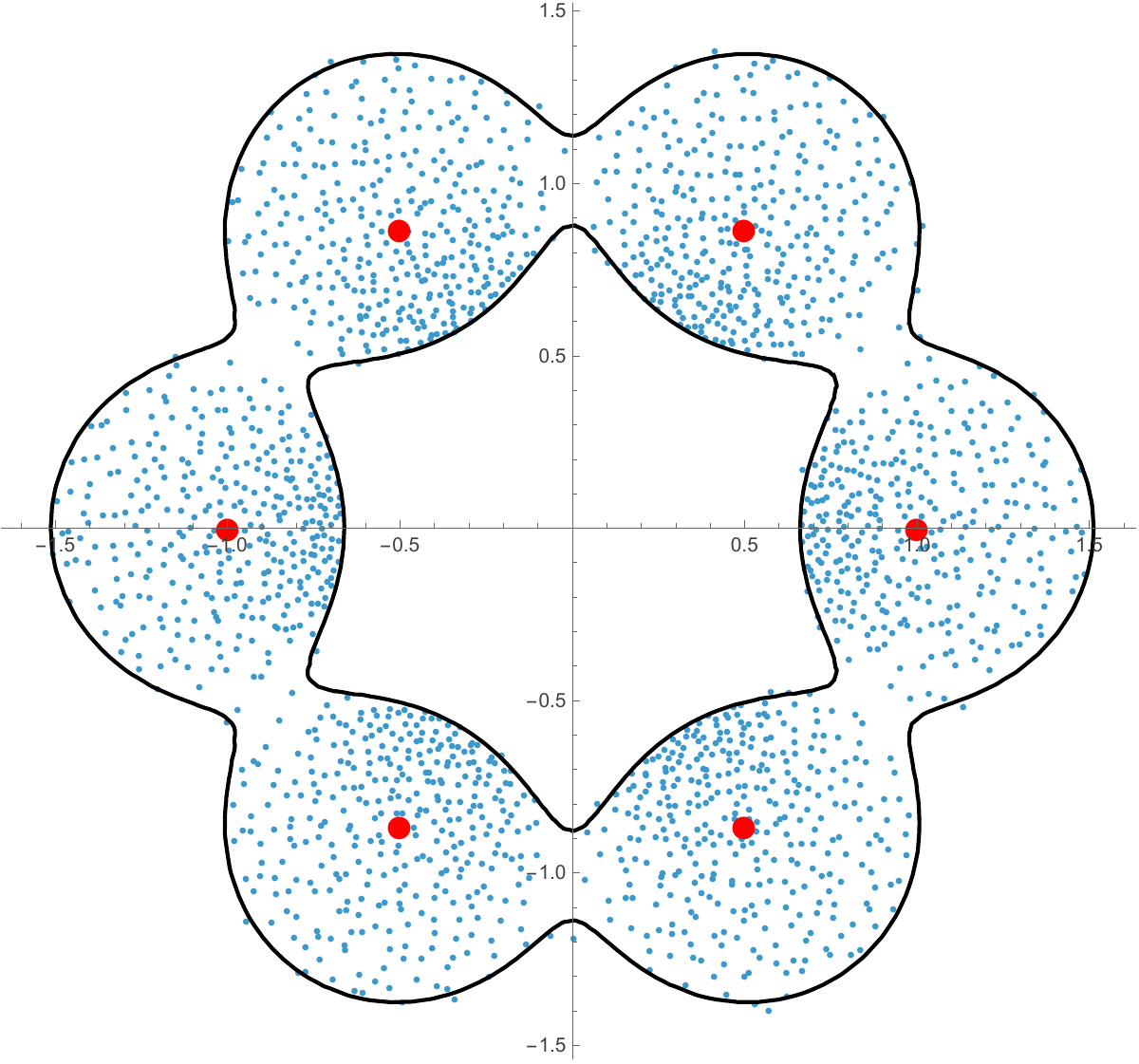}};

  \end{tikzpicture}
  \caption{Eigenvalues of $B_0B_{1,0}(t)$ with $N=2000$ and $t=\frac23$ (left) and $t=0.7$ (right), with initial condition $B_0$ a unitary $u_6$ with equal mass eigenvalues at the $6$th roots of unity.  Also plotted is the boundary $\partial\Sigma(u_6,t)$ (see \eqref{eq.Sigma.u}).
  \label{fig.six-pointed}}
\end{figure}

Following this, in \cite{HallHo2023} Hall--Ho refined the PDE analysis from the above-mentioned papers to allow the Brownian motions $b_{\rho,\zeta}(t)$ from the full parameter range $|\zeta|\le\rho$.

\begin{notation} \label{notat.ovweparam}
    The laws of the random matrices $W_{\rho,\zeta}(t)$ and $B_{\rho,\zeta}(t)$ are overparametrized. Since $(W(t))_{t\ge 0}$ is the same as the law of the process $(W(rt)/\sqrt{r})_{t\ge 0}$ for any $r>0$, it follows that $W_{\rho,\zeta}(t)$ (and consequently $B_{\rho,\zeta}(t)$) satisfy
\begin{equation} \label{eq.overparam}
\begin{aligned}
W_{\rho,\zeta}(t) &\equaldist W_{t\rho,t\zeta}(1) \equaldist W_{1,\zeta/\rho}(\rho t) \\
B_{\rho,\zeta}(t) &\equaldist B_{t\rho,t\zeta}(1) \equaldist B_{1,\zeta/\rho}(\rho t).
\end{aligned} \end{equation} 
The same applies to the free limits $w_{\rho,\zeta}(t)$ and $b_{\rho,\zeta}(t)$.

\cite{HallHo2023,HallHo2025Spectrum} elected to simplify parameters and only work with $b_{\rho,\zeta}(1)$; we do the same for the remainder of this section.  We also follow \cite{HallHo2023}'s convention and, in this parameter reduction, let $t$ play the role of $\rho$.  Hence we treat the operators
\begin{align*}
W(t,\zeta)&:= W_{t,\zeta}(1)  \qquad & \qquad B(t,\zeta)&:= B_{t,\zeta}(1) \\
w(t,\zeta)&:= w_{t,\zeta}(1) \qquad & \qquad b(t,\zeta)&:= b_{t,\zeta}(1).
\end{align*} 
Then the Brownian motion $u_0b_{1,0}(t)$ in Theorem \ref{thm.Brown.meas.10} has the same law as
\[ u_0b_{1,0}(t) \equaldist u_0b(t,0). \]
\end{notation}

For each $t>0$ and any $\zeta\in\C$ with $|\zeta|\le t$, the Brown measure $\mu_{u_0b(t,\zeta)}$ is a push-forward of the Brown measure $\mu_{u_0b(t,0)}$ from Theorem \ref{thm.Brown.meas.10} under a continuous map that is a diffeomorphism in the interior $\Sigma(u_0,t)$ and a homeomorphism on the closure.  To properly define this map, we introduce the following analytic transform: for any operator $a$ in a $W^\ast$-probability space, define
\begin{equation} \label{eq.Ja} J_a(z) = \frac12\int_{\C}\frac{\xi+z}{\xi-z}\,\mu_a(\xi) \end{equation}
where $\mu_a$ is the Brown measure of $a$.  This integral defines a conformal map on the complement of $\supp\mu_a$; on this region, it can be written in the form $J_a(z) = \frac12\tau[(a+z)(a-z)^{-1}]$.  The integral \eqref{eq.Ja} may be well defined for (some) $z$ in the support of $\mu_a$, as will be the case relevant to the following theorem.

\begin{theorem}[\cite{HallHo2023,HallHo2025Spectrum}] \label{thm.Brown.meas.rhozeta} Let $t>0$ and $\zeta\in\C$ with $|\zeta|\le t$ (but $\zeta\ne -t$, which is the unitary Brownian motion).  Let $b(t,\zeta)$ be a free multiplicative Brownian motion (see Notation \ref{notat.ovweparam}) and let $u_0$ be a unitary operator freely independent from $b(t,\zeta)$.  Define the map $\Psi^{u_0}_{t,\zeta}$ by
\begin{equation} \label{eq.Psi} \Psi^{u_0}_{t,\zeta}(z) = z\exp\{\zeta J_{u_0b(t,0)}(z)\}. \end{equation}
This map is continuous on $\C$.  Restricted to $\Sigma(u_0,t)$, it is a diffeomorphism onto its image, an open set we denote $\Sigma(u_0,t,\zeta)$.  The map $\Psi^{u_0}_{t,\zeta}$ is a homeomorphism from the closure of $\Sigma(u_0,t)$ onto the closure of $\Sigma(u_0,t,\zeta)$, and is a conformal map between the complements of these closures.

The Brown measure of $u_0b(t,\zeta)$ is the push-forward under $\Psi^{u_0}_{t,\zeta}$ of the Brown measure of $u_0b(t,0)$ (from Theorem \ref{thm.Brown.meas.10}).  Its support is $\overline{\Sigma(u_0,t,\zeta)}$, and it has a smooth density on its support.
\end{theorem}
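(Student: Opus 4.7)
The plan is to proceed by the PDE/method-of-characteristics strategy that was first developed in \cite{DHKBrown} for $\zeta=0$ and then extended to the elliptic parameter range in \cite{HallHo2023}. Fix a free unitary $u_0$ freely independent from the three-parameter process, and for $\epsilon>0$ consider the regularized log potential
\begin{equation*}
S(t,\zeta,z,\epsilon) := \tau\bigl[\log\bigl(|u_0 b(t,\zeta)-z|^2+\epsilon\bigr)\bigr],
\end{equation*}
whose Laplacian in $z$ (as $\epsilon\downarrow 0$) recovers the Brown measure $\mu_{u_0 b(t,\zeta)}$ via \eqref{eq.dual.Brown.char}. The first step is to derive, from the free It\^o equation \eqref{eq.b.fSDE}, a first-order evolution equation for $S$ in the parameter $\zeta$ holding $t$ fixed; the drift $\tfrac{\zeta}{2}b\,dt$ in \eqref{eq.b.fSDE} multiplies solutions by an $\exp$-type factor, which at the level of trace logarithms shows up as a transport-type PDE whose coefficient is the analytic transform $J_{u_0 b(t,0)}(z)$ from \eqref{eq.Ja}.

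The second step is the method of characteristics. After passing to the $\epsilon\to 0$ limit in the region where $S$ is smooth, the characteristic ODE in the $\zeta$-direction takes the form $\tfrac{d}{d\zeta}z(\zeta)=z(\zeta)\,J_{u_0 b(t,0)}(z(\zeta))$, whose exponential structure integrates to $z(\zeta)=z(0)\exp\{\zeta J_{u_0 b(t,0)}(z(0))\}$ --- exactly the map $\Psi^{u_0}_{t,\zeta}$ of \eqref{eq.Psi}. Consequently $S(t,\zeta,\Psi^{u_0}_{t,\zeta}(z),0)=S(t,0,z,0)$ on the characteristic region, and taking the distributional Laplacian in the terminal variable yields the pushforward identity $\mu_{u_0 b(t,\zeta)}=(\Psi^{u_0}_{t,\zeta})_{*}\,\mu_{u_0 b(t,0)}$, which in particular forces $\mathrm{supp}\,\mu_{u_0 b(t,\zeta)}=\overline{\Sigma(u_0,t,\zeta)}$.

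For the geometric claims I would treat $\Sigma(u_0,t)$ and its complement separately. On the complement of the support $\overline{\Sigma(u_0,t)}$, the function $J_{u_0 b(t,0)}$ is holomorphic (it coincides with $\tfrac12\tau[(u_0 b(t,0)+z)(u_0 b(t,0)-z)^{-1}]$), so $\Psi^{u_0}_{t,\zeta}$ is holomorphic there; injectivity on this region follows by the standard argument that a small $\zeta$-perturbation of the identity is conformal, together with rigidity/monodromy to extend to all $|\zeta|<t$. On $\Sigma(u_0,t)$ itself, the regularity of the density furnished by Theorem \ref{thm.Brown.meas.10} makes $J_{u_0 b(t,0)}$ real-analytic, whence $\Psi^{u_0}_{t,\zeta}$ is real-analytic; invertibility of its Jacobian then follows from a direct computation using the formulas in \cite{DHKBrown} for $\partial J$ and $\bar\partial J$ in $\Sigma(u_0,t)$. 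Global continuity, and the homeomorphism statement on the closure, are obtained by matching boundary values via the fact that $J_{u_0 b(t,0)}$ extends continuously across $\partial\Sigma(u_0,t)$ (the ``matching condition'' proved in \cite{HallHo2023}), and the smoothness of the pushed-forward density on $\Sigma(u_0,t,\zeta)$ then follows from the chain rule together with the smoothness statement in Theorem \ref{thm.Brown.meas.10}.

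The hardest step is the rigorous justification of the formal characteristic computation on the interior $\Sigma(u_0,t)$, where the log potential is merely continuous (not classically smooth) across the support boundary and where $J_{u_0 b(t,0)}$ does not extend holomorphically. One must work with the regularized $S(\cdot,\cdot,\cdot,\epsilon)$, propagate characteristics for $\epsilon>0$, and control the limit $\epsilon\downarrow 0$ uniformly on compact subsets of $\Sigma(u_0,t)$; this is where Hall--Ho's PDE a priori estimates enter, and is also the step that must be checked to match the real-variable formulas for the density of $\mu_{u_0 b(t,0)}$. Once these estimates are in hand, the pushforward identification and the support/regularity conclusions of the theorem follow by gluing the two regions across the common boundary.
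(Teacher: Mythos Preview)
This theorem is not proved in the present paper. It is stated in Section~\ref{sect.Brown(ian).meas} as a citation of results from \cite{HallHo2023,HallHo2025Spectrum}; the paper uses it only to describe the limit Brown measure and its support, and provides no argument of its own. So there is no ``paper's own proof'' to compare your proposal against.

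That said, your sketch is broadly in the spirit of the Hall--Ho approach (regularized log potential, PDE, method of characteristics, pushforward), but a few points are imprecise. First, the evolution equation in \cite{HallHo2023} is derived from the free SDE and is a PDE in the time-like parameter (with $\zeta$ entering through the coefficients), not a transport equation ``in $\zeta$ holding $t$ fixed'' as you describe; the characteristic flow that produces $\Psi^{u_0}_{t,\zeta}$ emerges from comparing the $\zeta=0$ and general-$\zeta$ solutions of that PDE, not from integrating $\tfrac{d}{d\zeta}z=zJ(z)$ directly. Second, your claimed ODE has $J_{u_0 b(t,0)}$ frozen on the right-hand side, which would only be correct if $J$ were constant along characteristics --- that conservation law is precisely one of the nontrivial outputs of the Hall--Ho analysis, not an input you can assume. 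Third, the injectivity of $\Psi^{u_0}_{t,\zeta}$ on the complement of the support does not follow from ``small perturbation of the identity plus monodromy'' for all $|\zeta|<t$; Hall--Ho establish it via explicit conformal-mapping arguments tied to the structure of $J$. So while your outline captures the flavor, several steps would need substantially different arguments to go through, and you should consult \cite{HallHo2023,HallHo2025Spectrum} for the actual proofs.
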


One key feature of Theorem \ref{thm.Brown.meas.rhozeta} which explains the parameter choices is that, for fixed $t>0$, the support regions $\{\Sigma(u_0,t,\zeta)\colon |\zeta|\le t\}$ are all diffeomorphic to the region $\Sigma(u_0,t)$ from the standard Brownian motion $u_0b(t,0)$.  I.e.\ the topology of the domain where the eigenvalues cluster can change with $t$, while $\zeta$ is a deformation parameter that does not change the topology of the support set.

\begin{figure}[h!]
\centering
  \begin{tikzpicture}

    \node[inner sep=2pt] at (0cm, 0cm)
     {\includegraphics[width=5cm]{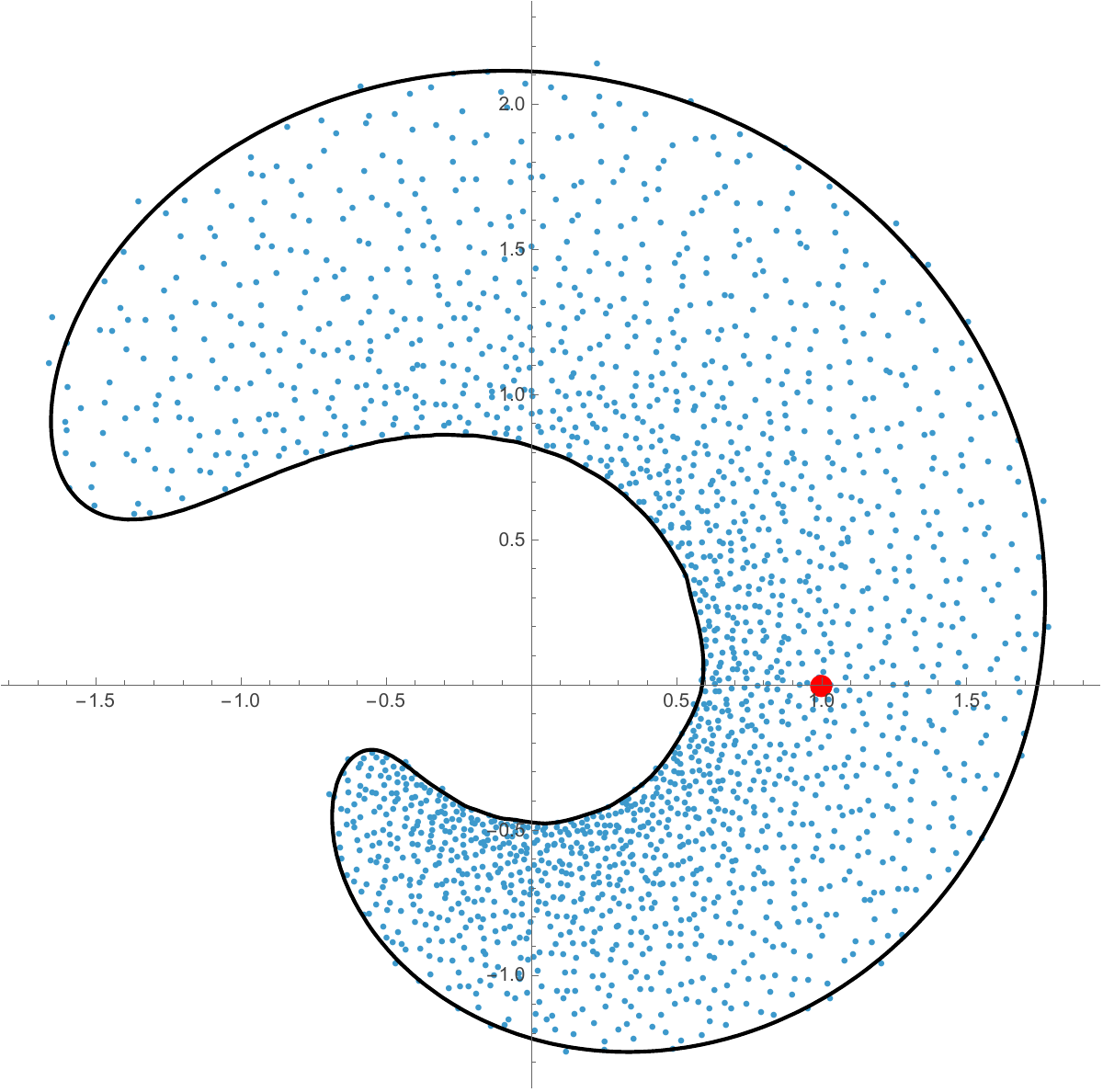}};

    \node[inner sep=2pt] at (5.5cm, 0cm)
        {\includegraphics[width=5cm]{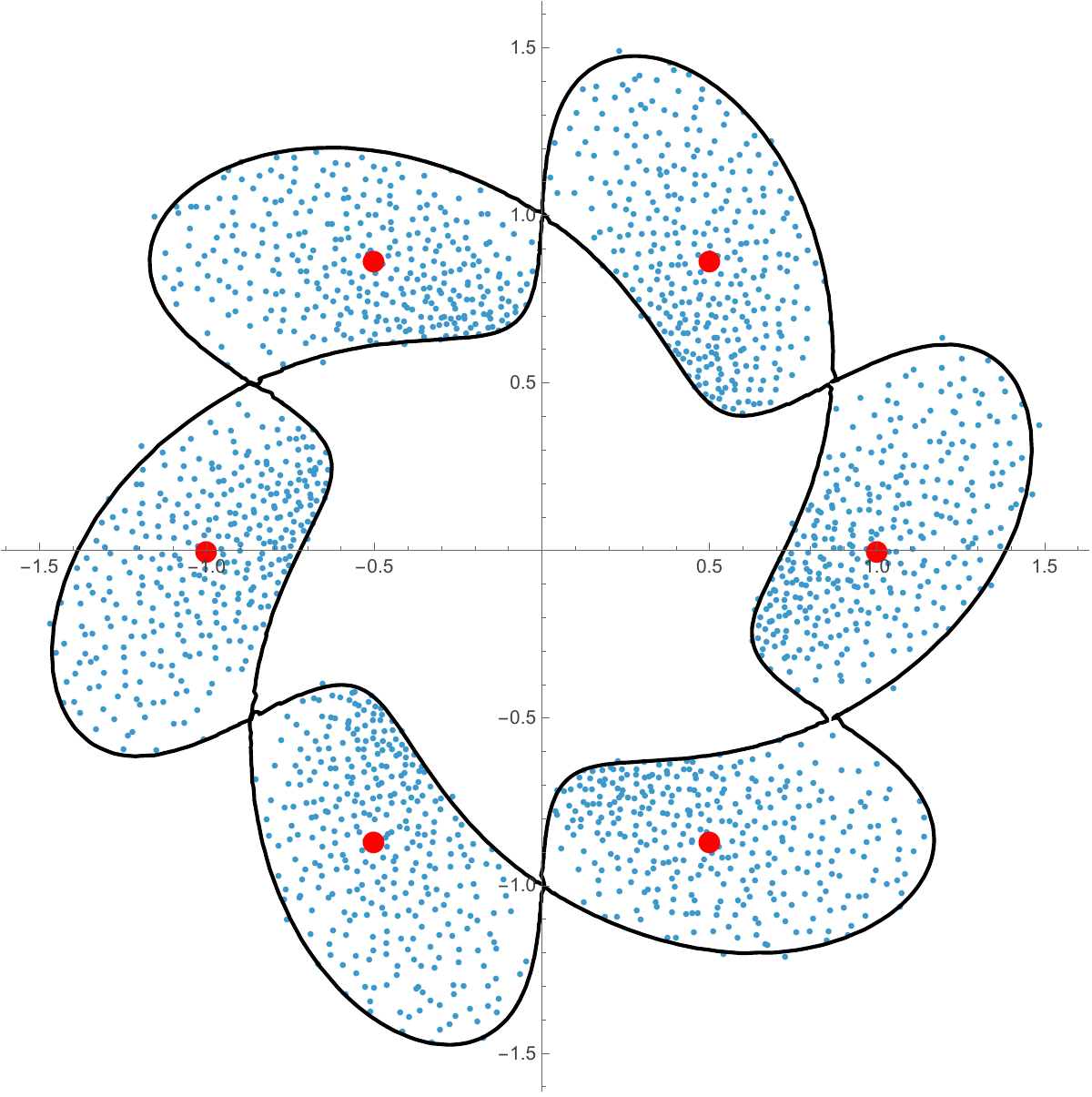}};

  \end{tikzpicture}
  \caption{Eigenvalues of $B_0 B(t,\zeta)$ with $N=2000$.  On the left, $B_0=I$ and $(t,\zeta)=(3,-2+\i)$; on the right, $B_0$ is unitary with $6$th roots of unity as eigenvalues, and $(t,\zeta)=(\frac23,\frac{\i}{3})$.  Also plotted are the boundaries $\partial\Sigma(b_0,t,\zeta)$, see Theorem \ref{thm.Brown.meas.rhozeta}.
  \label{fig-worm-star}}
\end{figure}

\begin{figure}[h!]
\centering
  \begin{tikzpicture}

    \node[inner sep=2pt] at (-0.5cm, 0cm)
      {\includegraphics[height=4cm]{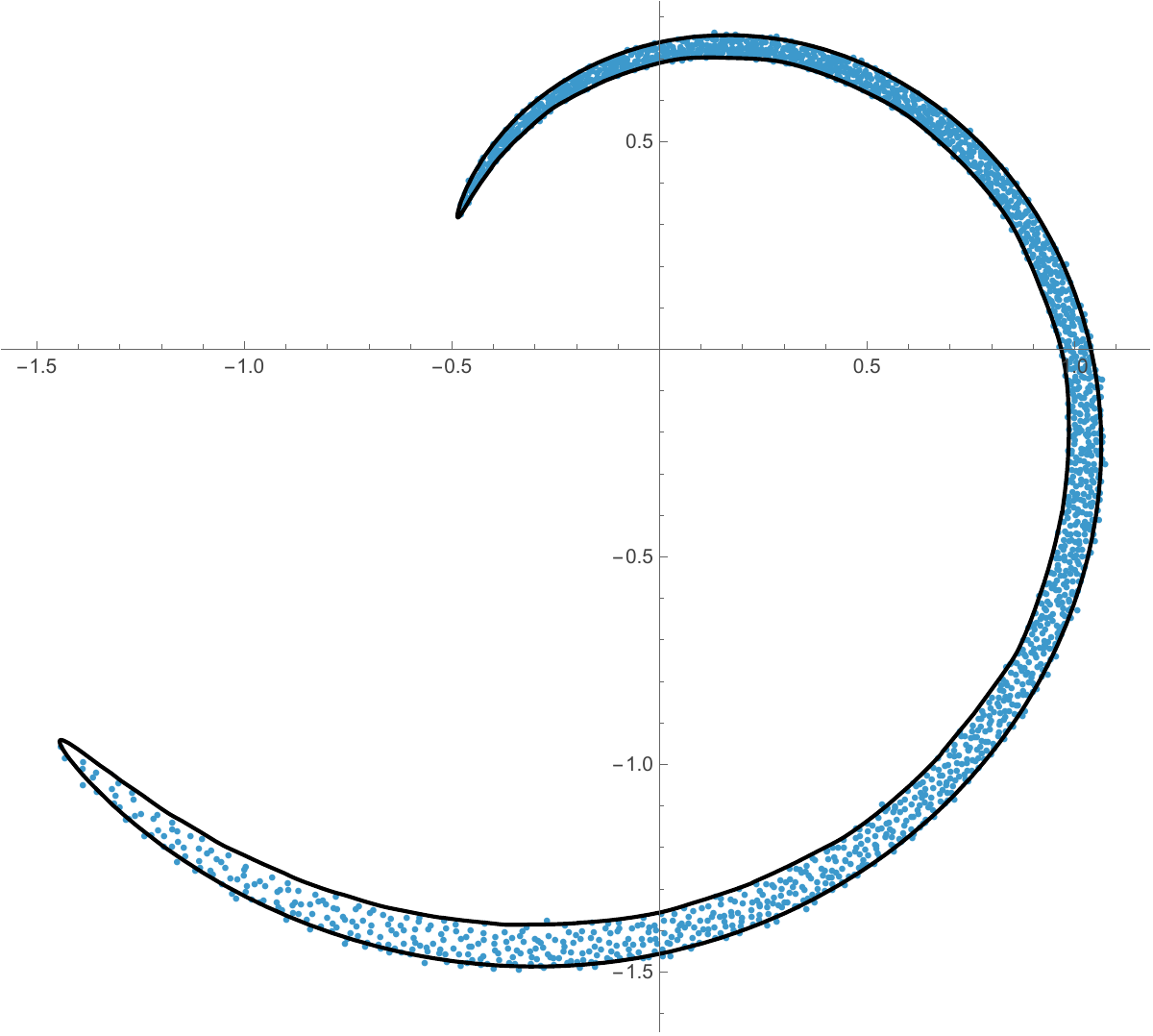}};

    \node[inner sep=2pt] at (3.75cm, 0cm)   
      {\includegraphics[height=4cm]{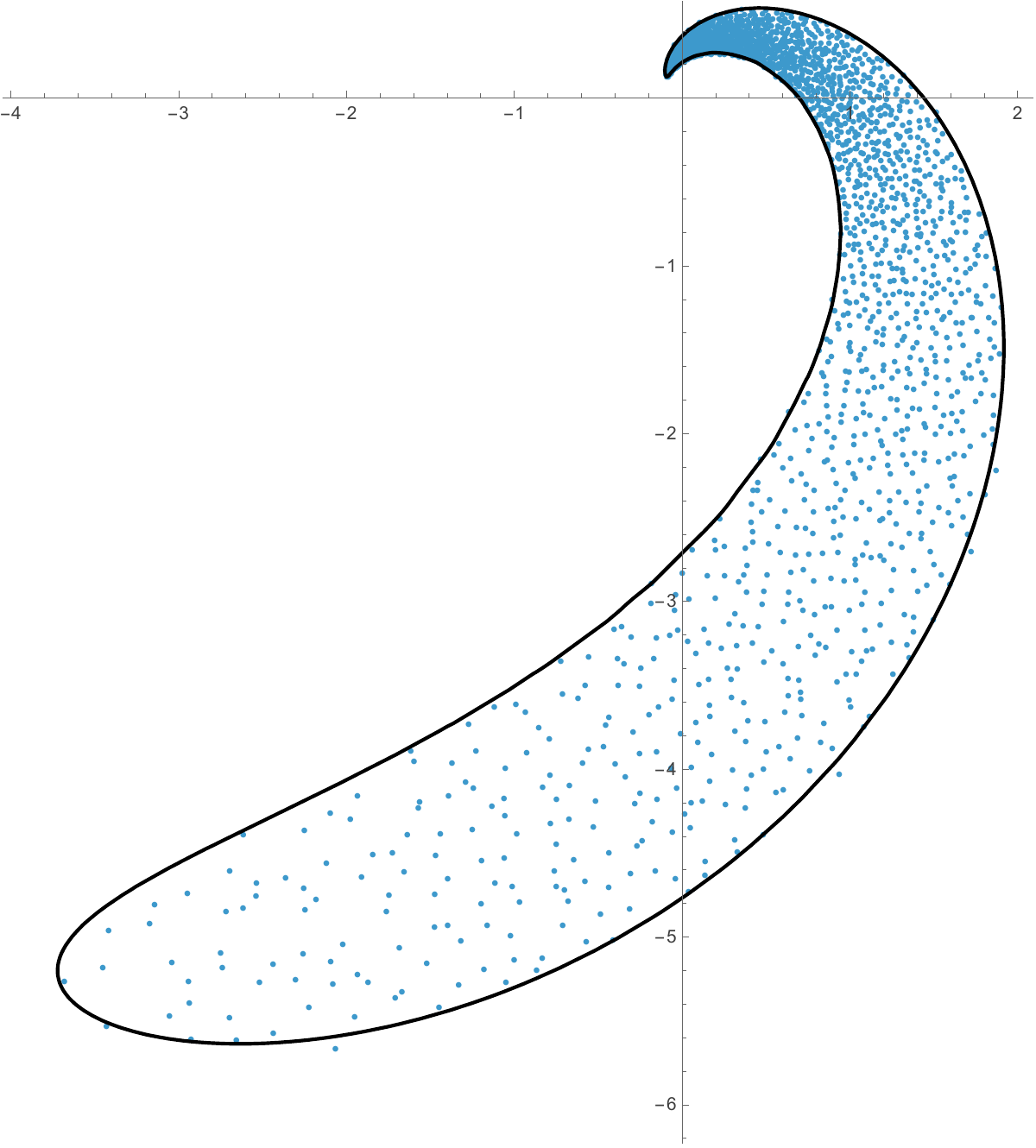}};
      
    \node[inner sep=2pt] at (-0.5cm, -3.75cm)
      {\includegraphics[width=4.5cm]{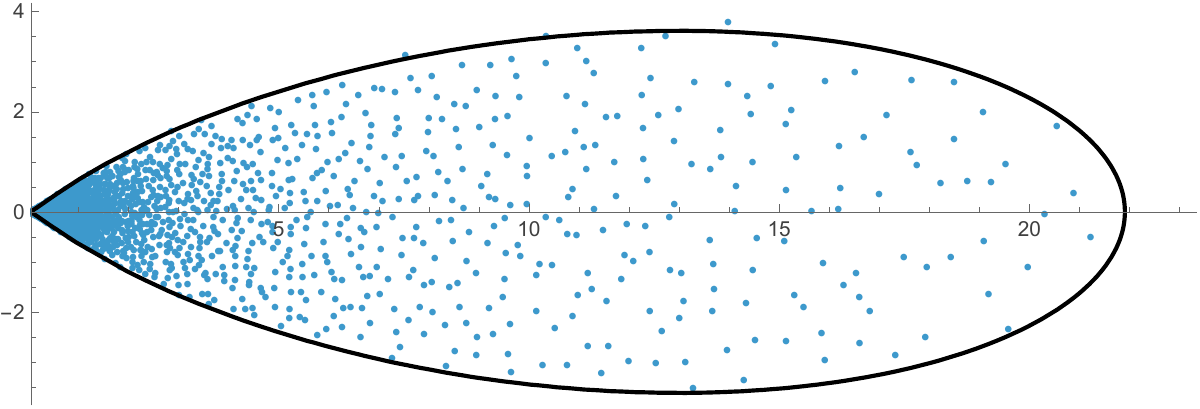}};

    \node[inner sep=2pt] at (4cm, -4cm)  
      {\includegraphics[height=3.6cm]{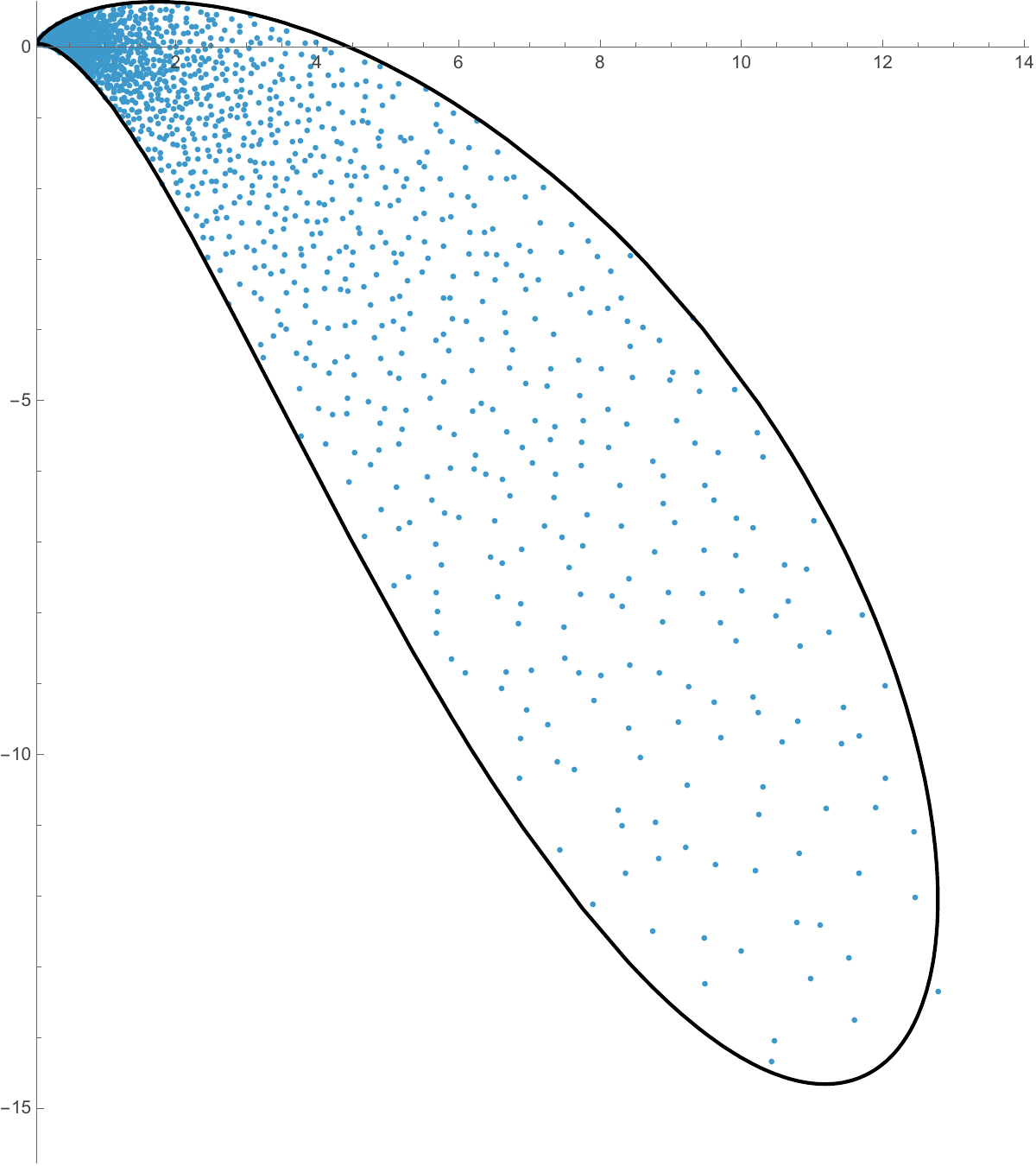}};

      
  \end{tikzpicture}
  \caption{Eigenvalues of $B_{\rho,\zeta}(2)$ with $N=2000$, $\rho=2$, and $\zeta = -2e^{i\alpha\pi}$ where $\alpha$ ranges through $\frac16$, $\frac12$, $\frac56$, and $1$ clockwise from top left.  Also plotted are the boundaries $\partial\Sigma(1,2,\zeta)$, see Theorem \ref{thm.Brown.meas.rhozeta}.  These simulations represent the hypoelliptic case, where $|\zeta|=\rho$ but $\zeta\ne -\rho$.  The eigenvalues ``unravel'' as $\zeta$ rotates around the circle $|\zeta|=\rho$ away from $\zeta=-\rho$ (i.e.\ $\alpha=0$), where the Brownian motion is unitary-valued.
  \label{fig-hypoelliptic}}
\end{figure}

Finally, moving beyond the case of unitary initial conditions, the papers \cite{DemniHamdi2022,HallEak2025} use a related but qualitatively different PDE approach to analyze the support of the Brown measure of $b_0b(t,\zeta)$ in the case that $b_0$ is a positive definite operator. In \cite{DemniHamdi2022}, Demni--Hamdi considered the case $\zeta=0$ and restricted $b_0$ to be a projection; refining those ideas, Eaknipitsari--Hall \cite{HallEak2025} considered general $b(t,\zeta)$ and general positive definite $b_0$.  The formulas and techniques in the latter paper are not particularly specific to the positive case, and so we state the results as a special case of a conjecture.

\begin{conj}[\cite{HallEak2025}] \label{conj.b0} Let $b_0$ be an invertible operator in a $W^\ast$-probability space $(\A,\tau)$.  Define
\[ \tilde{p}_0(z) = \tau[|b_0-z|^{-2}] \qquad \& \qquad \tilde{p}_2(z) = \tau[b_0^\ast b_0|b_0-z|^{-2}] \]
where, as usual, for any operator $a\in\A$, $|a|^2:=a^\ast a$.  Define $T(b_0,\cdot)\colon\C\to\R_+$ by
\begin{equation} \label{eq.T.b0} T(b_0,z) = \frac{\log(\tilde{p}_2(z))-\log(|z|^2\tilde{p}_0(z))}{\tilde{p}_2(z)-|z|^2\tilde{p}_0(z)}.
\end{equation}
For each $t>0$, define the open precompact set $\Sigma(b_0,t)$ by
\begin{equation} \label{eq.Sigma.b0} \Sigma(b_0,t) = \{z\in\C\colon T(b_0,z)<t\}. \end{equation}
Let $b(t,\zeta)$ be a free multiplicative Brownian motion (see Notation \ref{notat.ovweparam}) freely independent from $b_0$.  Then the support of the Brown measure of $b_0b(t,\zeta)$ is equal to the image $\overline{\Sigma(b_0,t,\zeta)}$ of $\overline{\Sigma(b_0,t)}$ under the map $\Psi^{b_0}_{t,\zeta}$ of \eqref{eq.Psi}.
\end{conj}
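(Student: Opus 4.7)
The plan is to extend the PDE-with-characteristics program of Driver--Hall--Kemp \cite{DHKBrown} and Hall--Ho \cite{HallHo2023,HallHo2025Spectrum} from unitary to general invertible initial data. The central object is the regularized log potential
\[
S_\varepsilon(t,z) := \tau\Bigl[\log\bigl(|b_0 b(t,\zeta) - z|^2 + \varepsilon\bigr)\Bigr],
\]
which decreases to $2 L_{b_0 b(t,\zeta)}(z)$ as $\varepsilon \downarrow 0$ and whose distributional Laplacian recovers (twice) the Brown measure $\mu_{b_0 b(t,\zeta)}$. First I would reduce to the case $\zeta = 0$: the map $\Psi^{b_0}_{t,\zeta}$ of \eqref{eq.Psi} is engineered so that, as in Theorem \ref{thm.Brown.meas.rhozeta} for unitary $b_0$, it should push $\mu_{b_0 b(t,0)}$ forward onto $\mu_{b_0 b(t,\zeta)}$. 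Extending this to invertible $b_0$ amounts to verifying that $J_{b_0 b(t,0)}$ remains a well-defined conformal map off $\overline{\Sigma(b_0,t)}$ and that the subordination identities used in \cite{HallHo2025Spectrum} survive dropping the unitarity assumption. Granting this reduction, it suffices to identify $\supp \mu_{b_0 b(t,0)}$ with $\overline{\Sigma(b_0,t)}$.

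Next, using free It\^o calculus on $b(t,0)$ (which satisfies $db(t,0) = b(t,0)\,dw_{1,0}$, driven by a standard free circular Brownian motion), differentiate $S_\varepsilon$ in $t$ and simplify via tracial and free-independence identities to obtain a first-order Hamilton--Jacobi-type PDE on the complement of $\supp \mu_{b_0 b(t,0)}$, with coefficients that are rational expressions in $\partial_z S$ and $\partial_{\bar z} S$. This mirrors the formal derivation in \cite[Sec.~4]{DHKBrown}, now with boundary data $S_\varepsilon(0, z) = \tau[\log(|b_0 - z|^2 + \varepsilon)]$ in place of $\tau[\log(|1-z|^2+\varepsilon)]$. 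The characteristics of this PDE are curves $s \mapsto z(s)$ originating at $z$, along which $S_\varepsilon$ evolves by an explicit ODE; the boundary of the support is then swept out by the breakdown time of this flow.

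The heart of the argument is integrating the characteristic ODE in closed form. The quantities appearing in \eqref{eq.T.b0} should enter directly via the initial data: $\tilde p_0(z) = \tau[|b_0 - z|^{-2}]$ is (up to constants) the Euclidean Laplacian of $\tau[\log|b_0 - z|^2]$, while $\tilde p_2(z) = \tau[b_0^* b_0 |b_0-z|^{-2}]$ enters through the conjugation by $b_0^* b_0$ in the It\^o correction for $b(t,0)$. I expect the characteristic issued from $z$ at time $0$ admits an explicit first integral yielding a hitting-time equation which rearranges to $t = T(b_0, z)$. As a consistency check, specializing $b_0 = u_0$ unitary collapses $\tilde p_2 = \tilde p_0$ via $b_0^* b_0 = 1$, and \eqref{eq.T.b0} should then reduce to \eqref{eq.T.u}, recovering Theorem \ref{thm.Brown.meas.10}.

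The hardest step is that, for non-unitary $b_0$, the two transforms $\tilde p_0$ and $\tilde p_2$ evolve independently along the characteristic flow (whereas they coincide identically in the unitary case), so the characteristic ODE is genuinely two-dimensional and the conserved quantity leading to \eqref{eq.T.b0} is not immediate; finding it will likely require uncovering structural identities involving the polar decomposition of $b_0$ that play the role of Hall--Ho's ``surprising identity.'' A secondary obstacle is rigorously matching the characteristic-accessible region with $\supp \mu$: one must rule out the possibility that $S(t,z) := \lim_{\varepsilon \downarrow 0} S_\varepsilon(t,z)$ develops a singular Laplacian on a strictly larger set, which in the unitary case follows from monotonicity of the flow in $t$ combined with absolute continuity of the limit Brown measure, and should carry over once the characteristic ODE is solved.
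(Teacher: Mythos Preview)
The statement you are attempting to prove is explicitly labeled a \textbf{Conjecture} in the paper, and the paper does not prove it. Immediately after stating it, the authors write that in \cite{HallEak2025} only one containment, $\supp \mu_{b_0 b(\rho,\zeta)} \subseteq \overline{\Sigma(b_0,\rho,\zeta)}$, was established, and only for positive definite $b_0$ (with an exception at $0$); the reverse inclusion and the case of general invertible $b_0$ remain open. The paper then offers numerical simulations (Figures \ref{fig.5} and \ref{fig.6}) as evidence for the full conjecture, but no argument.

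Your proposal is a reasonable outline of how one might try to extend the Driver--Hall--Kemp / Hall--Ho PDE-with-characteristics machinery to general invertible initial data, and your consistency check that $b_0^\ast b_0 = 1$ forces $\tilde p_2 = \tilde p_0$ and collapses \eqref{eq.T.b0} to \eqref{eq.T.u} is correct. But the difficulties you flag in your final paragraph---integrating a genuinely two-dimensional characteristic ODE, locating the conserved quantity that produces \eqref{eq.T.b0}, and matching the characteristic-accessible region with the actual support---are exactly the unresolved obstacles. What you have written is a research plan for an open problem, not a proof, and there is no proof in the paper to compare it against.
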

The expression \ref{eq.Sigma.b0} is defined by limits if any of the terms diverge.  The quantities $\tilde{p}_0(z)$ and $\tilde{p}_2(z)$ are positive but possibly infinite; if one is infinite, a calculus exercise shows that $T(b_0,z)=0$, meaning that $z\in\Sigma(b_0,t)$ for all $t>0$.

If $b_0$ is unitary then $\tilde{p}_2(z)=\tilde{p}_0(z)$, and \eqref{eq.T.b0} reduces \eqref{eq.T.u}; so Conjecture \ref{conj.b0} follows from Theorem \ref{thm.Brown.meas.rhozeta} in the case of a unitary initial condition.  In \cite{HallEak2025}, the authors considered the case that $b_0>0$ (i.e.\ positive definite), and the conjecture was half proved in that case: $\supp \mu_{b_0b(\rho,\zeta)}$ was shown to be {\em contained} in $\overline{\Sigma(b_0,\rho,\zeta)}$ (with an exception at $0$).  The obstructions to extending this containment to more general initial conditions were technical, and it is natural to expect it to hold true in full generality.  The following simulations of the region $\Sigma(b_0,\rho,\zeta)$ with eigenvalues of $B^N_{\rho,\zeta}(1)$ for fairly large $N$ are convincing evidence for the full conjecture.

\begin{figure}[h!]
\centering
  \begin{tikzpicture}

    \node[inner sep=2pt] at (0cm, 0cm)
      {\includegraphics[width=5cm]{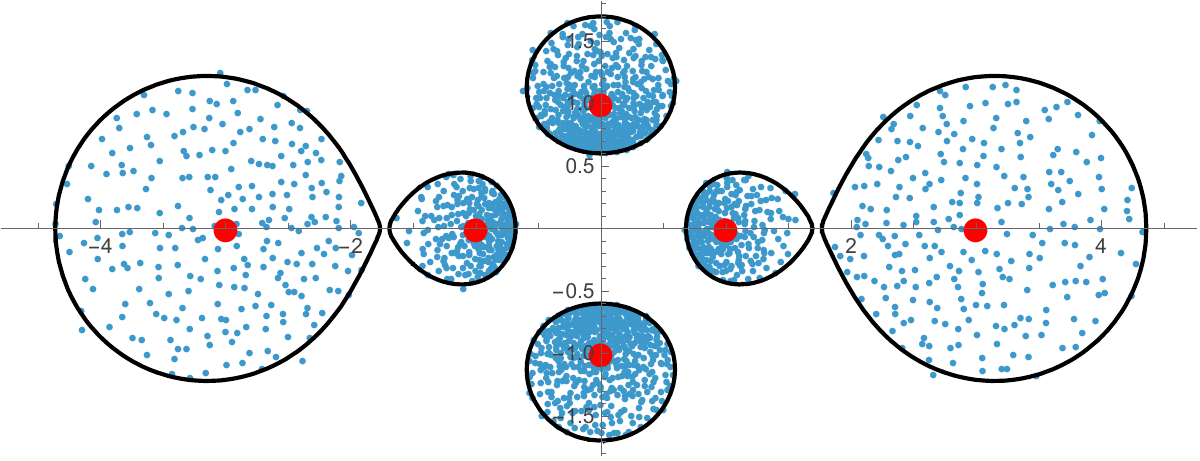}};

    \node[inner sep=2pt] at (5.5cm, -0cm)   
      {\includegraphics[width=5cm]{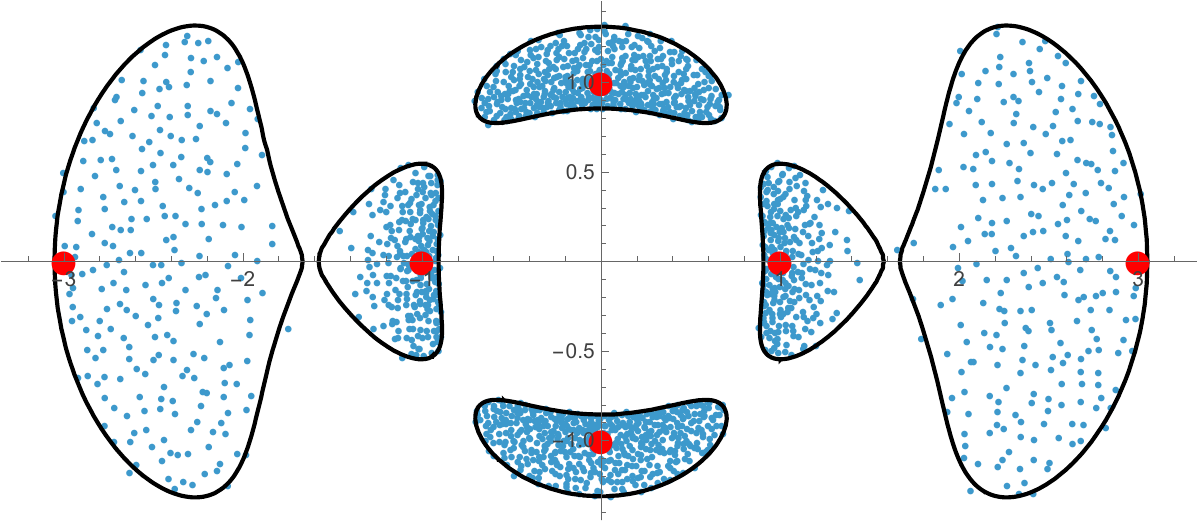}};

    \node[inner sep=2pt] at (0cm, -2.5cm)
      {\includegraphics[width=5cm]{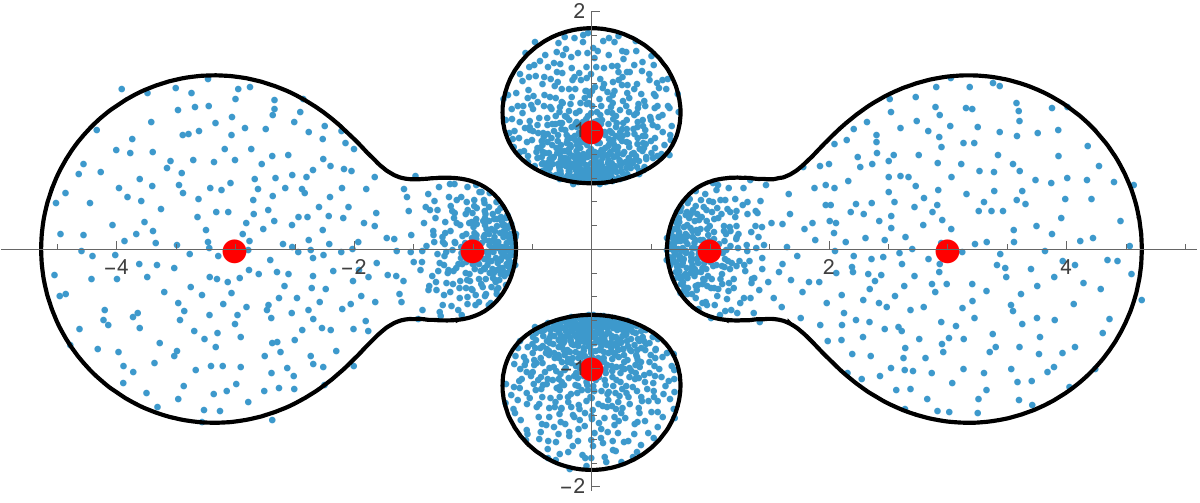}};

    \node[inner sep=2pt] at (5.5cm, -2.5cm)  
      {\includegraphics[width=5cm]{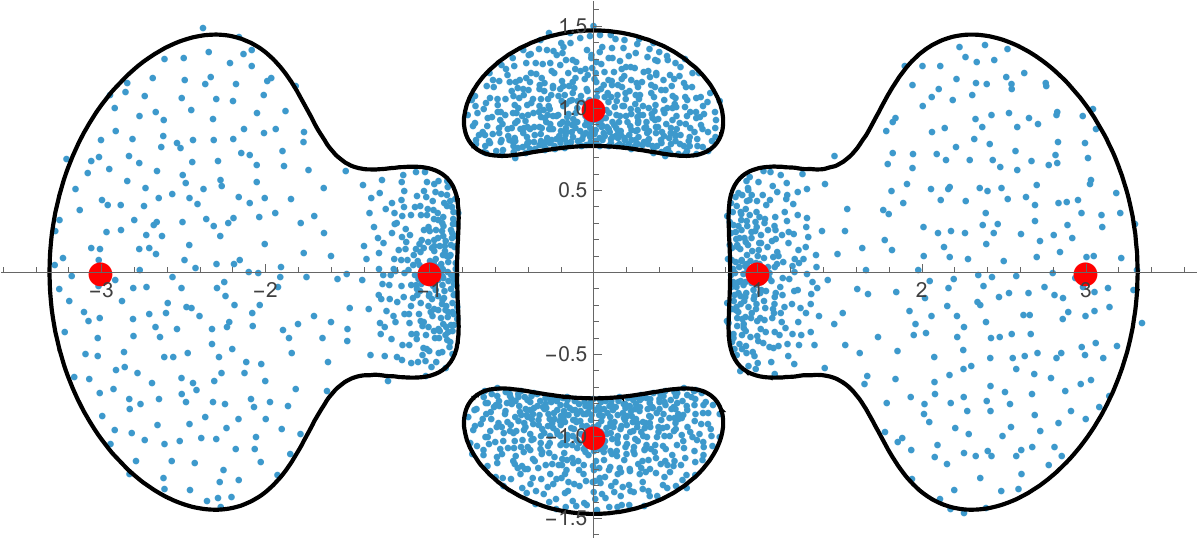}};
      
    \node[inner sep=2pt] at (0cm, -5cm)
      {\includegraphics[width=5cm]{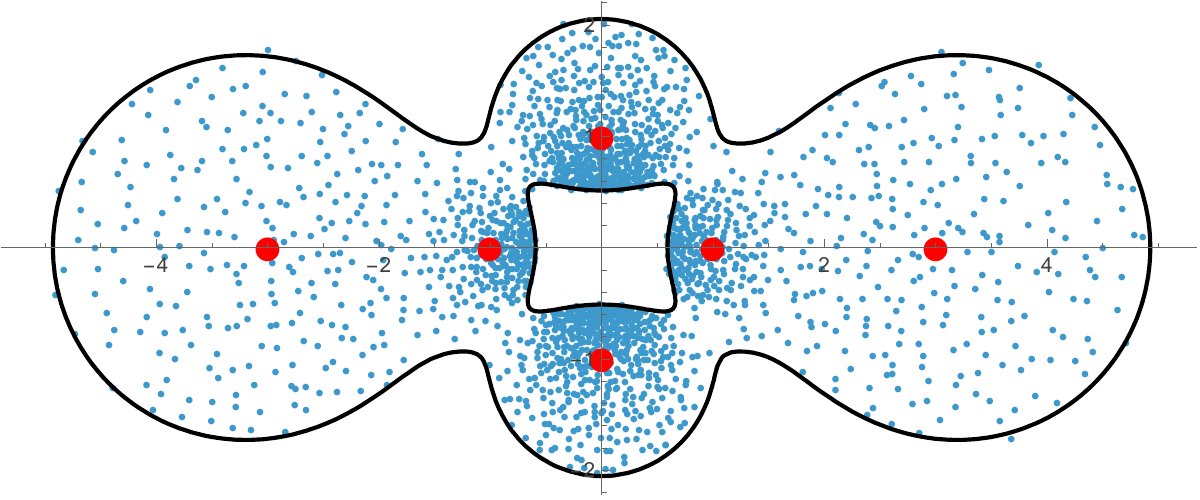}};

    \node[inner sep=2pt] at (5.5cm, -5cm)  
      {\includegraphics[width=5cm]{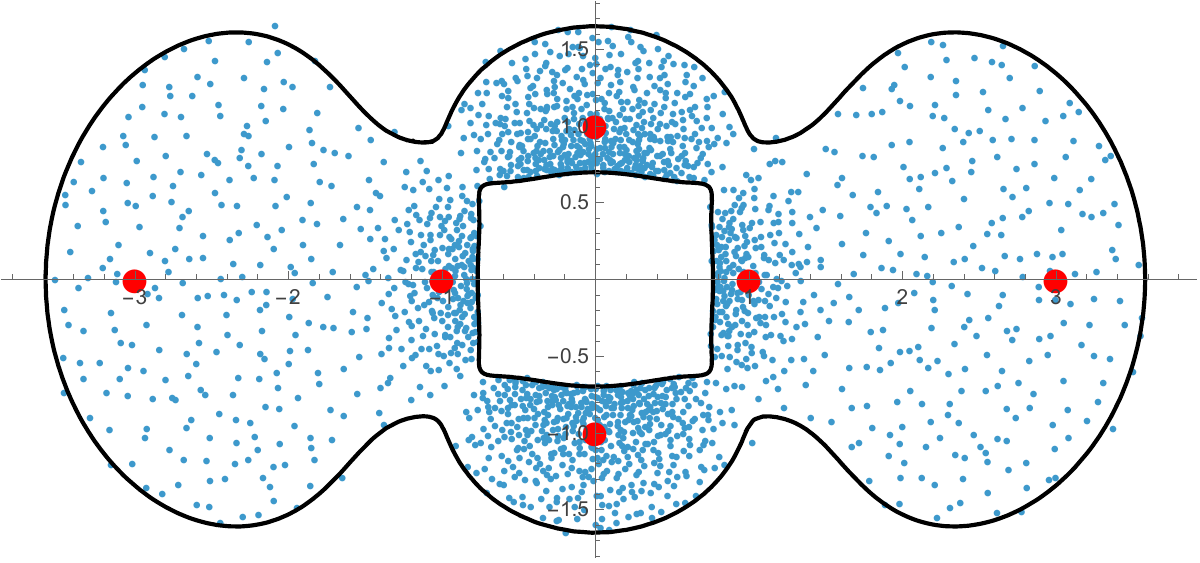}};
  \end{tikzpicture}
  \caption{Eigenvalues of $B_0B(t,\zeta)$ with $N=2000$. From top to bottom, $t$ ranges through $0.8,1,1.2$; $\zeta=0$ on the left and $\zeta=-0.5$ on the right.  In all cases, $B_0$ is a normal matrix with empirical eigenvalue distribution $\frac18(\delta_1+\delta_{-1}+\delta_3+\delta_{-3})+\frac14(\delta_{\i}+\delta_{-\i})$.
  Also plotted are the boundaries $\partial\Sigma(b_0,t,\zeta)$ from \eqref{eq.Sigma.b0}.
  \label{fig-6-blobs}}
\end{figure}

\section{Preliminaries and Background}

\subsection{Basic Constructions from Free Probability\label{sect.freeprob}} 

We begin by reviewing the following definitions and basic results from non-commutative (and especially free) probability.

The basic arena is a {\bf $W^\ast$-probability space}: a pair $(\A,\tau)$ consisting of a von Neumann algebra $\A$ (a $\ast$-algebra of bounded operators on a Hilbert space, that is closed under the weak operator topology), and $\tau\colon\A\to\C$ is a tracial ($\tau(ab) = \tau(ba)$) state ($\tau(1_\A)=1$) that is faithful ($\tau(a^\ast a)=0$ iff $a=0$) and $\sigma$-weakly continuous.  The core motivating example is $\A = L^\infty(\Omega,\mathscr{F},\P)$, the algebra of bounded random variables on a probability space, with $\tau = \E$ the expectation with respect to $\P$. Indeed, all commutative $W^\ast$-probability spaces have this form; the general $\sigma$-weak continuity assumption is in place precisely to verify the Dominated Convergence Theorem in general.

More relevant to us presently is the example $(\M_N(\C),\ts_N)$: the algebra of $N\times N$ complex matrices equipped with the {\bf normalized trace} $\ts_N =\frac1N\tr_N$, with the usual matrix trace $\tr_N[A] = \sum_{j=1}^N A_{jj}$.  We will also frequently consider the ``product'' of these two examples: the algebra $L^\infty(\Omega,\mathscr{F},\P;\M_N(\C))$ of random matrices (with bounded entries), equipped with the expected normalized trace $\E\ts_N$ -- or, when relevant, we use only the trace $\ts_N$ (in which case $\ts_N[A]$ is $\C$-valued random variable rather than just a scalar).

Not content to always insist on bounded random variables, we can generally extend a $W^\ast$-probability space to include random variables with $p$ finite moments.  For any $a\in\A$, $a^\ast a$ is positive semidefinite and so the Spectral Theorem can be used to define $|a|^p:= (a^\ast a)^{p/2}$.  The $L^p$-norm is then defined
\[ \norm{a}_p:= \left(\tau[|a|^p]\right)^{1/p}, \qquad 1\le p<\infty. \]
The usual convexity proof demonstrates the triangle inequality, and the faithfulness of $\tau$ means it is a genuine norm.  The trace property implies that $\norm{a}_p = \norm{a^\ast}_p$.

The {\bf non-commutative $L^p$-space} $L^p(\A,\tau)$ is defined to be the closure of $\A$ in the norm $\norm{\,\cdot\,}_p$. The state $\tau$ satisfies $|\tau[a]|\le \tau[|a|]$, which means that it extends to a linear functional on $L^1(\A,\tau)$ (which we also denote $\tau$).  The $L^p$-norms satisfy H\"older's inequality: if $1<p<\infty$ and $\frac{1}{p}+\frac{1}{p'}=1$, then for $a\in L^p(\A,\tau)$ and $b\in L^{p'}(\A,\tau)$,
\[ |\tau(ab)| \le \|ab\|_1 \le \|a\|_p\|b\|_{p'}. \]
In particular, since $\|1_\A\|_p=1$ for all $p$, it follows that $\|a\|_p^p = \tau[|a|^p\cdot 1_\A] \le \||a|^p\|_r\|1_\A\|_{r'} = \|a\|_{pr}^p$ for $r\ge 1$; i.e.\ $p\mapsto \|a\|_p$ is increasing, so that $L^p(\A,\tau)\subseteq L^q(\A,\tau)$ for $q\le p$.  Because $\tau$ is faithful, it follows that the operator norm on $\A$ satisfies $\|a\| = \lim_{p\to\infty} \|a\|_p$; so we may set $\|a\|_\infty :=\|a\|$ and extend H\"older's inequality fully to $1\le p\le\infty$, identifying $L^\infty(\A,\tau)=\A$.

We regard the elements of a $W^\ast$-probability space as (generalized) random variables; as such, we must define their distributions.  Classically, a (joint) probability distribution of random variables is determined by testing expectations of functions on them.  Owing to the non-commutativity, the natural class of test functions readily available is {\bf non-commutative polynomials}.

\begin{notation} \label{def.ncpoly}
	We denote $\PP_d = \C\langle X_1,\dots,X_d\rangle $ the set of non-commutative polynomials in $d$ variables. In this paper, $X_i$ denote generic non-commuting indeterminates for polynomials. A given $P\in \PP_d$ may be evaluated at a $d$-duple $\bs{a}=(a_1,\ldots,a_d)\in\A^d$ in $W^*$-probability space $\A$ to produce a new element $P(\bs{a})\in\A$.
\end{notation}

\begin{defi} \label{def.*-dist}
Let $\bs{a} = (a_1,\ldots ,a_d)$ be a $d$-tuple of non-commutative random variables in some $W^\ast$-probability space $(\A,\tau)$. The {\bf $\ast$-distribution} of $\bs{a}$ is the linear functional $\mu_{\bs{a}}\colon\PP_{2d}\to\C$ given by $P \mapsto \tau\left[ P(\bs{a}, \bs{a}^\ast) \right]$.

If $(\A_N,\tau_N)$ is a sequence of $W^\ast$-probability spaces for $N\ge 1$, and $\bs{a}_N \in\A_N$ are $d$-tuples, we say $\bs{a}_N$ {\bf converges in $\ast$-distribution to $\bs{a}$} if the $\ast$-distributions $\mu_{\bs{a}_N}$ converges pointwise to $\mu_{\bs{a}}$; i.e.\ if, for all $P\in\PP_{2d}$
\[ \lim_{N\to\infty} \tau_N[P(\bs{a}_N,\bs{a}_N^\ast)] = \tau[P(\bs{a},\bs{a}^\ast)]. \]
We say that $\bs{a}_N$ {\bf converges strongly to $\bs{a}$} if, in addition to converging in $\ast$-distribution, also
\[ \lim_{N\to\infty} \|P(\bs{a}_N,\bs{a}_N^\ast)\| = \|P(\bs{a},\bs{a}^\ast)\|. \]
\end{defi}
In this paper, Definition \ref{def.*-dist} will be used for the sequence of matrix $W^\ast$-probability spaces, i.e.\ $(\A_N,\tau_N)=(\M_N,\ts_N)$, typically with random elements.  As such, the above notions of convergence must be applied randomly as well -- in expectation, in probability, or almost surely (with the latter requiring that we sample all entries of all matrices in $\M_N(\C)$ for all $N$ from a single probability space).

If $a\in\A$ is a single {\em normal} element, $aa^\ast = a^\ast a$, and if $P\in\PP_2$, then $P(a,a^\ast)$ reduces to a linear combination of monomials of the form $a^n a^{\ast m}$.  Since $a$ is bounded, there is a unique compactly-supported Borel probability measure $\mu_a$ on $\C$ determined by
\begin{equation} \label{eq.spectral.meas} \tau[a^n a^{\ast m}] = \int \lambda^n\bar\lambda^m\,\mu_a(d\lambda). \end{equation}
The support of $\mu_a$ is precisely the spectrum $\sigma(a)$, and thence the $\ast$-distribution of $a$ is determined entirely by $\mu_a$ in the classical fashion:
\[ \tau[P(a,a^\ast)] = \int_\C P(\lambda,\bar\lambda)\,\mu_a(d\lambda). \]
This measure is called the {\bf spectral distribution} of $a$.  In the particular case of a normal matrix $A\in\M_N(\C)$, the spectral distribution is none other than the empirical law of eigenvalues
\begin{equation}\label{eq.spectral.meas.ESD} \mu_{A} = \sum_{j=1}^N \delta_{\lambda_j(A)}. \end{equation}
Note, however, that for non-normal $a$, the $\ast$-distribution is no longer encoded by a probability measure; similarly, for a tuple $\bs{a}$ of $\ge 2$ random variables that do not commute with each other, there is no reduction of the $\ast$-distribution to a measure on $\C^d$ as occurs in the classical (commutative) case.  The ESD \eqref{eq.spectral.meas.ESD} of course always exists, and more generally so does the Brown measure (also denoted $\mu_a$) introduced in Section \ref{sect.Brown.meas}.  But the connections between $\mu_a$ and the $\ast$-distribution of $a$ is tenuous at best when $a$ is not normal.

We now come to the framework that gives free probability its name.

	\begin{defi} 	\label{2freeprob}	
		Let $(\A,\tau)$ be a $W^\ast$-probability space, and let $\A_1,\dots,\A_n$ be $*$-subalgebras of $\A$, sharing the same unit $1_\A$. They are said to be {\bf freely independent} or just {\bf free} if, for all $k$, for all $a_i\in\A_{j_i}$ such that $j_1\neq j_2$, $j_2\neq j_3$, \dots , $j_{k-1}\neq j_k$:
        \begin{equation}
        \label{skdjvnsksovdn}
            \tau\Big( (a_1-\tau(a_1))(a_2-\tau(a_2))\dots (a_k-\tau(a_k)) \Big) = 0.
        \end{equation}
        I.e.\ products of centered elements from adjacent-alternating algebras have trace $0$.
		Families of non-commutative random variables are said to be free if the $*$-subalgebras they generate are free.
    \end{defi}

Freeness does not occur in finite-dimensional matrix algebras, or classical $L^\infty$ probability settings, except in fairly degenerate settings (e.g.\ constants are always freely independent from all other random variables).  However, a plethora of examples from random matrix theory exhibit \textit{asymptotic} freeness, i.e. as dimensions of the matrices tend to $\infty$, they become free. We state a classical example of this phenomenon below.

\begin{defi} \label{def.Haar.unitary} A {\bf Haar Unitary Ensemble} $U^N$ is a random $N\times N$ matrix whose distribution is the Haar measure on $\mathrm{U}(N)$.  In a $W^\ast$-probability space $(\A,\tau)$, a {\bf Haar unitary operator} $u\in\A$ is a unitary operator whose $\ast$-distribution is determined by $\tau[u^n] = \tau[u^{\ast\,n}]=\delta_{n0}$ for all $n\in\N$. \end{defi}

\begin{prop}[\cite{Voiculescu1991}; \cite{Speicher1993}] \label{prop.Haar.conj} 
Let $A_0^N,A_1^N$ be random matrix ensembles, i.e.\ random elements in the $W^\ast$-probability space $(\M_N(\C),\ts_N)$.  Let $(\A,\tau)$ be a $W^\ast$-probability space, with elements $a_0,a_1\in\A$ such that $A_0^N\to a_0$ a.s.\ in $\ast$-distribution and $A_1^N\to a_1$ a.s.\ in $\ast$-distribution.  For each $N\in\N$, let $U^N$ be a Haar Unitary Ensemble independent from $\{A_0^N,A_1^N\}$, and let $u$ be a Haar unitary operator in $\A$ freely independent from $\{a_0,a_1\}$.  Then $(A_0^N,U^NA_1^NU^{N\,\ast})$ converges a.s.\ in joint $\ast$-distribution to $(a_0,ua_1u^\ast)$; $ua_1u^\ast$ has the same distribution as $a_1$; and $a_0,ua_1u^\ast$ are freeely independent.  I.e.\ $A_0^N$ and $U^N A_1^N U^{N\,\ast}$ are a.s.\ asymptotically free.
\end{prop}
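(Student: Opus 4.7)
The proof proceeds by the method of moments: one shows convergence of every joint $\ast$-moment $\ts_N[\cdot]$ almost surely to the corresponding moment of $(a_0, u a_1 u^\ast)$ computed under the assumption that the latter are free. Every joint $\ast$-monomial in $(A_0^N, U^N A_1^N U^{N\ast})$ can be written, using $(U^N A_1^N U^{N\ast})^\ast = U^N A_1^{N\ast} U^{N\ast}$, in the cyclic form
\[
\ts_N\bigl[\, P_1(A_0^N)\, U^N Q_1(A_1^N)\, U^{N\ast}\, P_2(A_0^N)\, U^N Q_2(A_1^N)\, U^{N\ast} \cdots P_k(A_0^N)\, U^N Q_k(A_1^N)\, U^{N\ast}\,\bigr]
\]
for some $k\ge 1$ and one-variable $\ast$-polynomials $P_j, Q_j$. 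The plan is to condition on $\{A_0^N, A_1^N\}$, integrate out $U^N$ via the Weingarten formula
\[
\E\bigl[U_{i_1 j_1}\cdots U_{i_k j_k}\overline{U_{i'_1 j'_1}}\cdots \overline{U_{i'_k j'_k}}\bigr] = \sum_{\sigma,\pi\in S_k}\mathrm{Wg}(\sigma\pi^{-1}, N)\prod_\ell \delta_{i_\ell\, i'_{\sigma(\ell)}}\,\delta_{j_{\pi(\ell)}\, j'_\ell},
\]
and identify the large-$N$ limit with the free moment.

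The Weingarten weights satisfy $\mathrm{Wg}(\eta,N) = O(N^{-k - |\eta|})$ where $|\eta|$ is the Cayley distance of $\eta$ to the identity in $S_k$, so the leading contribution, of order $N^{-k}$, comes from the diagonal $\sigma = \pi$. Collapsing the Kronecker deltas produced by $\sigma = \pi$ replaces the index sum by a product of normalized traces of the form $\ts_N[P_{j_1}\cdots P_{j_m}(A_0^N)]\cdot\ts_N[Q_{i_1}\cdots Q_{i_{m'}}(A_1^N)]$, summed over $\sigma\in S_k$ with combinatorial multiplicities encoding cycle structure. By hypothesis each such trace product converges a.s.\ to the corresponding product of $\tau$-values. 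A standard non-crossing/genus-zero bookkeeping then identifies the resulting limiting sum exactly with the value of the joint moment under the assumption that $a_0$ is free from $u a_1 u^\ast$, with $u$ Haar unitary; the off-diagonal ($\sigma \ne \pi$) Weingarten contributions contribute $O(N^{-2})$ relative corrections, which are controlled using the uniform moment bounds provided by the a.s.\ $\ast$-distribution convergence of $A_0^N, A_1^N$.

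To upgrade convergence of expectations to almost-sure convergence, the plan is to run the same Weingarten expansion on the second moment of the centered trace; this produces a variance bound of order $O(N^{-2})$ for each fixed $\ast$-monomial, so Chebyshev together with Borel--Cantelli, applied along a countable family of $\ast$-polynomials dense in the space of all such, yields a single probability-one event on which every joint $\ast$-moment converges. Finally, the trace property $\tau[u P(a_1,a_1^\ast)u^\ast] = \tau[P(a_1,a_1^\ast)]$ shows $ua_1u^\ast \eqd a_1$, and the limiting moment formula computed above then verifies \eqref{skdjvnsksovdn} for $a_0$ and $ua_1u^\ast$, giving the asserted freeness. The hard part is the combinatorial core of step two --- checking that the leading $N^{-k}$ permutation sum exactly reproduces the free moment --- since this is where Voiculescu's and Speicher's insights \cite{Voiculescu1991,Speicher1993} enter essentially; once that identification is in hand, the remaining error analysis and the variance computation are routine consequences of the Weingarten asymptotics.
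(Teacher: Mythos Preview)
The paper does not actually prove this proposition: it is quoted as a classical result with citations to \cite{Voiculescu1991} and \cite{Speicher1993}, and the subsequent remark explains the history and points to \cite[Theorem~4.9]{MingoSpeicherBook} for details, noting only that the passage from deterministic $A_0^N,A_1^N$ to random ones is ``a straightforward Tonelli argument.''

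Your Weingarten-calculus outline is a correct modern route to the result, and in fact is essentially the approach taken in the Mingo--Speicher reference the paper cites. A couple of points worth sharpening. First, the claim that the leading contribution comes from $\sigma=\pi$ is correct as far as the Weingarten weight is concerned, but within the diagonal $\sigma=\pi$ not all permutations contribute at top order: the index contractions contribute $N^{\#(\gamma\sigma^{-1})+\#(\sigma)}$ (with $\gamma$ the full cycle coming from the trace), and only those $\sigma$ lying on the geodesic from $\mathrm{id}$ to $\gamma$ in the Cayley graph --- i.e.\ the non-crossing ones --- survive; you gesture at this with ``non-crossing/genus-zero bookkeeping,'' but it is the heart of the matter, not an afterthought. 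Second, historically Voiculescu's 1991 argument predates the Weingarten formalism and proceeds instead via an asymptotic orthogonality/cumulant computation; the Weingarten packaging you use is due to Collins and later authors. Your candid acknowledgement that ``the hard part is the combinatorial core of step two'' is accurate: what you have written is a sound plan that correctly locates where the real work lies, rather than a self-contained proof.
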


Now we introduce important families of free random variables that are large-$N$ limits of Gaussian random matrix ensembles and stochastic processes.

\begin{defi} \label{smdvskjvsdhg} A family of non-commutative random variables $ x=(x_1,\dots ,x_m)$ in a $W^*$-probability space $(\A,\tau)$ is called 
		a {\bf free semicircular system} if $x_1,\ldots,x_m$ are freely independent, 
		selfadjoint ($x_i=x_i^*$, $i=1 \dots m$), and for all $k\in\N$ and $i=1,\dots,m$ one has
		\begin{equation}
        \label{slkdnvsldnmv}
			\tau( x_i^k) =  \int t^k \,\varsigma(dt),
		\end{equation}
		with $\varsigma(dt) = \frac 1 {2\pi} \sqrt{4-t^2} \ \mathbbm{1}_{|t|\leq2}\, dt$ the semicircle distribution. For an explicit construction of such operators, we refer to \cite[Lecture 7]{SpeicherNicaBook}.
        
        A {\bf free semicircular Brownian motion} $(x(t))_{t\geq 0}$ is a family of semicircular variables such that $x(t)/\sqrt{t}$ has the semicircle distribution for each $t>0$, and the additive increments of $x$ are stationary and freely independent: for $0 \leq t_1 < t_2 < \infty$, $x(t_2)-x(t_1)$ has the same law as $ x(t_2-t_1)$, and $x(t_2) - x(t_1)$ is freely independent from $(x(t))_{0\leq t\leq t_1}$.   See \cite{BianeSpeicher1998} for a detailed construction and properties.
\end{defi}

We also review relevant classical random matrix models.

\begin{defi}
	\label{2HBdef}
	A {\bf Hermitian Brownian motion} $(X^N(t))_{t\ge 0}$ of size $N$ is the Brownian motion on the vector space of Hermitian matrices in $\M_N(\C)$, with respect to the scaled Hilbert--Schmidt inner product \eqref{eq.HS.innprod}.  Entry-wise, it can be described as follows:
	\begin{itemize}
		\item For $1\leq i\leq N$, the random variables $\{\sqrt{N} [X^N(t)]_{i,i}\}_{t\ge 0}$ are independent standard Brownian motions on $\R$.
		\item For $1\leq i<j\leq N$, the random variables $\{\sqrt{2N}\,\mathrm{Re}[X^N(t)]_{i,j}\}_{t\ge 0}$ and $\{\sqrt{2N}\,\mathrm{Im}[X^N_t]_{i,j}\}_{t\ge 0}$ are independent standard Brownian motions on $\R$, independent from $\{[X^N(t)]_{i,i}]\}_{t\ge 0}$.
	\end{itemize}
    A random matrix $Y^N$ whose law is equal to that of $X^N(1)$ is called a {\bf Gaussian Unitary Ensemble}, or $\mathrm{GUE}$ for short; its law can be described by its density with respect to Lebesgue measure on Hermitian matrices, which is proportional to $X\mapsto \exp(-N\mathrm{Tr}_N[X^2])$.  This density is invariant under unitary conjugation, hence the name.
    The process $(X^N(t))_{t\ge 0}$ is sometimes called a $\mathrm{GUE}$ Brownian motion.
\end{defi}

\begin{defi}    \label{def:GinibreBM}
    A {\bf Ginibre Brownian motion} of size $N$ is an $\M_N(\C)$-valued diffusion process $Z^N(\cdot)$ of the form
    \[ Z^N(t) \equaldist \frac{1}{\sqrt{2}}(X^N(t) + \i Y^N(t) ) \]
    where $X^N,Y^N$ are independent Hermitian Brownian motions of size $N$.  Entrywise, this can be described simply: for $1\le i,j\le N$, 
    $\{\sqrt{2N}\,\mathrm{Re}Z_{i,j}^N, \sqrt{2N}\,\mathrm{Im}Z_{i,j}^N\}_{t\ge 0}$ are all independent standard Brownian motions on $\mathbb{R}$.

    The random matrix $Z^N(1)$, whose entries are all i.i.d.\ complex normal random variables, is called a {\bf Ginibre Ensemble} first introduced in \cite{Ginibre1965}. Equivalently, the law of $Z$ has a density with respect to Lebesgue measure on $\M_N(\C)$ which is proportional to $Z\mapsto \exp(-N\mathrm{Tr}_N[Z^\ast Z])$.  Note that this density is invariant under $Z\mapsto UZV^\ast$ for any deterministic unitary matrices $U,V\in\mathrm{U}(N)$; thus, the law of the Ginibre ensemble is unitarily bi-invariant.
\end{defi}

        We will often need to consider free semicircular variables and matrices in the same space.
        To do so, let $(x_{u,v})_{1\leq u\leq v\leq N}$ and $(y_{u,v})_{1\leq u< v\leq N}$ be free semicircular variables in a $W^*$-probability space $(\A,\tau)$, and set 
        $$ x^N_{u,v} = \left\{ \begin{array}{cc}
           \displaystyle{\frac{x_{u,v}+\i y_{u,v}}{\sqrt{2N}}}& \text{ if } u< v  \\
           \displaystyle{\frac{x_{u,v}}{\sqrt{N}}} & \text{ if } u= v  \\
           \displaystyle{\frac{x_{v,u}-\i y_{v,u}}{\sqrt{2N}}} & \text{ else.}
        \end{array} \right. $$
        Then if we set $\tau_N \deq \tau\circ\ts_N$, one has that the trace of the $k$-th moment of $x^N$ is equal to the right hand side of Equation \eqref{slkdnvsldnmv}. Thus $x^N$ is a free semicircular variable. Let us now define $x_i^N$ just like $x^N$ but where each $x_i^N$ is generated with freely independent semicircular systems. Next, we set $\A_N$ to be the $W^*$-subalgebra of $\M_N(\A)$ generated by $\M_N(\C)$ and $(x_i^N)_{1\leq i\leq d}$ and we endow $\A_N$ with $\tau_N$. We then have the following lemma.

        \begin{lemma}
        \label{lem:bigspace}
            With $(\A_N,\tau_N)$ built as above, we have that $(x_i^N)_{1\leq i\leq d}$ is a free semicircular system, free from $\M_N(\C)$.
        \end{lemma}
        
        Thus one can view $(\A_N,\tau_N)$ as the free product of $\M_N(\C)$ with the $W^\ast$-algebra generated by a free semicircular system. For the proof see \cite[Proposition 2.3]{huit}. 
        
        In particular, since the law of $(x_i^N)_{1\leq i\leq d}$ does not depend on $N$, we will often simply denote it $(x_i)_{1\leq i\leq d}$.  Note that when restricted to $\M_N(\C)$, $\tau_N$ is just the regular normalized trace on matrices. Similarly, the restriction of $\tau_{N}$ to the $W^\ast$-algebra generated by $(x_i^N)_{1\leq i\leq d}$ does not depends on $N$ and hence will simply be denoted by $\tau$.

\subsection{Elliptical Brownian motions, $\mathrm{GL}(N,\C)$, and the Large-$N$ Limit\label{sect.background.BMs}}

We summarize the constructions in Section \ref{sect.intro.BMs} here, with further discussion of their large-$N$ limits.

\begin{defi}[Elliptical Brownian motions] \label{def:elliptical}
Let $\rho>0$ and let $\zeta\in\C$ with $0<|\zeta|\le\rho$.  Define
\begin{equation}
    \label{parametrization}
    \theta = \frac12\arg \zeta \qquad a = \sqrt{\frac12(\rho+|\zeta|)} \qquad b = \sqrt{\frac12(\rho-|\zeta|)}. 
\end{equation}
Then $a^2+b^2=\rho$ and $e^{2\i\theta}(a^2-b^2)=\zeta$, so that $(a,b,\theta)$ satisfy \eqref{eq.rho.zeta}.  This yields a bijective mapping
\begin{equation}\label{eq.bijection} \{(\rho,\zeta)\colon 0<|\zeta|\le\rho\}\mapsto\{(a,b,\theta)\colon a>b\ge 0, \theta\in[0,\pi)\}. \end{equation}
We may extend the definitions of $a,b$ to allow $\zeta=0$, in which case $a=b$; here $\theta$ is undefined.

Let $(X^N(t))_{t\geq 0}$, $(Y^N(t))_{t\geq 0}$ be independent Hermitian Brownian motions of size $N$. 
We define the general {\bf elliptical Brownian motion} $(W^N_{\rho,\zeta}(t))_{t\ge0}$ by
	\[ W^N_{\rho,\zeta}(t) = e^{\i \theta} \left(aX^N(t) + \i b Y^N(t)\right). \]
    Similarly, if $(x(t))_{t\geq 0}$ and $(y(t))_{t\geq 0}$ are freely independent free semicircular Brownian motion, the {\bf free elliptical Brownian motion $(w_{\rho,\zeta}(t))_{t\ge0}$} is defined as
    \[ w_{\rho,\zeta}(t) = e^{\i \theta} \left(a x(t) + \i b y(t)\right). \]
In both cases, when $a=b$ (i.e.\ $\zeta=0$), all values of $\theta$ yield the same distribution for the right-hand-side, so we may take canonically $\theta=0$ in this case.

Note that the laws of $W^N_{\rho,\zeta}(t)$ and $w_{\rho,\zeta}(t)$ are invariant under $a\mapsto-a$, $b\mapsto-b$, and $(a,b,\theta)\mapsto(b,a,\theta+\frac{\pi}{2})$.  It then follows from \eqref{eq.bijection} that $\rho>0$, $|\zeta|\le\rho$ paramaterize the laws of all possible elliptical Brownian motions.

To lighten notation we will often drop the subscripts $\rho,\zeta$ if there can be no confusion. 
\end{defi}
Of particular note is the case $(\rho,\zeta)=(1,0)$ (corresponding to $a=b=1$ with $\theta$ undetermined); here $W^N$ is a Ginibre Brownian motion.

As we proceed to consider putative large-$N$ limits, it is useful to disambiguate the dimension of the Brownian motion processes.

\begin{notation} The processes above will be indexed by $N$ as needed.  Moreover, the parameters $(\rho,\zeta)$ will remain fixed throughout most of this paper.  As such, for readability, we often suppress the explicit dependence of the notation for the Brownian motion processes on the parameters, and define
\begin{align*}
W_{\rho,\zeta}(t) &= W_{\rho,\zeta}^N(t) = W^N(t) = W(t) \in\M_N(\C) \\
B_{\rho,\zeta}(t) &= B_{\rho,\zeta}^N(t) = B(t) = B(t) \in \mathrm{GL}(N,\C).
\end{align*}
\end{notation}

\begin{defi}\label{def.SDE}
	We define the $(\rho,\zeta)$-Brownian motion $(B_{\rho,\zeta}^N(t))_{t\geq 0}$ as the solution of the SDE \eqref{eq.B.SDE.rho.zeta}, restated here:
	\[ B_{\rho,\zeta}^N(t) = I_N + \int_{0}^{t} B_{\rho,\zeta}^N(s)\, dW^N_{\rho,\zeta}(s) + \frac{\zeta}{2} \int_{0}^{t} B_{\rho,\zeta}^N(s)\,ds.\]
	We also define its free version $b_{\rho,\zeta}$ by replacing the independent Hermitian Brownian motions in the SDE above by freely independent self-adjoint free Brownian motions. To lighten notation we will often drop the subscripts $\rho,\zeta$. 
    \end{defi}

\begin{rem}[Other Parametrizations in the Literature] Our SDE in \eqref{eq.B.SDE.rho.zeta} and Definition \ref{def.SDE} has a slight parametrization change from some papers in the literature dealing with the same Brownian motion processes.
\begin{itemize}
    \item In Biane's original papers \cite{Biane1997b,Biane1997JFA} where the free multiplicative Brownian motion $\Lambda$ was introduced, the given SDE was $d\Lambda(t) = \i\, dZ(t)\Lambda(t)$, where $Z$ was a circular Brownian motion (i.e.\ our $w_{1,0}$).  Since $Z$ and $\i Z$ have the same law, and right vs left Brownian motions have the same law, Biane's $\Lambda$ is the same as our $b_{0,1}$.
    \item In \cite{Kemp2016,Kemp2017,KempCebron2022,HallHo2023,HallHo2025Spectrum,BCC2025}, the processes $W_{\rho,\zeta}$ and $B_{\rho,\zeta}$ differ in parametrization from ours in two ways: $W\leftrightsquigarrow \i W$, and the parameters $(\rho,\zeta)$ are $(s,\tau)$ where $s=\rho$ and $\tau = \rho-\zeta$; in particular, the Brownian motions are indexed by $(s,\tau)\in\R_+\times\C$ with $|s-\tau|\le s$.  This relates to the origin of the complex parameter in \cite{DHK2020}, where it arises in the complexification of time in the Segal--Bargmann transform.
    \item The free elliptical Brownian motions $w_{\rho,\zeta}(t)$ are discussed in \cite{Zhong2025}, using the same parametrization as ours, with $\zeta$ renamed $\gamma$.
\end{itemize}
\end{rem}

The ``flat'' Brownian motions $W^N = W^N_{\rho,\zeta}$ are (by design) invariant under unitary conjugation, i.e.\ 
for a fixed $U\in\mathrm{U}(N)$ $(UW^N(t)U^*)_{t\ge0}$ has the same law (as a process) as $(W^N(t))_{t\ge0}$.  It follows that the same holds true for the process $(B^N(t))_{t\ge0}$.  A straightforward calculation with the SDE (Definition \ref{def.SDE}) shows that the Brownian motions $(B^N_{\rho,\zeta}(t))_{t\ge0}$ are also $\mathrm{U}(N)$-conjugation invariant:
\begin{equation} \label{eq.B.conj.inv} UB^N_{\rho,\zeta}(t)U^\ast \equaldist B^N_{\rho,\zeta}(t) \qquad \forall\; t\ge 0,\, \rho>0,\, |\zeta|\le\rho,\, U\in\mathrm{U}(N).
\end{equation}

For any $t\ge 0$, the free multiplicative Brownian motion $b_{\rho,\zeta}(t)$ is the large-$N$ limit in $\ast$-distribution of the $\mathrm{GL}(N,\C)$ Brownian motion $B^N_{\rho,\zeta}(t)$; this was proved for $\zeta\in(-\rho,\rho)$ in \cite{Kemp2016} and generalized to complex $\zeta$ with $|\zeta|\le\rho$ (and strong convergence) in \cite{BCC2025}. We need this convergence paired with an arbitrary initial condition, stated in Lemma \ref{lem.init.cond.*-conv}; we now provide the proof.

\begin{proof}[Proof of Lemma \ref{lem.init.cond.*-conv}]
Fix $t\ge 0$.  \cite{Cebron2013,Kemp2016,BCC2025} proved that $B^N(t)\to b(t)$ in $\ast$-distribution; by assumption of the Lemma, $B_0^N\to b_0$ in $\ast$-distribution. Hence, by Proposition \ref{prop.Haar.conj}, $(B_0^N,U^N B^N(t)U^{N\,\ast})$ converge a.s.\ in $\ast$-distribution to $(b_0,ub(t)u^\ast)$.

Now, let $f\in C_b(\M_N(\C))$, and note that
\begin{align*} &\E\left[f(B_0^N,U^NB^N(t)U^{N\,\ast})\right] \\
=& \int_{\mathrm{U}(N)} \mathrm{Haar}(dU)\int_{\M_N(\C)} \mathrm{Law}_{B_0^N}(dB_0) \int_{\M_N(\C)} \mathrm{Law}_{B^N(t)}(dB_1)\;f(B_0,UB_1U^\ast) \end{align*}
where we have used the change-of-variables formula and the independence of $B_0^N$ and $B^N(t)$.
In the inside integral, $U$ is a fixed unitary matrix; hence, by \eqref{eq.B.conj.inv}, the law of $B^N(t)$ is invariant under conjugation by $U$.  Thus
\[ \int_{\M_N(\C)} \mathrm{Law}_{B^N(t)}(dB_1)\;f(B_0,UB_1U^\ast) = \int_{\M_N(\C)} \mathrm{Law}_{B^N(t)}(dB_1)\;f(B_0,B_1) \]
and reassembling this establishes that
\[ \E\left[f(B_0^N,U^NB^N(t)U^{N\,\ast})\right] = \E\left[f(B_0^N,B^N(t))\right]. \]
Since this holds for all $f\in C_b(\M_N(\C))$, it follows that $(B_0^N,U^NB^N(t)U^{N\,\ast}) \equaldist (B_0^N,B^N(t))$.  Thus
\[ (B_0^N,B^N(t)) \equaldist (B_0^N,U^N B^N(t) U^{N\,\ast}) \to (b_0,ub(t)u^\ast)\equaldist(b_0,b(t))\quad \text{a.s.} \]
where the last equality in $\ast$-distribution comes from the fact that $ub(t)u^\ast$ has the same $\ast$-distribution as $b(t)$, and the pair $b_0,b(t)$ is already freely independent.  Thence, it follows that $B_0^NB^N(t)$ converges a.s.\ in $\ast$-distribution to $b_0b(t)$, as claimed.
\end{proof}

\subsection{The Schwinger--Dyson Equations\label{sect.Schwinger-Dyson}}

One of the core computational toolsets we use are the Schwinger--Dyson equations, which (in our context) are free probability analogues of Gaussian integration by parts for matrix-valued functions. They are stated, for us, in terms of non-commutative polynomials (Notation \ref{def.ncpoly}) and non-commutative derivatives, discussed next.

\begin{defi} \label{def.ncpoly.*} On the algebra $\PP_d = \C\langle X_1,\ldots,X_d\rangle$,  we define an involution $*$ by $X_i^*= X_i$ extended to $\PP_d$ by real linearity and the product rule $(\alpha P Q)^* = \overline{\alpha}\,Q^*P^*$ for $\alpha\in\C$ and $P,Q\in\PP_d$.  A polynomial $P\in\PP_d$ is called {\bf self-adjoint} if $P=P^\ast$.
\end{defi}

The above $\ast$-algebra structure on $\PP_d$ is compatible with evaluation at self-adjoint random variables in a $W^\ast$-probability space, which will be our focus for the constructions in the rest of this Section and the following one.  Note: if $P\in\PP_d$ is self-adjoint and $\bs{y} = (y_1,\ldots,y_d)$ are all self-adjoint then the random variable $P(\bs{y})$ is also self-adjoint.

\begin{defi}
\label{slkdvnsldkjg}
	
	If $1\leq j\leq d$, the {\bf non-commutative derivatives} $\partial_j: \PP_d \to \PP_d^{\otimes 2}$ are defined first on monomials $M$ by
	$$ \partial_j M = \sum_{M=AX_jB} A\otimes B ,$$
	and then extended by linearity to all polynomials.  We then define the {\bf cyclic derivatives} $D_j = m\circ \partial_j$ with $m : A \otimes B \mapsto BA$.
\end{defi}
For proof of the next proposition see for instance \cite[Proposition 2.10]{CGP2022}.

\begin{prop}[Schwinger--Dyson Equations]
    \label{prop:SDeq}
    Let $Q\in \PP_d$ and $1\le j\le d$.
    \begin{enumerate}
        \item[(a)]
        If $\bs{X}^N = (X^N_1,\ldots,X^N_d)$ is a $d$-tuple of independent GUE random matrices in $\M_N(\C)$, then 
        \begin{equation}    \label{SDeq-mat}
            \E\left[\ts_N(X_j^N Q(\bs{X}^N))\right] = \E\left[\ts_N\otimes\ts_N\left((\partial_jQ)(\bs{X}^N)\right)\right],
        \end{equation}
        where we recall $\ts_N$ is the normalized trace. 
        \item[(b)]
        If $\bs{x}=(x_1,\dots, x_d)$ is a $d$-tuple of freely independent semicircular random variables, 
        \begin{equation}    \label{SDeq-op}
            \tau(x_j Q(\bs{x})) = \tau\otimes\tau\left((\partial_jQ)(\bs{x})\right).
        \end{equation}
    \end{enumerate}
\end{prop}

One of the core technical ingredients of this paper is a method for interpolating directly between random matrices and free variables that yields a powerful quantitative interpretation and extension of genus expansions for functions of GUEs.  The method was introduced in \cite{Parraud2022,Parraud2023}, and used in \cite{BCC2025,neuf,sept,huit} for various applications.  A combinatorial approach was recently explored in \cite{Jekel-ALEA}, explaining more directly the connection to the classical genus expansion.  We now briefly outline how the first $\mathcal{O}(\frac{1}{N^2})$ term in the expansion arises, yielding a fourth-order differential operator $L$ whose iteration underlies the full expansion.

Let $\bs{X}^N$ be a $d$-tuple of independent GUE matrices in $\M_N(\C)$, and let $\bs{x}$ be a $d$-tuple of freely independent semicircular random variables; as in Lemma \ref{lem:bigspace}, we can realize $\bs{X}^N$ and $\bs{x}$ together in the same $W^\ast$-probability space $(\A_N,\tau_N)$, freely independent from each other.  For any non-commutative polynomial $Q\in\PP_d$, $\tau_N(Q(\bs{X}^N))-\tau_N(Q(\bs{x}))\to 0$ as $N\to\infty$.  To get a handle on the rate of convergence, we introduce a smooth interpolation between $\bs{X}^N$ and $\bs{x}$ akin to the Mehler kernel for the Ornstein--Uhlenbeck process:
\[ \bs{z}_t := \sqrt{1-e^{-t}}\bs{x} + e^{-t/2}\bs{X}^N. \]
Note, then, that
\begin{equation} \label{eq.Mehler.1} \E\left[\tau_N(Q(\bs{X}^N))\right] - \tau_N(Q(\bs{x})) = -\int_0^\infty \E\left[\frac{d}{dt}\tau_N(Q(\bs{z}_t))\right]dt. \end{equation}

The expected derivative on the right-hand-side can then be expressed in terms of the generator $L_1$ of the process.  With appropriate use of the Schwinger--Dyson equations \eqref{SDeq-mat} and \eqref{SDeq-op}, we can express
\begin{equation} \label{eq.Mehler.diff} \E\left[\frac{d}{dt}\tau_N(Q(\bs{z}_t))\right] 
= \E\left[e^{-t}\Big(\tau_N\otimes\tau_N - \tau_N\circ(\ts_N\otimes \ts_N)\Big)(L_1Q(\bs{z}_t))\right].
\end{equation}
where $L_1 = \frac12\sum_{i=1}^d \partial_iD_i$.  A key insight in \cite{Parraud2023} was to recognize the difference above as coming from a {\em second} Mehler-like interpolation.  To see how, first note from Definition \ref{slkdvnsldkjg} that $L_1Q\in\PP_d\otimes\PP_d$.  Hence it is a finite linear combination of tensors $P_1\otimes P_2$.  When we evaluate such a tensor at a $d$-tuple $\bs{z}$, we mean $P_1(\bs{z})\otimes P_2(\bs{z})$; we could just as well evaluate the tensor factors at different $d$-tuples $P_1(\bs{z}^1)\otimes P_2(\bs{z}^2)$; we denote such evaluation as $(L_1Q)(\bs{z}^1,\bs{z}^2)$.

Let $\{\bs{z}^\ell\}_{1\le\ell\le 2}$ and $\{\bs{y}^k\}_{1\le k\le 4}$ be six families of $d$-tuples of free semicircular variables freely independent from $\M_N(\C)$, and define
\[ \bs{z}^k_{s,t} = \sqrt{1-e^{-s}}\bs{y}^k + \sqrt{e^{-s}-e^{-t}}\bs{x}^{\lceil k/2 \rceil} + e^{-t/2}\bs{X}^N. \]
Notice that $\bs{z}^k_{0,t} \equaldist \bs{z}^k_{t,t} \equaldist \bs{z}_t$.  Ergo, we may just as well express the right-hand-side of \eqref{eq.Mehler.diff} as
\begin{align*} \E\left[e^{-t}\Big(\tau_N\otimes\tau_N(L_1Q(\bs{z}^1_{t,t},\bs{z}^2_{t,t})) - \tau_N\circ(\ts_N\otimes \ts_N)(L_1Q(\bs{z}^1_{0,t},\bs{z}^2_{0,t}))\Big)\right].
\end{align*}
The free independence of $\bs{z}^1_{s,t}$ and $\bs{z}^2_{s,t}$ allows the factoring of the $\tau_N$ on the right, putting the two terms on the same footing: expressed in terms of $\tau_N\otimes\tau_N$.  Combining this with \eqref{eq.Mehler.1} yields
\begin{align} \nonumber &\; \E\left[\tau_N(Q(\bs{X}^N))\right]-\tau_N(Q(\bs{x})) \\
=& \; -\int_0^\infty e^{-t} \int_0^t\E\left[\frac{d}{ds}\tau_N\left(\ts_N\otimes \ts_N(L_1Q(\bs{z}^1_{s,t},\bs{z}^2_{s,t}))\right)\right]\,ds\,dt \label{eq.Mehler.2} \end{align}

Since $\bs{z}^1_{s,t}$ and $\bs{z}^2_{s,t}$ are copies of another Ornstein--Uhlenbeck type process, the expected derivative in \eqref{eq.Mehler.2} will again be expressible in terms of the generator of this process, which is a second-order (non-commutative) differential operator $L_2$.  Hence, the difference we are estimating in \eqref{eq.Mehler.1} is expressed in term of the {\em fourth order} operator $L=L_2L_1$.  Indeed: again applying judicious use of the Schwinger--Dyson equations \eqref{SDeq-mat} and \eqref{SDeq-op}, \eqref{eq.Mehler.2} yields the explicit formula
\begin{equation} \label{eq.Mehler.3} \E\left[\tau_N(Q(\bs{X}^N))\right]-\tau_N(Q(\bs{x}))
= \frac{1}{N^2}\int_0^\infty\int_0^t e^{-s-t}\E\left[\tau_N\left(\Theta_{s,t}\circ L_2L_1 Q\right)\right] ds dt \end{equation}
where $L_2 = \sum_{j=1}^d \partial_j\otimes\partial_j$ and $\Theta_{s,t}$ is a twisted evaluation map defined as follows.  Noting that $L_2L_1\colon\PP_d\to\PP_d^{\otimes 4}$, $\Theta_{s,t}$ is defined by linearity and 
\[ \Theta_{s,t}(P_1\otimes P_2\otimes P_3\otimes P_4) := P_2(\bs{z}^1_{s,t})P_1(\bs{z}^3_{s,t})P_4(\bs{z}^4_{s,t})P_3(\bs{z}^2_{s,t}). \]
Crucially: the $\frac{1}{N^2}$ arises from \eqref{SDeq-mat} which employs $\partial_j$ to merge two normalized traces into one normalized trace, yielding an extra factor of $1/N$. This happens twice because of a separate merging of free traces $\tau$, which is where the extra interpolating processes $\bs{z}^3_{s,t}$ and $\bs{z}^4_{s,t}$ arise.  See \cite{Felix-Interpolation-Notes} for the full calculation.

Equation \eqref{eq.Mehler.3} gives an explicit $\mathcal{O}(\frac{1}{N^2})$ bound for the difference between the expected trace of $Q(\bs{X}^N)$ and its large-$N$ limit, in terms of the fourth-order non-commutative differential operator $L = L_2L_1 = \frac12\sum_{i,j=1}^d (\partial_j\otimes\partial_j)\circ\partial_iD_i$.  The resultant polynomial $LQ$ is evaluated at interpolated processes $\bs{z}^k_{s,t}$; by comparing these to $\lim_{t\to\infty}\bs{z}^k_{0,t} = \bs{x}$, the above calculations can be iterated, yielding an expansion in powers of $\frac{1}{N^2}$ for $\E[\tau_N(Q(\bs{X}^N))]$ up to any integer order desired.  This expansion coincides with the genus expansion in the case of monomial $Q$ (necessarily by uniqueness), but expresses the coefficients in terms of powers of the operator $L$.  We detail this expansion in \eqref{skjvbsknv} below.

The full details of the iterated expansion are notationally complicated.  In order to minimize the amount of notation, we will not derive it fully here; the interested reader can find all the details in \cite{Parraud2023}, or \cite{Jekel-ALEA} for a more combinatorial approach. We present only the features of the expansion needed ultimately for our proof of Theorem \ref{thm.2}, to be used as a black box as much as possible. We do however need to introduce the following three definitions, expanding on the constructions above.

\begin{defi}
\label{tech1}
    Let us assume that we are given
    \begin{itemize}
        \item $\bs{X}^N=(X^N_1,\dots, X^N_d)$ a $d$-tuple of independent GUE matrices,
        \item $(x^m_i)_{i\in [1,d], m\in \N }$ free semicircular variables,
        \item $I = (I_1,\dots,I_{2n})\in \mathbb{Z}_{>0}^{2n}$ a collection of $2n$ integers,
        \item $T_n = (t_1,\dots,t_{2n})$ a sequence of non-negative real numbers.
    \end{itemize}
    Then with $\widetilde{t}_0=0$ and $\widetilde{t}_1\le \dots\le \widetilde{t}_{2n}$ the elements of $T_n$ sorted in increasing order, we set 
	$$ X_{i,I}^{N,T_{n}} = \sum_{l=1}^{2n} (e^{-\widetilde{t}_{l-1}} -e^{-\widetilde{t}_l})^{1/2} x^{I_{l}}_i + e^{-\widetilde{t}_{2n} /2} X_i^N , \qquad 1\le i\le d\,,$$
	$$ x_{i,I}^{T_{n}} = \sum_{l=1}^{2n} (e^{-\widetilde{t}_{l-1}} -e^{-\widetilde{t}_l})^{1/2} x^{I_{l}}_i + e^{- \widetilde{t}_{2n}/2} x_i^0 \,,\qquad 1\le i\le d. $$
    Note that we used Lemma \ref{lem:bigspace} to define $X^{N,T_n}_{i,I}$.
\end{defi}

Note that for $I,T_n$ fixed, the family $(x_{i,I}^{T_{n}})_{1\leq i\leq d}$ is a free semicircular system.

\begin{defi}[Terms of the $m$-th kind]
    \label{def:terms}
    If $P\in\PP_d$, a ``term of the $m$-th kind associated to $P$''
    is a tuple $(P_1,\dots, P_{m+1})\in \mathcal P_d^{m+1}$ defined inductively as follows:
    \begin{itemize}
        \item For $m=0$, it is either $P$ or $P^*$.
        \item For $m>0$ and any term of the $m-1$-st kind $(P_1,\dots,P_m)$, for every $i,j$ we fix choices of $A_l^{i,j}, B_l^{i,j}\in \PP_d$ such that
        \begin{equation}
            \label{skjvnskvndigh}
            \partial_iP_j=\sum_lA_l^{i,j}\otimes B_l^{i,j},
        \end{equation}
        and declare $(P_1,\dots,P_{j-1}, A_l^{i,j},B_l^{i,j},P_{j+1},\dots, P_m)$ to be a term of the $m$-th kind for each $i,j,l$.
    \end{itemize}
    This allows us to define
    $$ c(P) \deq \sup_{\substack{m\geq 0, \\ i\in [1,d]}} \sup_{\substack{(P_1,\dots,P_{m+1}) \text{ is a} \\ \text{term of the $m$-th kind.}}} \left\{ \sum_{j=1}^{m+1}\kappa_j\ \middle|\ \forall j,\ \partial_i P_j = \sum_{l=1}^{\kappa_j} A_l^{i,j}\otimes B_l^{i,j} \right\}.$$
\end{defi}

Heuristically, $c(P)$ measures the growth in the number of terms between the successive differentials of a polynomial: $\partial_i P$ will be a sum of at most $c(P)$ simple tensors and $\left(\partial_j\otimes\id\circ \partial_i+\id\otimes\partial_j\circ \partial_i  \right)P$ will be a sum of at most $ c(P)^2$ simple tensors.
Besides, note that the definition of the terms of the $m$-th kind is not unique since there is no canonical way of writing $\partial_iP_j$ as a sum of simple tensors, which is why we \emph{fix} a way of doing so in the construction above. 

\begin{defi}[$P$-monomials]
\label{def:Pmonomial} 
    With the notations of Definition \ref{def:terms}, 
    given $P\in \mathcal P_d$, a \emph{$P$-monomial} is any non-commutative polynomial $M$ in variables $(X_{i,I})_{i\in[1,d], I\in \mathbb{Z}_{>0}^{2n}}$, for some fixed $n\ge0$, obtained in the following way:
    For some finite set $\mathcal{C}$ of terms $\underline P^j=(P^j_1,\dots, P^j_{m_j+1})$, $j=1,\dots, q$ associated to $P$, let 
            $$ M=\prod_{s=1}^p R_s( \bs X_{I^s})$$ 
    for some reordering $(R_1,\dots, R_p)\in \mathcal P_d^p$ of the concatenation $(\underline P^1,\dots, \underline P^q)\in \mathcal P_d^p$, and some $I^1,\dots, I^p\in \mathbb{Z}_{>0}^{2n}$ (possibly with repetition), where $\bs X_{I^s}= (X_{i,I^s})_{i\in [1,d]}$. 
    Letting $r_m$ be the number of terms of the $m$-th kind in $\mathcal C$, 
we set
    $$p(M) := p=\sum_{i\geq 0} (i+1)r_i,\quad q(M) := q=\sum_{i\geq 0} r_i. $$
\end{defi}

\begin{exe}
    With $P=(X_2+1)X_1X_2^2X_1+X_1\in \mathcal P_2$, we have
    $$ \partial_1 P = (X_2+1)\otimes X_2^2X_1 + (X_2+1)X_1X_2^2\otimes 1 + 1\otimes 1,  $$
    $$ \partial_2 P = 1\otimes X_1 X_2^2X_1 + (X_2+1)X_1\otimes X_2X_1 + (X_2+1)X_1X_2\otimes X_1,  $$
    so we may take the following six elements of $\mathcal P_2^2$:
    \begin{align}   
        &(X_2+1, X_2^2X_1), ((X_2+1)X_1X_2^2, 1), (1, 1), \notag\\
        &(1, X_1 X_2^2X_1), ((X_2+1)X_1, X_2X_1), ((X_2+1)X_1X_2, X_1),\label{ex:terms}
    \end{align}
    to be the terms of the first kind associated to $P$. 
    Since we may alternatively express
    \[
    \partial_1P = 
    X_2\otimes X_2^2X_1 +1\otimes X_2^2X_1+ (X_2+1)X_1X_2^2\otimes 1 + 1\otimes 1,
    \]
    we could have taken the two terms
    \[
    (X_2,X_2^2 X_1), \; (1, X_2^2 X_1)
    \]
    in place of the first term in \eqref{ex:terms} to get a list of seven terms of the first kind.
    
    Having fixed the list \eqref{ex:terms} of terms of the first kind, then by differentiating the second entry $X_2^2X_1$ of the first term in \eqref{ex:terms} with respect to $X_2$ as follows:
    $$\partial_2 X_2^2X_1 = 1\otimes X_2X_1 + X_2\otimes X_1, $$
    we obtain the following two terms of the second kind:
    \[
    (X_2+1,1,X_2X_1)\,,\qquad (X_2+1, X_2,X_1).
    \]
    
    From the collection $\mathcal C=\{ \underline P^1,\underline P^2,\underline P^3\}$ of terms
    \begin{equation}    \label{ex:terms2}
        \underline P^1=P^* \,, \quad  \underline P^2 =(1,X_1X_2^2X_1)\,,\quad \underline P^3=(X_2+1,X_2,X_1)
    \end{equation}
    and $I,I'\in (\mathbb N\setminus\{0\})^{2n}$ for some $n\ge0$, we can construct the following $P$-monomial:
    \begin{align*}
        M &= P^*(\bs X_I) P^3_1(\bs X_{I'})  P^3_3(\bs X_{I'}) P^2_1(\bs X_I) P^3_2(\bs X_I) P^2_2(\bs X_{I'})    \\
        &=P^*(\bs X_I) (X_{2,I'}+1) X_{1,I'} X_{2,I} X_{1,I'}X_{2,I'}^2X_{1,I'}  .
    \end{align*}
    For this $P$-monomial (or any $P$-monomial built from the selection \eqref{ex:terms2}) we have $q(M)= 3$ and $p(M) = 1+ 2+3=6$. 
\end{exe}

We can now state the lemma that we will use for estimating high order moments of the multiplicative Brownian motion.

\begin{lemma}
	\label{3apparition}
    There exist sequences $(A_i)_{i}, (J_i)_{i}, (\mathcal{L}_i)_{i}$ with the following properties:
    \begin{enumerate}[left=0pt,label=(\alph*)]
    \item\label{expand-Ai} $A_i\subset\R_+^{2i}$ is a Borel set (in fact a polyhedron) with
    \[
    \int_{A_i } e^{-t_{2i}-\dots-t_{1}} dt_1\dots dt_{2i} \leq \frac{1}{i!}\,.
    \]
    \item \label{expand-Ji} $J_i\subset \mathbb Z_+^{2i}$ is an index set.
    \item \label{expand-Li} $\mathcal{L}_i$ is a mapping
    \[
    \mathcal{L}_i: \mathcal P_d \times A_i \to \C\langle X_{j,I}:j\in[1,d], I\in J_i \rangle
    \]
    where 
        \begin{itemize}
            \item for each fixed $T\in A_i$, $\mathcal{L}_i(\cdot, T)$ is a linear map, 
            \item for each fixed $Q\in \mathcal P_d$, the coefficients of $\mathcal{L}_i(Q,\cdot)$ are measurable (in fact simple) functions on $A_i$,
            \item given $M$ a monomial with coefficient $1$, for each fixed $T\in A_i$, $\mathcal{L}_i(M, T)$ is a sum of monomials with  coefficient $1$.
        \end{itemize}
    \item\label{expand-N2} ($\frac1{N^2}$-Expansion).
    For any $Q\in \PP_d$ and $s\geq 1$,
	\begin{align} \label{skjvbsknv}
		&\E\left[ \ts_N\Big( Q(\bs{X}^N) \Big) \right] 
        = \tau(Q(\bs x^0)) + \sum_{i=1}^{s-1} \frac{\alpha_i(Q)}{N^{2i}}
        + \frac{\tilde \alpha^N_s(Q)}{N^{2s}}
	\end{align}
    with coefficients given by
    \begin{align*}
        \alpha_i(Q) &:= \frac{1}{2^i} \int_{A_i } e^{-t_{2i}-\dots-t_{1}} \tau\Big( \left[\mathcal{L}_i(Q,T_i)\right](\bs x^{T_i}) \Big)\ dt_1\dots dt_{2i} \\
        \tilde \alpha_s^N(Q) &:= \frac{1}{2^s} \int_{A_s} e^{-t_{2s}-\dots-t_{1}} \E\left[\tau_N\Big( \left[\mathcal{L}_s(Q,T_s)\right] (\bs X^{N,T_s}) \Big)\right]\ dt_1\dots dt_{2s}\,
    \end{align*}
    (using the notation $T_i=(t_1,\dots, t_{2i})$ of Definition \eqref{tech1}).
    \item\label{expand-complexity} (Complexity). 
        If $Q=(P^*P)^k$, then for any $T\in A_i$, $\mathcal{L}_i(Q,T)$ is the sum of at most 
           $ 
            d^{2i} \times \left( 2k\times  c(P) \right)^{4i} 
            $
        $P$-monomials $M$ (with $n=i$) such that $p(M) = 2k+4i$ and $q(M) = 2k$.
    \end{enumerate}
\end{lemma}

We refer the interested reader to \cite[Section 2.3]{Parraud2023} for the explicit definitions of the sets $J_i$ and mappings $\mathcal{L}_i$; their construction is quite lengthy and will not be used directly in this paper (the sets $A_i$ are recalled in the proof below). Lemma \ref{3apparition} distills all of the properties that we need for the proof of Theorem \ref{thm.2} in Section \ref{section.concentration}.

\begin{proof}

This lemma, and notably 
the construction of $A_i,J_i,\mathcal L_i$ and 
the $\frac1{N^2}$-expansion of part \ref{expand-N2},
is a direct corollary of Lemma 3.6 and Proposition 3.7 of \cite{Parraud2023}. To establish the esetimate in \ref{expand-Ai} we recall from \cite{Parraud2023} that
$$A_i = \{ t_{2i}\geq t_{2i-2}\geq \dots \geq t_2\geq 0 \}\cap\{\forall g\in [1,i], t_{2g} \geq t_{2g-1} \geq 0\} \subset \R^{2i}. $$ 
Since for every $g$, one has
$\int_0^{t_{2g}} e^{-t_{2g-1}} dt_{2g-1} \leq 1, $
we get that
$$\int_{A_i } e^{-t_{2i}-\dots-t_{1}} dt_1\dots dt_{2i} \leq \int_{0\leq t_1\leq\dots\leq t_i} e^{-t_{i}-\dots-t_{1}} dt_1\dots dt_{i} = \frac{1}{i!}.$$
This yields \ref{expand-Ai}.

For \ref{expand-Li}, one can set 
$$\mathcal{L}_1 : (Q,T)\in\PP_d\times A_1 \mapsto \sum_{1\leq i,j\leq d} \Theta\circ\ev_1\circ\left(\partial_{j}\otimes \partial_{j}\right)\circ \left(\partial_iD_i\right)(Q), $$
where $\Theta(A\otimes B\otimes C\otimes D) := BADC$ and $\ev_1$ is a map which evaluates each tensorand in specific subsets of $(X_{j,I})_{1\leq j\leq d, I\in J_1}$. Note that $\mathcal{L}_1(Q,T)$ does not depend on $T$. Besides, $\mathcal{L}_{s+1}(Q,T_{s+1})$ is defined by induction by applying the operator 
\[
\mathcal{K}_{l,J_s} : \C\langle (X_{i,I})_{1\leq i\leq d, I\in J_s}\rangle \ni R \mapsto \sum_{\substack{1\leq i,j\leq d \\ I,K\in J_s\\ \text{such that } I_l=K_l}} \ev_{s+1} \circ \Theta \circ \left(\partial_{j,I}\otimes \partial_{j,K}\right)\circ \left(\partial_iD_i\right) R
\]
to $\mathcal{L}_s(Q,T_s)$, for an integer $l=l(T_{s+1})$ determined by $T_{s+1}$ and $\ev_{s+1}$ a map which evaluates each tensorand in specific subsets of $(X_{j,I})_{1\leq j\leq d, I\in J_{s+1}}$. In this context, given $M$ a monomial,
$$ \partial_{i,I} M =  \sum_{M=AX_{i,I}B} A\otimes B, $$
$$ \partial_{i} M = \sum_{I\in J_s} \sum_{M=AX_{i,I}B} A\otimes B, $$
$$ D_i M = \sum_{I\in J_s} \sum_{M=AX_{i,I}B} BA. $$
In particular, when applied to a monomial with leading coefficient $1$, the operator $\mathcal{K}_{l,J_s}$ returns a sum of monomial also with leading coefficient $1$. Hence we get \ref{expand-Li}.

Now we prove assertion \ref{expand-complexity}.  Fix a list $\mathcal{T}_P$ of terms associated to $P$ as in Definition \ref{def:terms}. Thus, whenever we take the non-commutative differential of a $P$-monomial, we assume that the sum of simple tensors that appears is built with the help of the same equation \eqref{skjvnskvndigh} with which we built $\mathcal{T}_P$ by induction. Thus, if $M$ is a $P$-monomial with $q(M)=2k$, then by definition of $c(P)$ (computed with respect to $\mathcal{T}_P$), $\mathcal{K}_{l,J_s}(M)$ will be a sum of at most $d^2 (2k\times c(P))^4$ simple tensors. By induction we obtain the claim on the number of $P$-monomials. Besides, applying the operator $\mathcal{K}_{l,J_s}$ (which involves differentiating non-commutatively four times) to a $P$-monomial $M$ with $q(M)=2k$, then applying the operator $\Theta$, yields a sum of $P$-monomials, and given one such $P$-monomial $L$, the number of terms used to build it does not change, i.e. $q(L)=q(M)=2k$. However, by performing this operation, we proceed to replace a term by a new term one kind higher four times in a row. Hence $p(L)=p(M)+4$ and the last part of \ref{expand-complexity} follows by induction on $i$ and the fact that $p(Q) = 2k$. \end{proof}

Before concluding this subsection, let us prove the following proposition that allows us to control moments of semicircular variables and GUE matrices by the moments of certain jointly Gaussian random variables.

\begin{prop}
	\label{prop:monomial}
    In the setting of Definition \ref{tech1}, for any monomial $M$ (with coefficient 1) in non-commutative variables $(X_{i,I})_{i\in [1,d],I\in J_n}$ we have
	\begin{equation}
		\label{ovsfnsdkov}
		0 \leq \tau\left(M(\bs x^{T_n})\right) \leq \E[M(g)],
	\end{equation}
	\begin{equation}
		\label{lskdvnsl}
		0 \leq \E\left[\tau_N\left(M(\bs X^{N,T_n})\right)\right] \leq \E[M(g)],
	\end{equation}
	where $g=(g_{i,I})_{i\in[1,d],I\in J_n}$ are centered real Gaussian random variables of variance $1$ such that $g_{i,I}$ and $g_{j,J}$ are independent if $i\neq j$ and equal otherwise.
	
	In particular, given $c_M\in\C$ and $a_M\in\R^+$ such that $|c_M|\leq a_M$, then
	\begin{equation}
		\label{skjdvngfd}
		\left| \E\left[\tau_N\left(\sum_{M \text{ monomial}}c_M M(\bs X^{N,T_n})\right)\right] \right| \leq \E\left[\sum_{M \text{ monomial}}a_M M(g)\right],
	\end{equation}
	and similarly for $\bs x^{T_n}$ instead of $\bs X^{N,T_n}$.
\end{prop}

\begin{proof}
	Thanks to Proposition \ref{prop:SDeq}(b), assuming that $M = X_{i,I} R$, we have that
	$$ \tau\left(M(\bs x^{T_n})\right) = e^{-\widetilde{t}_{2n}} \tau\otimes\tau\left(\partial_{i,0}R(\bs x^{T_n})\right) + \sum_{l=1}^{2n} (e^{-\widetilde{t}_{l-1}} -e^{-\widetilde{t}_l}) \tau\otimes\tau\left(\partial_{i,I_l}R(\bs x^{T_n})\right), $$
	where $\partial_{i,I_l}$ is the non-commutative partial differential with respect to $x_{i,I_l}$. Thus, by a straightforward induction, $\tau\left(M(\bs x^{T_n})\right)\geq 0$. 
	
	Let us remark that the product of the $s$-th and $r$-th moment of a centered Gaussian random variable of variance $1$ is always bounded by its $s+r$-th moment. Thus for any monomials $A$ and $B$, $\E[A(g)]\E[B(g)] \leq \E[A(g)B(g)]$. Hence if we assume that \eqref{ovsfnsdkov} is satisfied for monomials of degree strictly smaller than $k$ and that $M$ has degree $k$, then
	$$ \tau\left(M(\bs x^{T_n})\right) \leq \left(e^{-\widetilde{t}_{2n}} + \sum_{l=1}^{2n} (e^{-\widetilde{t}_{l-1}} -e^{-\widetilde{t}_l}) \right) \E\left[\frac{\partial R(g)}{\partial g_i}\right] = \E\left[\frac{\partial R(g)}{\partial g_i}\right] = \E\left[g_i R(g)\right], $$
	where we used Gaussian integration by part in the last equality.  The bound \eqref{ovsfnsdkov} now follows by induction.
	
	Turning to \eqref{lskdvnsl}, thanks to Lemma \ref{3apparition} (or more precisely \cite[Lemma 3.6]{Parraud2023}), 
	$$ \E\left[\ts_N\left(M(\bs X^{N,T_n})\right)\right] = \sum_{i=0}^{k/4} \frac{\alpha_i}{N^{2i}}, $$
	where the coefficients $\alpha_i$ are linear combinations with non-negative coefficients of traces of monomials in $x^{T_l}$ for $l\geq n$. In particular, thanks to the first part of the proof, those coefficients are non-negative. Thus 
	$$ 0\leq \E\left[\ts_N\left(M(\bs X^{N,T_n})\right)\right] = \sum_{i=0}^{k/4} \frac{\alpha_i}{N^{2i}} \leq \sum_{i=0}^{k/4} \alpha_i = \E\left[\tau\left(M(\bs X^{1,T_n})\right)\right]. $$
	However, if we let $G=(G_{i,I}^{T_n})_{i\in[1,d],I\in J_n}$ be a family of Gaussian random variables, built exactly like $x_{i,I}^{T_n}$ in Definition \ref{tech1} but with independent Gaussian random variables of variance $1$ instead of free semicircular variables, then by using Equation \eqref{ovsfnsdkov},
	$$ 0\leq \E\left[\ts_N\left(M(\bs X^{N,T_n})\right)\right] \leq \E\left[M(g)\right]. $$
	Hence the conclusion by using the fact that if $g_1,\dots,g_l$ are centered Gaussian random variables of variance $1$, not necessarily independent but such that for all $i,j$, $\cov(g_i,g_j)\geq 0$, then
	$\E\left[g_1\dots g_l\right] \leq \E[g_1^l].$
\end{proof}

\section{
Small-time Affine Approximation \label{section.concentration}}

\subsection{Setup and overview}  For the duration of this section we let $W^N=W^N_{\rho,\zeta}$ be an elliptical Brownian motion as in Definition \ref{def:elliptical}, i.e. 
\[
W^N(t)=e^{\i \theta }(a X^N(t) +\i bY^N(t))\,,\quad t\ge0
\]
with $(X^N(t))_{t\geq 0},(Y^N(t))_{t\geq 0}$ independent Hermitian Brownian motions. Given $W^N$, we let $B^N$ be the multiplicative Brownian motion as in Definition \eqref{def.SDE}, driven by $W^N$, and for each $n\in \N$ we let $B^N_n$ be the discretized process 
\begin{equation}    \label{def:discrete}
    B_n^{N}(t) = \prod_{i=1}^{\lfloor t2^n \rfloor} \left(I_N + W^N\left(\frac{i}{2^n}\right) - W^N\left(\frac{i-1}{2^n}\right) +\frac{\zeta}{2^{n+1}} I_N\right). 
\end{equation}
Note that $B^N$ and $B^N_n$ are defined using the same driving process $W^N$. We also fix realizations of the corresponding free non-commutative processes $x,y,w,b,b_n$, defining the latter as in \eqref{def:discrete} with $W^N$ replaced by $w$. 
We set
\begin{equation}
    \label{def:tn}
    t_n:= \lfloor t2^n\rfloor /2^n\,.
\end{equation}
Note that $B_n^N(t) = B_n^N(t_n)$ for all $n,t$. 

We further let $(X_i^N,Y_i^N)_{i\ge1}$ be an i.i.d.\ sequence of independent pairs of GUE matrices,  independent of all other processes. 
For computations, and in particular for invoking Lemma \ref{3apparition}, it will be convenient to reexpress joint moments of increments of the processes $B^N,B_n^N,W^N$ in terms of the $X_i^N,Y_i^N$. 
Indeed, note that
\begin{equation}
    \label{def:delW}
    W^N\bigg(\frac i{2^n}\bigg) - W^N\bigg(\frac{i-1}{2^n}\bigg) 
    \stackrel{d}{=} 
    \frac{e^{\i \theta}}{2^{n/2}} ( a X_i^N + b Y_i^N)\,,\quad i\ge1.
\end{equation}

As in the previous section we use $X_i,Y_i$ (without the superscript $N$) to denote generic arguments of non-commutative polynomials, which may be evaluated at the GUE matrices $X_i^N, Y_i^N$ or semicircular variables.

In this section we use $L^p$ norms for the non-commutative probability space of random matrices equipped with $\E\ts_N$ as state, and also on a ``large-$N$ limit'' non-commutative probability space equipped with a tracial state $\tau$.  We use the same notation for both, which will always be clear from context: for an $N\times N$ random matrix $A$ and non-commutative random variable $a$, 
\[ \norm{A}_p := \left(\E\ts_N(|A|^p)\right)^{1/p} \quad \&  \quad \norm{a}_p := \left(\tau(|a|^p)\right)^{1/p} \]
recalling $|A|= (AA^\ast)^{1/2}$, $|a|= (a^\ast a)^{1/2}$.

We prove Theorem \ref{thm.2} using the moment method. Thus, we need to bound
\begin{equation}    \label{BIW-2k}
    \|B^N(t)-I_N-W_N(t)\|_{2k}^{2k}=\E \ts_N |B^N(t)-I_N-W_N(t)|^{2k}
\end{equation}
for large $k=k(N)$. This is accomplished with Proposition \ref{prop:2kmom}.

Towards the proof of Proposition \ref{prop:2kmom}, in Section \ref{sec:discrete} we show $B_n^N\to B^N$ in $L^{2k}$, uniformly on compact time intervals. 
This allows us to replace $B^N(t), W^N(t)$ in \eqref{BIW-2k} with $B_n^N(t_n), W^N(t_n)$. 
From \eqref{def:discrete}, \eqref{def:delW} we can then view $|B_n^N(t) - I_N - W_N(t)|^{2k}$ as a non-commutative polynomial in independent GUE matrices, allowing for the application of Lemma \ref{3apparition}.
The proof of Proposition \ref{prop:2kmom} is then concluded in Section \ref{sec:thm.2} by bounding each term in the resulting expansion. 

In Section \ref{sec:norm} we deduce tail bounds on the norm of $B^N(t)$ that will also be useful in subsequent sections.

\subsection{Random walk approximation}
\label{sec:discrete}

Recall the discretized processes $B_n^N,b_n$ from \eqref{def:discrete}.
In this subsection we show these processes converge to $B,b$ in $L^p$. 
The main results of this section are the following proposition and corollary.

\begin{prop} \label{prop:Lp.conv} For any $T\geq 0$ and $2\le p<\infty$, $B_n^N\to B^N$ 

in $L^p(\Omega\times [0,T],\ts_N)$. 
Moreover, there exists a constant $C_{\rho,T,p}$ such that
	\begin{align}  \label{bd:BBn-Lp}
		&\norm{B^N(t)-B^N_n(t)}_p \leq \frac{C_{\rho,T,p}}{2^{n/2p}} \qquad \forall\,t\in[0,T].
	\end{align}
	The same estimate holds in $L^p(\tau)$
    with $B^N_n,B^N$ replaced by their free counterparts.
\end{prop}

\begin{cor} \label{cor.B-I-W.moments}
    For $t\ge 0$ and $2\le p<\infty$,
    \[ B^N_n(t)-I_N-W^N\left(t_n\right)\text{ converges to }B^N(t)-I_N-W^N(t)\text{ in }L^p\text{ as }n\to\infty. \]
    The same holds with $B^N_n,B^N$ replaced by their free counterparts $b_n,b$.
\end{cor}

\begin{proof}[Proof of Corollary \ref{cor.B-I-W.moments}]
Note that the $L^p$-norm of the difference is equal to
\begin{align*} &\norm{B^N_n(t)-B^N(t)+W^N(t)-W^N\left(t_n\right)}_{p} \\
    \leq &\norm{B^N_n(t)-B^N(t)}_{p} +\norm{W^N(t)-W^N\left(t_n\right)}_{p}.
\end{align*}
The first term 
tends to zero by Proposition \ref{prop:Lp.conv}.  For the second term, note that $W^N(t)-W^N(s)$ has the same law as $\sqrt{t-s}W^N(1)$, and so
\[ \norm{W^N(t)-W^N\left(t_n\right)}_{p} = \sqrt{t-t_n} \norm{aX^N+\i bY^N}_p \]
where $X^N$ and $Y^N$ are GUE matrices.  This $L^p$-norm is bounded independently of $N$ by a $\rho$-dependent constant times a $p$th Gaussian moment.  Since $t_n=\lfloor 2^n t\rfloor/2^n\to t$, 
the result follows uniformly in $N$.  The argument for the free case is similar. \end{proof}

The remainder of this subsection is dedicated to the proof of Proposition \ref{prop:Lp.conv}. We need the following:

\begin{lemma}
	\label{sdovmkson}
	For any $k\in\N$ and 
    $0\le t\le T$
    there exists a constant 
    $C_{k,T}$ such that
	$$ \sup_{n,N\geq 1} 
    \|B^N_n(t)\|_{2k} 
    \leq C_{k,T}.$$
    The same holds with $B^N_n(t)$ replaced by $b_n(t)$.
\end{lemma}

\begin{proof}
    From \eqref{def:delW}, 
    \[
    \E\left[\ts_N\left(|B^N_n(t)|^{2k}\right)\right] 
        = \E \left[ \ts_N \Bigg| \prod_{i=1}^{\lfloor t2^n \rfloor} \left( \bigg( 1+ \frac{\zeta}{2^{n+1}} \bigg)I_N + \frac{e^{\i\theta}}{2^{n/2}}\left(aX_i^N+\i b Y_i^N\right) \right) \Bigg|^{2k} \right].
    \]
    Thanks to Proposition \ref{prop:monomial}, 
    with $g_i,g_i'$ i.i.d.\ standard Gaussians, the right hand side above is bounded by
	\begin{align*}
		&\E\left[\left| \prod_{i=1}^{\lfloor t2^n \rfloor} \left(1+  \frac{|a^2-b^2|}{2^{n+1}}+ \frac{1}{2^{n/2}}\left(|a|g_i+|b| g_i'\right) \right) \right|^{2k}\right] \\
		&= \E\left[\left| \prod_{i=1}^{\lfloor t2^n \rfloor} \left(1+ \frac{|a^2-b^2|}{2^{n+1}}+ \sqrt{\frac{a^2+b^2}{2^n}}g_i \right) \right|^{2k}\right]\\
         &= \E\left[\left(1+ \frac{|a^2-b^2|}{2^{n+1}} + \sqrt{\frac{a^2+b^2}{2^n}}g_1 \right)^{2k}\right]^{\lfloor t2^n \rfloor}.
    \end{align*}
	One can write 
	$$ \E\left[\left(1+ \frac{|a^2-b^2|}{2^{n+1}}+ \sqrt{\frac{a^2+b^2}{2^n}}g_1 \right)^{2k}\right] = 1 + \sum_{i=1}^{k}\frac{c_{i,k}}{2^{i n}},$$
	where the coefficients $c_{i,k}$ do not depend on $n$. Thus
	$$ \E\left[\ts_N\left(|B^N_n(t)|^{2k}\right)\right] \leq e^{t\sum_{i=1}^{k}\frac{c_{i,k}}{2^{(i-1)n}}}. $$
	Hence the conclusion. The claim for $b_n$ follows by similar lines.
\end{proof}

Next we can show $B_n(t)$ and $b_n(t)$ are Cauchy in $L^2$: 

\begin{lemma}
	\label{sljvndksn}
	There are constants $C_\rho,K_\rho<\infty$ so that, for any $n\geq 1$, $t\ge0$,
	$$ \norm{B^N_n(t)-B_{n-1}^N(t)}_2^2 \leq C_{\rho}\frac{t+1}{2^n} e^{K_{\rho}t}. $$
	The above inequality also stands if one replaces $B^N_n(t)$ by $b_n(t)$.
\end{lemma}

\begin{proof}
	Let $(X_i^N,Y_i^N)_{i\geq 1} $ be a family of independent GUE matrices, and define
	$$ \Upsilon_i^n \deq \left(1+\frac{\zeta}{2^{n+1}}\right)I_N + \frac{ae^{\i\theta}}{2^{n/2}}X^N_i +\i \frac{be^{\i\theta}}{2^{n/2}}Y^N_i. $$
	Then $B^N_n(t)$ has the same distribution as the product of the $\Upsilon_i^n$ for $1\le i\le \lfloor t2^n\rfloor$.  We proceed inductively, peeling away the $i=1$ terms in the following product: 
	\begin{align*}
		&\E\left[\ts_N\left(B^N_n(t)(B^N_n(t))^*\right)\right] \\
        &= \E\left[\ts_N\left(\prod_{i=2}^{\lfloor t2^n \rfloor} \Upsilon_i^n \left(\prod_{i=2}^{\lfloor t2^n \rfloor} \Upsilon_i^n\right)^*\right)\right] \left|1+\frac{\zeta}{2^{n+1}}\right|^2 \\
		&\quad + \E\left[\ts_N\left( \frac{ae^{\i\theta} X^N_1 +\i be^{\i\theta} Y^N_1}{2^{n/2}} \prod_{i=2}^{\lfloor t2^n \rfloor} \Upsilon_i^n \left(\prod_{i=2}^{\lfloor t2^n \rfloor} \Upsilon_i^n\right)^* \right)\right]\left(1+\frac{\zeta^*}{2^{n+1}}\right) \\
		&\quad + \left(1+\frac{\zeta}{2^{n+1}}\right)\E\left[\ts_N\left( \prod_{i=2}^{\lfloor t2^n \rfloor} \Upsilon_i^n \left(\prod_{i=2}^{\lfloor t2^n \rfloor} \Upsilon_i^n\right)^* \frac{ae^{-\i\theta} X^N_1 -\i be^{-\i\theta} Y^N_1}{2^{n/2}}\right)\right] \\
		&\quad + \E\left[\ts_N\left(\frac{ae^{\i\theta} X^N_1 +\i be^{\i\theta} Y^N_1}{2^{n/2}} \prod_{i=2}^{\lfloor t2^n \rfloor} \Upsilon_i^n \left(\prod_{i=2}^{\lfloor t2^n \rfloor} \Upsilon_i^n\right)^* \frac{ae^{-\i\theta} X^N_1 -\i be^{-\i\theta} Y^N_1}{2^{n/2}}\right)\right] \\
		&= \E\left[\ts_N\left(\prod_{i=2}^{\lfloor t2^n \rfloor} \Upsilon_i^n \left(\prod_{i=2}^{\lfloor t2^n \rfloor} \Upsilon_i^n \right)^* \right)\right] \left(\left|1+\frac{\zeta}{2^{n+1}}\right|^2+\frac{\rho}{2^n}\right) \\
		&= \E\left[\ts_N\left(\prod_{i=2}^{\lfloor t2^n \rfloor} \Upsilon_i^n \left(\prod_{i=2}^{\lfloor t2^n \rfloor} \Upsilon_i^n\right)^* \right)\right] \left(1+\frac{\rho+\Ree\zeta}{2^n}+\frac{|\zeta|^2}{4^{n+1}}\right),
	\end{align*}
	where we used Proposition \ref{prop:SDeq} to go from the second to third equality, wherein the cross terms vanish (alternatively, this step follows from the fact that  $(X_i^N,Y_i^N)_{i\ge1}$ are independent standardized Wigner matrices). By induction we get that
	\begin{equation}
		\E\left[\ts_N\left(B^N_n(t)(B^N_n(t))^*\right)\right] = \left(1+\frac{\rho+\Ree\zeta}{2^n}+\frac{|\zeta|^2}{4^{n+1}}\right)^{\lfloor t2^n \rfloor}.
	\end{equation}
	Now to compute $\E\left[\ts_N\left(B^N_n(t)(B_{n-1}^N(t))^*\right)\right]$, we set
	$$ \Gamma_i^n \deq \left(1+\frac{\zeta}{2^{n}}\right)I_N + \frac{ae^{\i\theta}}{2^{n/2}}(X^N_{2i-1}+X_{2i}^N) +\i \frac{be^{\i\theta}}{2^{n/2}}(Y^N_{2i-1}+Y_{2i}^N) $$	
	and further note that $\lfloor t2^n\rfloor = 2\lfloor t2^{n-1}\rfloor+\chi$ for $\chi\in\{0,1\}$. Then 
	\begin{align*}
		&\E\left[\ts_N\left(B^N_n(t)(B_{n-1}^N(t))^*\right)\right] \\
		&= \E\Bigg[\ts_N\Bigg(\prod_{i=1}^{\lfloor t2^{n-1} \rfloor} \Upsilon_{2i-1}^n\Upsilon_{2i}^n \prod^{\lfloor t2^{n} \rfloor}_{i= 2\lfloor t2^{n-1}\rfloor+1} \Upsilon_i^n \times \Bigg(\prod_{i=1}^{\lfloor t2^{n-1} \rfloor} \Gamma_i^{n}\Bigg)^* \Bigg)\Bigg] \\
		&= \E\Bigg[\ts_N\Bigg(\prod_{i=1}^{\lfloor t2^{n-1} \rfloor} \Upsilon_{2i-1}^n\Upsilon_{2i}^n \Bigg(\prod_{i=1}^{\lfloor t2^{n-1} \rfloor} \Gamma_i^{n}\Bigg)^* \Bigg)\Bigg] \left(1+\frac{\zeta}{2^{n+1}}\right)^{\chi}
	\end{align*}
	where we used the fact that 
	$\prod^{\lfloor t2^{n} \rfloor}_{i= 2\lfloor t2^{n-1}\rfloor+1} \Upsilon_i^n$ (which is either the identity or $\Upsilon_{\lfloor t2^n\rfloor}^n$, as $\chi\in\{0,1\}$)
	is independent of other random matrices. Now since
	\begin{align*}
		\Upsilon_{2i-1}^n\Upsilon_{2i}^n =&\ \left(1+\frac{\zeta}{2^{n+1}}\right) \left(\frac{ae^{\i\theta}}{2^{n/2}}(X^N_{2i-1}+X^N_{2i}) + \i \frac{be^{\i\theta}}{2^{n/2}}(Y^N_{2i-1}+Y^N_{2i})\right) \\
		&+ \left(\frac{ae^{\i\theta}}{2^{n/2}}X^N_{2i-1} +\i \frac{be^{\i\theta}}{2^{n/2}}Y^N_{2i-1}\right)\left(\frac{ae^{\i\theta}}{2^{n/2}}X^N_{2i} +\i \frac{be^{\i\theta}}{2^{n/2}}Y^N_{2i}\right) \\
        &+ \left(1+\frac{\zeta}{2^{n+1}}\right)^2I_N,
	\end{align*}
	we can apply Proposition \ref{prop:SDeq} once again (or alternatively that $(X_i^N,Y_i^N)_{i\ge1}$ are independent standardized Wigner matrices), to get that
	\begin{align*}
		&\E\left[\ts_N\left(B^N_n(t)(B_{n-1}^N(t))^*\right)\right] \left(1+\frac{\zeta}{2^{n+1}}\right)^{-\chi}\\
		&= \E\left[\ts_N\left(\prod_{i=2}^{\lfloor t2^{n-1} \rfloor} \Upsilon_{2i-1}^n\Upsilon_{2i}^n \left(\prod_{i=2}^{\lfloor t2^{n-1} \rfloor} \Gamma_i^n\right)^*\right)\right] \\
        &\quad\quad \times\left(\left(1 + \frac{\zeta}{2^{n+1}}\right)^2\left(1 + \frac{\overline{\zeta}}{2^{n}}\right) + \left(1 + \frac{\zeta}{2^{n+1}}\right)\frac{\rho}{2^{n-1}} \right) \\
		&= \E\left[\ts_N\left(\prod_{i=2}^{\lfloor t2^{n-1} \rfloor} \Upsilon_{2i-1}^n\Upsilon_{2i}^n \left(\prod_{i=2}^{\lfloor t2^{n-1} \rfloor} \Gamma_i^n\right)^*\right)\right] \left(1+\frac{\rho + \Ree\zeta}{2^{n-1}} +\frac{C^{n,\rho,\zeta}}{4^n}\right),
	\end{align*}
    where $C^{n,\rho,\zeta}$ is uniformly bounded by a constant that only depends on $\rho$ (since $|\zeta|\le\rho$). Hence, by induction, we get 
	\begin{align*}
		\E\left[\ts_N\left(B^N_n(t)(B_{n-1}^N(t))^*\right)\right] &= \left(1+\frac{\rho + \Ree\zeta}{2^{n-1}} +\frac{C^{n,\rho,\zeta}}{4^n}\right)^{\lfloor t2^{n-1} \rfloor}  
        \left(1+\frac{\zeta}{2^{n+1}}\right)^{\chi}.
	\end{align*}
	Thus we have
	\begin{align*}
		&\norm{B^N_n(t)-B_{n-1}^N(t)}_2^2 \\
        &= \E\left[\ts_N\left(|B^N_n(t)|^2\right)\right] + \E\left[\ts_N\left(|B_{n-1}^N(t)|^2\right)\right] - 2\Ree \E\left[\ts_N\left(B^N_n(t)(B_{n-1}^N(t))^*\right) \right] \\
		&= \left(1+\frac{\rho+\Ree\zeta}{2^n}+\frac{|\zeta|^2}{4^{n+1}}\right)^{\lfloor t2^n \rfloor} + \left(1+\frac{\rho+\Ree\zeta}{2^{n-1}}+\frac{|\zeta|^2}{4^{n}}\right)^{\lfloor t2^{n-1} \rfloor} \\
		&\quad - 2\Ree \Bigg( \left(1+\frac{\rho+\Ree\zeta}{2^{n-1}} +\frac{C^{n,\rho,\zeta}}{4^{n}}\right)^{\lfloor t2^{n-1} \rfloor} \left(1+\frac{\zeta}{2^{n+1}}\right)^{\chi} \Bigg).
	\end{align*}
    Since $|\zeta|\le\rho$, the claimed bound for $B_n$ follows. The claim for $b_n$ follows from the same argument, using Proposition \ref{prop:SDeq}(b) in place of Proposition \ref{prop:SDeq}(a). 
\end{proof}

\begin{proof}[Proof of Proposition \ref{prop:Lp.conv}] 

    We provide the proof for $B^N_n \to B^N$; the free version is essentially identical.  
    
    We first reduce to the case $p=2$: we claim that for any $t\in\R$, there exists a constant $C_{\rho,T}$ such that
	\begin{align}  \label{goal:Lp-L2}
		&\norm{B^N(t)-B^N_n(t)}_2 \leq \frac{C_{\rho,T}}{2^{n/2}} \qquad \forall\,t\in[0,T].
	\end{align}
    (Note the bound is stronger than the $p=2$ case of \eqref{bd:BBn-Lp}.)
    Before establishing \eqref{goal:Lp-L2} we use it to conclude the proof of the proposition. 
Note that, for $p\ge 1$, and any random matrix $A$,
\[ |A|^p = (A^\ast A)^{\frac{p}{2}} = (A^\ast A)^{\frac{p-1}{2}}(A^\ast A)^{\frac{1}{2}} = |A|^{p-1}\cdot|A|, \]
and hence taking expected traces $\|A\|_p^p = \E\ts_N[|A|^{p-1}|A|]$.  Thus, by the Cauchy-Schwarz inequality,
\begin{align} \nonumber
\norm{B^N_n(t)-B^N(t)}_{p}^p &= \E\ts_N[|B^N_n(t)-B^N(t)|^{p-1}\cdot|B^N_n(t)-B^N(t)|] \\ \nonumber
&\le \norm{|B^N_n(t)-B^N(t)|^{p-1}}_2\norm{B^N_n(t)-B^N(t)}_2 \\
&= \norm{B^N_n(t)-B^N(t)}_{2(p-1)}^{p-1}\cdot \norm{B^N_n(t)-B^N(t)}_2. \label{e.Holder.final}
\end{align}
Now, $\norm{B^N_n(t)-B^N(t)}_{2(p-1)} \le \norm{B^N_n(t)}_{2(p-1)} + \norm{B^N(t)}_{2(p-1)}$.  By Lemma \ref{sdovmkson} (and the fact that $\|A\|_p \le \|A\|_{\lceil p\rceil}$), there is a constant $C^1_{\rho,T,p}$ with $\norm{B^N_n(t)}_{2(p-1)} \le C^1_{\rho,T,p}$ for all $n,N$ and $t\in[0,T]$.  

Similarly, $\norm{B^N(t)}_{2(p-1)} \le \norm{B^N(t)}_{2k}$ for $k = \lceil p-1\rceil$.  By \cite[Theorem 5.1]{Kemp2016},
\[ \norm{B^N(t)}_{2k}^{2k} \le \norm{b(t)}_{2k}^{2k} + \frac{C(k,\rho,\zeta,t)}{N^2} \]
where the constant $C(k,\rho,\zeta,t)$ is continuous in all arguments; moreover, by \cite[Proposition 1.8]{Kemp2016}, $\|b(t)\|_{2k}^{2k}$ is continuous in $t$.  (These results are proved in \cite{Kemp2016} in the case $\zeta\in(-\rho,\rho)$ is real; tracking through the calculations shows they hold for general $|\zeta|\le\rho$ complex, with formulas replacing $\zeta$ by $\mathrm{Re}\,\zeta$.)

It follows by choosing $k$ with $2k>p$ that there is a constant $C^2_{\rho,p,T}$ with $\sup_N \norm{B^N(t)}_{2(p-1)} \le C^2_{\rho,T,p}<\infty$ for $t\in[0,T]$.
Hence, from \eqref{e.Holder.final}, we have
\[ \norm{B^N_n(t)-B^N(t)}_{p}^p \le (C^1_{\rho,T,p}+C^2_{\rho,t,p})^{p-1}\cdot \norm{B^N_n(t)-B^N(t)}_2. \]
Combining this with 
\eqref{goal:Lp-L2} and taking $p$th roots, the proposition is thus proved with $C_{\rho,T,p}:=(C_{\rho,T})^{1/p}(C^1_{\rho,T,p}+C^2_{\rho,t,p})^{1-1/p}$.

    It remains to establish \eqref{goal:Lp-L2}.
	Thanks to Lemma \ref{sljvndksn}, $(B_n^N)_{n\geq 0}$ is a Cauchy sequence in $L^2(\Omega\times [0,T];\M_N(\C))$, and hence converges towards a stochastic process $\wt B^N$. By construction, for any $t\in\R$
	\begin{equation}
		\label{skjdvnsknl}
		\norm{\wt B^N(t)-B^N_n(t)}_2 \leq \sum_{s\geq n+1} \sqrt{C_{\rho}\frac{t+1}{2^s} e^{K_{\rho}t}} \leq  \frac{\sqrt{C_{\rho}(t+1)}e^{K_{\rho}t/2}}{2^{n/2} (\sqrt{2}-1)}.
	\end{equation}
    It only remains to show that $\wt B^N=B^N$.
    By strong uniqueness for the SDE defining $B^N$ (see Definition \ref{def.SDE}) it suffices to show
    \begin{equation}    \label{Lp-goal}
        \wt B^N(t) = I_N + \int_{0}^{t} \wt B^N(s) dW^N(s) + \frac{\zeta}{2} \int_{0}^{t} \wt B^N(s) ds \qquad \forall t\ge0.
    \end{equation}
    We compute
	\begin{align*}
		&\int_0^t B_n^N(s) dW^N(s) \\
        &= \sum_{j=1}^{\lfloor t2^n \rfloor} \int_{\frac{j-1}{2^n}}^{\frac{j}{2^n}} B_n^N\left(\frac{j-1}{2^n}\right) dW^N(s) + \int_{t_n}^{t} B_n^N\left(t_n\right) dW^N(s) \\
		&= \sum_{j=1}^{\lfloor t2^n \rfloor} B_n^N\left(\frac{j-1}{2^n}\right) \left(W^N\left(\frac{j}{2^n}\right) - W^N\left(\frac{j-1}{2^n}\right)\right) 
        + B_n^N\left( t_n \right) \left(W^N(t)-W^N\left(t_n\right)\right) \\
		&= \sum_{j=1}^{\lfloor t2^n \rfloor} \left(B_n^N\left(\frac{j}{2^n}\right)  - B_n^N\left(\frac{j-1}{2^n}\right)\right) - \frac{\zeta}{2^{n+1}} \sum_{j=1}^{\lfloor t2^n \rfloor} B_n^N\left(\frac{j-1}{2^n}\right) \\
        &\quad\quad\quad\quad\quad\quad\quad\quad\quad\quad\quad\quad\quad\quad + B_n^N\left( t_n \right) \left(W^N(t)-W^N\left(t_n\right)\right) \\
		&= B_n^N\left( t_n \right)  - I_N - \frac{\zeta}{2} \int_0^{t_n} B_n^N(s)ds 
        + B_n^N\left( t_n \right) \left(W^N(t)-W^N\left(t_n\right)\right) \\
		&= B_n^N(t)  - I_N - \frac{\zeta}{2} \int_0^{t} B_n^N(s)ds 
        + B_n^N\left( t_n \right) \left(W^N(t)-W^N\left(t_n\right) + \frac{\zeta}{2}\left(t-t_n\right)I_N\right) .
 	\end{align*}
    Thanks to Lemma \ref{sdovmkson} and H\"older's inequality, we have
	\begin{align}
		&\norm{ B_n^N(t)  - I_N- \int_0^t B_n^N(s) dW^N(s) - \frac{\zeta}{2} \int_0^{t} B_n^N(s)ds }_2^2 \nonumber \\
		&\le
        \norm{B_n^N\left(t_n\right)}_4^2 \cdot 
        \norm{W^N(t)-W^N\left(t_n\right) + \frac{\zeta}{2} \left(t-t_n\right)I_N}_4^2 \nonumber \\
        &\le C_{2,t}\cdot \norm{W^N\left(t-t_n\right) + \frac{\zeta}{2} \left(t-t_n\right)I_N}_4^2    \label{ekvnsl}
	\end{align}
    wherein we used the fact that $C_{2,t}$ is nondecreasing in $t$ (and $t_n \le t$) and that $W^N$ has stationary centered increments.  Now, $e^{-\i\theta}W^N(s)$ has the same distribution as $\sqrt{s}(aX^N+\i bY^N)$ where $X^N,Y^N$ are independent GUEs.  The expected normalized trace moments of a GUE are well known from the genus expansion, see \cite[Section 13.4]{Kemp-RMT}; in particular, $\norm{X^N}_{4}^4 = 2+\frac{1}{N^2}\le 3$.   By the triangle inequality,
    \begin{align*} \eqref{ekvnsl} &\le C_{2,t}\cdot
    (t-t_n)\cdot \left( |a|\norm{X^N}_{4} + |b|\norm{Y^N}_{4}+|\zeta|\right)^2 =\mathcal{O}(2^{-n})
    \end{align*}
    so
    \begin{equation} \label{eq.L2.SDE.est}
    \norm{ B_n^N(t)  - I_N- \int_0^t B_n^N(s) dW^N(s) - \frac{\zeta}{2} \int_0^{t} B_n^N(s)ds }_{2} = \mathcal{O}(2^{-n/2}). \end{equation} 
	Now we also compare the above integrals of $B_n^N$ with the same integrals of $\wt B^N$:
	\begin{align*}
		&\norm{ \int_0^t B_n^N(s) dW^N(s) - \int_0^t \wt B^N(s) dW^N(s)}_{2}^2 \\
		&= \norm{ \int_0^t (B_n^N(s) - \wt B^N(s)) dW^N(s) }_{2}^2 \\
		&= \E\left[ \ts_N\left( \int_0^t (B_n^N(s) - \wt B^N(s)) dW^N(s) \left(\int_0^t (B_n^N(s) - \wt B^N(s)) dW^N(s) \right)^* \right) \right] \\
		&= \E\Bigg[ \sum_{1\leq i,j,l,a,b,c\leq N} \ts_N\Bigg( \int_0^t (B_n^N(s) - \wt B^N(s))_{i,j} d(W^N(s))_{j,l} E_{i,l} \\
        &\quad\quad\quad\quad\quad\quad\quad\quad\quad\quad\quad \times \left(\int_0^t (B_n^N(s) - \wt B^N(s))_{a,b} d(W^N(s))_{b,c} E_{a,c} \right)^* \Bigg) \Bigg] \\
		&= \frac{1}{N}\sum_{1\leq i,j,l,b \leq N} \E\Bigg[ \int_0^t (B_n^N(s) - \wt B^N(s))_{i,j} d(W^N(s))_{j,l} \\
        &\quad\quad\quad\quad\quad\quad\quad\quad \times\int_0^t \overline{(B_n^N(s) - \wt B^N(s))_{i,b}}\ d\overline{(W^N(s))_{b,l}} \Bigg] \\
		&= \frac{2\rho}{N^2}\sum_{1\leq i,j,l \leq N} \E\left[ \int_0^t \left|(B_n^N(s) - \wt B^N(s))_{i,j}\right|^2 ds \right] \\
		&= 2\rho\int_0^t \norm{B_n^N(s) - \wt B^N(s)}_{2}^2 ds \\
		&\leq \frac{\int_0^t C_{\rho}(s+1)e^{K_{\rho}s} ds}{2^n (\sqrt{2}-1)^2},
	\end{align*}
	where we used \eqref{skjdvnsknl} in the last line. In combination with \eqref{eq.L2.SDE.est}, and the fact that
	$$ \norm{\int_0^{t} (B_n^N(s)-\wt B^N(s))ds}_{2} \leq \int_0^{t} \norm{B_n^N(s)-\wt B^N(s)}_{2}ds \,,$$
    upon taking $n\to\infty$ we obtain \eqref{Lp-goal} to complete the proof.
\end{proof}

\subsection{Proof of Theorem \ref{thm.2}}
\label{sec:thm.2}

We now come to the main technical result of this section, 
from which
Theorem \ref{thm.2} quickly follows.

\begin{prop}
	\label{prop:2kmom}
	Given a $(\rho,\zeta)$-Brownian motion $B^N$, driven by $W^N$, there exists a constant $C_{\rho, T}$ such that for all $k\geq 0$ and $t\leq T$,
	\begin{align*}
		\E\left[\ts_N\left(\left|B^N(t) - I_N - W^N(t)\right|^{2k}\right)\right] \leq (t C_{\rho,T})^{2k} e^{C_{\rho,T} \frac{k^4}{N^2}}.
	\end{align*}
\end{prop}

    \begin{proof}[Proof of Theorem \ref{thm.2}]
    Since the probability is trivially bounded by 1, by modifying the constant $C$ in the theorem statement we may assume $t\le c\delta$ for any fixed $c=c(\rho,T)>0$.
    From Markov's inequality and Proposition \ref{prop:2kmom}, 
	\begin{align}
		\P\left( \norm{B^N(t) - I_N - W^N(t)} \geq \delta \right) &\le \P\left( \norm{\left|B^N(t) - I_N - W^N(t)\right|^{2k}} \geq \delta^{2k} \right) \notag\\
		&\leq \P\left( \tr_N\left(\left|B^N(t) - I_N - W^N(t)\right|^{2k}\right) \geq \delta^{2k} \right) \notag\\
		&\leq N \frac{\E\left[ \ts_N\left(\left|B^N(t) - I_N - W^N(t)\right|^{2k}\right) \right]}{\delta^{2k}} \notag\\
		&\leq N\left(\frac{t C_{\rho,T}}{\delta}\right)^{2k}e^{C_{\rho,T} \frac{k^4}{N^2}}\notag\\
        &= ( e^{f(N,k)} t/\delta)^{2k}  \label{efNk}
	\end{align}
    for $f(N,x):= \log C_{\rho,T} + \frac1{2x}\log N + \frac12C_{\rho,T} x^3/N^2$. Now for any $x\in [N^{2/3},2N^{2/3}]$, 
    \[
    f(N,x) \le C_0:= \log C_{\rho,T} + 4C_{\rho,T} + \sup_N N^{-2/3}\log N<\infty
    \]    
    so \eqref{efNk} is 
    bounded by
    $
    (e^{C_0} t/\delta)^{2\lceil N^{2/3}\rceil},
    $
    which in turn is at most $(e^{C_0}t/\delta)^{N^{2/3}}$ for $t\le e^{-C_0}\delta $. The claim follows.
    \end{proof}

It remains to prove Proposition \ref{prop:2kmom}. We need the following:

\begin{lemma}
	\label{lem:bnorm}
	Given $(b(t))_{t\geq 0}$ a $(\rho,\zeta)$-free Brownian motion driven by $(w(t))_{t\geq 0}$, there exists constants $C_{\rho},K_{\rho}<\infty$, such that
	$$  \norm{b(t)} \leq C_{\rho} e^{K_{\rho} t},\quad \norm{b(t) - 1} \leq \sqrt{t}\ C_{\rho} e^{K_{\rho} t},\quad \norm{b(t) - 1 - w(t)} \leq t\ C_{\rho} e^{K_{\rho}t}. $$
\end{lemma}

\begin{proof}
	Note that
	$$ b(t) = 1 + \int_0^t b(s)\ dw(s) + \frac{\zeta}{2} \int_{0}^{t} b(s) ds, $$
	$$ b(t) - 1 = \int_0^t b(s)\ dw(s) + \frac{\zeta}{2} \int_{0}^{t} b(s) ds, $$
	$$ b(t) - 1 - w(t) = \int_0^t (b(s)-1)\ dw(s) + \frac{\zeta}{2} \int_{0}^{t} b(s) ds. $$
	Besides $w = e^{\i\theta}(a x + \i b y)$ where $x,y$ are usual self-adjoint free Brownian motions. Thus by using the free Burkholder--Gundy inequality (see \cite[Theorem 3.2.1]{BianeSpeicher1998}) coupled with the fact that
	$$ \norm{\int_{0}^{t} b(s) ds} \leq \left(\int_{0}^{t} \norm{b(s)}^2 ds\right)^{1/2}, $$
	and $|\zeta|\leq \rho$, we get that 
	$$ \norm{b(t)} \leq 1 + \left(4\sqrt{2\rho}+\frac{\rho}{2}\right) \left(\int_0^t \norm{b(s)}^2 ds\right)^{1/2}, $$
	$$ \norm{b(t) - 1} \leq \left(4\sqrt{2\rho}+\frac{\rho}{2}\right) \left(\int_0^t \norm{b(s)}^2 ds\right)^{1/2}, $$
	$$ \norm{b(t) - 1 - w(t)} \leq \sqrt{32\rho}\left(\sqrt{32\rho}+\frac{\rho}{2}\right) \left(\int_0^t \int_0^s \norm{b(u)}^2 du ds\right)^{1/2} + \frac{\rho}{2} \int_0^t \norm{b(s)} ds. $$
	By squaring the first equation we get that
	$$ \norm{b(t)}^2 \leq 2 + 2\left(4\sqrt{2\rho}+\frac{\rho}{2}\right)^2 \int_0^t \norm{b(s)}^2 ds, $$
	and by Gronwall's inequality,
	$$ \norm{b(t)}^2 \leq 2 e^{2\left(4\sqrt{2\rho}+\frac{\rho}{2}\right)^2t}. $$
	Consequently, we have that
    \begin{align*}
        \norm{b(t) - 1} &\leq \left(4\sqrt{2\rho}+\frac{\rho}{2}\right) \left(\int_0^t 2 e^{2\left(4\sqrt{2\rho}+\frac{\rho}{2}\right)^2s} ds\right)^{1/2} \\
        &\leq \left(4\sqrt{2\rho}+\frac{\rho}{2}\right) \sqrt{2t}\ e^{\left(4\sqrt{2\rho}+\frac{\rho}{2}\right)^2t},
    \end{align*}
	\begin{align*}
		\norm{b(t) - 1 - w(t)} &\leq 4\sqrt{2\rho}\left(4\sqrt{2\rho}+\frac{\rho}{2}\right) \left(\int_0^t \int_0^s 2 e^{2\left(4\sqrt{2\rho}+\frac{\rho}{2}\right)^2u} du ds\right)^{1/2} \\
        &\quad + \frac{\rho}{2} \int_0^t \sqrt{2} e^{\left(4\sqrt{2\rho}+\frac{\rho}{2}\right)^2s} ds \\
		&\leq \left(8\sqrt{\rho}\left(4\sqrt{2\rho}+\frac{\rho}{2}\right) e^{\left(4\sqrt{2\rho}+\frac{\rho}{2}\right)^2t} + \frac{\rho}{\sqrt{2}} e^{\left(4\sqrt{2\rho}+\frac{\rho}{2}\right)^2t}\right) t.
	\end{align*}
	Hence the conclusion.
	\end{proof}

\begin{proof}[Proof of Proposition \ref{prop:2kmom}]
	To begin with, we define
    \begin{align*}
        &\bs{X}^N=\left(2^{n/2} \left(X^N\left(\textstyle{\frac{j}{2^n}}\right) - X^N\left(\textstyle{\frac{j-1}{2^n}} \right)\right)\right)_{1\leq j\leq \lfloor 2^n t \rfloor}, \\
        &\bs{Y}^N = \left(2^{n/2}\left(Y^N\left(\textstyle{\frac{j}{2^n}}\right) - Y^N\left(\textstyle{\frac{j-1}{2^n}}\right)\right)\right)_{1\leq j\leq \lfloor 2^n t \rfloor},
    \end{align*}
	where $(X^N(t))_{t\geq 0}$ and $(Y^N(t))_{t\geq 0}$ are the Hermitian Brownian motions used to define $W^N$ in Definition \ref{def:elliptical}. In this proof, we work with $\PP_d$ where $d=2\lfloor 2^n t \rfloor$. Besides, those $2\lfloor 2^n t \rfloor$ indeterminates will be sorted in the following order: $X_1,\dots,X_{\lfloor 2^n t \rfloor},$ $ Y_1,\dots,Y_{\lfloor 2^n t \rfloor}$. In particular, $\partial_p$ will be the non-commutative differential with respect to $X_p$ if $p\in [1,\lfloor 2^n t \rfloor]$, and with respect to $Y_{p-\lfloor 2^n t \rfloor}$ if $p\in [\lfloor 2^n t \rfloor+1,2\lfloor 2^n t \rfloor]$. We now define polynomials
    \begin{align*}
        S_n &:= \prod_{1\leq j\leq \lfloor 2^n t \rfloor}\left(1+\frac{\zeta}{2^{n+1}}+e^{\i\theta}\frac{a X_j+ b Y_j}{2^{n/2}}\right),\\
        Q_n &\deq \bigg|S_n - \sum_{1\leq j\leq \lfloor 2^n t \rfloor} e^{\i\theta} \frac{aX_j+\i bY_j}{2^{n/2}} - 1 \bigg|^{2k}
    \end{align*}
    in non-commuting indeterminates $X_j,Y_j$. Note that $Q_n$ is such that $Q_n(\bs{X}^N,\bs{Y}^N) = \left|B^N_n(t) - I_N - W^N\left(t_n\right)\right|^{2k}$. Besides, since $(\bs{X}^N,\bs{Y}^N)$ is a family of independent GUE random matrices, Lemma \ref{3apparition} yields an expansion
	\begin{align}
		\label{cite1}
		&\E\left[ \ts_N\left( \left|B^N_n(t) - I_N - W^N\left(t_n\right)\right|^{2k} \right) \right]
        = \sum_{i=0}^{s-1} \frac{\alpha_i(Q_n)}{N^{2i}} 
        + \frac{\tilde \alpha_s^N(Q_n)}{N^{2s}}
	\end{align}
    for any integer $s\ge1$,
    with the convention
    $\alpha_0(Q_n)=\tau(Q_n(\bs x^0,\bs y^0))$.
	From there on, we will divide the proof in two steps where we upper bound each of the terms in the equation above, before eventually letting $s$ go to infinity.
	
	\textbf{Step 1:}
    We first bound the coefficient for the remainder term in \eqref{cite1}:
    \begin{equation*}
        \tilde \alpha_s^N(Q_n) = \frac{1}{2^s} \int_{A_s} e^{-t_{2s}-\dots-t_{1}} \E\left[\tau_N\Big( \left[\mathcal{L}_s(Q_n,T_s) \right] (\bs X^{N,T_s},\bs Y^{N,T_s}) \Big)\right]\ dt_1\dots dt_{2s}\,.
    \end{equation*}
    First, let us remark that if we set
	$$ U_n \deq \prod_{1\leq j\leq \lfloor 2^n t \rfloor}\left(1+\frac{|\zeta|}{2^{n+1}}+\frac{|a|X_j+ |b| Y_j}{2^{n/2}}\right),\quad P_n \deq \left|U_n\right|^{2k},$$
	then one can write $Q_n$ and $P_n$ as linear combinations of monomials $M$, with coefficients $c_M$ and $a_M$ respectively, such that 
    $|c_M|\leq a_M$. This property of the polynomials $Q_n$ and $P_n$ is preserved under application of the operator $\mathcal{L}_s$. Indeed, thanks to Lemma \ref{3apparition}\ref{expand-Li}, we know that applying the operator $\mathcal{L}_s$ to a monomial $M$ yields a sum of monomials (with coefficients 1). Therefore, let $g = \left(g_{i,I}\right)_{i\in [1,\lfloor 2^n t\rfloor], I\in J_s}$ 
    be a family of centered Gaussian random variables of variance $1$ where $g_{i,I}=g_{j,J}$ if $i=j$ and otherwise $g_{i,I}$ and $g_{j,J}$ are independent. If we also set $\widetilde{g}$ an independent copy of $g$, thanks to Proposition \ref{prop:monomial},
		\begin{align*}
		&\int_{A_s} e^{-t_{2s}-\dots-t_{1}} \E\left[\tau_N\Big( \left[\mathcal{L}_s(Q_n,T_s) \right] (\bs X^{N,T_s},\bs Y^{N,T_s}) \Big)\right]\ dt_1\dots dt_{2s} \\
		&\leq\int_{A_s} e^{-t_{2s}-\dots-t_{1}} dt_1\dots dt_{2s} \E\left[\left[\mathcal{L}_s(P_n,T_s) \right] (g,\widetilde{g}) \right].
	\end{align*}
	Besides, if $q_1,q_2\in [1,\lfloor 2^n t \rfloor]$ and $p\in [l,m]$,
    \begin{align*}
        & \partial_p \left(  \prod_{q_1\leq j\leq q_2}\left(1+\frac{|\zeta|}{2^{n+1}}+\frac{|a|X_j+ |b| Y_j}{2^{n/2}}\right) \right) \\
        &= \frac{|a|}{2^{n/2}}\prod_{q_1\leq j< p}\left(1+\frac{|\zeta|}{2^{n+1}}+\frac{|a|X_j+ |b| Y_j}{2^{n/2}}\right) \otimes \prod_{p< j\leq q_2}\left(1+\frac{|\zeta|}{2^{n+1}}+\frac{|a|X_j+ |b| Y_j}{2^{n/2}}\right).
    \end{align*}
    And we have the exact same formula for $ \partial_{p+\lfloor 2^n t \rfloor} $ but with $|a|$ replaced by $|b|$ in the first instance on the right hand side. Thus, a term of the $m$-th kind associated to $U_n$ will be of the form 
    $$ \left( \frac{P_1}{2^{n/2}},\dots,\frac{P_m}{2^{n/2}}, P_{m+1} \right), $$
    where
    $$ P_l = |c_l| \prod_{p_l\leq j< p_{l+1}}\left(1+\frac{|\zeta|}{2^{n+1}}+\frac{|a|X_j+ |b| Y_j}{2^{n/2}}\right), $$
    for $c_{m+1}=1$, $c_l\in\{a,b\}$ else and $p_l\in [1,\lfloor 2^nt\rfloor]$. Therefore, with the notations of Definition \ref{def:terms},  we have $c(U_n)=1$. We also set $g_j:=g_{j,J}$ since this does not depend on $J$. Then, thanks to Lemma \ref{3apparition} used with $d=2\lfloor 2^n t \rfloor$, for any $T_s\in A_s$, we get that $ \left[\mathcal{L}_s(P_n,T_s)\right] (g,\widetilde{g})$ is a sum of at most $(2k)^{4s} (2\lfloor 2^n t \rfloor)^{2s}$ $U_n$-monomials evaluated in $(g,\widetilde{g})$. Each of those quantities can then be upper bounded (since both $|a|$ and $|b|$ are smaller than $\sqrt{\rho}$) by
	$$ \left(\frac{\rho}{2^n}\right)^{\frac{1}{2}\sum_{m\geq 0} mr_m} \prod_{j=1}^{\lfloor 2^n t \rfloor} \left|1+\frac{|\zeta|}{2^{n+1}}+\frac{|a|g_j+|b|\widetilde{g}_j}{2^{n/2}}\right|^{2k-z_j}, $$
	where $z_j\in [0,2k]$ is such that $z_1+\dots+z_{\lfloor 2^n t \rfloor} = 4s$, and $r_m$ is the number of terms of the $m$-th kind used to build the $U_n$-monomial. But thanks to Lemma \ref{3apparition}\ref{expand-complexity},
    \begin{equation}
    \label{sdkvjnsldnvs}
        \sum_{m\geq 0} mr_m = \sum_{m\geq 0} (m+1)r_m - \sum_{m\geq 0} r_m = 2k+4s - 2k = 4s.
    \end{equation}
    Consequently, for some constant $C_{k,t,\rho}$,
	\begin{align*}
		\E\left[ \left[\mathcal{L}_s(P_n,T_s)\right] (g,\widetilde{g}) \right] 
        &\leq (2k)^{4s} (2\lfloor 2^n t \rfloor)^{2s} \left(\frac{\rho}{2^n}\right)^{2s} \E\left[\left|1+\frac{|\zeta|}{2^{n+1}}+\frac{|a|g_j+|b|\widetilde{g}_j}{2^{n/2}}\right|^{2k}\right]^{\lfloor 2^n t \rfloor} \\
		&\leq (2k)^{4s} (2\rho t)^{2s}\ C_{k,t,\rho},
	\end{align*}
	where for the last line we argue as in the proof of Lemma \ref{sdovmkson}.
    Combining with Lemma \ref{3apparition}\ref{expand-Ai}, we conclude
	\begin{align}
		\label{cite2}
        \sup_n
        \tilde \alpha_s^N(Q_n) 
        &\leq \frac{(2k)^{4s} (2\rho t)^{2s}}{s!}\ C_{k,t,\rho}. 
	\end{align}
	
	\textbf{Step 2:}
    We turn to bound the coefficients $\alpha_i(Q_n)$ in \eqref{cite1}, where
    \begin{equation*}
        \alpha_i(Q_n) = 
        \frac{1}{2^i }\int_{A_i } e^{-t_{2i}-\dots-t_{1}} \tau\Big( \left[\mathcal{L}_i(Q_n,T_i) \right] (\bs x^{T_i},\bs y^{T_i}) \Big)\ dt_1\dots dt_{2i}\,.
    \end{equation*}
	Note that if we set
	$$ R_n \deq \prod_{1\leq j\leq \lfloor 2^n t \rfloor} \left(1+\frac{\zeta}{2^{n+1}}+e^{\i\theta} \frac{aX_j+\i bY_j}{2^{n/2}}\right) - \sum_{1\leq j\leq \lfloor 2^n t \rfloor} e^{\i\theta} \frac{aX_j+\i bY_j}{2^{n/2}} - 1, $$
    so that $Q_n=|R_n|^{2k}$, as well as
    $$ S_n^{q_1,q_2} = \prod_{q_1\leq j\leq q_2} \left(1+\frac{\zeta}{2^{n+1}}+e^{\i\theta} \frac{aX_j+\i bY_j}{2^{n/2}}\right), $$
    then for $p\in [1,\lfloor 2^n t \rfloor]$,
	\begin{align*}
		\partial_p R_n &= \frac{ae^{\i\theta}}{2^{n/2}} \left( S_n^{1,p-1} \otimes S_n^{p+1, \lfloor 2^n t \rfloor} - 1\otimes 1\right) \\
		&= \frac{ae^{\i\theta}}{2^{n/2}} \left( \left(S_n^{1,p-1} - 1\right) \otimes S_n^{p+1, \lfloor 2^n t \rfloor} + 1\otimes \left(S_n^{p+1, \lfloor 2^n t \rfloor}-1\right)\right).
	\end{align*}
	And similarly for $p\in [q_1,q_2]$, we get that
	\begin{align*}
		\partial_p S_n^{q_1,q_2} = \frac{e^{\i\theta}a}{2^{n/2}} S_n^{q_1,p-1}\otimes S_n^{p+1,q_2}.
	\end{align*}
	
	\noindent We also have similar formulas for $p\in [\lfloor 2^n t \rfloor+1,2\lfloor 2^n t \rfloor]$. Consequently, we have $c(R_n)=2$ and terms of the first kind associated to $R_n$ are of the following form:
    $$ \left( \frac{c e^{\i\theta}}{2^{n/2}} \left(S_n^{1,p-1} - 1\right), S_n^{p+1, \lfloor 2^n t \rfloor}\right) \quad \text{or}\quad \left( \frac{ce^{\i\theta}}{2^{n/2}} , S_n^{p+1, \lfloor 2^n t \rfloor}- 1\right),$$
    where $c\in\{a,b\}$. Besides, terms of the $m$-th kind for $m\geq 2$ are of the form
    $$ \left( \frac{c_1 e^{\i\theta}}{2^{n/2}} S_n^{1,p_1-1},\frac{c_2 e^{\i\theta}}{2^{n/2}} S_n^{p_1+1,p_2-1},\dots,\frac{c_m e^{\i\theta}}{2^{n/2}} S_n^{p_{m-1}+1,p_m-1}, S_n^{p_m+1,\lfloor 2^nt\rfloor} \right), $$
    $$ \text{or}\quad \left( \frac{c_1 e^{\i\theta}}{2^{n/2}},\frac{c_2 e^{\i\theta}}{2^{n/2}} S_n^{p_1+1,p_2-1},\dots,\frac{c_m e^{\i\theta}}{2^{n/2}} S_n^{p_{m-1}+1,p_m-1}, S_n^{p_m+1,\lfloor 2^nt\rfloor} \right), $$
    
    \noindent with $c_l\in \{a,b\}$. Thus, thanks to Lemma \ref{3apparition}, $\left[\mathcal{L}_i(Q_n,T_i)\right](\bs x^{T_i},y^{T_i})$ is a sum of at most $ (4k)^{4i} (2\lfloor 2^n t \rfloor)^{2i}$ $R_n$-monomials which are themselves products of $2k+4i$ polynomials of the following form (or of its adjoints):
    \begin{align*}
        R_n,\quad \frac{c e^{\i\theta}}{2^{n/2}}\left(S_n^{q_1,q_2} -1\right),\quad \frac{c e^{\i\theta}}{2^{n/2}} S_n^{q_1,q_2},\quad \frac{c e^{\i\theta}}{2^{n/2}},
    \end{align*}
    each evaluated in a given free semicircular system and with $c\in\{a,b\}$. Besides, since both $|a|$ and $|b|$ are smaller than $\sqrt{\rho}$, given $(\bs x, \bs y) =(x_i,y_i)_{1\leq i\leq \lfloor 2^n t \rfloor}$ a free semicircular system, thanks to H\"older inequality, one can upper bound the trace of such an expression by 
    \begin{align}
    \label{sdjvnslvmls}
        \left( \frac{\rho}{2^{n}}\right)^{\frac{1}{2}\sum_{m\geq 0}mr_m}& \norm{R_n(\bs x,\bs y)}_{2k+4i}^{r_0} \times \left( \sup_{q_1,q_2} \norm{S_n^{q_1,q_2}(\bs x,\bs y) - 1}_{2k+4i} \right)^{r_1} \\
        \times& \left( \sup_{q_1,q_2} \norm{S_n^{q_1,q_2}(\bs x,\bs y)}_{2k+4i} \right)^{2k+4i-r_1-r_0}. \nonumber
    \end{align}
    Let us now remark that thanks to Proposition \ref{prop:Lp.conv} and Lemma \ref{lem:bnorm}, there exist constants $C_{\rho,t,k,i}$ such that for $1\leq q_1\leq q_2\leq \lfloor 2^n t \rfloor$,
    $$\norm{R_n(\bs x,\bs y)}_{2k+4i} \leq tC_{\rho} e^{K_{\rho}t} + C_{\rho,t,k,i}\ 2^{-\frac{n}{4k+8i}},$$
    \begin{align*}
        \norm{S_n^{q_1,q_2}(\bs x,\bs y) - 1}_{2k+4i}  &= \norm{b_n\left( \frac{q_2-q_1+1}{2^n} \right) -1}_{2k+4i} 
        \leq \sqrt{t} C_{\rho} e^{K_{\rho}t} + \frac{C_{\rho,t,k,i}} {2^{n/(4k+8i)}},
    \end{align*}
    $$\norm{S_n^{q_1,q_2}(\bs x,\bs y)}_{2k+4i} = \norm{b_n\left( \frac{l-m+1}{2^n} \right) }_{2k+4i} \leq C_{\rho} e^{K_{\rho}t} + \frac{C_{\rho,t,k,i}} {2^{n/(4k+8i)}}.$$
    
    \noindent Thus, by using again Lemma \ref{3apparition}\ref{expand-complexity} as in Equation \eqref{sdkvjnsldnvs}, we get that \eqref{sdjvnslvmls} is upper bounded by 
	$$ \frac{\rho^{2i}}{2^{2ni}}t^{r_0+\frac{r_1}{2}} \left(C_{\rho}e^{K_{\rho }t}\right)^{2k+4i} + K_{\rho,t,k,i} 2^{-\left(2i+c_{k,i}\right)n}, $$
    for some constant $K_{\rho,t,k,i}$. Besides, thanks to Lemma \ref{lem:simple} below coupled with Lemma \ref{3apparition}\ref{expand-complexity}, one can always lower bound $r_1+\frac{r_2}{2}$ by $2k-2i$. Thus, one can upper bound $t^{r_1+\frac{r_2}{2}}$ by $t^{2k-2i}$ (up to a constant $T^{2i}$ if $t\geq 1$). Finally, we get that for some constant $C_{\rho,T}$,
	\begin{align*}
		&\limsup_{n\to\infty} \left|\int_{A_i } e^{-t_{2i}-\dots-t_{1}} \tau\Big( \left[\mathcal{L}_i(Q_n,T_i)\right] (\bs x^{T_i},\bs y^{T_i}) \Big)\ dt_1\dots dt_{2i}\right| \\
		&\leq \limsup_{n\to\infty} \int_{A_i } e^{-t_{2i}-\dots-t_{1}} dt_1\dots dt_{2i} \times k^{4i} \lfloor 2^n t \rfloor^{2i} \times \frac{1}{2^{2ni}} t^{2k-2i} (C_{\rho,T})^{2k+i}\\
		&\leq \frac{k^{4i}}{i!} t^{2k} (C_{\rho,T})^{2k+i},
	\end{align*}
    where we used \ref{3apparition}\ref{expand-Ai} in the last line.
	
    From \eqref{cite1}, \eqref{cite2} and Corollary \ref{cor.B-I-W.moments}, we have shown that for all integers $k,s\ge 1$,
	\begin{align*}
		&\E\left[\ts_N\left(\left|B^N(t) - I_N - W^N(t)\right|^{2k}\right)\right] 
        =\lim_{n\to\infty}\E\left[\ts_N\left(\left|B^N_n(t) - I_N - W^N\left(t_n\right)\right|^{2k}\right)\right] \\        
        &\qquad\qquad\qquad\qquad\qquad\leq \sum_{i=0}^{s-1} \frac{k^{4i}(C_{\rho,T})^{i}}{N^{2i}i!} (t C_{\rho,T})^{2k} + \frac{(2k)^{4s} (2\rho t)^{2s}}{s!}\ C_{k,t,\rho} \\
		&\qquad\qquad\qquad\qquad\qquad\leq  (t C_{\rho,T})^{2k} e^{C_{\rho,T} \frac{k^4}{N^2}} + \frac{(2k)^{4s} (2\rho t)^{2s}}{s!}\ C_{k,t,\rho}
	\end{align*}
	The result now follows by letting $s\to\infty$.  \end{proof}

\begin{lemma} \label{lem:simple}
    If a sequence of non-negative integers $(r_m)_{m\geq 0}$ satisfies
    $$ \sum_{m\geq 0} (m+1)r_m = 2k+4i,\quad \text{and}\quad \sum_{m\geq 0} r_m = 2k, $$
    then $r_0+\frac{r_1}{2}\geq 2k-2i$.
\end{lemma}

\begin{proof}
    We have
    $$ \sum_{m\geq 0} (m+1)r_m = \sum_{m\geq 0} r_m+4i,$$
    thus
    $$ \frac{r_1}{2} + \sum_{m\geq 2} r_m \leq \frac{1}{2} \sum_{m\geq 1} m r_m = 2i. $$
    Therefore,
    \[
    r_0+\frac{r_1}{2} = 2k - \frac{r_1}{2} - \sum_{m\geq 2} r_m \geq 2k - 2i. \qedhere
    \]
\end{proof}

\subsection{Tail bound for the norm}
\label{sec:norm}

We will also need a concentration estimate on the norm of $B^N(t)$. One could obtain it by using Proposition \ref{prop:2kmom} coupled with an estimate on the norm of a GUE matrix and the triangle inequality. However, following the strategy developed in the proof of Proposition \ref{prop:2kmom} yields the desired bound with no need for another reference. Hence, we have the following result.

\begin{theorem}
	\label{ldnvsdkn2}
	There exists a constant $c_{\rho,T}$ such that for any $t\in [0,T]$ and $K>0$,
	$$ \P\left( \norm{B^N(t) } \geq K \right) \leq \left(\frac{c_{\rho,T}}{K}\right)^{N^{2/3}}. $$
\end{theorem}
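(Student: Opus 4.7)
The plan is to combine a trace moment bound with Markov's inequality at order $k\sim N^{2/3}$, paralleling the route from Theorem~\ref{kjdsvnksn} to Theorem~\ref{thm.2}. Specifically, the goal is to establish that for some constant $C_{\rho,T}$,
\begin{equation*}
\E\bigl[\ts_N(|B^N(t)|^{2k})\bigr]\ \leq\ (C_{\rho,T})^{2k}\, \exp\!\Bigl\{C_{\rho,T}\,\tfrac{k^4}{N^2}\Bigr\}
\end{equation*}
for every $t\in[0,T]$ and $k\in\N$. Once this is in hand, the theorem follows by invoking the pointwise inequality $\|B^N(t)\|^{2k}\leq \tr_N(|B^N(t)|^{2k})=N\,\ts_N(|B^N(t)|^{2k})$ and Markov's inequality: with $k=k_N:=\lceil N^{2/3}\rceil$ one has $k_N^4/N^2=O(k_N)$ and $N^{1/(2k_N)}=O(1)$, so taking the $2k_N$-th root absorbs all the $N$-dependence into a single constant $c_{\rho,T}$, producing $(c_{\rho,T}/K)^{N^{2/3}}$ for $K\geq c_{\rho,T}$ (the case $K<c_{\rho,T}$ being trivial).

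To prove the moment bound I replicate the strategy of Theorem~\ref{kjdsvnksn}. Using Lemma~\ref{sdovmkson} and Proposition~\ref{svlksldknv}, I first pass to the discrete approximation
\[
\E\bigl[\ts_N(|B^N(t)|^{2k})\bigr]=\lim_{n\to\infty}\E\bigl[\ts_N(|B_n^N(t)|^{2k})\bigr],
\]
so that the target becomes the expected normalized trace of a polynomial $Q_n(X^N,Y^N)$ in independent GUE matrices. Lemma~\ref{3apparition} then expands this expectation as a finite sum over $i<s$ of integrated traces of $(L^{T_i}\cdots L^{T_1})(Q_n)$ evaluated at free semicircular systems, plus a remainder of size $O(N^{-2s})$ that I drive to zero as $s\to\infty$.

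Each summand is bounded by adapting Step~2 of the proof of Theorem~\ref{kjdsvnksn}, with one structural simplification. Since $Q_n=(B_n^{N\ast}B_n^N)^k$ contains no subtraction of $I+W^N$, the repeated differentiations produce only partial products of the form $\prod_{a\leq j\leq b}\Delta_j^n$ (and their adjoints), without any $\prod-1$ segments. Each such partial product has $L^m$-norm bounded by $C_\rho e^{K_\rho T}+O(2^{-n/2})$: in the limit $n\to\infty$ it converges to a time-increment of the free multiplicative Brownian motion $b$, whose operator norm is controlled by Proposition~\ref{snlvsln}. The counting of Theorem~\ref{kjdsvnksn} then gives, after $i$ applications of $L^T$, at most $Ck^{4i}(\lfloor 2^n t\rfloor)^{2i}\,2^{-2ni}$ terms, each a product of $2k+4i$ such subproducts. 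Noncommutative H\"older together with $\int_{A_i}e^{-t_{2i}-\cdots-t_1}\,dt_1\cdots dt_{2i}\leq 1/i!$ bounds the $i$-th contribution by $(C_{\rho,T})^{2k+4i}(T^2 k^4/N^2)^i/i!$. Summing over $i$ yields the claimed $(C_{\rho,T})^{2k}\exp(C_{\rho,T}k^4/N^2)$ estimate, and the remainder is dominated by Gaussian moments via Proposition~\ref{sodvnsknskdvn}, exactly as in Step~1 of Theorem~\ref{kjdsvnksn}.

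The main point of care lies in the $i=0$ base case, where the Parraud expansion yields precisely $\tau(|b_n(t)|^{2k})$; this must be controlled uniformly in $n$ by $(C_\rho e^{K_\rho T})^{2k}$. This follows by combining the $L^2$-convergence $b_n\to b$ from Proposition~\ref{svlksldknv} with the uniform-in-$n$ moment bound of Lemma~\ref{sdovmkson} (which, via a telescoping argument and noncommutative H\"older, upgrades $L^2$-convergence to convergence of the $2k$-th free moment for each fixed $k$) and finally the operator-norm estimate $\|b(t)\|\leq C_\rho e^{K_\rho T}$ of Proposition~\ref{snlvsln}. Beyond this point the bookkeeping mirrors that of Theorem~\ref{kjdsvnksn}: the loss of the $t^{2k-2i}$ factors only replaces the $(tC_{\rho,T})^{2k}$ prefactor there by a $t$-independent constant, while the $\exp(Ck^4/N^2)$ dependence on $N$, which arises entirely from the combinatorial series $\sum_i(C_{\rho,T}k^4/N^2)^i/i!$, is preserved.
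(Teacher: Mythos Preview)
Your proposal is correct and follows essentially the same approach as the paper: the paper's proof is extremely terse, simply reducing via Markov's inequality to the moment bound $\E[\ts_N(|B^N(t)|^{2k})]$ and then stating that ``the rest of the proof then follows exactly like those of Theorem~\ref{kjdsvnksn} and~\ref{thm.2}.'' You have correctly fleshed out what this means, including the key structural simplification (no $\prod-1$ segments after differentiation, hence no $t^{2k}$ prefactor) and the base-case handling via Proposition~\ref{snlvsln}.
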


\begin{proof}
    One has
   	\begin{align*}
		\P\left( \norm{B^N(t)} \geq K \right) 
        \le \P\left( \norm{\left|B^N(t)\right|^{2k}} \geq K^{2k} \right) 
        &\leq \P\left(  \tr_N\left(\left|B^N(t)\right|^{2k}\right) \geq K^{2k} \right) \\
		&\leq \frac{N}{K^{2k}} \E\left[\ts_N\left(\left|B^N(t)\right|^{2k}\right) \right].
	\end{align*}
	The rest of the proof then follows exactly like those of Proposition \ref{prop:2kmom} and \ref{thm.2}.
\end{proof}

Letting $K>c_{\rho,t}$ so that $r=c_{\rho,t}/K\in(0,1)$, an application of the Borel--Cantelli lemma yields the following:

\begin{cor} \label{cor.bounded} There is some constant $K = K_{\rho,t}<\infty$ so that
\[ \mathbb{P}\{\|B^N(t)\|\le K\text{ for all large }N\}=1. \]
\end{cor}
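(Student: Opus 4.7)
The plan is to apply Theorem \ref{ldnvsdkn2} directly together with the first Borel--Cantelli lemma, as outlined in the sentence preceding the statement. First I would fix any constant $K = K_{\rho,t} > c_{\rho,t}$ (for concreteness, say $K := 2c_{\rho,t}$), so that the ratio $r := c_{\rho,t}/K$ lies strictly in $(0,1)$. Theorem \ref{ldnvsdkn2} then gives
\[ \P\bigl(\|B^N(t)\| \geq K\bigr) \leq r^{N^{2/3}} \qquad \text{for every } N\geq 1. \]
Since $0<r<1$ and $N^{2/3}\to\infty$, the tail $r^{N^{2/3}}$ decays faster than any polynomial in $1/N$, so $\sum_{N\geq 1} r^{N^{2/3}} < \infty$.

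Applying the first Borel--Cantelli lemma to the events $A_N := \{\|B^N(t)\| \geq K\}$ then yields $\P(A_N \text{ infinitely often}) = 0$, which is equivalent to $\P\bigl(\|B^N(t)\| \leq K \text{ for all sufficiently large }N\bigr) = 1$, as claimed. There is no substantive obstacle at this step: all of the work has already been absorbed into the concentration estimate Theorem \ref{ldnvsdkn2}, and the corollary is simply the almost-sure packaging of that bound. The only mild subtlety worth remarking on is that the stretched-exponential exponent $N^{2/3}$ is more than comfortably summable; any rate of the form $N^{\eps}$ for $\eps>0$ would in fact suffice, which suggests robustness of the argument to weaker forms of the underlying moment estimate \eqref{ksjdnfvsl}.
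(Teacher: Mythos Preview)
Your proposal is correct and matches the paper's argument exactly: fix $K>c_{\rho,t}$, apply Theorem \ref{ldnvsdkn2} to get a summable tail $r^{N^{2/3}}$, and invoke Borel--Cantelli. The paper's own justification is precisely the one-sentence remark preceding the corollary, so there is nothing further to compare.
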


\section{Estimates on Small Singular Values}
\label{sect:invertibility}

To lighten notation, in this section we drop the superscript $N$ from matrices $B^N(t),Z^N$, etc.

As discussed in Section \ref{sect.Brown.meas}, a key step in the Hermitization approach is to establish quantitative lower bounds on small singular value of shifts of $B(t)$. 
Specifically, for an arbitrary fixed $N\times N$ matrix $A$ we seek bounds of the form
\begin{equation}
    \label{bd:smallest}
    \P( \sigma_N( B(t)+ A) \le \delta) \ls N^C\delta^c \qquad \forall \delta\ge0
\end{equation}
for the smallest singular value, and
\begin{equation}
    \label{bd:smallish}
    \sigma_{N-\ell} ( B(t) + A) \ge c'(\ell/N)^{C'} \qquad \forall\, \ell\in[k, \kappa N]
\end{equation}
holding with high probability for some $k=o(N)$ and $\kappa>0$ independent of $N$.
Such control is provided by the following two propositions.

\begin{prop}[Smallest singular value]
\label{prop:smallest}
    Let $(B(t))_{t\geq 0}$ be a $(\rho,\zeta)$-Brownian motion. 
    Let $A$ be an $N\times N$ matrix, deterministic or random and independent of $(B(t))_{t\geq 0}$.
    Then the following hold for $0<t\le T<\infty$.
    \begin{enumerate}
        \item[(a)] If $|\zeta|<\rho$, then for all $\alpha>0$, \eqref{bd:smallest} holds for all $N\ge (4/\alpha)^{3/2}+ t^{-1/2}$, with $C=2$, $c=1-\alpha$ and the implicit constant depending only on $\alpha,\rho,\zeta,T$. 
        \item[(b)] (Degenerate case). If $|\zeta|=\rho$, then \eqref{bd:smallest} holds for universal constants $C,c\in (0,\infty)$ and all $N\ge t^{-c/C}$, with the implicit constant in \eqref{bd:smallest} depending only on $\rho$ and $T$.
    \end{enumerate}
\end{prop}

\begin{prop}[Mesoscopic singular values]
\label{prop:meso}
    In the setting of Proposition \ref{prop:smallest} (with no assumptions on $\zeta,\rho$)
    there are universal constants $C,c\in(0,\infty)$ such that for any $k\ge C\log N$, \eqref{bd:smallish} holds with $C'=2$,  $c'=c'(\rho,\zeta,T)>0$, $\kappa=\sqrt{t}$ with probability at least $1-\exp( -\min\{ ck, N^{2/3}\})$. 
\end{prop}

The key to the proof of both Propositions is Theorem \ref{thm.2}, which enables a reduction to the study of shifted Gaussian matrices, for which we have the following two lemmas.

\begin{lemma}
\label{lem:smallest}
    Let $G$ be an $N\times N$ complex Ginibre or GUE matrix (having the distribution of $Z^N(1)$ or $X^N(1)$ in Definition \ref{def:GinibreBM}),
    and let $A$ be an arbitrary complex $N\times N$ matrix, deterministic or random and independent of $G$.
    Then
    \begin{equation}
        \P( \sigma_N(G+A) \le \delta) \le N^C\delta^c\qquad \forall\,\delta\ge0
    \end{equation}
    for universal constants $C,c\in(0,\infty)$.
\end{lemma}

For the Ginibre case see \cite[Lemma 3.3]{sinmin}, where it is established with optimal constants $C=c=2$. (For a short argument with sub-optimal constants see \cite[$\mathsection$4.4]{Bordenave-Chafai-circular}.)
    The GUE case appears to be new (note the perturbation $A$ is not assumed to be Hermitian). We defer the proof and a discussion of related literature to Appendix \ref{app:smallest}.

\begin{lemma}\label{lem:meso}
    There is a universal constant $c>0$ such that the following holds. 
    Let $G$ be an $N\times N$ matrix with standard real Gaussian entries, and assume the $N(N+1)/2$ pairs $\{(G_{ij},G_{ji}): 1\le i\le j\le N\}$ are jointly independent. 
    Then for any $N\times N$ complex matrix $A$, deterministic or random and independent of $G$, and any $k\ge c^{-1}\log N$,
    \[
    \sigma_{N-\ell} ( G + A) \ge c\ell/N\qquad \forall \ell\, \ge k
    \]
    with probability at least $1-e^{-ck}$.
\end{lemma}

The proof of Lemma \ref{lem:meso} is deferred to Appendix \ref{app:meso}.

We note that the unshifted matrix $B(t)$ is always well invertible with high probability:

\begin{lemma}
	\label{lem3}
    For any $0<t\le T<\infty$ and $\delta>0$,
    \[
    \P( \sigma_N(B(t)) \le \delta ) = \mathcal{O}_{\rho,T}(\delta)^{N^{2/3}}.
    \]
\end{lemma}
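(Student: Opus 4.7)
The plan is to reduce this lower-tail estimate to the upper-tail norm bound already supplied by Theorem \ref{ldnvsdkn2}. Since $\sigma_N(B^N(t)) = \|B^N(t)^{-1}\|^{-1}$, the claim is equivalent to
\[
\P\bigl(\|B^N(t)^{-1}\| \ge 1/\delta\bigr) = O_{\rho,T}(\delta)^{N^{2/3}},
\]
which has exactly the shape of Theorem \ref{ldnvsdkn2}. The whole problem therefore boils down to showing that $\|B^N(t)^{-1}\|$ is distributed the same as $\|B^N(t)\|$.

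To establish this, I will read off the law of $B^N(t)^{-1}$ from the SDE \eqref{eq.SDE.Binv}. Using $\Xi = \zeta I$ that equation reads
\[
\Binv(t) = I - \int_0^t dW^N(s)\,\Binv(s) + \frac{\zeta}{2}\int_0^t \Binv(s)\,ds,
\]
in which the noise multiplies $\Binv$ on the left. Transposing entrywise converts left-multiplication into right-multiplication and commutes with the It\^o integral, so $\Binv^\top$ satisfies a left-invariant SDE of exactly the type \eqref{eq.B.SDE.rho.zeta}:
\[
\Binv(t)^\top = I + \int_0^t \Binv(s)^\top\,d\tilde W^N(s) + \frac{\zeta}{2}\int_0^t \Binv(s)^\top\,ds,
\]
with driving noise $\tilde W^N(s) := -W^N(s)^\top$.

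Next I verify $\tilde W^N \equaldist W^N$ as processes. Writing $W^N = e^{\i\theta}(aX^N + \i b Y^N)$ for independent Hermitian Brownian motions $X^N,Y^N$, the entrywise symmetries $X^\top \equaldist X$ and $Y^\top \equaldist Y$ (the off-diagonal entries are centered complex Gaussians, invariant under conjugation, and the diagonal entries are real), together with $-X \equaldist X$ and $-Y \equaldist Y$, give $-(W^N)^\top \equaldist W^N$ as processes. Strong uniqueness for the linear SDE \eqref{eq.B.SDE.rho.zeta} then forces $\Binv(\cdot)^\top \equaldist B^N(\cdot)$ as $\mathrm{GL}(N,\mathbb{C})$-valued processes. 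Since transposition is an isometry for the operator norm, $\|B^N(t)^{-1}\| = \|\Binv(t)^\top\| \equaldist \|B^N(t)\|$.

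Applying Theorem \ref{ldnvsdkn2} with $K = 1/\delta$ then gives
\[
\P\bigl(\sigma_N(B^N(t)) \le \delta\bigr) = \P\bigl(\|B^N(t)^{-1}\| \ge 1/\delta\bigr) = \P\bigl(\|B^N(t)\| \ge 1/\delta\bigr) \le (c_{\rho,T}\,\delta)^{N^{2/3}},
\]
which is the claim. There is no substantive obstacle; the only step that needs to be verified carefully is the identification $\Binv^\top \equaldist B^N$, and this reduces to the elementary symmetry $-(W^N)^\top \equaldist W^N$ of the elliptic Brownian motion.
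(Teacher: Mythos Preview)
Your argument is correct and follows essentially the same route as the paper: reduce to a norm bound on $B^N(t)^{-1}$, identify $B^N(t)^{-1}$ (after an isometric involution) as another $(\rho,\zeta')$-Brownian motion, and invoke Theorem \ref{ldnvsdkn2}. The only cosmetic difference is that the paper applies the adjoint rather than the transpose, obtaining a $(\rho,\overline{\zeta})$-Brownian motion instead of a $(\rho,\zeta)$-Brownian motion; since the constant in Theorem \ref{ldnvsdkn2} depends only on $\rho$ and $T$, either involution works.
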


\begin{proof}
	For any $t>0$ we have
	\begin{align*}
		\P\left(\sigma_N(B(t)) \leq \delta \right) &= \P\big( \big\| B(t)^{-1}\big\| \geq 1/\delta \big).
	\end{align*}
	However, following \eqref{eq.SDE.Binv}, if we define $(\Binv(t))_{t\geq 0}$ as the solution of the following stochastic differential equation: 
	\[
    \Binv(t) = I - \int_{0}^{t} dW(s) \Binv(s) + \frac{\zeta}{2} \int_{0}^{t} \Binv(s) ds
    \]
	we get thanks to classical stochastic calculus that $B(t) \Binv(t) = I$ for all $t\geq 0$ (see \cite[Section 4.2]{Kemp2016} for further details). In particular,
    $$ \P(\sigma_N\left(B(t)) \leq \delta \right) = \P\left( \norm{\Binv(t)} \geq 1/\delta \right)= \P\left( \norm{\Binv(t)^*} \geq 1/\delta \right). $$
    But if ${B}^\dagger(t) = ((B(t))^{-1})^*$, and $W^\dagger(t) = - \left(W(t)\right)^*$, we have that
    \begin{equation}
        \label{eq:inverse}
        B^\dagger(t) = I + \int_{0}^{t} B^\dagger(s) d\widetilde{W}(s)  + \frac{\overline{\zeta}}{2} \int_{0}^{t} B^\dagger(s) ds.
    \end{equation}
	Since $(W^\dagger(t))_{t\geq 0}$ has the same law as $(W_{\rho,\overline{\zeta}}(t))_{t\geq 0}$, we have that $(B^\dagger(t))_{t\geq 0}$ is a $(\rho,\overline{\zeta})$-Brownian motion. Hence the conclusion follows thanks to Theorem \ref{ldnvsdkn2}.
\end{proof}

We turn to prove Propositions \ref{prop:smallest} and \ref{prop:meso}. 
We will first handle the case that $|\zeta|<\rho$ and later note the modifications needed for the degenerate case. 

With $|\zeta|<\rho$, and recalling the parametrization \eqref{parametrization}, we may assume without loss of generality that $0< b\le |a|$.

Given independent $(\rho,\zeta)$-Brownian motions $(B(t))_{t\geq 0}$, $(\wt B(t))_{t\geq 0}$  driven respectively by $(W(t))_{t\geq 0}$, $(\widetilde{W}(t))_{t\geq 0}$, we have for any $0\le \varepsilon\le t$ the equality in law
\begin{equation}    \label{BtBe}
    B(t)\eqd B(t-\varepsilon)\wt B(\varepsilon). 
\end{equation}
Next we show that up to a small error we can replace $\wt B(\varepsilon)$ with a shift $cG+M$ of a scaled Ginibre matrix $G$ for some $c=c(\varepsilon)>0$, where $G$ and $M$ are independent. 
Indeed, note that if $\alpha,\beta\in\R$ and $X,Y$ are independent GUE matrices of size $N$, then $\alpha X+\beta Y$ has the same law as $\sqrt{\alpha^2+\beta^2}Z$ where $Z$ is also a GUE matrix. Furthermore, $\widetilde{W}(\varepsilon)$ has the same law as
\begin{equation}
    \label{eq:decomp0}
    \sqrt{\varepsilon} e^{\i \theta} \left(aX + \i b Y\right). 
\end{equation}
Thus, since $|a|\geq b\ge 0$, we get that $\widetilde{W}(\varepsilon)$ has the same law as
\begin{equation}
\label{eq:decomp}
    b \sqrt{\varepsilon} e^{\i \theta} \left(X + \i Y\right) +  \sqrt{\vert \zeta \vert} \sqrt{\varepsilon} e^{\i \theta} \widetilde{X},
\end{equation}
where $\widetilde{X}$ is an independent copy of $X$. 
Consequently, 
\begin{equation}    \label{tWG}
    \widetilde{W}(\varepsilon)\eqd
    b \sqrt{2\varepsilon} G + \sqrt{\vert \zeta \vert} \sqrt{\varepsilon} e^{\i \theta} \widetilde{X}
\end{equation}
where $G$ is a Ginibre random matrix independent of $\widetilde{X}$. 

For $\varepsilon,\delta_0,\delta_1\ge0$, denote the event
\begin{equation}        \label{ssv.good}
    \cG=\cG(\delta_0,\delta_1,\eps):= \big\{\, \sigma_N(B(t-\varepsilon))\ge \delta_0,\, \| \wt B(\varepsilon)-I-\wt W(\varepsilon)\|\le\delta_1\,\big\}\,.
\end{equation}
From Lemma \ref{lem3} and Theorem \ref{thm.2},
\begin{equation}    \label{bd:ssv.good}
    \P( \cG(\delta_0,\delta_1,\varepsilon)^{\mathrm{c}}) \le \mathcal{O}_{\rho,T}(\delta_0)^{N^{2/3}} + \mathcal{O}_{\rho,T}(\varepsilon/\delta_1)^{N^{2/3}}.
\end{equation}
On $\cG$, for any $0\le \ell\le N-1$ we have
\begin{align*}
    \sigma_{N-\ell}( B(t-\varepsilon)\wt B(\varepsilon) + A) 
    &\ge \delta_0\, \sigma_{N-\ell}\big(\,\wt B(\varepsilon) + B(t-\varepsilon)^{-1}A\,\big)
\end{align*}
and
\begin{align*}
    \sigma_{N-\ell}\big(\,\wt B(\varepsilon) + B(t-\varepsilon)^{-1}A\,\big)
    &\ge \sigma_{N-\ell}\big(\, \wt W(\varepsilon) + I + B(t-\varepsilon)^{-1}A \,\big) - \delta_1\,.
\end{align*}
Combining with \eqref{BtBe} and \eqref{tWG} we have that for any $\delta\ge0$,
\begin{align}
    \P\Big(\, \sigma_{N-\ell}( B(t) + A) \le \delta\,,\, \cG\, \Big) &= \P\Big(\, \sigma_{N-\ell}( B(t-\varepsilon) \wt B(\varepsilon)+ A) \le \delta\,,\,\cG\,\Big)    \notag\\
    &\le \P \bigg(\, \sigma_{N-\ell}\big( b\sqrt{2\varepsilon} G + M \big) \le \frac{\delta}{\delta_0} + \delta_1 \bigg) \label{bd:sigNell}
\end{align}
where
\[
M:= I+ \sqrt{\vert \zeta \vert} \sqrt{\varepsilon} e^{\i \theta} \widetilde{X} +  B(t-\varepsilon)^{-1}A
\]
is independent of the Ginibre matrix $G$. 

To conclude the proof of Proposition \ref{prop:smallest} for the case $|\zeta|<\rho$ we use Lemma \ref{lem:smallest}, along with \eqref{bd:ssv.good} and \eqref{bd:sigNell} with $\ell=0$ and $\varepsilon,\delta_0,\delta_1>0$ to be chosen, to get
\begin{align*}
    &\P\Big(\, \sigma_{N}( B(t) + A) \le \delta\Big)
    \le \frac{N^2}{2b^2\varepsilon}\bigg( \frac {\delta}{\delta_0}+ \delta_1 \bigg)^2 + \mathcal{O}_{\rho,T}( \delta_0+ \varepsilon\delta_1^{-1})^{N^{2/3}}.
\end{align*}
The claim follows upon taking $\varepsilon:=\delta$, $\delta_0:=\delta^{\alpha/2}$ and $\delta_1:= \delta^{1-\alpha/2}$.
(Note that we may assume $\delta\le N^{-2}$ as the claimed bound is trivial otherwise, so to ensure $\eps=\delta\le t$ it suffices to have $N\ge t^{-1/2}$.) 

Turning to conclude the proof of Proposition \ref{prop:meso} for the case $|\zeta|<\rho$, from a union bound over the choices of $\ell$, it suffices to show that for any fixed $\ell \ge C\log N$, 
\begin{equation}
    \label{ssv-goal}
    \P( \sigma_{N-\ell}(B(t) + A) \le c(\ell/N)^2) \le e^{-c\ell} + \exp( -2N^{2/3}). 
\end{equation}
Taking $\delta_0=c$, $\delta=\delta_1 =c^2(\ell/N)^2$, and $\varepsilon=c\delta$ for sufficiently small $c=c(T,\rho,\zeta)>0$, from \eqref{bd:sigNell} and \eqref{bd:ssv.good} we can bound the left hand side above by 
\[
\P\bigg( \sigma_{N-\ell}\Big( G+ \frac1{b\sqrt{2\varepsilon}}M\Big) \le \frac{2\sqrt{c}}{ b}\frac\ell N \bigg) + \exp( - 2N^{2/3}).
\]
Taking $c$ smaller if necessary, Proposition \ref{prop:meso} now follows from Lemma \ref{lem:meso} after conditioning on the imaginary part of $G$ (rescaling by a factor $\sqrt{2}$ to standardize the real parts of the entries). From the constraint $\eps\le t$, with this choice of parameters we can take $\ell$ as large as $C_0\sqrt{t}N$ for any fixed $C_0>0$ by shrinking $c$. 

We turn to the degenerate case $|\zeta|=\rho$. 
Now we may assume $b=0<a=\sqrt{\rho}$. 
Returning to \eqref{eq:decomp0}, we see that
$\widetilde{W}(\varepsilon)$ has the same law as
\begin{equation}
\label{eq:decomp1}
    \sqrt{a\varepsilon} e^{\i \theta} X,
\end{equation}
where $X$ is a GUE matrix.
Following the same lines we get that for any $\ell\ge0$
\begin{align}
    \P\Big(\, \sigma_{N-\ell}( B(t) + A) \le \delta\,,\, \cG\, \Big) 
    &\le \P \bigg(\, \sigma_{N-\ell}\big( \sqrt{a\varepsilon}  X + M' \big) \le \frac{\delta}{\delta_0} + \delta_1 \bigg) \label{bd:sigNell1}
\end{align}
where now $M':= e^{-\i\theta}(I+   B(t-\varepsilon)^{-1}A)$
is independent of the GUE matrix $X$. 
The proof now concludes along the same lines as for the case $|\zeta|<\rho$, using the GUE case of Lemma \ref{lem:smallest}, and applying Lemma \ref{lem:meso} to $\sqrt{2}$ times the real part of $X$. 
\qed

\section{An Analytic Approach for Mesoscopic Singular Values\label{sect.analytic.approach}}

In this section we provide an alternative argument for a lower bound on small singular values of $B_0^NB^N(t)-zI$, as in Proposition \ref{prop:meso}.
That proposition implies that the empirical cumulative distribution function of shifted singular values is essentially H\"older-$\frac12$ continuous near $0$, at shrinking scales, for all $z\in\C$.  This is enough (in conjunction with the bound on polynomial decay of the smallest singular value in \ref{prop:smallest}) to yield convergence of the empirical law of eigenvalues; in fact, it would be enough to have a H\"older continuity bound (of varying exponent) just at Lebesgue almost every $z\in\C$.

In this section, we show that: at least for the Brownian motion $B^N(t)$ started at the identity (i.e.\ $B^N_0=I$), the expected empirical CDF of singular values $\sigma_j(B^N(t)-zI)$ is actually Lipschitz near $0$, for Lebesgue almost all $z\in\C$.  In particular: the only $z$ where this may fail (and instead we get H\"older-$\frac12$ continuity) are on the boundary of the the support set of the Brown measure of the limit free multiplicative Brownian motion.

To state and prove this precisely, we make the same parameter reduction as in Notation \ref{notat.ovweparam}: Since $B^N_{\rho,\zeta}(t) \equaldist B^N_{t\rho,t\zeta}(1)$, we henceforth consider the random matrices and operators
\begin{equation} \label{eq.reparam.sect5}
 B^N(t,\zeta) = B^N_{t,\zeta}(1) \quad \& \quad b(t,\zeta) = b_{t,\zeta}(1).
 \end{equation}
Recall that the Brown measure of $b(t,\zeta)$ was identified in \cite{HallHo2023}; in particular, its support set is a compact region $\overline{\Sigma(1,t,\zeta)}$ defined in Theorem \ref{thm.Brown.meas.rhozeta}.  The boundary $\partial\Sigma(1,t,\zeta)$ is a simple smooth closed curve if $t<4$, a smooth ``figure-eight'' curve if $t=4$, and the union of two disjoint simple smooth closed curves if $t>4$.  In particular, the Lebesgue measure of $\partial\Sigma(1,t,\zeta)$ is $0$ for all $t>0$ and $|\zeta|\le t$.

\begin{prop} \label{prop.Wegner.bounded}
Let $T>0$ and $\zeta\in\C$ with $|\zeta|\le t$.  Let $z\in\C\setminus\partial\Sigma(1,t,\zeta)$. Then there is a constant $\overline{C}_{z,T}$ so that, for all $N\in\N$, all $t\in[0,T]$, and all $\eta> N^{-2/11}$,
\begin{equation} \label{eq.Wegner.bounded} \left|\mathrm{Im}\,\E\ts_N[(i\eta-|B^N(t,\zeta)-z I|)^{-1}]\right| \le \overline{C}_{z,T}. \end{equation}
\end{prop}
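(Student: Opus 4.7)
The plan is to follow the PDE approach pioneered in \cite{DHKBrown, HallHo2023}. Introduce the finite-$N$ and free-limit smoothed Stieltjes transforms of the shifted singular-value distributions,
\begin{align*}
S_N(t,z,\eta) &:= \E\,\ts_N\left[(\i\eta I - |B^N(t,\zeta) - zI|)^{-1}\right], \\
S(t,z,\eta) &:= \tau\left[(\i\eta - |b(t,\zeta) - z|)^{-1}\right],
\end{align*}
and split the work in two: (i) uniform boundedness of $|\mathrm{Im}\, S|$ in small $\eta > 0$ when $z \notin \partial\Sigma(1,t,\zeta)$, and (ii) control of $|S_N - S|$ on the scale $\eta > N^{-2/11}$.

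For (i): if $z$ lies outside $\overline{\Sigma(1,t,\zeta)}$ then $|b(t,\zeta)-z|$ is invertible with uniformly bounded inverse, so $S(t,z,\cdot)$ extends analytically to $\eta = 0$. If $z$ lies in the interior of $\Sigma(1,t,\zeta)$, the Hall--Ho analysis in \cite{HallHo2023} furnishes a smooth bounded density for the singular-value distribution of $b(t,\zeta)-z$ near $0$. In both cases one obtains $|\mathrm{Im}\,S(t,z,\eta)| \le C_{z,T}$.

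For (ii): Hermitize to a $2N\times 2N$ block and apply It\^o's formula to the SDE \eqref{eq.B.SDE.rho.zeta}. Exploiting the decomposition of the Laplacian on $\mathrm{GL}(N,\C)$ as $\mathcal{D} + N^{-2}\mathcal{L}$ on trace polynomials from \cite{Cebron2013, DHK2013}, one derives a PDE of the shape
\[
\partial_t S_N = \mathcal{F}(S_N,\nabla_z S_N, \partial_\eta S_N) + \frac{1}{N^2}\mathcal{R}_N,
\]
where $\mathcal{F}$ is precisely the nonlinear first-order operator governing the free function $S$ in \cite{HallHo2023}, and $\mathcal{R}_N$ collects expectations of products of resolvents, closed off at leading order via a variance/concentration estimate. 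The difference $D_N = S_N - S$ then satisfies a linearized inhomogeneous equation whose coefficients involve derivatives of $S$ in $z$ and $\eta$. Away from $\partial\Sigma(1,t,\zeta)$, the Hall--Ho explicit solution controls these derivatives at polynomial rate in $\eta^{-1}$; a Gr\"onwall argument then yields a bound of the shape $|D_N| \ls N^{-2}\eta^{-11}$, which is $O(1)$ precisely when $\eta > N^{-2/11}$. Combined with (i) this establishes \eqref{eq.Wegner.bounded}.

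The main obstacle is the stability argument in (ii): quantifying the perturbation of the nonlinear Hall--Ho PDE by its $O(1/N^2)$ correction, with derivative estimates on the free solution that remain controlled down to the scale $\eta \sim N^{-2/11}$. The hypothesis $z \notin \partial\Sigma(1,t,\zeta)$ is essential, since derivatives of $S$ blow up faster as $z$ approaches the boundary, reflecting the emergence of a sharp spectral edge. Tracking the precise exponent and verifying that it matches the $N^{-2/11}$ threshold in the statement is the most delicate computation.
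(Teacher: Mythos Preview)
Your part (i) is essentially the paper's Lemma \ref{lem.free.Wegner}: outside the closure one uses invertibility of $b(t,\zeta)-z$, and inside $\Sigma(1,t,\zeta)$ one invokes the analytic continuation of the regularized log-potential $S(t,z,\zeta,\eta)$ established in \cite{DHKBrown,HallHo2023}. So this half matches.

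Your part (ii), however, takes a genuinely different route from the paper. The paper does \emph{not} run a PDE stability argument. Instead it proves a direct smooth-functional comparison (Corollary \ref{ksdvkjvbdw}, derived from the asymptotic expansion of \cite{Parraud2023} via Fourier inversion): for compactly supported $f\in\mathcal C^5$,
\[
\big|\,\E\ts_N\big[f(|B^N(t,\zeta)-zI|^2)\big]-\tau\big[f(|b(t,\zeta)-z|^2)\big]\,\big|\;\lesssim\;\frac{\|f\|_{\mathcal C^5}}{N^2}.
\]
One then plugs in a cutoff of $f_\eta(x)=-\eta/(\eta^2+x)$, for which $\|f_\eta\|_{\mathcal C^5}\lesssim \eta^{-11}$ (the $k$th derivative is of order $\eta^{-2k-1}$, and $k=5$ gives $11$). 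This makes the exponent $2/11$ completely transparent and requires no derivative estimates on the free solution $S$ beyond the boundedness already established in (i).

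Your PDE approach is plausible in principle, but as written it has a gap: you assert the bound $|D_N|\lesssim N^{-2}\eta^{-11}$ without deriving the exponent, and you yourself flag this as ``the most delicate computation.'' In a Gr\"onwall scheme the power of $\eta^{-1}$ would come from derivative bounds on the free $S$ entering the linearized coefficients, and there is no a priori reason these should produce exactly $11$. Moreover, the decomposition $\mathcal D+N^{-2}\mathcal L$ from \cite{Cebron2013,DHK2013} is formulated for trace \emph{polynomials}, so applying it to resolvents requires an approximation step that itself introduces $\eta^{-1}$ powers. The paper's route sidesteps all of this: the $C^5$ comparison is a black box, and the $\eta$-dependence is a one-line computation of $\|f_\eta\|_{\mathcal C^5}$.
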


Proposition \ref{prop.Wegner.bounded} is a strong {\em Wegner anti-concentration estimate} (see \cite{Wegner}).  The (conclusion of the) proof is in Section \ref{sect.proof.Wegner} below.  It follows that the expected CDF of the empirical law of singular values of $B^N_{\rho,\zeta}(t)-zI$ is uniformly Lipschitz on mesoscopic-to-macroscopic scales.

\begin{cor}
\label{cor:wegner}
    For $0<t\le T<\infty$ and $z\in\mathbb{C}\setminus\partial\Sigma(1,t\rho,t\zeta)$, we have
    \[
    \E \mu_{|B_{\rho,\zeta}^N(t)-zI|}([0,\eta]) =\mathcal{O}_{\rho,z,T}( \eta)
    \]
    uniformly for $\eta\in[N^{-2/11},1]$.
\end{cor}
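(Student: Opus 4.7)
The plan is to derive Corollary \ref{cor:wegner} as a direct consequence of Proposition \ref{prop.Wegner.bounded} via the standard observation that $|\mathrm{Im}\,\ts_N[(i\eta - A)^{-1}]|$ dominates a smoothed version of the measure of $[0,\eta]$ under the spectral measure of $A = |B_{\rho,\zeta}^N(t) - zI|$. There is really only one genuine step; the work is all in the proposition.

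First I will reconcile the parametrizations. By Notation \ref{notat.ovweparam}, $B_{\rho,\zeta}^N(t) \eqd B^N(t\rho, t\zeta)$, so the matrix $A_N := |B_{\rho,\zeta}^N(t) - zI|$ has the same law as $|B^N(t\rho,t\zeta) - zI|$. Under the hypothesis $z\in\C\setminus \partial\Sigma(1,t\rho,t\zeta)$, Proposition \ref{prop.Wegner.bounded} applied with parameters $(t',\zeta') = (t\rho, t\zeta)$ and $T' = T\rho$ yields
\[
\bigl|\mathrm{Im}\,\E\,\ts_N\bigl[(i\eta - A_N)^{-1}\bigr]\bigr| \le \overline{C}_{z,T\rho}
\]
for all $\eta > N^{-2/11}$ and all $t\in[0,T]$.

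Next I will convert this Stieltjes-transform bound into a measure bound. For each realization, since $A_N \ge 0$, spectral calculus gives
\[
\mathrm{Im}\,\ts_N\bigl[(i\eta - A_N)^{-1}\bigr] = -\eta \int_0^\infty \frac{d\mu_{A_N}(x)}{\eta^2+x^2},
\]
which is pointwise non-positive. Hence its expectation and its absolute value commute, and restricting the integral to $[0,\eta]$ (where $\eta^2+x^2 \le 2\eta^2$) gives
\[
\bigl|\mathrm{Im}\,\E\,\ts_N\bigl[(i\eta - A_N)^{-1}\bigr]\bigr|
= \E\!\left[\eta\int_0^\infty \frac{d\mu_{A_N}(x)}{\eta^2+x^2}\right]
\ge \frac{\E\,\mu_{A_N}([0,\eta])}{2\eta}.
\]
Combining the two displays yields $\E\,\mu_{A_N}([0,\eta]) \le 2\overline{C}_{z,T\rho}\,\eta = O_{\rho,z,T}(\eta)$, as claimed.

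The main obstacle in this chain of reasoning is not in the corollary itself but in Proposition \ref{prop.Wegner.bounded}: that is where the PDE/complex-analytic input about $\partial\Sigma(1,t,\zeta)$ enters, and where the lower mesoscopic scale $N^{-2/11}$ originates. The conversion above is the routine Stieltjes-transform-to-distribution-function step that is standard in the random matrix literature, so once the proposition is in hand, the corollary requires essentially no additional work beyond bookkeeping the overparametrization.
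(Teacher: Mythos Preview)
Your proof is correct and follows essentially the same approach as the paper: both reduce to the $B^N(t\rho,t\zeta)$ parametrization via Notation \ref{notat.ovweparam}, invoke Proposition \ref{prop.Wegner.bounded}, and then convert the Stieltjes-transform bound to a measure bound via the pointwise inequality $\eta/(\eta^2+x^2)\ge (2\eta)^{-1}\mathbf{1}_{[0,\eta]}(x)$.
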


\begin{proof} 
Since $B^N_{\rho,\zeta}(t) \equaldist B^N_{t\rho,t\zeta}(1) = B^N(t\rho,t\zeta)$, it suffices to prove the corollary for $B^N(t,\zeta)$; i.e.\ for $z\in\C\setminus\partial\Sigma(1,t,\zeta)$,
\begin{equation} 
\label{eq.Lipschitz.1} 
\E\mu_{|B^N(t,\zeta)-zI|}([0,\eta]) = \mathcal{O}_{z,T}(\eta). 
\end{equation}
Now since $\mathrm{Im} (i\eta-x)^{-1} = -\eta/(\eta^2+x^2)$ for $x\in \R$, the claim follows from Proposition \ref{prop.Wegner.bounded} and the pointwise bound $\eta/(\eta^2+x^2)\ge (2\eta)^{-1}\mathbbm{1}_{x\in[0,\eta]}$.
\end{proof}

Corollary \ref{cor:wegner} can be substituted for Proposition \ref{prop:meso} in the proof of Theorem \ref{thm.main} in Section \ref{sect.final} (for the case $B_0^N=I$), following the argument from \cite{GuionnetKZ-single-ring}.

\begin{rem} It is worth noting that, for any matrix $A^N\in\M_N(\C)$, the spectral measure $\mu_{|A^N|}$ is exactly the empirical law of singular values $\sigma_j(A^N)$; thus
\[ \mu_{|A^N|}([0,\eta]) = \frac1N\sum_{j=1}^N \delta_{\sigma_j(A^N)}([0,\eta]) = \frac{\#\{j\le N\colon \sigma_j(A^N)\le \eta\}}{N}. \]
This is indeed the empirical CDF of singular values.  Hence, Corollary \ref{cor:wegner} says that the expected empirical CDF of singular values of $B^N_{\rho,\zeta}(t)-zI$ is Lipschitz near $0$, with a Lipschitz constant that is is locally uniform in $t$, for all $z$ except possibly on the boundary of the domain $\Sigma(1,t\rho,t\zeta)$.
\end{rem}

The quantity on the left-hand-side of \eqref{eq.Wegner.bounded} is the (imaginary part of the) expected Cauchy transform $G_{|B^N(t,\zeta)-zI|}(i\eta)$.  Our approach to prove Proposition \ref{prop.Wegner.bounded} is to compare this to the Cauchy transform of the large-$N$ limit operator, $G_{|b(t,\zeta)-z|}(i\eta)$.  As shown in Section \ref{sect.C5} below, this difference is $\mathcal{O}(\frac{1}{N^2})$ with constants that depend polynomially on $1/\eta$; then by choosing $\eta>N^{-\alpha}$ for an appropriate power $\alpha>0$, the bound \eqref{eq.Wegner.bounded} reduces to a similar bound on the free multiplicative Brownian motion; i.e.\ it reduces to showing $G_{|b(t,\zeta)-z|}(i\eta)$ is bounded for $\eta>0$.  This is accomplished in Section \ref{sect.PDE}, primarily using analytic results derived from the PDE methods in \cite{DHKBrown,HallHo2023} together with spectral bounds from \cite{HallHo2025Spectrum}.

\subsection{Smooth Functional Comparison to the Large-$N$ Limit\label{sect.C5}}

Throughout this section, our estimates will be formulated in terms of the overparametrized $B^N(t) = B^N_{\rho,\zeta}(t)$; they will be converted to the reduced parametrization $B^N(t,\zeta)$ when needed in Section \ref{sect.proof.Wegner}.  Additionally, the tools in this section apply equally well for a deterministic initial condition $B_0^N$, so we prove them in that level of generality for potential future use.

We begin with the following lemma.

\begin{lemma}
	\label{skjdbcnks}
    Let $B_0^N$ be a deterministic matrix. There exists a constant $C_{\rho,t}$ such that, for any $y\in\R$, we have that
    \begin{align*}
        &\left| \E\left[\ts_N\left(e^{\i y \left|B^N_0B^N(t)-z\right|^2}\right)\right] - \tau_N\left(e^{\i y |B^N_0b(t)-z|^2}\right) \right| \\
        & \leq C_{\rho,t} \frac{(1+y^4)(1+|z|^4)(1+\norm{B^N_0}^4)}{N^2},
    \end{align*}
    where we used the construction of the space $\A_N$ of Lemma \ref{lem:bigspace} to define the product of $B^N_0$ and $b(t)$. 
\end{lemma}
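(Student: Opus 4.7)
The plan is to reduce the comparison to polynomial test functions via discretization of both Brownian motions, then invoke the one-term interpolation expansion of Lemma \ref{3apparition} (at order $s=1$) to pair the matrix expression with the free one up to a $1/N^2$ remainder. First, I would replace $B^N(t)$ by $B_n^N(t)$ and $b(t)$ by $b_n(t)$. The $L^2$-error from Proposition \ref{svlksldknv} is $O_{\rho,t}(2^{-n/2})$; combined with the Lipschitz bound $|e^{\i yu}-e^{\i yv}|\le |y||u-v|$, the identity $|A|^2 - |B|^2 = A^\ast(A-B) + (A-B)^\ast B$, Cauchy--Schwarz, and the moment control of Lemma \ref{sdovmkson}, the replacement in the exponential introduces an error that vanishes as $n\to\infty$. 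It thus suffices to prove the inequality uniformly in $n$ for the discretized versions.

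Next, I would apply Lemma \ref{3apparition} to the Taylor truncation
\[
T_K(u) = \sum_{k=0}^{K-1} \frac{(\i y)^k}{k!}\, u^k
\]
evaluated at $u = |B_0^N B_n^N(t)-z|^2$, which is a polynomial of finite degree in the underlying GUE Brownian increments that define $B_n^N(t)$. At order $s=1$ the lemma expresses the difference between the matrix and free versions as $N^{-2}$ times an integral over $A_1$ of $e^{-t_1-t_2}$ against the action of the four-derivative operator $L^{T_1}$ on this polynomial, evaluated at the interpolating $X^{N,T_1}$ (resp.\ $x^{T_1}$). The Taylor remainder satisfies $|e^{\i yu}-T_K(u)|\leq (|y|\|u\|)^K/K!$, and Lemma \ref{sdovmkson} provides uniform operator-norm moment bounds for $B_n^N(t)$ (and analogously for $b_n(t)$); sending $K\to\infty$ therefore promotes $T_K$ to $e^{\i y\cdot}$ on both sides of the identity.

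The quantitative heart of the proof is bounding the $L^{T_1}$-integrand polynomially in $y, z, \|B_0^N\|$, uniformly in $K$. Each noncommutative derivative of $e^{\i y|B_0^N\cdot -z|^2}$ with respect to a GUE increment brings down one factor of $\i y$ together with one factor linear in $B_0^N, B_n^N(t), z$ (or their adjoints), from differentiating the $|\cdot|^2$. After the four derivatives inside $L^{T_1}$ and bounding the surviving trace of a polynomial via Proposition \ref{sodvnsknskdvn} together with the moment bounds of Lemma \ref{sdovmkson}, one obtains a contribution of order $C_{\rho,t}\, y^4 (1+|z|)^4 (1+\|B_0^N\|)^4$; combining with the trivial bound at $y=0$ yields the advertised $(1+y^4)(1+|z|^4)(1+\|B_0^N\|^4)$.

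The hard part will be making the extension of Lemma \ref{3apparition} from polynomial to smooth test functions precise while tracking these quartic dependencies simultaneously through the four derivatives. This requires either a Duhamel-type identity $\partial e^{\i y|\cdot|^2}$ to commute derivatives past the exponential, or the justification that the Taylor truncation errors are dominated by the factorial decay $1/k!$ beating the polynomial-in-$k$ combinatorial growth produced by $L^{T_1}$ applied to $u^k$---either way, the absolute convergence in $K$ and uniformity in $n$ must be checked carefully to take both $K\to\infty$ and $n\to\infty$ at the end.
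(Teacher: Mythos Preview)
Your overall strategy---discretize both Brownian motions, then apply the one-term interpolation expansion to compare the matrix and free versions---matches the paper's proof exactly. The discretization step is handled just as you describe, via the Duhamel identity $e^A-e^B=\int_0^1 e^{\alpha A}(A-B)e^{(1-\alpha)B}\,d\alpha$ together with the $L^2$ bound from Proposition~\ref{svlksldknv} and the moment control of Lemma~\ref{sdovmkson}.

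The only substantive difference is in how the exponential is fed into the interpolation machinery. You propose Taylor truncation as the primary route, with Duhamel as an alternative; the paper goes the Duhamel way directly, citing Lemma~3.6 of \cite{Parraud2023} (a version of Lemma~\ref{3apparition} stated for analytic, not just polynomial, test functions). The resulting remainder comes with an integral over $[0,1]^4$---one Duhamel parameter for each of the four noncommutative derivatives in $L^{T_1}$---and the integrand is a product of at most twelve factors of the discretized Brownian motion (or pieces thereof), times four exponential factors of unit operator norm. The quartic dependence on $y$, $z$, and $\|B_0^N\|$ then falls out exactly as you anticipate, with the remaining twelfth-moment of the discretized process bounded via the argument of Lemma~\ref{sdovmkson}. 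The Duhamel route is cleaner here precisely because it avoids the issue you flag at the end: with Taylor truncation one must verify that the combinatorial growth of $L^{T_1}(u^k)$ is beaten by $1/k!$ \emph{after} taking moments of products of $k$ factors of $B_n^N(t)$, and those moments (cf.\ the constant $C_{k,t}$ in Lemma~\ref{sdovmkson}) are not uniform in $k$.
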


\begin{proof}

    Given two operators $A$ and $B$, one has that
    $$ e^A-e^B = \int_0^1 e^{\alpha A} (A-B) e^{(1-\alpha)B} d\alpha.$$
    Consequently,
    \begin{align*}
        &\left| \E\left[\ts_N\left(e^{ \i y \left|B^N_0B^N(t)-z\right|^2}\right)\right] - \E\left[\ts_N\left(e^{\i y \left|B^N_0B^N_n(t)-z\right|^2}\right)\right] \right| \\
        &= \Bigg| y \int_0^1 \E\Big[\ts_N\Big(e^{ \i\alpha y \left| B^N_0B^N(t) - z\right|^2 } \Big( \left| B^N_0B^N(t) - z\right|^2 \\
        &\quad\quad\quad\quad\quad\quad\quad\quad - \left| B^N_0 B^N_n(t) - z\right|^2 \Big) e^{\i y (1-\alpha) \left| B^N_0 B^N_n(t) - z\right|^2}\Big)\Big] d\alpha \Bigg| \\
        &\leq \left| y \right| \norm{B^N_0} \left( \norm{B^N_0B^N(t)-z}_2 + \norm{B^N_0B^N_n(t)-z}_2 \right) \norm{B^N(t) - B^N_n(t)}_2,
    \end{align*}
    where we used that for any self-adjoint operator $X$, the operator norm of $e^{\i y X}$ is equal to $1$. Thus, thanks to Lemma \ref{sdovmkson} and Proposition 
    \ref{prop:Lp.conv},
    $$ \E\left[\ts_N\left(e^{\i y B^N_0 B^N(t)}\right)\right] = \lim_{n\to\infty} \E\left[\ts_N\left(e^{\i y B^N_0B^N_n(t)}\right)\right] $$
    and similarly,
    $$ \tau_N(e^{\i y B^N_0 b(t)}) = \lim_{n\to\infty} \tau_N(e^{ \i y B^N_0 b_n(t)}). $$
    Finally, thanks to \cite[Lemma 3.6]{Parraud2023}, one has that
    \begin{align*}
        &\E\left[\ts_N\left(e^{\i y \left|B^N_0B^N(t)-z\right|^2}\right)\right] - \tau_N\left(e^{\i y |B^N_0b(t)-z|^2}\right) \\
        &= \frac{1}{N^2} \int_0^{\infty}\int_0^u e^{-v-u}\int_{[0,1]^4} \E\left[\tau_N\left( R_{\alpha_1,\alpha_2,\alpha_3,\alpha_4,}^{n,N,y} \right)\right] \ d\alpha_1d\alpha_2d\alpha_3d\alpha_4 dvdu,
    \end{align*}
    where 
    $$\frac{\left|\E\left[\tau_N\left( R_{\alpha_1,\alpha_2,\alpha_3,\alpha_4,}^{n,N,y} \right)\right]\right|}{(1+y^4)(1+|z|^4)(1+\norm{B^N_0}^4)}$$
    can, up to a universal constant, be upper bounded by 
    $$ \sup_{1\leq l\leq m\leq \lfloor 2^n t \rfloor} \E\left[\ts_N\left( \left|\prod_{i=l}^m \left(I_N + \frac{e^{\i\theta}}{2^{n/2}}\left(aX_{i,\{1,2\}}^{N,\{v,u\}}+\i b Y_{i,\{1,2\}}^{N,\{v,u\}}\right) +\frac{\zeta}{2^{n+1}} I_N\right) \right|^{12} \right)\right], $$
    where the variables $X_{i,\{1,2\}}^{N,\{v,u\}}$ and $Y_{i,\{1,2\}}^{N,\{v,u\}}$ are defined as in Lemma \ref{3apparition}. Besides, by the same proof as in Lemma \ref{sdovmkson}, the quantity above is bounded by a constant which only depends on $t$ and $\rho$ since $|\zeta| \leq \rho$. Hence the conclusion.
\end{proof}

The following corollary is the main tool to handle the global regime.

\begin{cor}
    \label{ksdvkjvbdw}
	There exists a constant $C_{\rho,t}$ such that, for any functions $f\in\CC^5(\R)$ with compact support bounded by $K$,
    \begin{equation} \label{eq.comparison.C5}
    \begin{aligned}
        &\left| \E\left[\ts_N\left(f\left(\left|B^N_0B^N(t)-z\right|^2\right)\right)\right] - \tau_N\left(f\left( |B^N_0b(t)-z|^2\right)\right) \right| \\
        &\leq C_{\rho,t} \sqrt{K} \frac{(1+|z|^4)(1+\norm{B^N_0}^4)\norm{f}_{\mathcal{C}^5(\R)}}{N^2},
    \end{aligned}
    \end{equation}
    where $\norm{f}_{\mathcal{C}^5(\R)} = \sum_{k=0}^5\sup_{x\in\R} |f^{(k)}(x)|$. 
\end{cor}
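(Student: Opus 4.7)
The plan is to derive Corollary \ref{ksdvkjvbdw} from Lemma \ref{skjdbcnks} by Fourier inversion; the corollary is simply a smooth functional calculus upgrade of the pointwise-in-$y$ control on expected characteristic functions already established in that lemma. First, given $f\in\CC^5(\R)$ with compact support of measure $\le K$, I would write the Fourier representation $f(x)=(2\pi)^{-1}\int_\R\widehat{f}(y)e^{\i yx}\,dy$, and note that for any selfadjoint operator $X$ in a tracial $W^*$-probability space $(\B,\phi)$ the spectral theorem together with Fubini yields
\[
\phi(f(X))=\frac{1}{2\pi}\int_\R\widehat{f}(y)\,\phi(e^{\i yX})\,dy.
\]
Applying this with $X=|B^N_0B^N(t)-z|^2$ in $(\M_N(\C),\E\circ\ts_N)$ and with $X=|B^N_0b(t)-z|^2$ in $(\A_N,\tau_N)$ and subtracting, the left-hand side of \eqref{eq.comparison.C5} equals $(2\pi)^{-1}\int_\R\widehat{f}(y)\Delta_N(y)\,dy$, where $\Delta_N(y)$ is precisely the quantity whose modulus Lemma \ref{skjdbcnks} bounds by $C_{\rho,t}(1+y^4)(1+|z|^4)(1+\norm{B^N_0}^4)/N^2$. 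Inserting this estimate pulls the factor $(1+|z|^4)(1+\norm{B_0^N}^4)/N^2$ outside, and it remains only to control $\int_\R(1+y^4)|\widehat{f}(y)|\,dy$ in terms of $\sqrt{K}\,\norm{f}_{\CC^5(\R)}$.

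For the remaining Fourier-analytic step, I would use that integration by parts (valid because $f$ is compactly supported and $\CC^5$) gives $|y|^k|\widehat{f}(y)|=|\widehat{f^{(k)}}(y)|$ for $0\le k\le 5$, and then apply Cauchy--Schwarz and Plancherel:
\[
\int_\R|\widehat{f^{(k)}}(y)|\,dy\le\Bigl(\int_\R\frac{dy}{1+y^2}\Bigr)^{1/2}\Bigl(\int_\R(1+y^2)|\widehat{f^{(k)}}(y)|^2\,dy\Bigr)^{1/2}\le C\bigl(\norm{f^{(k)}}_2+\norm{f^{(k+1)}}_2\bigr).
\]
Since $\mathrm{supp}(f^{(j)})\subseteq\mathrm{supp}(f)$ has measure at most $K$, one has $\norm{f^{(j)}}_2\le\sqrt{K}\,\norm{f^{(j)}}_\infty$ for each $j\le 5$. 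Taking $k=0$ and $k=4$ above and summing then yields $\int_\R(1+y^4)|\widehat{f}(y)|\,dy\le C\sqrt{K}\,\norm{f}_{\CC^5(\R)}$, and inserted into the preceding paragraph this gives the claimed bound.

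I do not foresee any serious obstacle here: the whole argument is a routine Fourier inversion reduction once Lemma \ref{skjdbcnks} is in hand. The only mild bookkeeping is recognizing why $\CC^5$ is the right degree of regularity in the hypothesis: the $y^4$ growth in Lemma \ref{skjdbcnks} requires four derivatives to absorb, and the Cauchy--Schwarz step needed to make the resulting integrals absolutely convergent accounts for the fifth derivative. (One could plausibly shave this off to $\CC^4$ with a more delicate Sobolev-type argument, but $\CC^5$ will be entirely sufficient for the applications in Section \ref{sect.analytic.approach}.)
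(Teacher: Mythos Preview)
Your proposal is correct and follows essentially the same approach as the paper: Fourier inversion to reduce to Lemma \ref{skjdbcnks}, then Cauchy--Schwarz and Plancherel to convert the integrability of $(1+y^4)|\widehat{f}(y)|$ into control by $\sqrt{K}\,\norm{f}_{\CC^5(\R)}$. The only cosmetic difference is that the paper writes $|y|^k=\frac{|y|^k+|y|^{k+1}}{1+|y|}$ and applies Cauchy--Schwarz against $(1+|y|)^{-2}$, whereas you apply Cauchy--Schwarz against $(1+y^2)^{-1}$ directly to $\widehat{f^{(k)}}$; these are equivalent bookkeeping choices.
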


\begin{proof}
    Let $f\in L^1(\R)$, and set
    $$ \widehat{f}(y) \deq \frac{1}{2\pi} \int_{\R} e^{-\i yx} f(x) dx.$$
    Then if $\widehat{f}\in L^1(\R)$, by Fourier inversion theorem,
    $$ f(x) = \int_{\R} e^{\i x y} \widehat{f}(y) dy.  $$
    Besides, if $f$ is sufficiently smooth, then $\left|y^k\widehat{f}(y) \right|= \left|\widehat{f^{(k)}}(y)\right|$, where $f^{(k)}$ is the $k$-th derivative of $f$. Consequently,
    \begin{align*}
        &\int_{\R} |y^k| \left|\widehat{f}(y)\right| dy
        = \int_{\R} \frac{(|y^k| + |y^{k+1}|) \left|\widehat{f}(y)\right| }{1+|y|} dy 
        \leq \int_{\R} \frac{\left|\widehat{f^{(k)}}(y)\right| +\left|\widehat{f^{(k+1)}}(y)\right| }{1+|y|} dy \\
        &\leq \left( \left( \int_{\R} \left|\widehat{f^{(k)}}(y)\right|^2 dy \right)^{1/2} + \left(\int_{\R} \left|\widehat{f^{(k+1)}}(y)\right|^2 dy \right)^{1/2} \right) \left(\int_{\R}\frac{1 }{(1+|y|)^2} dy\right)^{1/2} \\
        &\leq \frac{1}{\sqrt{2\pi}} \left( \left( \int_{\R} \left|f^{(k)}(y)\right|^2 dy \right)^{1/2} + \left(\int_{\R} \left|f^{(k+1)}(y)\right|^2 dy \right)^{1/2} \right) \left(\int_{\R}\frac{1 }{(1+|y|)^2} dy\right)^{1/2} \\
        &\leq \sqrt{K}\left(\frac{1}{2\pi}\int_{\R}\frac{1 }{(1+|y|)^2} dy\right)^{1/2} \left( \sup_{y\in\R} \left|f^{(k)}(y)\right|+ \sup_{y\in\R}\left|f^{(k+1)}(y)\right| \right),
    \end{align*}
    where we used Plancherel theorem on the second line. Thus, we have
    \begin{align*}
        \int_{\R} (1+y^4) \left|\widehat{f}(y)\right| dy &\leq \sqrt{K}\left(\frac{1}{2\pi}\int_{\R}\frac{1 }{(1+|y|)^2} dy\right)^{1/2} \norm{f}_{\mathcal{C}^5(\R)}.
    \end{align*}
    Hence the conclusion by using Lemma \ref{skjdbcnks}.
\end{proof}

\subsection{Free Wegner Estimate\label{sect.PDE}}

Here we prove a version of the desired Wegner estimate in Proposition \ref{prop.Wegner.bounded} but for the free multiplicative Brownian motion already in the large-$N$ limit.  The reader may wish to review Section \ref{sect.Brown(ian).meas} for the definitions of the regions $\Sigma(1,t)=\Sigma(1,t,0)$ and more generally $\Sigma(1,t,\zeta)$; here the $1$ is the initial condition $b_0=u_0=1$ for the Brownian motion.

\begin{lemma} \label{lem.free.Wegner} Let $T>0$ and $\zeta\in\C$ with $|\zeta|\le t$.  Let $z\in\C\setminus\partial\Sigma(1,t,\zeta)$. Then there is a constant $\overline{C}_{z,T}$ so that, for  all $t\in[0,T]$, and all $\eta>0$,
\begin{equation} \label{eq.free.Wegner.bounded} \left|\mathrm{Im}\,\tau[(i\eta-|b(t,\zeta)-z |)^{-1}]\right| \le \overline{C}_{z,T}. \end{equation}
\end{lemma}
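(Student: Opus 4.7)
The plan is to convert the bound on $\mathrm{Im}\,\tau[(i\eta-h)^{-1}]$ (with $h:=|b(t,\zeta)-z|$) into an anti-concentration statement for the spectral measure $\nu_{t,z}$ of $h$ near $0$, then split into two regimes determined by whether $z$ lies in the interior of $\Sigma(1,t,\zeta)$ or in its complement.

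\textbf{Reduction.} Since $h$ is positive and self-adjoint, a direct algebraic manipulation gives
$$-\mathrm{Im}\,\tau[(i\eta-h)^{-1}] \;=\; \eta\,\tau[(h^2+\eta^2)^{-1}] \;=\; \int_0^\infty \frac{\eta}{x^2+\eta^2}\,d\nu_{t,z}(x),$$
which is a Poisson-type integral against $\nu_{t,z}$. To bound this uniformly in $\eta>0$ it suffices to show that either $0\notin\mathrm{supp}(\nu_{t,z})$, or $\nu_{t,z}$ admits a bounded density in a neighborhood of $0$. Equivalently, writing $S(t,z,\eta):=\tfrac12\tau[\log(h^2+\eta^2)]$ for the regularized log potential, one needs to bound $\partial_\eta S$ uniformly for $\eta>0$.

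\textbf{Exterior regime: $z\notin\overline{\Sigma(1,t,\zeta)}$.} By the spectral-support results of \cite{HallHo2023,HallHo2025Spectrum}, for such $z$ the operator $b(t,\zeta)-z$ is invertible in $\A$, whence $\sigma:=\|(b(t,\zeta)-z)^{-1}\|^{-1}>0$ and $\mathrm{supp}(\nu_{t,z})\subset[\sigma,\infty)$. Hence $\eta\,\tau[(h^2+\eta^2)^{-1}]\le \eta/\sigma^2$, which is bounded by $1/\sigma^2$ for $\eta\in(0,1]$ (the regime $\eta>1$ is trivially controlled by $1/\eta$). Uniformity over $t\in[0,T]$ follows because $\sigma$ depends continuously on $t$ (a consequence of the free It\^o SDE \eqref{eq.b.fSDE} for $b$), and because the hypothesis $z\notin\partial\Sigma(1,t,\zeta)$ combined with continuity in $t$ of the support region from Theorem \ref{thm.Brown.meas.rhozeta} provides a positive lower bound on $\mathrm{dist}(z,\overline{\Sigma(1,t,\zeta)})$ uniform for $t\in[0,T]$.

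\textbf{Interior regime: $z\in\Sigma(1,t,\zeta)$.} The regularized log potential $S(t,z,\eta)$ satisfies a first-order Hamilton--Jacobi-type PDE in $(t,\eta)$, derived via free It\^o calculus for $b$ (cf.\ \cite{DHKBrown,HallHo2023}), whose characteristics can be integrated explicitly and have lifetime $T(z)$ from \eqref{eq.T.u} in the case $\zeta=0$; the general case is reduced to this via the pushforward diffeomorphism $\Psi^1_{t,\zeta}$ of Theorem \ref{thm.Brown.meas.rhozeta}. For $z$ strictly inside $\Sigma(1,t,\zeta)$, the characteristic flow remains in the smooth regime, so by \cite{HallHo2025Spectrum} the density of $\nu_{t,z}$ near $0$ is bounded, with bound controlled by the Brown measure density at $z$ (itself at most $1/(\pi t)$, by Theorem \ref{thm.Brown.meas.10}). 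Thus $\partial_\eta S(t,z,\eta)$ extends continuously to $\eta=0$, establishing \eqref{eq.free.Wegner.bounded}.

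\textbf{Main obstacle.} The hardest step is the interior regime, where one must carefully invoke the spectral bounds of \cite{HallHo2025Spectrum} both to rule out atoms of $\nu_{t,z}$ at $0$ and to extract a \emph{quantitative} density bound uniform in $t\in[0,T]$. Handling $\zeta\ne 0$ requires transferring the PDE analysis via the map $\Psi^1_{t,\zeta}$ of \eqref{eq.Psi}, which is a local diffeomorphism on the interior but degenerates at $\partial\Sigma(1,t,\zeta)$; this is precisely why the hypothesis $z\notin\partial\Sigma(1,t,\zeta)$ (rather than merely $z\notin \partial\Sigma(1,t_0,\zeta)$ for a single $t_0$) is essential for uniformity in $t$.
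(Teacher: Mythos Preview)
Your overall architecture---reduce to a Poisson integral, split into exterior versus interior of $\Sigma(1,t,\zeta)$, handle the exterior via the identification of $\overline{\Sigma(1,t,\zeta)}$ with $\mathrm{spec}(b(t,\zeta))$, and handle the interior by transferring to $\zeta=0$ via a diffeomorphism and then invoking the PDE analysis of \cite{DHKBrown}---is exactly the paper's approach.

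However, your interior argument has a gap in the citations that leaves the key step unsupported. You write that ``by \cite{HallHo2025Spectrum} the density of $\nu_{t,z}$ near $0$ is bounded, with bound controlled by the Brown measure density at $z$.'' But \cite{HallHo2025Spectrum} identifies the \emph{spectrum} of $b(t,\zeta)$; it does not give you a bound on the density of the singular-value measure $\nu_{t,z}=\mu_{|b(t,\zeta)-z|}$ near $0$, nor does it tie that density to the Brown measure density in the way you suggest. What the paper actually uses is \cite[Theorem 6.4]{DHKBrown}, which shows that for $z'\in\Sigma(1,t)$ the function $(t,\eta)\mapsto S(t,z',0,\eta)$ extends real-analytically in $\eta$ past $\eta=0$; this directly gives boundedness of $\partial_\eta S$ (and hence of the quantity in \eqref{eq.free.Wegner.bounded}) without ever asserting a density bound for $\nu_{t,z}$. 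Your proposed route through a density bound is strictly stronger and, as far as I can tell, not established in the references you cite.

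A second, smaller point: the diffeomorphism you invoke to reduce to $\zeta=0$ is the Brown-measure pushforward $\Psi^1_{t,\zeta}$ of Theorem \ref{thm.Brown.meas.rhozeta}. What is actually needed (and what the paper uses, from \cite[Theorem 8.2]{HallHo2023}) is a diffeomorphism $\Phi_\zeta:\Sigma(1,t)\to\Sigma(1,t,\zeta)$ with the stronger property that $S(t,z,\zeta,\eta)=S(t,\Phi_\zeta^{-1}(z),0,\eta)$ for all $\eta$, not merely equality after taking $\eta\downarrow 0$ and then $\Delta_z$. You should check that the map you name has this property (or cite the right one).
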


\begin{proof} The potentially bad set $\partial\Sigma(1,t,\zeta)$, which is of Lebesgue measure $0$, separates $\C$ into two regions. We treat separately the two cases: where $z\in\Sigma(1,t,\zeta)$, and where $z\notin\overline{\Sigma(1,t,\zeta)}$.  We begin with the latter.

In \cite[Theorem 3.7]{HallHo2025Spectrum}, it was shown that $\overline{\Sigma(1,t,\zeta)}$ is precisely equal to the spectrum of $b(t,\zeta)$.  Hence, for $z\notin\overline{\Sigma(1,t,\zeta)}$, $b(t,\zeta)-z$ has a bounded inverse, and therefore so does $|b(t,\zeta)-z|$ (by the spectral theorem).  For readability, set $x=|b(t,\zeta)-z|$.  We can therefore compute that
\begin{align*} (i\eta-x)^{-1} = -x^{-1}(1-i\eta x^{-1})^{-1} = -x^{-1}\sum_{k=0}^\infty (ix^{-1})^k \eta^k
\end{align*}
which converges in $\A$ provided $|\eta|< \|x^{-1}\|^{-1}$.  Thus
\[ \tau[(i\eta-|b(t,\zeta)-z|)^{-1}] = -\sum_{k=0}^\infty i^k \tau[(x^{-1})^{k+1}]\eta^k \]
has a convergent power series expansion for $|\eta|<\| |b(t,\zeta)-z|^{-1} \|^{-1}$, and hence it is bounded for $0<\eta<\||b(t,\zeta)-\lambda|^{-1} \|^{-1}$.  Since this norm is continuous in $t$, the bound is locally uniform in $T$, as stated.

Now, consider the case that $z\in\Sigma(1,t,\zeta)$.  Define
\[ S(t,z,\zeta,\eta) = \tau[\log(|b(t,\zeta)-z|^2+\eta^2)]. \]
(Then $\lim_{\eta\downarrow0}S(t,z,\zeta,\eta) = 4\pi U_{b(t,\zeta)}(z)$ where $U_{b(t,\zeta)}$ is the log potential and the Brown measure of $b(t,\zeta)$ is the Laplacian  of this limit, see Definition \ref{def.BrownMeasure}.)
\[ \left.\frac{\partial}{\partial\eta}S(t,z,\zeta,\eta)\right|_{\eta=\eta_0} =2\eta_0\,\tau[(|b(t,\zeta)-z|^2+\eta_0^2)^{-1}] = -2\mathrm{Im}\,\tau[(i\eta_0-|b(t,\zeta)-z |)^{-1}] \]
where the second equality follows just as in Corollary \ref{cor:wegner} from the identity $\mathrm{Im}(i\eta-x)^{-1} = -\eta/(\eta^2+x^2)$ for $x\in\R$.

Now, \cite[Theorem 8.2]{HallHo2023} (or more precisely its proof) showed that there is a diffeomorphism $\Phi_{\zeta}\colon\Sigma(1,t)\to\Sigma(1,t,\zeta)$ such that
\[ S(t,z,\zeta,\eta) = S(t,\Phi_\zeta^{-1}(z),0,\eta). \]
Moreover, the diffeomorphism does not depend on $t$ or $\eta$.  Hence
\begin{align*}
\mathrm{Im}\,\tau[(i\eta_0-|b(t,\zeta)-z |)^{-1}] = -\frac12 \left.\frac{\partial}{\partial\eta}S(t,z,\zeta,\eta)\right|_{\eta=\eta_0} = \left.\frac{\partial}{\partial\eta}S(t,\Phi_{\zeta}^{-1}(z),0,\eta)\right|_{\eta=\eta_0}
\end{align*}

By \cite[Theorem 6.4]{DHKBrown}, for $z'\in\Sigma(1,t)$ the function $(t,\eta)\mapsto S(t,z',0,\eta)$ has real analytic continuation to $\R_+\times (-\delta,\delta)$ where $\delta$ may depend on $z$, but not $t$.  (The statement of that theorem suggests the analytic continuation is only local in $t$ as well; however, in the proof, the actual extension implicitly computed in \cite[Eq.\ (6.20)]{DHKBrown}, is analytic on $\R_+$ and continuous on $[0,\infty)$.)  Since $z'=\Phi_{\zeta}^{-1}(z)\in\Sigma(1,t)$ precisely when $z\in\Sigma(1,t,\zeta)$, it therefore follows that the $\frac{\partial}{\partial\eta}$ derivative above is continuous and bounded, locally uniformly in $t$, for $0<\eta_0<\delta/2$ (where $\delta$ may depend on $z$).

Hence, for such $z$, the derivative $\frac{\partial}{\partial\eta} S(t,z,\zeta,\eta)$ is continuous and bounded for $0<\eta<\delta/2$.  What's more, the Cauchy transform $|[(i\eta-|b(t,\zeta)-z |)^{-1}]|$ is $\le 2/\delta$ for all $\eta\ge\delta/2$.  This proves the bound for all $\eta>0$, concluding the proof.
\end{proof}

\subsection{Proof of Proposition \ref{prop.Wegner.bounded}\label{sect.proof.Wegner}}

We now combine Corollary \ref{ksdvkjvbdw} with Lemma \ref{lem.free.Wegner} to prove Proposition \ref{prop.Wegner.bounded}.

Taking $B_0^N=I$ and reparametrizing per \eqref{eq.reparam.sect5}, \eqref{eq.comparison.C5} yields
\begin{equation} \label{eq.Wegner.final.1}
\begin{aligned} & \left|\E\left[\ts_N\left(f(|B^N(t,\zeta)-zI|^2)\right)\right] -\tau\left(f(|b(t,\zeta)-z|^2)\right)\right| \\
&\le  2C_{t\rho,t}\sqrt{K}\frac{(1+|z|^4)\|f\|_{\mathcal{C}^5(\R)}}{N^2}
\end{aligned}
\end{equation}
for any function $f\in\mathcal{C}^5(\R)$ that is compactly supported with support of diameter $\le K$.  Let $M=M(t,z,\zeta)<\infty$ be a constant large enough that $\||B^N(t,\zeta)-zI|^2\|\le M$ a.s.\ for all large $N$ (see Corollary \ref{cor.bounded}) and $\| |b(t,\zeta)-z|^2\|\le M$. For given $\eta\in(0,1]$, let $f_\eta$ be a smooth, compactly supported function satisfying
\begin{itemize}
    \item $f_\eta(x) = -\frac{\eta}{\eta^2+x}$ for $x\in[0,M]$
    \item For each $k\le 5$, the maximum derivative $\max_{x\in\R}|f_\eta^{(k)}(x)|$ does not exceed the maximum derivative $\max_{x\in[0,M]}|f_\eta^{(k)}(x)| = |f_\eta^{(k)}(0)| = \frac{k!\eta}{(\eta^2)^{k+1}} = k!\eta^{-2k-1}$.
\end{itemize}
This can be accomplished with a smooth function $f_\eta$ whose support diameter $K$ is uniform in $\eta\in(0,1]$: the smaller $\eta>0$ is, the larger the maximum derivative of the approximation is allowed to be.  Hence, $K=K(M)$ can be taken to depend on $M$ independent of $\eta$.

Then, since the spectra of $|B^N(t,\zeta)-zI|^2$ and $|b(t,\zeta)-z|^2$ are contained in the interval $[0,M]$, it follows that
\begin{align*} \ts_N\left(f(|B^N(t,\zeta)-zI|^2)\right) &= -\lambda \ts_N\left((\eta^2+|B^N(t,\zeta)-zI|^2)^{-1}\right) \\
&= \mathrm{Im}\,\ts_N[(i\eta-|B^N(t,\zeta)-zI|)^{-1}] \\
\& \qquad \tau\left(f(|b(t,\zeta)-z|^2)\right) &= -\lambda \tau\left((\eta^2+|b(t,\zeta)-z|^2)^{-1}\right) \\
&= \mathrm{Im}\,\tau[(i\eta-|b(t,\zeta)-z|)^{-1}].
\end{align*}
What's more, the second condition on $f_\eta$ shows that
\[ \|f_\eta\|_{\mathcal{C}^5} = \sum_{k=0}^5 k!\eta^{-2k-1} \le 154 \eta^{-11}. \]
Hence, \eqref{eq.Wegner.final.1} shows that
\begin{align*} &\left| \E\left(\mathrm{Im}\,\ts_N[(i\eta-|B^N(t,\zeta)-zI|)^{-1}]\right) - \mathrm{Im}\,\tau[(i\eta-|b(t,\zeta)-z|)^{-1}]\right| \\
&\le 2C_{t\rho,t}\sqrt{K(M)}\frac{(1+|z|^4)\cdot 154\eta^{-11}}{N^2}.
\end{align*}
That is to say: for all $N\in\N$ and all $\eta\in(0,1]$,
\begin{align} \nonumber &\left|\E\left(\mathrm{Im}\,\ts_N[(i\eta-|B^N(t,\zeta)-zI|)^{-1}]\right)\right| \\ \label{eq.Wegner.final.2}
\le& \left|\mathrm{Im}\,\tau[(i\eta-|b(t,\zeta)-z|)^{-1}]\right| + 308C_{t\rho,t}\sqrt{K(M)}(1+|z|^4)\frac{\eta^{-11}}{N^2}.
\end{align}
By Lemma \ref{lem.free.Wegner}, the first term is bounded above by a constant $\overline{C}_{z,T}$ for all $\eta>0$ and for all $t\in[0,T]$.  For the second term, if we take $\eta\ge N^{-2/11}$ then $\eta^{-11}/N^2\le 1$.  This proves boundedness.

Lastly, we consider the dependence of the bound on $t$.  The constant $K(M)$ depends on $z,\zeta,t$ only through the uniform upper bound on $\||B^N(t,\zeta)-zI|^2\| \le \|B^N(t,\zeta)\|^2 + 2|z|\|B^N(t,\zeta)\| + |z|^2$.  The strong convergence result of \cite{BCC2025} shows that $\|B^N(t,\zeta)\|$ is bounded uniformly in $N$ by a smooth exponential function of $t$.  Finally, the constant $C_{t\rho,t}$ is increasing in $t$, as is clear from the last display equation in the proof of Lemma \ref{skjdbcnks} defining it.  This shows, therefore, that the bound in Proposition \ref{prop.Wegner.bounded} is indeed locally uniform in $t$, concluding the proof.

\section{
Convergence of Eigenvalues\label{sect.final}}

In this final section, we briefly outline again the Hermitization procedure for proving convergence of empirical eigenvalue distributions, and put it together with the estimates bounding the decay rates of shifted singular values we proved in Sections \ref{sect:invertibility} and \ref{sect.analytic.approach} to prove our main Theorem \ref{thm.main}.  We then prove one more result: combined with the established strong convergence of the processes $B_{\rho,\zeta}$, and recent results on the spectrum of the free limit $b_{\rho,\zeta}$, we prove that the (random) spectrum of $B_{\rho,\zeta}(t)$ converges a.s.\ in Hausdorff metric to the spectrum of $b_{\rho,\zeta}(t)$ as $N\to\infty$.

\subsection{Proof of Theorem \ref{thm.main}: Convergence of the Empirical Law}
\label{sec:cvg-esd}

We begin by recalling the following lemma encapsulating the Hermitization procedure outlined in Section \ref{sect.Brown.meas}.

\begin{lemma}[{\cite[Lemma 4.3]{Bordenave-Chafai-circular}}]    \label{lem:BoCh}
    Let $(A^N)_{N\ge1}$ be a sequence of random matrices where $A^N$ is $N\times N$. 
    Suppose there exists a family of non-random probability measures $(\nu_z)_{z\in\C}$ on $\R_+$ such that for a.e.\ $z\in\C$, almost surely:
    \begin{enumerate}
        \item[(a)]\label{BoCh.a} $\mu_{|A^N-zI_N|}\to \nu_z$, and 
        \item[(b)]\label{BoCh.b} $\log$ is uniformly integrable 
        for $(\mu_{|A^N-zI_N|})_{N\ge1}$.
    \end{enumerate}
    Then there exists a probability measure $\mu$ on $\C$ such that
    \begin{enumerate}
        \item[(a')]\label{BoCh.a'} $\mu_{A^N}\to\mu$ weakly 
        almost surely, and
        \item[(b')]\label{BoCh.b'} for a.e.\ $z\in\C$, we have
        \[
        \int_\C \log|\lambda-z| d\mu(\lambda) =\int_0^\infty\log(s)d\nu_z(s).
        \]
    \end{enumerate}
\end{lemma}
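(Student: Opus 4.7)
The plan is to apply the Hermitization framework spelled out in Section \ref{sect.Brown.meas}, which is the abstract formulation Girko pioneered. Recall from \eqref{eq.Hermit.1}--\eqref{eq.Hermit.2} the fundamental identity
\[
L_{A^N}(z) := \frac{1}{2\pi}\int_\C \log|\lambda - z|\,d\mu_{A^N}(\lambda) = \frac{1}{2\pi}\int_0^\infty \log(s)\,d\mu_{|A^N-zI_N|}(s),
\]
so that the log potential of the empirical eigenvalue distribution is directly computable from the singular value distribution of the shifted matrix. Moreover, $L_{A^N}$ is subharmonic with distributional Laplacian $\mu_{A^N}$.

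First I would establish pointwise a.s.\ convergence $L_{A^N}(z) \to L(z) := \frac{1}{2\pi}\int_0^\infty \log(s)\,d\nu_z(s)$ for a.e.\ $z\in\C$. Hypothesis (a) provides weak convergence of $\mu_{|A^N-zI_N|}$ to $\nu_z$, but since $\log$ is unbounded at both $0$ and $\infty$ weak convergence alone is insufficient; hypothesis (b) on uniform integrability of $\log$ is precisely what allows $\log$ to be passed through the limit (a standard application of the Vitali convergence theorem).

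Next I would upgrade this pointwise-a.e.\ convergence to $L^1_{\mathrm{loc}}(\C)$ convergence almost surely. Each $L_{A^N}$ is subharmonic, and the family is locally uniformly bounded above by $\log(\|A^N\|+|z|)$; standard potential-theoretic machinery (sub-mean-value inequality combined with Fatou's lemma, or Hartogs-type theorems) then forces $L^1_{\mathrm{loc}}$ convergence, with limit coinciding a.e.\ with the pointwise limit $L$. Setting $\mu := \frac{1}{2\pi}\Delta L$ distributionally, for any test function $\phi\in C_c^\infty(\C)$ we then have
\[
\int\phi\,d\mu_{A^N} = \frac{1}{2\pi}\int\Delta\phi(z)\,L_{A^N}(z)\,dz \longrightarrow \frac{1}{2\pi}\int\Delta\phi(z)\,L(z)\,dz = \int\phi\,d\mu,
\]
yielding vague a.s.\ convergence $\mu_{A^N}\rightharpoonup\mu$; relation (b') is then tautological from the definitions.

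The main obstacle is the promotion to $L^1_{\mathrm{loc}}$ convergence. The exceptional Lebesgue-null sets of $z$ in hypothesis (a) depend on the realization of the randomness, so one must use Fubini to interchange the ``almost surely'' and ``for a.e.\ $z$'' quantifiers on a set of full joint measure before invoking subharmonicity. A secondary subtlety is upgrading vague to weak convergence: one must verify tightness of $(\mu_{A^N})$ and that $\mu(\C)=1$. This follows from the large-$|z|$ asymptotic $L_{A^N}(z) = \log|z| + O(|z|^{-1})$ combined with uniform integrability of $\log$ against $\mu_{|A^N-zI_N|}$ at macroscopic scales, which together force $L(z) \sim \log|z|$ as $|z|\to\infty$ and hence identify $\mu$ as a bona fide probability measure.
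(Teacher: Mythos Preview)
The paper does not prove this lemma; it is quoted verbatim from \cite[Lemma 4.3]{Bordenave-Chafai-circular} as a black box and then applied in Section~\ref{sect.final}. Your outline is essentially the standard proof found in that reference (and in antecedents of Girko and Bai): pass $\log$ through the weak limit via uniform integrability, swap the ``a.e.\ $z$'' and ``a.s.'' quantifiers by Fubini, use subharmonic compactness to upgrade pointwise-a.e.\ to $L^1_{\mathrm{loc}}$, then take the distributional Laplacian. One small correction: your claimed upper bound $\log(\|A^N\|+|z|)$ presumes a norm bound that is not hypothesized; the local uniform upper bound on $L_{A^N}$ actually comes from the uniform integrability of $\log$ at infinity in hypothesis (b), not from any operator-norm control. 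Also note that the minus sign in (b') as stated in the paper appears to be a typo (it is absent in the Hermitization identity you correctly wrote down); your remark that (b') is tautological is correct modulo this sign.
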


We proceed to prove Theorem \ref{thm.main}.
Fix $t>0$. 
Lemma \ref{lem.init.cond.*-conv} verifies condition \ref{BoCh.a} of the above lemma for $A^N=B_0B(t)=B_0^NB^N(t)$, with $\nu_z = \nu_{b_0b(t)-z}$ as in Definition \ref{def.BrownMeasure}, i.e.\ $\nu_z$ is the spectral measure of $|b_0b(t)-z|$.  By definition, these measures satisfy \ref{BoCh.b'} with $\mu=\mu_{b_0,t}$. 

It only remains to verify condition \ref{BoCh.b} of the lemma.
From the Borel--Cantelli lemma it suffices to show that, for every $\varepsilon>0$ and a.e.\ $z\in\C$, there exists $H=H(\varepsilon, z)<\infty$ such that 
\begin{equation}
    \label{log-goal1}
    \P\bigg( \int_{\{|\log s|>H\}} |\log s| d\mu_{|B_0B(t)-z|}(s)>\varepsilon\bigg)\le N^{-2}
\end{equation}
for all $N$ sufficiently large. 
We show this holds for all $z\in\C$. 

Fix $z\in\C\setminus\{0\}$ arbitrary. 
We henceforth suppress from the notation the dependence of constants on the fixed parameters $\rho,\zeta,t,z$, and we assume without comment that $N$ is sufficiently large. 
From Theorem \ref{ldnvsdkn2}, we have with probability at least $1-\exp(-N^{2/3})$ that $\|B(t)\|\leq K$ for some constant $K$, and hence that
\begin{align*}
    \int_{\{\log s>H\}} |\log s| d\mu_{|B_0B(t)-z|}(s) &\leq e^{-H} \int_{\{s>\exp(L)\}} s^2 d\mu_{|B_0B(t)-z|} (s) \\
    \leq e^{-H} &\ts_N\left(|B_0B(t)-z|^2\right) \\
    \leq e^{-H} &\left(K^2\ts_N\left(|B_0|^2\right) +2K|z|\ts_N\left(|B_0|^2\right)^{1/2} + |z|^2\right).
\end{align*}
Since we assumed that $B_0$ converges in $*$-distribution, in particular $\ts_N\left(|B_0|^2\right)$ converges when $N$ goes to infinity and therefore its supremum over $N$ is bounded. Thus, with probability at least $1-\exp(-N^{2/3})$, we have that 
\[
\int_{\{\log s>H\}} |\log s| d\mu_{|B_0B(t)-z|}(s) = \mathcal{O}( e^{-H}). 
\]
It thus suffices to show that, with sufficiently high probability,
\begin{equation}
    \label{log-goal2}
     \int_0^{\delta} |\log s| d\mu_{|B_0B(t)-z|}(s) \lesssim o(1) + f(\delta)
\end{equation}
for some function $f(\delta)=o_{\delta\to0}(1)$. Note that for any $\eps>0$, 
\begin{align*}
    \P\left( \sigma_N(B_0B(t)-z) \leq \varepsilon \right) &= \P\left( \sigma_N\left((B_0-z(B(t))^{-1}) \cdot B(t)\right)  \leq \varepsilon \right) \\
    \leq \P&\left( \sigma_N(B(t))  \leq \sqrt{\varepsilon} \right) + \P\left( \sigma_N(B_0-zB(t)^{-1})  \leq \sqrt{\varepsilon} \right),
\end{align*}
where we used in the last line that for any matrices $X,Y$, $\sigma_{N}(XY) \geq \sigma_{N}(X)\sigma_{N}(Y)$. Since we know that $\widetilde{B}^N(t) \deq ((B(t))^{-1})^*$ is a solution of the SDE \eqref{eq:inverse}, $(B(t))^{-1}$ is also a multiplicative Brownian motion. Thus, since we assumed $z\ne 0$, we can apply Proposition \ref{prop:smallest} to $B(t)$ and $(B(t))^{-1}$ along with the union bound to conclude
\begin{equation}    \label{N.LB}
    \sigma_N(B_0B(t)-z) \ge N^{-\mathcal{O}(1)}
\end{equation}
with probability at least $1-N^{-10}$, say.
Similarly, 
from Proposition \ref{prop:meso} we have that with probability at least $1-N^{-10}$,
\[
\min\Big\{\sigma_{N-k}(B(t))\,,\; \sigma_{N-k}(B_0-z(B(t))^{-1})\Big\} \gs (k/N)^2\quad \forall\, C\log N\le k\le \sqrt{t}N
\]
for a sufficiently large constant $C$. 
Then from the fact that for any matrices $X,Y$, $\sigma_{N-2k}(XY) \geq \sigma_{N-k}(X)\sigma_{N-k}(Y)$, we conclude that
\begin{equation}    \label{N-k.LB}
    \sigma_{N-k}(B_0B(t)-z) \gs \min\{(k/N)^4,t^2\} \qquad \forall \;  k\ge 2C\log N
\end{equation}
with probability at least $1-2N^{-10}$.

For brevity we denote the singular values of $B_0B(t)-z$ by $\sigma_i:=\sigma_i(B_0B(t)-z)$.
For any $\delta\in(0,1)$, on the probability $1-\mathcal{O}(N^{-10})$ event $\cG$ that \eqref{N.LB}, \eqref{N-k.LB} hold,
\begin{align*}
    \int_0^\delta \log(1/s) d\mu_{|B_0B(t)-z|}(s) 
    &= \frac1N \sum_{k=0}^{N-1} \log\frac1{\sigma_{N-k}} 1_{\sigma_{N-k}\le\delta}\\
    &\le 2C\frac{\log N}{N}\log\frac1{\sigma_{N}} + \frac1N \sum_{k\ge 2C\log N} \log \frac1{\sigma_{N-k}} 1_{\sigma_{N-k} \le \delta}\\
    &\lesssim \frac{\log^2N}N 
    + \Big((\delta/c)^{1/4}-\sqrt{t}\Big)_+
    + \frac1N \sum_{k\le (\delta/c)^{1/4}N} \log\frac Nk \\
    &\lesssim \frac{\log^2N}N 
    + \Big((\delta/c)^{1/4}-\sqrt{t}\Big)_+
    + \int_0^{(\delta/c)^{1/4}}\log(1/x)dx
\end{align*}
where in the final line we recognized the sum over $k$ as a Riemann sum for the integral of $\log$ near the origin. 
Since $\log$ is locally integrable under the Lebsegue measure, we hence obtain \eqref{log-goal2} to conclude the proof of Theorem \ref{thm.main}.\\

\subsection{Convergence of the Spectrum}
\label{sec:cvg-Hausdorff}

In this final section, we prove that the spectrum of the Brownian motion, with a unitary initial condition, converges a.s.\ as $N\to\infty$ in {\em Hausdorff distance}.  Precisely: for any subset $H\subset\C$ and $\varepsilon>0$, we denote by $H+\varepsilon$ the Euclidean $\varepsilon$-neighborhood of $H$,
\[ H+\varepsilon \deq \{ \lambda\in\C\ |\ \exists z\in H\colon |z-\lambda| < \varepsilon \}. \]
The Hausdorff distance between two compact sets $H,K$ is 
\[ \mathrm{d}_{\mathrm{Haus}}(H,K) = \inf\{\varepsilon>0\colon H\subset K+\varepsilon \;\&\; K\subset H+\varepsilon\}. \]
Ergo, given compact sets $\Sigma,\Sigma^N$, to stay $\mathrm{d}_{\mathrm{Haus}}(\Sigma^N,\Sigma)\to 0$ is precisely to say that, for any $\varepsilon>0$, for all sufficiently large $N$ it holds true that $\Sigma \subset \Sigma^N+\varepsilon$ and $\Sigma^N\subset \Sigma+\varepsilon$.

Our main result in Theorem \ref{thm.main}, weak convergence of Brown(ian) measures, is not enough on its own to prove convergence of the {\em support} of the measures, i.e.\ the spectrum.  But coupled with strong convergence and some limit spectral results, we prove the following.

\begin{theorem} \label{thm.Hausdorff} Let $\rho>0$ and $\zeta\in\C$ with $|\zeta|\le\rho$. Let $B_{\rho,\zeta}$ be a Brownian motion on $\mathrm{GL}(N,\C)$, and let $b_{\rho,\zeta}$ be a free multiplicative Brownian motion.  Let $u_0$ be a unitary operator freely independent from $b_{\rho,\zeta}$, and suppose that for each $N$ $U_0\in\mathrm{U}(N)$ is a random unitary matrix independent from $B_{\rho,\zeta}$ with the property that, a.s., $U_0$ converges strongly to $u_0$ as $N\to\infty$.

Then for any fixed $t\ge 0$ the spectrum of $U_0B_{\rho,\zeta}(t)$ converges a.s.\ in Hausdorff distance to the spectrum of $u_0b_{\rho,\zeta}(t)$, which coincides with the support $\overline{\Sigma(u_0,t\rho,t\zeta)}$ of the Brown measure of $u_0b_{\rho,\zeta}(t)$.
\end{theorem}

We prove Theorem \ref{thm.Hausdorff} as a consequence of the following more general result which may be of independent interest.  We state it here for fixed (deterministic) matrices; in its application to prove Theorem \ref{thm.Hausdorff}, all {\em almost sure} statements refer to single probability $1$ events on which they hold true, and so we may treat all matrices as fixed for this purpose.

\begin{prop} \label{prop.Hausdorff.conv} Let $a$ be an operator in a $W^\ast$-probability space $(\A,\tau)$, and for each $N\in\N$ let $A^N$ be a matrix in $\M_N(\C)$.
\begin{enumerate}
    \item\label{strong.1} If $A^N$ converges strongly to $a$, then for each $\varepsilon>0$ and all sufficiently large $N\in\N$ the spectrum of $A^N$ is contained in the $\varepsilon$-neighborhood of the spectrum of $a$: $\sigma(A^N)\subset\sigma(a)+\varepsilon$.
    \item\label{strong.2} If the ESD $\mu_{A^N}$ of $A^N$ converges weakly to the Brown measure $\mu_a$ of $a$, then for each $\varepsilon>0$ and all sufficiently large $N\in\N$ the support of $\mu_a$ is contained in the $\varepsilon$-neighborhood of the spectrum of $A^N$: $\supp\mu_a\subset \sigma(A^N)+\varepsilon$.
    \item\label{strong.3} Ergo: if $A^N\to a$ strongly and $\mu_{A^N}\to\mu_a$ weakly, and in addition $\supp\mu_a = \sigma(a)$, then $\sigma(A^N)\to\sigma(A)$ in Hausdorff distance.
\end{enumerate}   
\end{prop}
In the case that $A^N$ and $a$ are selfadjoint, \ref{strong.1} was proved in \cite[Prop.\ 2.1(3)]{CollinsMale2014}; the non-selfadjoint setting has not been covered in any literature known to us.

\begin{proof} We begin with \ref{strong.2}. Fix $\varepsilon>0$.  Since $\supp\mu_a\subseteq \sigma(a)$ it is compact, there are finitely many points $z_1,\ldots,z_m\in\supp\mu_a$ so that the union of the open balls $\{B(z_j,\varepsilon/2)\}_{1\le j\le m}$ covers $\supp\mu_a$. For each $j$, let $\psi_j$ be a $C_c(\C)$ function which is strictly positive on $B(z_j,\varepsilon/2)$ and $0$ elsewhere.  By assumption in \ref{strong.2},
\[ \int_{\C} \psi_j\,d\mu_{A^N} \to \int_{\C} \psi_j\,d\mu_a \quad a.s. \]
Since $\psi_j>0$ on a neighborhood of $z_j\in \supp\mu_a$, $\int_{\C} \psi_j\,d\mu_a > 0$; hence it follows that $\int_{\C} \psi_j\,d\mu_{A^N} >0$ for all large $N$.  This implies that there is an element $\lambda_j^N\in\sigma(A^N)$ in $B(z_j,\varepsilon/2)$ for all large $N$.  Now, if $\zeta\in B(z_j,\epsilon/2)$, it follows that $|\zeta-\lambda_j^N|\le |\zeta-z_j|+|z_j-\lambda_j^N| < \varepsilon$ and so $\zeta\in\sigma(A^N)+\varepsilon$ for all large $N$.  As $\supp\mu_a$ is contained in the union of the $B(z_j,\varepsilon/2)$, it follows that $\supp\mu_a\subset\sigma(A^N)+\varepsilon$ for all large $N$, as claimed.

We now turn to \ref{strong.1}.  If $\lambda\notin \sigma(a)$, then $a-\lambda$ is invertible, and thus so is $|a-\lambda|^2$. In particular, since the spectrum of $|a-\lambda|^2$ is compact, $c_\lambda:=\inf\sigma(|a-\lambda|^2)>0$.  Since $A^N$ converges strongly to $a$, it follows that $|A^N-\lambda I|^2$ converges strongly to $|a-\lambda|^2$.  As $|A^N-\lambda I|^2$ and $|a-\lambda|^2$ are selfadjoint, it thence follows from \cite[Prop.\ 2.1(3)]{CollinsMale2014} that $\sigma(|A^N-\lambda I|^2)$ converges to $\sigma(|a-\lambda|^2)$ in Hausdorff distance.  In particular $\inf\sigma(|A^N-\lambda I|^2) > \frac12\inf\sigma(|a-\lambda|^2) = c_\lambda/2$ for all sufficiently large $N$, say $N\ge N_\lambda$.  Note that $\inf \sigma(|A^N-\lambda|^2) = (\sigma_N(A^N-\lambda I))^2$ is the square of the smallest singular value of $A^N-\lambda I$.

Now, for any $z\in\C$,
\[ A^N - zI = A^N-\lambda I + (\lambda-z)I = (A^N-\lambda I)\left(I+(\lambda-z)(A-\lambda I)^{-1}\right). \]
Since the smallest singular value $\sigma_N$ is a supermultiplicative function on $\M_N(\C)$,
\begin{equation} \label{eq.strong.conv.1} \sigma_N(A^N-z I) \ge \sigma_N(A^N-\lambda I)\cdot\sigma_N\left(I+(\lambda-z)(A-\lambda I)^{-1}\right). \end{equation}
If $|\lambda-z|<\sqrt{c_\lambda/2}$, then for $N\ge N_\lambda$, $\sigma_N(A^N-\lambda I) > \sqrt{c_\lambda/2}>|\lambda-z|$.  But then
\[ |\lambda-z| \|(A^N-\lambda I)^{-1}\| = |\lambda-z|\cdot\frac{1}{\sigma_N(A^N-\lambda I)} < 1 \]
and it follows that $I+(\lambda-z)(A-\lambda I)^{-1}$ is invertible.  Combined with \eqref{eq.strong.conv.1}, this shows that $A^N-zI$ is invertible whenever $|z-\lambda|<\sqrt{c_\lambda/2}$, for $N\ge N_\lambda$.  In summary: for any $\lambda\notin\sigma(a)$, the open ball $B(\lambda,\sqrt{c_\lambda/2})$ is disjoint from $\sigma(A^N)$ for all $N\ge N_\lambda$.

Now, since $A^N$ converges strongly to $a$, $R = \sup_N \norm{A^N}$ is finite, and so $\sigma(A^N)$ is contained in the closed ball $\overline{B(0,R)}$ for all $N$.  For any $\varepsilon>0$, $\overline{B(0,R)}\setminus(\sigma(a)+\varepsilon)$ is compact; hence, there is a finite subset $\Lambda$ of $\lambda$ in this set so that
\[ \overline{B(0,R)}\setminus(\sigma(a)+\varepsilon) \subset \bigcup_{\lambda\in\Lambda} B(\lambda,\sqrt{c_\lambda/2}). \]
Thus, for $N\ge \max\{N_\lambda\colon \lambda\in\Lambda\}$, $A^N-zI$ is invertible for any $z\in\overline{B(0,R)}\setminus(\sigma(a)+\varepsilon)$.  In other words: for all large $N$, $\overline{B(0,R)}\setminus(\sigma(a)+\varepsilon)$ is contained in $\C\setminus\sigma(A^N)$.  As $\sigma(A^N)\subset\overline{B(0,R)}$, it therefore follows taking complements that $\sigma(A^N)\subseteq \sigma(a)+\varepsilon$ as claimed.

Finally, \ref{strong.3} follows immediately from \ref{strong.1} and \ref{strong.2} together with the stated assumption that $\supp\mu_a$ is equal to the full spectrum $\sigma(a)$.
\end{proof}

\begin{proof}[Proof of Theorem \ref{thm.Hausdorff}] From \cite[Corollary 1.2]{BCC2025}, it follows that $B_0B_{\rho,\zeta}(t)$ converges strongly to $b_0b_{\rho,\zeta}(t)$ a.s.\ for any sequence of initial conditions $B_0$ converging strongly to $b_0$; hence from Proposition \ref{prop.Hausdorff.conv}\ref{strong.1} we have $\sigma(B_0B_{\rho,\zeta}(t))$ eventually contained in any neighborhood of $\sigma(b_0b_{\rho,\zeta}(t))$ a.s.  Conversely, from our Theorem \ref{thm.main}, the ESD of $B_0B_{\rho,\zeta}(t)$ converges weakly a.s.\ to the Brown measure of $b_0b_{\rho,\zeta}(t)$; hence, from Proposition \ref{prop.Hausdorff.conv}\ref{strong.2} it follows that the support of the Brown measure of $b_0b_{\rho,\zeta}(t)$ is eventually contained in any neighborhood of the spectrum of $B_0B_{\rho,\zeta}(t)$.

Now restricting to the case that $B_0=U_0$ and $b_0=u_0$ are unitary as in the theorem statement, we have that the support of the Brown measure of $u_0b_{\rho,\zeta}(t)$ is equal to $\overline{\Sigma(u_0,t\rho,t\zeta)}$ (see our Theorem \ref{thm.Brown.meas.rhozeta}).  Moreover, \cite[Theorem 3.7]{HallHo2025Spectrum} proves that the Brown measure support is exactly equal to the spectrum $\sigma(u_0b_{\rho,\zeta}(t))$; hence, we conclude the statement of the theorem.
\end{proof}

As a final remark: we expect it to hold true in full generality that the spectrum of $B_0B_{\rho,\zeta}(t)$ converges in Hausdorff distance to the spectrum of $b_0b_{\rho,\zeta}(t)$ a.s., for any initial conditions $B_0$ converging strongly to $b_0$ a.s.  The only technical obstacle to this is extending the results of \cite{HallHo2025Spectrum} beyond the unitary initial condition case.

\appendix

\section{Small Singular Values of Shifted Gaussian Matrices}
\label{app:small}

\subsection{Proof of Lemma \ref{lem:smallest}}
\label{app:smallest}

In this section we prove the GUE case of Lemma \ref{lem:smallest}, restated below. 
As we noted, the Ginibre case was established in \cite[Lemma 3.3]{sinmin}.

It will be convenient to work with a GUE matrix $G$ scaled to have entries of unit variance.
Thus, letting $G$ have the distribution of $\sqrt{N}X^N(1)$, with $X^N$ as in Definition \ref{2HBdef}, it suffices to prove the following:

\begin{lemma}[Non-normal shift of a GUE matrix]
\label{lem:ssv.GUE}
    Let $G$ be a scaled $N\times N$ GUE matrix as above, and let $A$ be an arbitrary complex $N\times N$ matrix, deterministic or random and independent of $G$. For any $\delta\ge0$,
        \[
        \P( \sigma_N(G+A) \le \delta) \le  CN^2\delta^{1/8}\,.
        \]
        for a universal constant $C$. 
\end{lemma}

Previous works have controlled the smallest singular value for \emph{Hermitian} shifts $H+A$ of Hermitian random matrices $H$ with independent centered entries on and above the diagonal satisfying some tail or anti-concentration assumptions. The first results of this type were by Nguyen  \cite{Nguyen:symmetric}
and Vershynin \cite{Vershynin:symmetric}.
An optimal bound on the lower tail for unshifted Hermitian random matrices $H$ was recently obtained in \cite{CJMS:symmetric}; we refer to that work for a detailed review of the literature for unshifted Hermitian random matrices.

For matrices with Gaussian entries, it is common to obtain bounds that are completely uniform in $A$, with no constraint on the norm. This was done in the non-normal case of shifted Ginibre matrices in the well-known work of Shankar--Spielman--Teng on the smoothed analysis of Gaussian elimination \cite{SST}. Sharp bounds for the case that $H$ is GOE were obtained by Bourgain in \cite{Bourgain:symmetric}; another argument that also covers GUE matrices was found in \cite{APSSS}.

In the present work we need 
to allow the deterministic shift $A$ to be non-normal. Hence the matrix is generically non-normal, and the above-mentioned sharp results for Hermitian  $A$ do not apply. 

For the proof we follow the approach from \cite{Vershynin:symmetric} reducing the problem to bounding the lower tail for the distance of a column to the span of remaining columns, which can be expressed in terms of entries of the first row and column of $G+A$ and the inverse of the complementary $N-1$-dimensional principal submatrix -- see \eqref{def:Q}. The difference here from \cite{Vershynin:symmetric} is that there is some additional case analysis due to the fact that the first row and column of $A$ are completely arbitrary (and may be very large). On the other hand, we do not need hard results from the Littlewood--Offord theory for anticoncentration of random walks as in \cite{Vershynin:symmetric}, due to the Gaussianity of entries of $G$.

\begin{proof}
Without loss of generality we take $A$ to be deterministic. We may also assume $\delta\le\frac12$.  Let $S_i\subset S^{N-1}$ be the set of complex unit vectors $x$ such that $|x_i|\ge 1/\sqrt{N}$. 
Then $S^{N-1}\subset \bigcup_{i=1}^N S_i$, so by the union bound,
\[
    \P ( \sigma_N(G+A) \le \delta) 
    =\P\bigg( \inf_{x\in S^{N-1}} \|(G+A)x\|_2\le \delta\bigg)
    \le \sum_{i=1}^N \P\bigg( \inf_{x\in S_i} \|(G+A)x\|_2\le \delta\bigg).
\]
By permuting coordinates we see it suffices to show
\begin{equation}    \label{GUE-goal1}
    \P( \cE) \ls N\delta^{1/8}\,,\qquad \cE:= \bigg\{ \inf_{x\in S_1} \|(G+A)x\|_2\le \delta\bigg\}.
\end{equation}

Let $Y\in\C^{N-1}$ be the first column of $G$ with first entry removed, and let $B,\wt B^\tran$ be respectively the first column and row of $A$ with first entry removed. Thus,
\begin{equation}
    G+A = 
    \begin{pmatrix}
        G_{1,1}+ A_{1,1} & Y^* + \wt B^\tran\\
        Y + B & M^\tran 
    \end{pmatrix}
\end{equation}
for an $(N-1)\times (N-1)$ matrix $M$ that is independent of $G_{1,1},Y$. We will only use that $M$ is invertible almost surely. ($M$ is a.s. invertible because its distribution is absolutely continuous with respect to the Lebesgue measure on $\mathbb{C}^{N^2}$, and the collection of non-invertible matrices viewed as a subset of $\mathbb{C}^{N^2}$ has measure 0. The latter property is a consequence of the fact that the zero set of a polynomial has Lebesgue measure 0.)

From \cite[Lemma 3.2]{RudelsonVershynin2014} (see also \cite[Proposition 5.1]{Vershynin:symmetric}) we have
\begin{equation}    \label{EinQ}
    \cE\subset  \big\{ Q\le \delta \sqrt{N}\big\}
\end{equation}
where 
\begin{equation}\label{def:Q}
    Q:= \frac{|G_{1,1}+A_{1,1}- (Y+B)^\tran M^{-1}(\overline Y + \wt B)|}{\sqrt{1+\|M^{-1}(\overline Y + \wt B)\|_2^2}}\,.
\end{equation}
(This is the distance of the first column of $G+A$ to the span of the other $N-1$ columns.)
Letting $K_1,K_2,K_3>0$ to be chosen later, define events
\begin{equation*}
    \cE_1:= \big\{ \|M^{-1}(\overline Y + \wt B)\|_2\le K_1\big\}
    \,,\quad
    \cE_2:= \big\{ \|M^{-1}\|\ge K_2\big\}
    \,,\quad
    \cE_3:= \big\{ \|Y\|_2 \ge K_3\sqrt{N}\big\}.
\end{equation*}
The rest of the proof divides into four claims:

\begin{claim}   \label{claimE1}
    $\P( \{Q\le \delta \sqrt{N}\}\cap \cE_1) \ls \delta(1+K_1^2)^{1/2}\sqrt{N}$.
\end{claim}

\begin{claim}   \label{claimE2}
    $\P(\cE\cap \cE_2) \ls (\delta^2 + K_2^{-2})N$.
\end{claim}

\begin{claim}
    \label{claimE3}
    $\P( \cE_3) \le K_3^{-2}$.
\end{claim}

\begin{claim}
    \label{claimE4}
    With $K_1=\delta^{-1/2}$, $K_2=K_3\sqrt{N}=\delta^{-1/8}$, 
    \[
    \P( \{Q\le \delta\sqrt{N} \}\cap \cE_1^c\cap\cE_2^c\cap\cE_3^c) \ls \delta^{1/8}N.
    \]
\end{claim}

Combining the claims with \eqref{EinQ} we obtain \eqref{GUE-goal1} as desired. It only remains to prove the four claims.

\begin{proof}[Proof of Claim \ref{claimE1}]
For this claim we use the randomness of $G_{1,1}$.
On $\cE_1$ we have
\[
Q \ge (1+K_1^2)^{-1/2} |G_{1,1} + R|
\]
where $R:= A_{1,1}- (Y+B)^\tran M^{-1}(\overline Y + \wt B)$ is independent of $G_{1,1}$. 
Conditioning on $R$, the claim follows from the fact that the standard real Gaussian variable $G_{1,1}$ has bounded density. 
\end{proof}

\begin{proof}[Proof of Claim \ref{claimE2}]
For this claim we use the randomness of $Y$.
Condition on a realization of $M$ satisfying $\cE_2$. 
Let $u\in S^{N-2}$ such that $\|M^{-1}u\|_2\ge K_2$. We may select such a vector $u$ deterministically under the conditioning on $M$.
On $\cE$, fix $x=(x_1, \tilde x)\in S_1$ such that $\|(G+A)x\|_2\le \delta$.
With $v:= M^{-1}u/\|M^{-1}u\|_2$, on $\cE\cap \cE_2$ we thus have
\begin{align*}
    \delta \ge \|(G+A)x\|_2 
    &\ge |(0\; v^\tran)(G+A)x|\\
    &= |x_1v^\tran(Y+B) + v^\tran M^\tran \wt x|\\
    &\ge \frac1{\sqrt{N}}|v^\tran(Y+B)| - \|Mv\|_2\\
    &\ge\frac1{\sqrt{N}}|v^\tran(Y+B)| - \frac1{K_2}\,.
\end{align*}
Since $v$ is a fixed unit vector and $Y\in \C^{N-1}$ is standard Gaussian, $g:=v^\tran Y$ is standard Gaussian in $\C$ and in particular has bounded density.
Hence,
\[
\P( \cE \cap \cE_2) \le \P \big( |g + v^\tran B|\le \sqrt{N}(\delta+ K_2^{-1})\big)\ls (\delta^2 + K_2^{-2}) N\,.\qedhere
\]
\end{proof}

\begin{proof}[Proof of Claim \ref{claimE3}]
Since $Y$ is standard Gaussian in $\C^{N-1}$, the claim is immediate from Markov's inequality.
\end{proof}

\begin{proof}[Proof of Claim \ref{claimE4}]
Note that on $\cE_1^c\cap \cE_2^c\cap \cE_3^c$,
\begin{equation}
    \label{A1-top}
    K_1<\|M^{-1}(\overline Y + \wt B)\|_2 < K_2 K_3\sqrt{N} + \|M^{-1}\wt B\|_2\,.
\end{equation}
Combining these bounds, we get that the denominator in \eqref{def:Q} is bounded by
\begin{align}
    1+\|M^{-1}(\overline Y + \wt B)\|_2 
    &< \|M^{-1}\wt B\|_2 \bigg( 1+ \frac{1+K_2K_3\sqrt{N}}{\|M^{-1}\wt B\|_2}\bigg) \notag\\
    &\le \|M^{-1}\wt B\|_2 \bigg( 1+ \frac{1+K_2K_3\sqrt{N}}{K_1-K_2K_3\sqrt{N}}\bigg)\,.\label{Q-denom}
\end{align}
On $\cE_2^c\cap\cE_3^c$ the numerator in \eqref{def:Q} is bounded below by 
\begin{align}   \label{Q-num}
    |Y^\tran M^{-1}\wt B + B^\tran M^{-1}\overline Y + R'| - K_2K_3^2N 
\end{align}
where $R':= B^\tran M^{-1}\wt B - G_{1,1}-A_{1,1}$ is independent of $Y$. 
Decomposing the standard Gaussian vector $Y\in \C^{N-1}$ as $Y=\frac1{\sqrt{2}}( U+ iV)$ for $U,V$ standard real Gaussian vectors, we have
\[
Y^\tran M^{-1}\wt B + B^\tran M^{-1}\overline Y
= \sqrt{2}( a+ b) U + i \sqrt{2}(a-b) V
\]
where we denote the deterministic row vectors
\[
a:= (M^{-1}\wt B)^\tran \,,\quad b:= B^\tran M^{-1}\,.
\]
Combining with \eqref{Q-denom}, \eqref{Q-num}, we have that on $\{Q\le\delta\sqrt{N}\}\cap\cE_1^c\cap \cE_2^c\cap \cE_3^c$,
\begin{align*}
    \delta\sqrt{N}\ge Q &\ge 
    \frac{|\sqrt{2}(a+b)U + i\sqrt{2}(a-b)V + R'| - K_2K_3^2N}{\|a\|_2 \Big( 1+ \frac{1+K_2K_3\sqrt{N}}{K_1-K_2K_3\sqrt{N}}\Big)}\,.
\end{align*}
Rearranging and substituting the bound $\|a\|_2 \le \|a+b\|_2+\|a-b\|_2$, we get
\begin{align}   \label{zdot}
    \big| z \cdot (U, V) + & R'/\sqrt{2}(\|a+b\|_2+\|a-b\|_2) \big| \notag \\
   &  \le \delta\sqrt{N} \Big( 1+ \frac{1+K_2K_3\sqrt{N}}{K_1-K_2K_3\sqrt{N}}\Big) + \frac{K_2K_3^2N}{K_1-K_2K_3\sqrt{N}}
\end{align}
where for the second term on the right hand side we again used \eqref{A1-top} to bound $\|a\|_2 \ge K_1-K_2K_3\sqrt{N}$, and we set
\[
z:=\bigg( \frac{a+b}{\|a+b\|_2+\|a-b\|_2}, \frac{i(a-b)}{\|a+b\|_2+\|a-b\|_2}\bigg) \in \C^{2N-2}.
\]
With $K_1=\delta^{-1/2}$, $K_2=K_3\sqrt{N}=\delta^{-1/8}$, \eqref{zdot} becomes
\begin{equation}\label{zdot'}
   \big| z \cdot (U, V) + R'/\sqrt{2}(\|a+b\|_2+\|a-b\|_2) \big| \ls \delta^{1/8}N
\end{equation}

Clearly $\|z\|_2\gs 1$, so at least one of its real or imaginary parts has norm $\gs 1$. 
It follows that at least one of the real or imaginary parts of $z\cdot (U,V)$ is Gaussian of variance $\gs 1$, since $(U,V)$ is standard Gaussian in $\R^{2N-2}$. 
Hence,
\[
\sup_{w\in \C} \P ( |z\cdot (U,V) + w| \le \eps) \ls \eps
\]
for any $\eps\ge0$. 
Together with \eqref{zdot'} this yields the claim.
\end{proof}

The proof of Lemma \ref{lem:ssv.GUE} is complete.
\qedhere
\end{proof}

\subsection{Proof of Lemma \ref{lem:meso}}
\label{app:meso}

In this appendix we prove Lemma \ref{lem:meso}, restated below. 

\begin{lemma}\label{lem:meso.app}
    There is a universal constant $c>0$ such that the following holds. 
    Let $G$ be an $N\times N$ matrix with standard real Gaussian entries, and assume the $N(N+1)/2$ pairs $\{(G_{ij},G_{ji}): 1\le i\le j\le N\}$ are jointly independent. 
    Then for any $N\times N$ complex matrix $A$, deterministic or random and independent of $G$, and any $k\ge c^{-1}\log N$, the event that
    \[
    \sigma_{N-\ell} ( G + A) \ge c\ell/N\qquad \text{for all $k\le \ell\le N-1$}
    \]
    holds with probability at least $1-e^{-ck}$.
\end{lemma}

We prove Lemma \ref{lem:meso.app} via the geometric argument of Tao and Vu \cite{TaoVu2010-aop}, 
with a small modification to allow dependency between entries $G_{ij}$ and $G_{ji}$. The first step in this approach is to establish the following lower bound for $\sigma_{N-j}(A)$ for an arbitrary $N \times N$ matrix $A$. Informally, the lemma below provides a lower bound on $\sigma_{N-j}(A)$ in terms of the extent to which the rows of $A$ are linearly independent. 

\begin{lemma}\label{lem:interSVlb}
    For any $N \times N$ matrix $A$ with complex entries, we have that 
    \begin{equation}
        \label{LB-drows}
        \sigma_{N-j}(A) \gtrsim \sqrt{\frac{j}{N}} \min_{i \in [m]} \text{\emph{dist}}(R_i, V_i)
    \end{equation}
    where $m = N - \lceil j/2 \rceil$, $R_i$ denotes the $i$th row of $A$, and $V_i = \text{\emph{span}}(R_{\ell}: \ell \neq i, \ell \leq m)$.
\end{lemma}

\begin{proof}
Let $M$ denote the matrix consisting of the top $m$ rows of $A$. 
We may assume the the right hand side of \eqref{LB-drows} is positive, i.e.\ the rows of $M$ are linearly independent, as otherwise the claim holds trivially.
By Cauchy interlacing, we have that
\begin{equation}    \label{M-A}
    \sigma_{N-j}(A) \geq \sigma_{N-j}(M).
\end{equation}
    Combining with the inverse second moment identity (see  \cite[Lemma A.4]{TaoVu2010-aop}):\footnote{Proof: By projecting the rows to their span we reduce to the case that $M$ is square and invertible. Then one sees that the expressions in \eqref{inv2mom} are two ways of writing the Frobenius norm of $M^{-1}$.}
    \begin{equation}
        \label{inv2mom}
        \sum_{i=1}^m \text{dist}(R_i, V_i)^{-2} = \sum_{i=1}^m \sigma_i(M)^{-2}, 
    \end{equation}
    we get that
    \begin{align*}
        N \max_{i \in [m]} \text{dist}(R_i, V_i)^{-2} \geq \sum_{i=1}^m \sigma_i(M)^{-2} 
        \geq \sum_{i=N-j}^m \sigma_i(M)^{-2} \geq & \frac{j-1}{2} \sigma_{N-j}(M)^{-2} \,.
    \end{align*}
    Combining with \eqref{M-A} and rearranging, we obtain the desired conclusion.     
\end{proof}

With Lemma \ref{lem:interSVlb} in hand, we complete the proof of Lemma \ref{lem:meso.app} using concentration of measure.

\begin{proof}[Proof of Lemma \ref{lem:meso.app}]
    We begin by noting that it suffices to show that, by union bound, there exists a constant $c > 0$ (possibly different from the one in the lemma statement) such that for every $j \geq k$
    \begin{equation}
    \label{eq:sing_val_anticonc}
        \mathbb{P}\left(\sigma_{N-j}(X) \leq cj/\sqrt{N}\right) \leq e^{-cj}.
    \end{equation}
    Moreover, applying Lemma \ref{lem:interSVlb} with $A = \frac{1}{\sqrt{N}} X$, we see it suffices to show there exists a universal constant $c$, possibly different from the one in \eqref{eq:sing_val_anticonc}, such that
\begin{equation}
\label{eq:dist_conc_target}
  \mathbb{P}(\text{dist}(R_i, V_i) \leq c\sqrt{j}) \leq e^{-cj}. 
\end{equation}
    For an $N$-dimensional row vector $y$ we write $y^{(i)}$ for the $(N-1)$-dimensional vector obtained by removing the $i$th entry of $y$.
    Set 
    \begin{align*}
        V_i' = \text{span}\left(R_{\ell}^{(i)}: \ell \neq i, \ell \leq m\right)\,,\qquad
        V_i'' = \text{span}\left(V_i', Y^{(i)}_i\right)
    \end{align*}
    which are subspaces of $\C^{N-1}$.
    Then, 
    $$\text{dist}(R_i, V_i) \geq \text{dist}\left(R_i^{(i)}, V_i'\right) \geq \text{dist}\left(G_{i}^{(i)}, V_i''\right).$$
    By our assumptions, $G_{i}^{(i)}$ is a standard Gaussian vector and $V_i''$ is an independent subspace of dimension $\leq N-\lceil \frac{j}{2 }\rceil +1$. Additionally, notice that $x \mapsto \text{dist}(x, V_i'')$ is a 1-Lipshitz function of $x$. Thus, we can apply Gaussian concentration, conditional on $V_i''$, to obtain
    \begin{equation}
\label{eq:gauss_conc_dist}
\mathbb{P}\left(\mathbb{E}\left[\text{dist}\left(G_{i}^{(i)}, V_i''\right) \,\Big\vert\, V_i''\right] - \text{dist}\left(G_{i}^{(i)}, V_i''\right) \geq t \,\Big\vert\, V_i''\right) \leq e^{-t^2/2}.
    \end{equation}
It remains to estimate $\mathbb{E}\left[\text{dist}\left(G_{i}^{(i)}, V_i''\right) \big\vert V_i''\right]$. First, observe that
$$\mathbb{E}\left[\text{dist}\left(G_{i}^{(i)}, V_i''\right)^2 \,\Big\vert\, V_i''\right] = N - 1 -\dim(V_i'').$$
By the Gaussian Poincar\'e inequality, 
$$\text{Var}\left(\text{dist}\left(G_{i}^{(i)}, V_i''\right) \,\Big\vert\, V_i''\right) \leq 1.$$
(One also sees the variance is $\mathcal{O}(1)$ from integrating the tail bound \eqref{eq:gauss_conc_dist}, along with the same bound for the lower tail.)
Therefore,
$$\mathbb{E}\left[\text{dist}\left(G_{i}^{(i)}, V_i''\right) \,\Big\vert\, V_i''\right] \geq (N-2-\dim(V_i''))^{1/2} \gtrsim \sqrt{j}.$$
Combining the above lower bound with \eqref{eq:gauss_conc_dist}, we can see that \eqref{eq:dist_conc_target} holds and the desired conclusion follows.
\end{proof}

\begin{ack}
\phantomsection
\addcontentsline{toc}{section}{Acknowledgments}
We are grateful to Brian Hall and Ching Wei Ho, who provided help (and code) to create the figures in Sections \ref{Intro} and \ref{sect.Brown(ian).meas}, as well as insightful discussion related to Conjecture \ref{conj.b0}.
\end{ack}

\phantomsection
\bibliographystyle{amsplain}
\bibliography{GLBM-Eig}

\end{document}